\documentclass[11pt]{article}
\usepackage{amsmath, exscale, epsfig,amssymb, 
amsthm,
 makeidx}

\usepackage{setspace}
\usepackage{caption}

\usepackage{titlesec}
\titlespacing*{\section}{0pt}{0.4\baselineskip}{0.3\baselineskip}
\titlespacing*{\subsection}{0pt}{0.4\baselineskip}{0.3\baselineskip}

\usepackage[pdftex]{hyperref} 
\hypersetup{
    unicode      = false,     
    pdftoolbar   = true,      
    pdfmenubar   = true,      
    pdffitwindow = true,      
    pdfnewwindow = true,      
    colorlinks   = true,      
    linkcolor    = blue,      
    citecolor    = red,      
    filecolor    = blue,      
    urlcolor     = green       
}

\small
\normalsize
\usepackage[utf8]{inputenc}

\usepackage{setspace}

\usepackage[T1]{fontenc}
\usepackage[english]{babel}
\usepackage{enumerate,vmargin}

\usepackage{times}
\usepackage{amsfonts}
\usepackage{amsbsy}
\usepackage{amscd}

\usepackage{multicol}
\allowdisplaybreaks

\usepackage[all]{xy}

\usepackage{stmaryrd}
\usepackage{graphicx}
\usepackage{paralist}
\usepackage{amsfonts}
\usepackage{amssymb,mathrsfs}

\setmarginsrb{2.9cm}{2.6cm}{2.9cm}{1.7cm}{0cm}{0mm}{0cm}{9mm}

\def\build#1_#2^#3{\mathrel{\mathop{\kern 0pt#1}\limits_{#2}^{#3}}}
\def\noi{{\noindent}}

\def\cq{$\hfill \square$}
\def\un{{\bf 1}}

\newcommand{\bE}{{\bf E}}

\newcommand{\bM}{\mathbb{M}}
\newcommand{\bN}{\mathbf{N}}
\newcommand{\bbN}{\mathbb{N}}

\newcommand{\bP}{{\bf P}}

\newcommand{\cJ}{\mathcal{J}}

\newcommand{\cE}{\mathcal{E}}

\newcommand{\cH}{\mathcal{H}}

\newcommand{\cM}{\mathcal{M}}

\newcommand{\cT}{\mathcal{T}}

\def\be{\begin{equation}}

\def\ee{\end{equation}}
\def\ba{\begin{eqnarray*}}
\def\ea{\end{eqnarray*}}

\def\noi{\noindent}

\newcommand{\lgeo}{[\![}
\newcommand{\rgeo}{]\!]}
\def\cqfd{ \hfill $\blacksquare$ }

\newcommand{\eqo}{\! = \! }
\newcommand{\geqo}{\! \geq \! }
\newcommand{\leqo}{\! \leq \! }
\newcommand{\ino}{\! \in \! }

\newcommand{\bss}{\mathbf s}
\newcommand{\bxx}{\mathbf x}

\newcommand{\bbR}{\mathbb{R}}
\newcommand{\leko}{\! < \! }
\newcommand{\geko}{\! > \! }

\def\btt{\mathbf{t}}

\def\bC{\mathbf{C}}

\newcommand{\bc}{\mathbf c}

\newcommand{\bnu}{\boldsymbol \nu}
\newcommand{\bmu}{\boldsymbol \mu}
\newcommand{\bzeta}{\boldsymbol \zeta}
\def\cJ{\mathcal{J}}

\def\cH{\mathcal{H}}

\def\bxx{\mathbf{x}}
\def\byy{\mathbf{y}}

\def\bnn{\mathbf{n}}
\def\bmm{\mathbf{m}}
\def\bgam{{\boldsymbol{\gamma}}}

\def\ee{\mathrm{e}}

\def\sh{\mathrm{sinh}}

\def\bM{\mathbf{M}}
\def\bss{\mathbf{s}}
\def\bxx{\mathbf{x}}
\def\byy{\mathbf{y}}

\def\bcc{\mathbf{c}}
\def\btt{\mathbf{t}}
\def\bnn{\mathbf{n}}
\def\bmm{\mathbf{m}}
\def\tee{\mathtt{e}}
\def\tmm{\mathtt{m}}
\def\tnn{\mathtt{n}}
\def\tZ{\mathtt{Z}}
\def\tC{\mathtt{C}}
\def\tU{\mathtt{Uni}}
\def\tJJ{\mathtt{J}}

\def\grob{\mathtt{G}\overline{\mathtt{ro}}}

\newcommand{\PPi}{{\boldsymbol{\Pi}}}
\newcommand{\bvaro}{{\boldsymbol{\varrho}}}

\newcommand{\bGam}{{\boldsymbol{\Gamma}}}

\newcommand{\pcH}{\mathtt{p}_{ H}} 
\newcommand{\tdd}{\mathtt{d}} 
\newcommand{\tgg}{\mathtt{g}} 
\newcommand{\tss}{\mathtt{s}} 
\newcommand{\trr}{\mathtt{r}} 
\newcommand{\txx}{\mathtt{x}} 
\def\tHH{\mathtt{H}}
\def\bcE{{\boldsymbol{\mathcal{E}}}}

\def\bH{\mathbf{H}}

\newtheoremstyle{thmstyl}
{3.5pt} 
{2.5pt} 
{\em} 
{} 
{\bfseries} 
{.} 
{.5em} 
{} 
\theoremstyle{thmstyl}

\newtheorem{theorem}{Theorem}[section]

\newtheorem{lemma}[theorem]{Lemma}
\newtheorem{proposition}[theorem]{Proposition}

\newtheoremstyle{dfstyl}
{3.5pt} 
{2.5pt} 
{} 
{} 
{\bfseries} 
{.} 
{.5em} 
{} 
\theoremstyle{dfstyl}

\newtheorem{definition}[theorem]{Definition}
\newtheorem{notation}[theorem]{Notation}

\newtheorem{remark}[theorem]{Remark}
\newtheorem{example}[theorem]{Example}

\newcommand{\eqnsection}{
\renewcommand{\theequation}{\arabic{section}.\arabic{equation}}
    \makeatletter
    \csname  @addtoreset\endcsname{equation}{section}
    \makeatother}
\eqnsection

\title{ \textbf{The continuous Derrida-Retaux branching process \\ in the Brownian CRT}}

\author{Thomas \textsc{Duquesne}
\thanks{LPSM, Sorbonne Universit\'e, Campus Pierre et Marie Curie, 4 place Jussieu, F-75252 Paris Cedex 05, France.
Email: thomas.duquesne@sorbonne-universite.fr}
\and Zhan \textsc{Shi}
\thanks{State Key Laboratory of Mathematical Sciences, AMSS, Chinese Academy of Sciences, 100190 Beijing, China.
Email: shizhan@amss.ac.cn}
}

\begin{document}

\maketitle

\vspace{-7mm}

\begin{abstract} 
The Derrida-Retaux continuous branching process, which is conjectured to be
the scaling limit of the corresponding discrete model and of many other critical hierarchical renormalization models, 
is a process of cells evolving inhomogeneously on the time interval $[0, 1)$ via linear growth of each cell and 
independent splitting of cells at a specific rate. In this article, we show that the
Derrida-Retaux continuous branching process is, on the one hand, encoded by a family of processes converging to a Brownian motion and on the other hand, fully obtained from the continuum Brownian tree encoded by the limiting 
Brownian path by time reversal and through length erasure of this tree. 
This direct representation explains the specific laws governing the significant quantities featuring the model 
(total mass, number of cells, law of large numbers) and provides a better understanding of its dynamics.

\smallskip

{\small 

\noi
\textbf{Keywords} $\, $  Hierarchical renormalization model; Growth-fragmentation process; Continuum random tree; Brownian tree; Length erasure; Trimming; Brownian path decomposition 

\smallskip

\noi
\textbf{Mathematics Subject Classification} $\, $ 60J80 $\cdot$ 60J85 $\cdot$ 60F17 $\cdot$ 60J70
}
\end{abstract}

\section{Introduction}

The \emph{Derrida-Retaux continuous branching process} (\emph{D-R branching process} for short) describes the evolution on the time interval $[0, 1)$ of a population of cells which grow and split according to a dynamics that is described informally as follows.  
\begin{compactenum}

\smallskip

\item[$(a)$] At time $0$, there is one cell with mass $\smash{ x\ino \bbR_+}$.

\smallskip

\item[$(b)$] A cell of mass $y$ at time $t\in [0, 1)$ that does not split between times $t$ and $\smash{t^\prime\ino [t, 1)}$ has a mass equal to $\smash{y + t^\prime \! -\! t}$ at time $\smash{t^\prime}$, i.e., \emph{the growth of cells is linear}. 

\smallskip

\item[$(c)$] A cell of mass $y$ at time $t\in [0, 1)$ instantly splits at time $t$ with rate $\smash{\frac{2y}{(1-t)^2}}$.

\smallskip

\item[$(d)$] When a cell division occurs, the mother-cell of mass $y$ instantly produces a dauther-cell of random mass $\smash{y'}$. The new mass of the mother-cell is then $\smash{y''\eqo y\! -\! y'}$, i.e., \emph{there is no loss of mass}. Moreover, the splitting ratio $\smash{y'/y}$ is \emph{uniformly distributed} on $[0, 1]$. 

\smallskip

\item[$(e)$] \emph{Splitting times and splitting ratios are independent} and \emph{cells behave independently} too. 

\smallskip

\end{compactenum}
Even though no rigorous proof has been available yet, D-R branching process is expected to be the scaling limit of the discrete-time renormalization model at criticality studied in \cite{derrida-retaux} and conditioned on survival (see \cite{4authors} and \cite{bz_survey} and  see also a rigorous proof of the convergence for related models in Hu, Mallein and Pain~\cite{HMP} or in Bai and Zhang~\cite{bai-zhang}). 
It is also expected to be the limit of more general hierarchical renormalization models. This is the motivation for studying its main properties in \cite{DerDuqShi24}. 

Let us briefly recall some results from \cite{DerDuqShi24}. To this end, we need to introduce some notation. 
The number of cells in the population at time $t$ is finite and we denote it by $\bnn_t$. We also introduce the following. 
\begin{equation}
\label{DRsimple}
\smash{\bxx (t)\! = \! \big( \bxx_k (t)\big)_{1\leq k \leq \bnn_t} \quad \textrm{and} \quad \bmm_t= \sum_{^{1\leq k \leq \bnn_t}} \bxx_k(t)} 
 \end{equation}

\vspace{5pt}
 
\noi 
resp.~the list of the masses of the cells and the total mass of the cell population at time $t$ (we explain below how to list conveniently the cells of the population). 
Since each cell grows linearly at unit speed between two splitting times, we necessarily get 
\begin{equation}
\label{mnequation}
\smash{\bmm_t = x+ \! \int_0^t \!\! \bnn_s \, \mathrm d s\;  , \quad t\ino [0, 1) \; .}
\end{equation}

\vspace{5pt}

\noi
The splitting rate explodes at time $1$. Therefore, $\smash{\lim_{t\uparrow 1} \bnn_t\eqo \lim_{t\uparrow 1} \bmm_t\eqo \infty}$ and 
as conjectured in \cite{bz_survey} and proved in \cite{DerDuqShi24},  
\begin{equation}
\label{limitduree}
\smash{\big( (1\!-\! t) \bmm_t , (1\!-\! t)^2 \bnn_t\big)\!  \overset{\textrm{proba}}{\underset{t\to 1^-}{-\!\!\!-\!\!\! \longrightarrow}} (2\bzeta, 2\bzeta) \; \,  \textrm{where} \; \,  \bE \big[ \mathrm{e}^{\! -2\lambda \bzeta }\,  \big] \eqo  \frac{\lambda}
{\sh^2 (\sqrt{\lambda})}\mathrm{e}^{-2x(\sqrt{\lambda} \coth (\sqrt{\lambda})-1)  }.}
\end{equation} 
As showed in \cite{DerDuqShi24} the mass of each cell goes to $0$ as $\smash{t\! \uparrow \! 1}$ but they can be renormalized to obtain the following law of large numbers (Theorem 3.13 in \cite{DerDuqShi24}):
\begin{equation}
\label{LLN}
\smash{\frac{1}{\bnn_t} \! \sum_{^{1\leq k\leq \bnn_t}} \!\!\! \delta_{\frac{1}{1-t} \bxx_k (t)} \, \overset{\textrm{proba}}{\underset{t\to 1^-}{-\!\!\!-\!\!\! \longrightarrow}} \,  \boldsymbol{\gamma}_2 (\mathrm dy)\! :=\! 4ye^{-2y} \mathrm d y.}
\end{equation}
Here the limit holds in probability on the space $\mathcal M_1 (\bbR_+)$ of the laws on $\bbR_+$ equipped with the Polish topology of weak convergence. 

The proofs of (\ref{limitduree}) and (\ref{LLN}) rely on the one hand on a Poisson coupling which is recalled below. It explains why (\ref{limitduree}) and (\ref{LLN}) hold in probability. They also rely on the fact that the time-changed process 
${\smash\bxx^*(s)\! :=\! \big(\mathrm{e}^s\mathbf x_k (1\! -\! \mathrm{e}^{-s})\,  ; \, 1\! \leq \! k \! \leq \! \mathbf n_{1-\mathrm{e}^{-s}} \big)}$, $\smash{s\ino \bbR_+}$, is a time-homogeneous Markovian growth-fragmentation process in the sense of Bertoin~\cite{bertoin}. 
 Although it does not fit in the general laws of large numbers obtained in 
 Bertoin and Watson~\cite{bertoin-watson18}, it turns out to be a solvable model and (\ref{limitduree}) and (\ref{LLN}) are obtained thanks to explicit computations using the infinitesimal generator of $\bxx^*$ which is known with sufficient accuracy.  
 
 \smallskip

\noi
\textbf{Goals and main ideas.} This article aims at providing a representation of the D-R branching process in terms of the Brownian CRT. In addition to providing direct probabilistic proofs of (\ref{limitduree}) and (\ref{LLN}), we claim that this representation, which is geometrically simple, explains the main features of the D-R branching process. Before discussing precisely our results, let us first outline the main ideas underlying our work.

 The first idea consists in adding to each cell a \emph{nucleus} from which the mass expands. Indeed we view each cell of the population as an interval whose length is its mass. The nucleus of the cell is simply a distinguished point of the interval (See Figure \ref{1CellNucleus} and Definition \ref{popcelldef} below). 

\begin{figure}[htb]
\centering
 \begin{minipage}{0.9\textwidth}
 \centering 
 \includegraphics[width=0.75\textwidth, height=0.18\textheight]{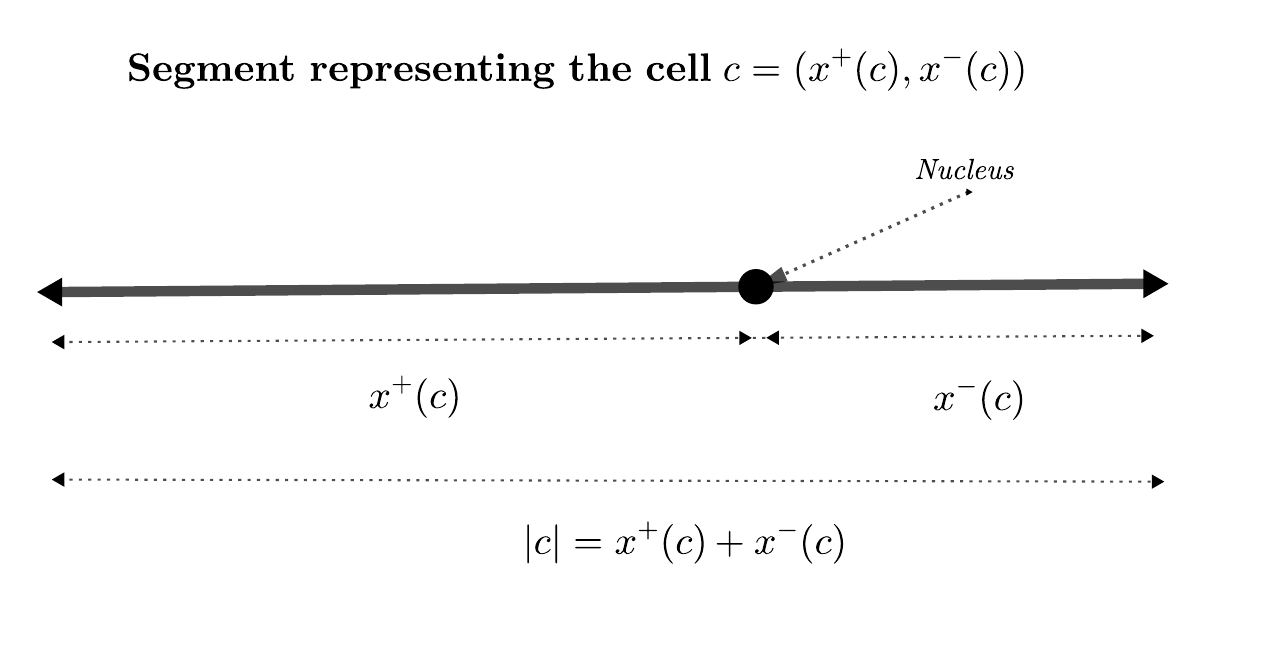}
\vspace{-0.5cm}
\caption{{\footnotesize \emph{A cell $c$ is a segment whose nucleus is a distinguisged point (the black dot) at distance $\smash{x^{_+}\! (c)}$ from the endpoint on the left-hand side of the segment and at distance $\smash{x^{_-}\! (c)}$ from the endpoint on the right-hand side of the segment. The mass of the cell is the total length of the segment: $\! \smash{|c|\eqo  x^{_+}\! (c)+x^{_-}\! (c)}$. \cq } }
\label{1CellNucleus}}
 \end{minipage}
\end{figure}  

\vspace{-10pt}

The cell then grows at speed $\smash{ \frac{_1}{^2}}$ both on the left-hand side and on the right-hand side 
of the nucleus. Namely, let us suppose that at 
time $t$ the cell is represented by the interval $[0, y]$ with nucleus $z\ino [0, y]$ and let us suppose that the cell does not split up to time $\smash{t' \ino [t, 1)}$. Then the cell at time $\smash{t'}$ can be represented by the intervall $\smash{[0, y+t'\! -\! t]}$ and its nucleus is then situated at $\smash{z'\eqo  z+ \frac{_1}{^2} (t'\! -\! t)}$. The overall dynamics of the D-R process equipped with nuclei remains as in $(a$-$e)$ but when a cell splits, we need to specify the following: \emph{the mother-cell is the cell that keeps the nucleus and at the splitting time, the nucleus of the daughter-cell is situated 
at the endpoint corresponding to the place at which the division occurs} and it eventually moves inwards as  the daughter-cell grows. See Figure \ref{SplitGrowth}. More precise definitions are provided below. 
 
The second idea (already introduced in \cite{DerDuqShi24}) consists in putting end-to-end the intervals representing the cell population, in order to produce a cumulated version of them. Since the cells are independent and since the splitting ratios are uniform, we can couple them to derive the splitting times and ratios from a single Poisson point process on $\smash{[0, 1) \! \times \! \bbR_+}$ with intensity $\smash{2\mathrm d t \, \mathrm d y /(1\! -\! t)^2}$. It also induces an indexation of the cell that is consistent with this Poisson coupling. See Figure \ref{PoissCoupl} (and see below for a more formal definition).  

\begin{spacing}{1}
\begin{figure}[htb]
\centering

\begin{minipage}{0.9\textwidth}
 \centering 

\includegraphics[width=0.9\textwidth, height=0.23\textheight]{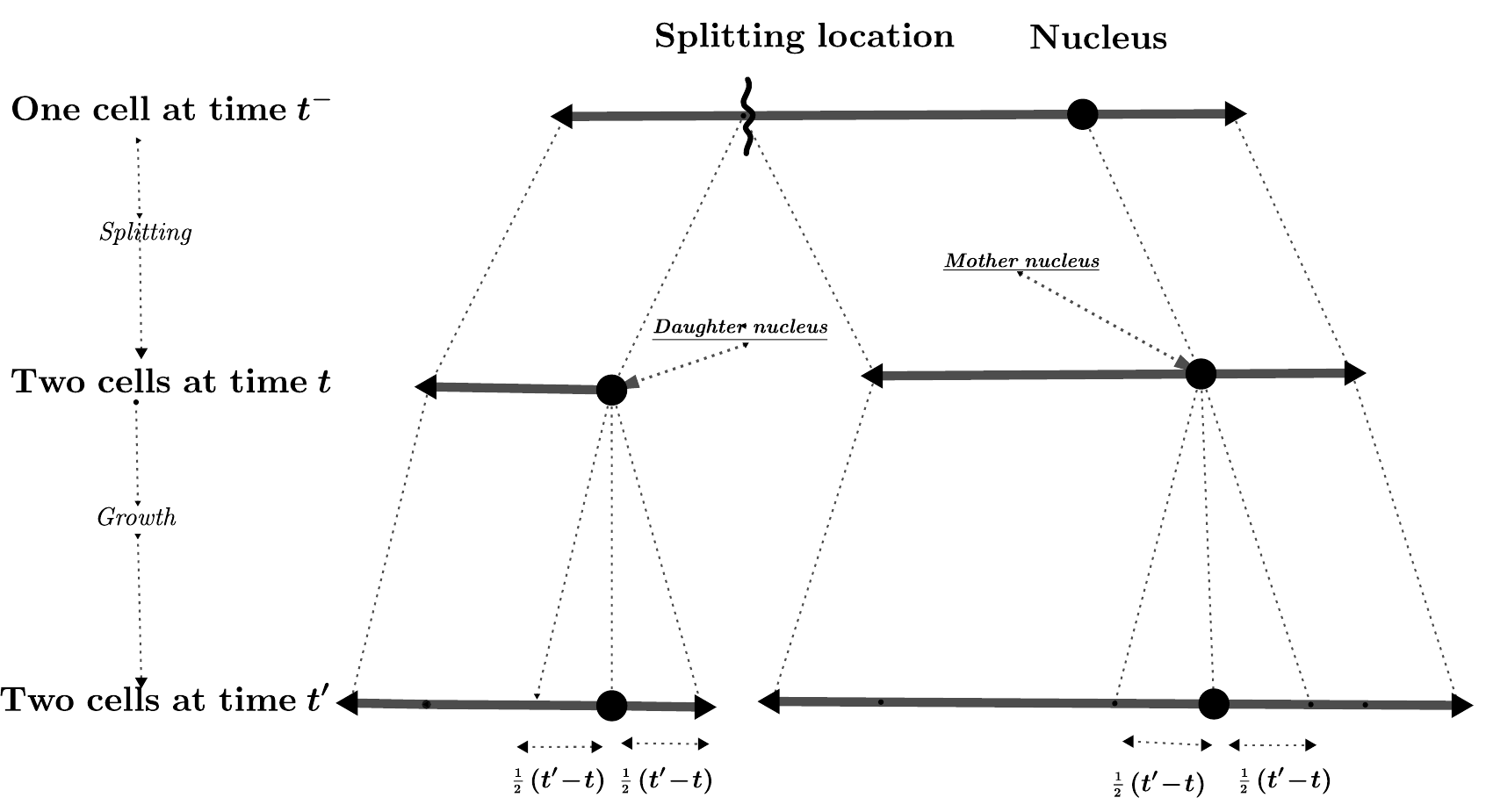}
\caption{
{\footnotesize \emph{One single cell (equipped with a nucleus) splits at time $t^-$. Two cells are produced: 
the mother cell is the one that keeps the nucleus (here, on the right-hand side). The other cell is called the daughter cell: its nucleus is created at the endpoint where the splitting occurs (here, the endpoint on right-hand side of the daughter cell). We assume here that there is no splitting event between time $t$ and time $t'$ and the two cells grow linearly at speed $1/2$ on each side of their respective nuclei.} \cq }
\label{SplitGrowth}}
\end{minipage}
\end{figure}  
\end{spacing}
  
\vspace{-10pt}  
 
  The reason for introducing a nucleus in each cell is that it allows a cell population to be \emph{encoded by a 
zigzag function}, i.e., a continuous and piecewise affine function with slopes equal to $1$ or to $-1$. More precisely, 
a 
cell, which is represented by the interval $[0, y]$ and whose nucleus is $z\ino [0, y]$, is encoded by the function 
$\smash{t\ino [0, y] \! \mapsto \! 
t\! -\! 2(t\! -\! z)_+}$, where $\smash{(\, \cdot \, )_+}$ denotes the positive part function. 
The cell population following the D-R dynamics as discussed above is then encoded by the zigzag function obtained 
by concatenating the coding functions of each cell (see Figure \ref{ZigzagCellPop}). In 
Theorem \ref{cvzigzag} we state that such a zigzag process, when suitably rescaled, converges as $\smash{t\! \uparrow \! 1}$ to a Brownian path whose law 
 is specified below. Moreover, Theorem \ref{recovercell} asserts that the whole D-R branching process can be recovered from the limiting Brownian path of Theorem \ref{cvzigzag} by a procedure called length erasure. or trimming, wich is a simple geometric transform of the Brownian CRT coded by the limiting path.

\smallskip

\noi
\textbf{Detailled statements.} Let us next explain more precisely our main results. To this end, we need to introduce some notation on cells and cell populations. 

\begin{definition} 
\label{popcelldef}(\emph{Cell populations})
\noindent
$(a)$ A \emph{cell} $c$ $\eqo$ $\smash{ (x^+  (c) , x^- (c)) }$ $\ino$ $\smash{\bbR_+^2}$, 
is a pair of nonnegative real numbers: $\smash{x^+(c)}$ is the mass situtated on the left-hand side of nucleus and $\smash{x^-(c)}$ the mass situated on the right-hand side of the nucleus, the mass of $c$ being $\smash{|c| \! :=\! x^+(c)+x^-(c)}$. See Figure \ref{1CellNucleus}.

\smallskip

\noindent
$(b)$ A \emph{population} of $n$ cells is then given by a vector $\smash{\mathbf c \eqo (c_k)_{1\leq k\leq n} \ino (\bbR_+^2)^n\! \equiv\!  \bbR_+^{2n}}$. We denote by $ \mathtt n(\mathbf c)\eqo n$ the 
\emph{size} of the cell population $\bcc$ and we denote by $\smash{\mathtt m (\mathbf c) \eqo \sum_{1\leq k \leq n} |c_k|} $ its \emph{total mass}.

\smallskip

\noindent $(c)$
  The \emph{cumulated version of the population} $\smash{\bcc\eqo (c_k)_{1\leq k\leq n}}$ is denoted and defined 
by $\smash{\overline{\mathbf c}} $ $ \eqo $ $\smash{(z_{j})_{1\leq k\leq 2n}}$ $ \in$ $\smash{ \bbR^{2n}_+}$ where $ \smash{z_{2k} \eqo  |c_1|+ \ldots +|c_k| }$ and 
$\smash{ z_{2k-1}\eqo  z_{2k} \! -\! x^{-}(c_k)}$, $\smash{1\leqo k\leqo n }$. 
Here, it is convenient to set 
$z_0\eqo 0$. Note that $\smash{z_{2n}\eqo \mathtt{m} (\mathbf c)}$. See Figure \ref{ZigzagCellPop}.

\smallskip

\noindent
$(d)$ The space of cell populations is the disjoint union $\smash{\mathtt{Cell} \! :=\! \bigsqcup_{n\in \bbN^*}  \bbR^{2n}_+} $. It is endowed with the sum topology which makes it a complete 
locally compact space with countable basis.  \cq 
\end{definition}
Linear growth and splitting of cells formally correspond to
the following functions on $\mathtt{Cell}$.

\begin{definition}
\label{GroDivdef} (\emph{Growth and division}) 
Let $\smash{\mathbf c \eqo (c_k)_{1\leq k\leq n} \ino \mathtt{Cell}}$ whose cumulated version $\smash{\overline{\mathbf c}}$ is denoted by $\smash{(z_{j})_{1\leq k\leq 2n}}$. 

\smallskip

\noindent
$(a)$ For all $\smash{t\ino \bbR_+}$ we denote by $\smash{\mathtt{Gro}_t (\mathbf c)\eqo \big( \big(\frac{_1}{^2} t + x^{+} (c_k) ,  \frac{_1}{^2} t + x^{-} (c_k)  \big)\big)_{1\leq k\leq n}}$ the cell population  obtained from 
$\smash{\mathbf c}$ after a linear growth of each cells during a timespan $t$. 

\smallskip

\noindent
$(b)$  The cell population resulting from a division of $\mathbf c$ at location $\smash{y\ino \bbR_+}$ is defined as 
follows: if $\smash{y\geko \mathtt{m} (\mathbf c) }$, we simply set $\smash{\mathtt{Div}_y(\mathbf c)\eqo  \mathbf c}$ and 
if $\smash{y\ino [0, \mathtt{m} (\mathbf c) ]}$, 
then we set $\smash{\mathtt{Div}_y(\mathbf c)\eqo \mathbf c'}$ where 
the cumulated version of 
$\smash{\mathbf c'}$ is the vector with $2n+2$ nondecreasing components obtained by inserting twice the value $y$ in 
$\smash{(z_j)_{1\leq j\leq 2n}}$. Namely $\smash{\overline{\mathbf c}'\eqo  (z_1, \ldots , z_j, y, y , z_{j+1}, \ldots, z_{2n})}$ if $\smash{y\ino [z_j ,  z_{j+1}]}$ for some $\smash{1\leqo j\leko 2n}$ and $\smash{\overline{\mathbf c}'\eqo  (y,y, z_1, \ldots , z_{2n})}$ if $\smash{y\ino [0, z_1]}$. \cq 
\end{definition}

We fix an initial cell $\smash{\bcc(0)\eqo (x^+\!\! , x^-)}$ whose mass is $\smash{x\eqo x^+\! + x^-}$. The D-R branching process (with nuclei) is then completely determined once we have fixed the sequence 
of its \emph{splitting times} $\smash{(\btt_n)_{n\in \bbN^*}}$, which satisfy $\smash{0\leko \btt_n \leko \btt_{n+1} \leko \sup_{p\in \bbN^*} \btt_p \eqo 1}$, and the sequence of its \emph{splitting locations} $\smash{(\byy_n)_{n\in \bbN^*}}$ that are derived as follows from a Poisson point process $\smash{\PPi}$ on $\smash{[0, 1) \times \bbR_+}$ with intensity $\smash{2\mathrm d t \, \mathrm d y /(1\! -\! t)^2}$: 
\begin{compactenum}

\smallskip

\item[$(i)$] we easily check that there is a unique $\smash{\bbR_+\! }$-valued 
continuous process $\smash{(\bmm_t)_{t\in [0, 1)}}$ that is solution for all $t\ino [0, 1)$ to 
\begin{equation}
\label{mtntcompati}
\smash{\underline{\texttt{Eq} (\bcc(0), \PPi)}:} \quad \smash{ \bmm_t  =  \mathtt{m} (\bcc (0))+\!  \int_{0}^t \!\!  \big( 1+ \# \big\{ (r,y)\ino \PPi: r\leko s \;  \textrm{and} \; y \leko \bmm_r \big\}\big) \mathrm d s\,  ;}
\end{equation}
\item[$(ii)$] the sequence $\smash{(\btt_n, \byy_n)_{n\in \bbN^*}}$ is such that 
$\smash{\{ (\btt_n, \byy_n); n\ino \bbN^*\}\eqo \{ (t,y)\ino \PPi: y\leko \bmm_t\}}$; 

\smallskip

\item[$(iii)$] the D-R branching process with initial cell $\bcc(0)$ is then defined recursively 
as the process $\smash{t\ino [0, 1) \! \mapsto \! \bcc (t) \ino \mathtt{Cell}}$ such that for all $\smash{t\ino [0, \btt_1]}$, $\smash{\bcc(t)\eqo \mathtt{Gro}_t (\bcc(0)) }$ and such that for all $\smash{n\ino \bbN^*}$ and all $\smash{t\ino (\btt_n, \btt_{n+1} ]}$, 
\end{compactenum} 
\begin{equation}
\label{nuclDRbrdef}
\smash{\bcc(t)= \mathtt{Gro}_{t-\btt_n} \big( \mathtt{Div}_{\byy_n} (\bcc(\btt_n))\big) . }
\end{equation}  
See Figure \ref{PoissCoupl}. We always keep using notation $\smash{\bcc(t)\eqo (\bcc_k(t))_{1\leq k\leq \bnn_t}}$ for the D-R branching process: here, 
$\smash{\bnn_t}$ is the finite size of the cell population and $\smash{\bcc_k(t)}$ stands for the $k$-th cell in the indexation induced by the Poisson coupling. 
First note that each time a division occurs, the size of the population increases by $1$. 
Then clearly $\smash{\bnn_t \eqo 1+\# \big\{ (s,y)\ino \PPi: s\leko t \;  \textrm{and} \; y \leqo \bmm_s \big\}}$ 
and we observe that 
$\smash{\bmm_t}$ is the total mass of $\smash{\bcc(t)}$, namely $\smash{\mathtt{m} (\bcc(t))\eqo \bmm_t}$. Therefore 
$\smash{\texttt{Eq} (\bcc(0), \PPi)}$ is (\ref{mnequation}). The process $\bcc(\cdot)$ clearly follows the dynamics informally described at the beginning of the section and when we forget about nuclei and we only consider the mass of the cells, then $\smash{\bxx(t)\! :=\! (|\bcc_k(t)|)_{1\leq k\leq \bnn_t}}$, $\smash{t\ino [0, 1)}$, is the D-R branching process with inital mass $\smash{x\eqo x^+\! +x^-}$ as introduced genuinely in \cite{DerDuqShi24} (and as mentioned at the beginning of the introduction). 

\begin{figure}[htb]
\centering
\centering
  \begin{minipage}{0.9\textwidth}
 \centering 
\includegraphics[height=0.28\textheight, width=0.75\textwidth]{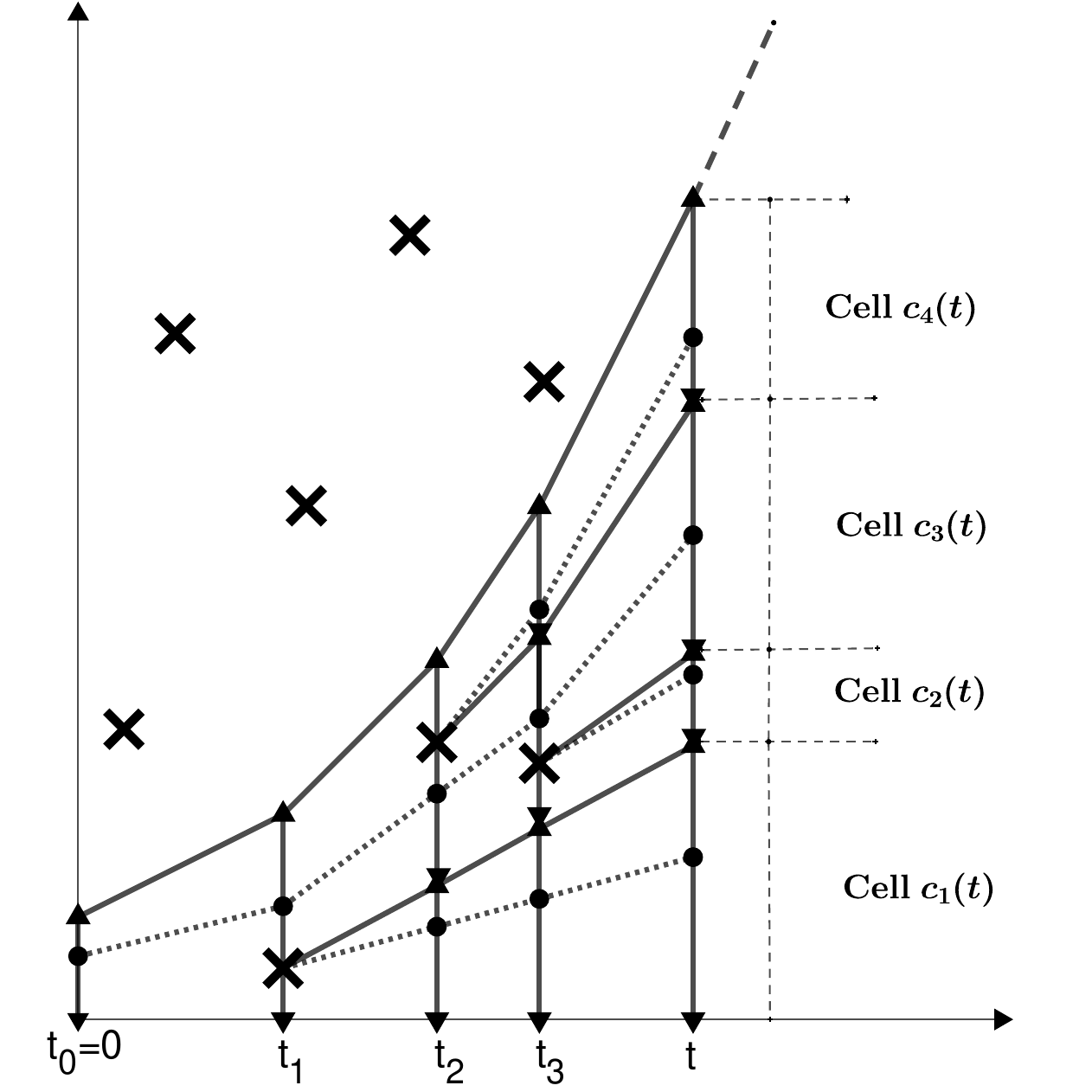}
\caption{{\footnotesize \emph{An example of the branching system associated with a single initial cell $\smash{\mathbf c(0)\eqo (x^+\! , x^-)}$ and coupled with a P.p.p.~$\PPi$ on $\smash{[0, 1) \times \bbR_+}$ whose points are represented by bold crosses.  
Time corresponds to the horizontal axis and cells are put end-to-end vertically: their endpoints are delimited by arrows whose evolution follows continuous lines. Nuclei are represented by thick bullets $\bullet$ and their evolution follows dashed lines. There are exactly three division events between times $0$ and $t$ indicated as $t_1, t_2$ and $t_3$. Thus at time $t$ there are four cells which are listed in increasing order from bottom to top.}  \cq
 }}
\label{PoissCoupl}
\end{minipage}
\end{figure}

 We next explain how to associate with each cell population $\bcc(t)$ an encoding process, how to rescale it and how to pass to the limit. Our framework in terms of processes is the following:  
we consider continuous functions $\smash{H\!  :\!  t\ino \bbR_+ \! \mapsto \! H_t \ino \bbR}$ such that $\smash{H_0\eqo 0}$ and with a (possibly infinite) \emph{lifetime} denote by $\zeta \ino [0, \infty]$. It means here that if $\zeta \leko \infty$, then $H$ is constant on $[\zeta, \infty)$. We simply denote by $\bC$ the space such continuous functions 
$(H, \zeta)$ provided with a lifetime. We endow this space with the product topology of the usual convergence  in $[0, \infty]$ (for the lifetimes) and the uniform convergence on every compact intervals, which makes of $\bC$ a Polish space.  We denote by 
$\partial$ the unique function of $\bC$ with null lifetime and we call it the \emph{null lifetime function}. See Definition  \ref{proclifetimedef} for more details. To any $\smash{\bcc \eqo (c_k)_{1\leq k\leq n}\ino \mathtt{Cell}}$ whose cumulated version $\smash{\overline{\bcc}}$ is denoted by $\smash{(z_j)_{1\leq j\leq 2n}}$, we associate an encoding continuous function $\smash{\mathtt{Z}_\cdot (\bcc)}$ whose lifetime is $\smash{z_{2n}\eqo \mathtt{m} (\bcc)}$ (the total mass of $\bcc$) and that is given by 
\begin{equation} 
\smash{ \forall z\ino [0, \mathtt{m} (\bc) ] , \quad \mathtt{Z}_z(\bcc)= \int_0^z \!\!  \Big(\!\! \!\!  \!\!  \sum_{^{\quad 1\leq j\leq 2n}} \!\! \!\! \!\! (-1)^{j+1} \un_{(z_{j-1}, z_{j} ] } (y) \Big) \mathrm d y , }
  \end{equation}
where we recall that $\smash{z_0\eqo 0}$. In other word $\smash{\mathtt{Z}_\cdot (\bcc) }$ is a continuous piecewise 
affine function whose slopes are equal to $1$ or to $-1$: it starts at $0$, it goes up during a timespan equal to $\smash{x^{+} (c_1)}$, then it goes down during a timespan equal to $\smash{x^{-} (c_1)}$, then it goes up during a timespan equal to $\smash{x^{+} (c_2)}$ and it goes down during a timespan equal to $\smash{x^{-} (c_2)}$, etc. Generically (i.e., if $\smash{x^+(c_k)x^-(c_k) \geko 0}$ for all $1\leqo k\leqo n$), the function $\smash{\mathtt{Z}_\cdot (\bcc)}$ reaches $n$ strict 
local maxima at the times $\smash{(z_{2k-1})_{1\leq k\leq n}}$, and $n+1$ strict local minima at the times $\smash{(z_{2k})_{0\leq k\leq n}}$.  We call $\smash{\mathtt{Z}_\cdot (\bcc)}$ the \emph{zigzag function of  $\bcc$}. See Figure \ref{ZigzagCellPop}.
\begin{figure}[htb]
\vspace{0.3cm}
\centering
\centering
\begin{minipage}{0.9\textwidth}
\centering 
\includegraphics[width=0.8\textwidth, height=0.24\textheight]{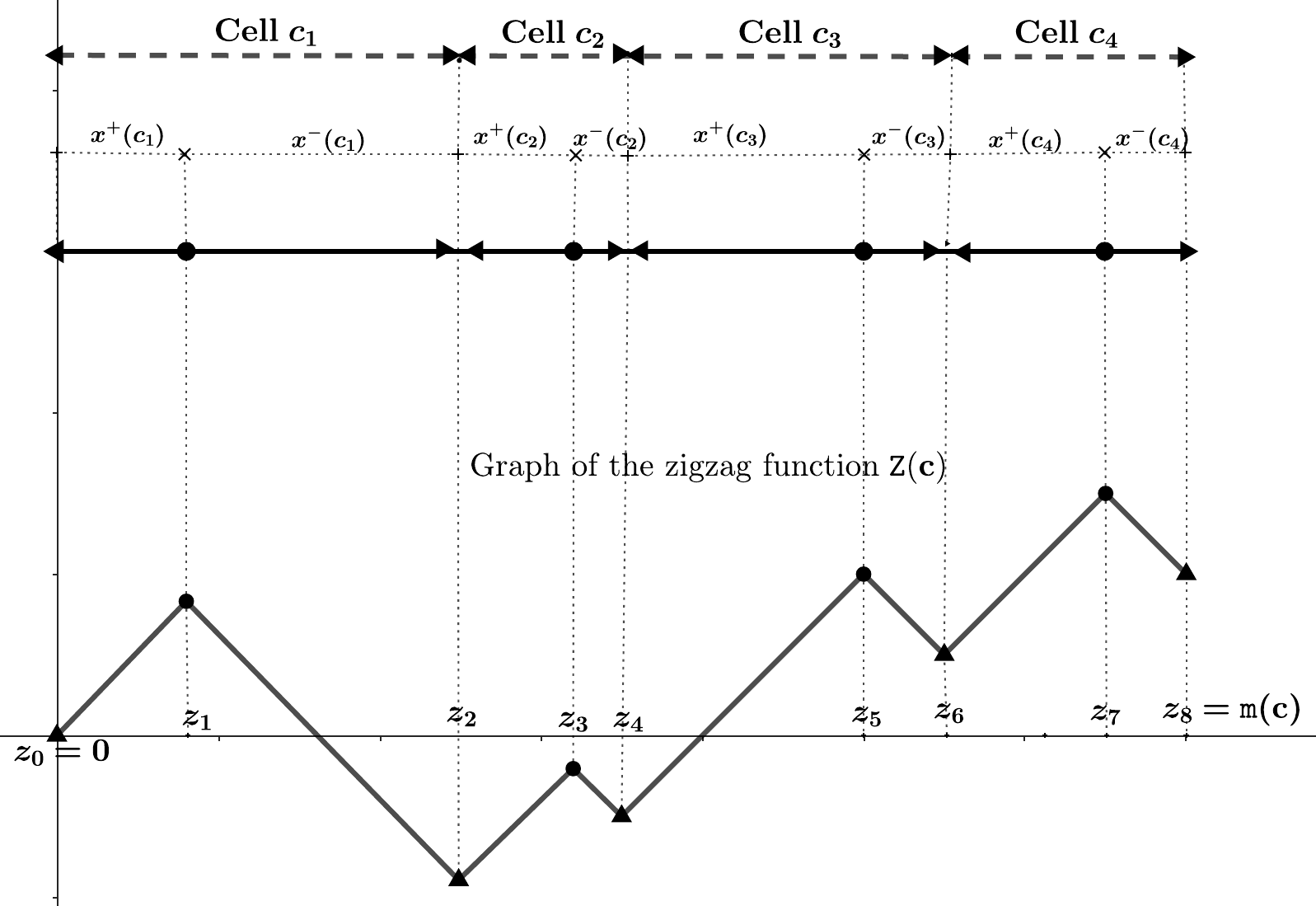}
\caption{{\footnotesize \emph{A cell population $\smash{\mathbf c}$ of $\smash{\mathtt n (\mathbf c)\eqo 4}$ cells: $\smash{c_1\eqo \big( x^+(c_1), x^-(c_1)\big)}$,
$\smash{c_2\eqo \big( x^+(c_2), x^-(c_2)\big)}$, $\smash{c_3\eqo \big( x^+(c_3), x^-(c_3)\big)}$, 
$\smash{c_4\eqo \big( x^+(c_4), x^-(c_4)\big)}$, with its cumulated version 
$\smash{\overline{\mathbf{c}}\eqo (z_j)_{1\leq j\leq 8}}$ and its corresponding zigzag function 
$\smash{\mathtt Z_\cdot (\mathbf c)}$. The lifetime of $\smash{\mathtt Z_\cdot (\mathbf c)}$ is the total mass 
$\smash{z_8\eqo \mathtt m (\mathbf c)}$ of the cell population $\smash{\mathbf c}$. 
Here each nucleus corresponds to a local maximum of $\smash{\mathtt Z_\cdot (\mathbf c)}$. }\cq
\label{ZigzagCellPop}}
}
\end{minipage}
\end{figure}  

 As we see below in Theorem \ref{cvzigzag}, a rescaled version of the $\bC$-valued process 
$\smash{t\ino [0, 1)}$ $\smash{ \! \mapsto\! }$ $\smash{ \big(\mathtt{Z}_\cdot (\bcc(t)), \bmm_t \big)}$ actually 
converges as $\smash{t\! \uparrow \! 1}$ to a limiting random process whose law is derived as follows from a $\bbR$-valued Brownian motion $\smash{B\eqo (B_t)_{t\in \bbR_+}}$ with initial value $\smash{B_0\eqo 0}$. 
To simplify notation, for all $\smash{t\ino \bbR_+}$ we set 
$\smash{\underline{B}_t\eqo \min_{s\in [0, t]} B_s}$ and $\smash{B^{_+}_t \eqo B_t \! -\! \underline{B}_t}$, which is the Brownian motion reflected on its infimum process. Let us first recall basic facts on the excursions of 
$B$ above its infimum. It is possible to take 
$\smash{-\underline{B}_\cdot}$ as local time of $\smash{B^+_\cdot}$ at $0$. We denote by $\smash{(\alpha_j, \beta_j)}$, $\smash{j\ino \mathcal J}$, the excursion intervals of $B$ above its infimum, i.e., the connected components of the open subset $\smash{\{ t\ino \bbR_+ : B^+_t \geko 0\}}$. For all $\smash{j\ino \mathcal J}$,  we denote 
the excursion corresponding to $\smash{(\alpha_j, \beta_j)}$ by 
$\smash{(H_j (\cdot), \zeta_j )} $ $ \eqo$ $\smash{ (B^+_{(\alpha_j + \cdot) \wedge \beta_j} , \beta_j \! -\! \alpha_j )}$.  We then recall that 
\begin{equation}
\label{NItomeadefi}
\smash{\textrm{$ \Big\{( -\! \underline{B}_{\alpha_j} , (H_j, \zeta_j)\Big)\,  ; \, j\ino \mathcal J \Big\}$ is a Poisson point process on $\bbR_+ \! \times \! \bC$} }
\end{equation}
with intensity $\smash{\mathrm dy\,  \bN (\mathrm d H)}$ where $\smash{\bN}$ is \emph{Ito's measure}. Here the normalization of $\smash{\bN}$ is such that 
\begin{equation}
\label{normaIto}
\smash{\forall  h\ino (0, \infty), \quad  \bN \big( \Gamma \geko h  \big) =1/h }
\end{equation}
where for all $\smash{(H, \zeta)\ino \bC}$ we denote by $\smash{\Gamma (H)\eqo \max_{t\in \bbR_+} \! H_t} $, the \emph{total height} of $H$.

\begin{definition}
\label{condheightIto} (\emph{Ito's measure conditioned by its height})
We then denote by $\smash{\bN (\, \cdot\,  | \, \Gamma \eqo h)}$ the law of the Brownian excursion 
conditionned to have height $h$. 
Namely, 
\begin{compactenum}

\smallskip

\item[$(a)$] 
$\smash{h\ino (0, \infty) \! \mapsto \!  \bN (\, \cdot\,  | \, \Gamma \eqo h)}$ is continuous with respect to weak convergence on $\bC$, 

\smallskip

\item[$(b)$]  $\smash{\bN (dH \, | \, \Gamma \eqo h)}$-a.s.~$\smash{\Gamma (H)\eqo h}$, 

\smallskip

\item[$(c)$] $\smash{\bN (\, \cdot \, ) \eqo \int_0^\infty \bN (\, \cdot\,  | \, \Gamma \eqo h) h^{-2} \mathrm d h}$. \cq 
\end{compactenum}
\end{definition}
\noindent
For all $\smash{y\ino \bbR_+}$, we also need to introduce the following hitting time: 
\begin{equation}
\label{bvarodef}
\smash{ \bvaro_{-y} \eqo \inf \big\{ t\ino \bbR_+ : B_t\eqo -y \big\}.}
 \end{equation}
From the previous decomposition of $B$ into excursions above its infimum, we get 
\begin{equation}
\label{ampliBh}
\smash{\forall y,h \ino (0, \infty), \quad  \bP \big( \!\!\!\!\! \!\!\max_{\, \quad s\in [0, \bvaro_{-y} \, ]} \!\!\!\!\!\!\!\!   B^+_{\!s} < h \big)= \mathrm{e}^{-y/h} \; . }
\end{equation} 
The scaling limit of $\smash{\mathtt{Z}_\cdot (\bcc(t))}$ is then defined thanks to the following conditioned Brownian paths. 
\begin{definition}
(\emph{Unicellular Brownian process})
\label{unicelBrodef} Let $\smash{x, x^+ \! ,\,  x^- \ino \bbR_+}$ and let $\smash{h\ino (0, \infty)}$.

\smallskip

\noindent $(a)$ If $x \geko 0$ we denote by $\smash{P^{_\downarrow}_{^{\! x,h}}}$ the law of 
$\smash{( B_{\cdot \, \wedge \bvaro_{-x}} , \bvaro_{-x})}$ under $\smash{\bP ( \, \cdot \, | \, \max_{ s\in [0, \bvaro_{-x} ]} \! B^{_+}_{{ s}} \leko  h\, )}$. If $\smash{x\eqo 0}$ we set 
$\smash{P^{_\downarrow}_{^{\! 0,h}}\eqo \delta_{\partial}}$, where $\partial$ stands for the null lifetime function. We call 
$\smash{P^{_\downarrow}_{^{\! x,h}}}$ the law of the \emph{$x$-Brownian descent with an amplitude less than $h$}.

\smallskip

\noindent $(b)$ If $x \geko 0$ we denote by $\smash{P^{_\uparrow}_{^{\! x,h}}}$ the law of $\smash{( x\! +\! B_{(\bvaro_{-x} -\, \cdot\,  )_+ } \! , \bvaro_{-x} )}$ under $\smash{\bP ( \, \cdot \, | \! \max_{ s\in [0, \bvaro_{-x} ]}  \! B^{_+}_s \leko  h  )}$. If $x\eqo 0$ we set 
$\smash{P^{_\uparrow}_{^{\! 0,h}}\eqo \delta_{\partial}}$. We call 
$\smash{P^{_\uparrow}_{^{\! x,h}}}$ the law of the \emph{$x$-Brownian rise with an amplitude less than $h$}.

\smallskip

\noindent $(c)$ The law $\smash{P_{\! x^+ \! ,\, x^-\! ,\, h}}$ of the \emph{$h$-unicellular Brownian process with initial cell $\smash{(x^+\! , x^-)}$} 
is the law of the $\bC$-valued process $(\bH, \bzeta)$ which is 
the concatenation of three \emph{independent} process $\smash{(\bH^{(i)}\! , \bzeta_i ) }$, $i\ino \{ 1,2,3\}$, i.e., for all $\smash{t\ino \bbR_+}$, 
\begin{equation}
\label{unicelBMdef}
\smash{\bzeta\eqo \bzeta_1\! + \bzeta_2\! + \bzeta_3 \quad \textrm{and} \quad \bH_t\eqo \bH^{(1)}_t\! +\bH^{(2)}_{(t-\bzeta_1)_+}\!\!  +\bH^{(3)}_{(t-\bzeta_1-\bzeta_2)_+},}
\end{equation}
where $\smash{(\bH^{(1)}\! , \bzeta_1) \! \overset{\textrm{law}}{=} \! P^{_\uparrow}_{^{\! x^+\! ,\,  h}}}$,  $\smash{(\bH^{(2)}\! , \bzeta_2)  \! \overset{\textrm{law}}{=} \! \bN (\, \cdot \,  | \,  \Gamma\eqo h)}$ and $\smash{(\bH^{(3)}\! , \bzeta_3) \! \overset{\textrm{law}}{=}\! P^{_\downarrow}_{^{\! x^-\! , \,h}}}$. See Figure \ref{Hunieteraz1}. \cq 
\end{definition}
\begin{theorem}
\label{cvzigzag} Let $\smash{(\bcc(t))_{t\in [0, 1)}}$ be a D-R branching process with nuclei as defined above with initial cell $\smash{\bcc(0)\eqo (x^+\! ,\, x^-)}$. 
Let $\smash{\mathtt{Z}_\cdot (\bcc(t))}$ be the zigzag function encoding $\bcc(t)$. Then there is a $\bC$-valued process 
$(\bH, \bzeta)$ which is distributed as a $\smash{\frac{_1}{^2}}$-unicellular Brownian process with initial cell $\smash{\bcc(0)\eqo (x^+,x^-)}$ (see Definition \ref{unicelBrodef} $(c)$) and such that the following limit holds: 
\begin{equation}
\label{cvzigzagexpli}
\smash{\Big( \big( \mathtt{Z}_{\frac{2s}{1-t}}\big( \bcc(t) \big) \big)_{\! s\in \bbR_+} \, , \, \frac{_1}{^2}(1\! -\! t) \, \bmm_t \Big) \xrightarrow[t\uparrow 1]{\textrm{in probability in $\bC$}}\big( \bH, \bzeta \big) \, .}
\end{equation}
\end{theorem}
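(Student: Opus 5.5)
The plan is to build the D-R branching process with nuclei directly out of a path $(\bH,\bzeta)$ with law $\smash{P_{x^+,x^-,1/2}}$, by a \emph{length erasure} (trimming) procedure. The limit (\ref{cvzigzagexpli}) then becomes an almost sure, deterministic consequence of the construction.

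\textbf{Step 1: the trimming operator.} For a continuous function $H$ with lifetime and a level $h>0$, let $\mathtt{R}_h(H)$ be the zigzag function coding the tree of $H$ after every leaf has been cut back by length $h$. Equivalently, we keep only those local maxima of $H$ that rise at least $h$ above the neighbouring minima on both sides (the excursions of $H$ above its running levels of height $\geq h$), and we lower each of them by $h$. The function $\mathtt{R}_h(H)$ has slopes $\pm1$ in its own length parametrisation.

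Two features of $\smash{P_{x^+,x^-,1/2}}$ make it the natural starting law:
\begin{compactenum}
\item[$\bullet$] Under $\bN(\,\cdot\,|\,\Gamma\eqo\frac12)$, the Brownian rise/descent conditioned to have amplitude $<\frac12$ contributes no surviving leaf at level $h=\frac12$.
\item[$\bullet$] Consequently $\mathtt{R}_{1/2}(\bH)$ is the single-cell zigzag: up $x^+$, down $x^-$, with one local maximum, which is the nucleus.
\end{compactenum}

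We then set $h(t)$ as the level naturally attached to time $t$, presumably $h(t)=\frac12(1-t)$, and define $\bcc(t)$ as the cell population read off from $\mathtt{R}_{h(t)}(\bH)$, after the time rescaling dictated by (\ref{cvzigzagexpli}). Concretely, each local maximum is a nucleus, and the up and down lengths around it (multiplied by $2/(1-t)$) are $x^\pm$ of that cell.

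\textbf{Step 2: identifying the law.} This is the main obstacle. We must show that $t\mapsto\bcc(t)$ has the D-R dynamics with nuclei:
\begin{compactenum}
\item[$\bullet$] between events, each cell grows linearly at speed $\frac12$ on each side of its nucleus;
\item[$\bullet$] a new V-shaped pair (a local minimum immediately followed by a local maximum) is inserted at a location that is uniform along the total mass;
\item[$\bullet$] insertions occur at rate $2\,\mathrm dt/(1-t)^2$ per unit mass, matching $\PPi$.
\end{compactenum}

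The growth part is deterministic. Lowering $h$ lengthens every edge of the trimmed tree, and in the rescaled coordinates this produces linear growth of all cells.

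For the branching part I would use the Neveu--Pitman description of the trimmed Brownian tree, with its excursion decomposition as in (\ref{NItomeadefi})--(\ref{ampliBh}). Along the path coded by $\mathtt{R}_h$, the sub-excursions of height in $[h',h)$ hanging off the trimmed tree form a Poisson process. Their intensity per unit length follows from (\ref{normaIto}), and is proportional to $h^{-2}\,\mathrm dh\times$ length. Since these sub-excursions are attached at uniformly distributed points, each newly surviving leaf is a division at a uniform location.

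It remains to check three things:
\begin{compactenum}
\item[$(i)$] after the time change $h=h(t)$ and the spatial rescaling by $2/(1-t)$, this intensity is exactly $2\,\mathrm dt\,\mathrm dy/(1-t)^2$;
\item[$(ii)$] the new nucleus sits at the division endpoint, because a new leaf starts with length $0$;
\item[$(iii)$] the Markov property holds, that is, the future insertions are independent of $\mathtt{R}_{h}(\bH)$.
\end{compactenum}
Point $(iii)$ follows from the Poisson structure and the independence of the three pieces in (\ref{unicelBMdef}). Together these give equality in law of the whole process with the D-R branching process with initial cell $(x^+,x^-)$. Since the convergence in (\ref{cvzigzagexpli}) is in probability of the single-time marginals, transferring from this coupling to any D-R process is immediate.

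\textbf{Step 3: convergence.}
\begin{compactenum}
\item[$\bullet$] \emph{Uniform convergence of the paths.} Trimming at level $h$ moves the encoding path by at most $h$ in sup norm, once we reparametrise by the length of the original path. So we must also show that the time reparametrisation between $\mathtt{R}_{h}$, rescaled as in (\ref{cvzigzagexpli}), and the original time of $\bH$ converges uniformly to the identity.
\item[$\bullet$] \emph{The reparametrisation.} Its convergence is a downcrossing/local-time statement. As $h\to0$, $h$ times the number of $h$-downcrossings on $[0,s]$ converges uniformly to the local-time-type functional that equals $s$ in the appropriate normalisation (Lévy's downcrossing theorem applied excursion by excursion).
\item[$\bullet$] \emph{Convergence of the lifetimes.} The same argument gives $\frac12(1-t)\bmm_t\to\bzeta$, since $\bmm_t$ is the total length of the trimmed tree.
\end{compactenum}
This yields almost sure convergence in $\bC$, hence convergence in probability.

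The delicate bookkeeping is in Step 2. Getting the constants right, namely $h(t)$ and the factor $2$ in the rate, and proving the independence/Markov property for the trimming flow is where I expect most of the work.
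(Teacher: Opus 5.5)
Your architecture is the paper's: realize the process as $\bcc(t)=\mathtt C(\cE_{\frac12(1-t)}\bH)$ with $\bH$ of law $P_{x^+,x^-,\frac12}$ (Theorem~\ref{mainth3}), then prove the rescaled convergence (\ref{cvproba1}). But Step~2, where essentially all the work lies, is asserted rather than proved, and the reason you give for the Markov property $(iii)$ does not suffice.

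Independence of the three pieces in (\ref{unicelBMdef}) is a statement at level $\frac12$ only. A rate argument needs, for every $b\in(0,\frac12]$, the conditional law of $\bH$ given $\cE_b\bH$. By Lemma~\ref{semigrera}, $\cE_b\bH$ determines the whole erasure history on $[b,\frac12]$. That conditional law is $P_{\mathtt C(\cE_b\bH),b}$, a concatenation of independent $b$-unicellular Brownian paths carrying the cells of $\cE_b\bH$ (Theorem~\ref{condi}). The paper derives it from two ingredients:
\begin{itemize}
\item an i.i.d.\ decomposition of $B$ into $b$-unicellular blocks with exponential cells (Theorem~\ref{Brodecunicelth}, whose proof needs time reversal under $\bN$, Williams' decomposition and the topping Lemma~\ref{Topmore});
\item the nesting of $(a+b)$-blocks into $b$-blocks.
\end{itemize}
Excursion theory above the infimum only gives the first layer of this.

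Next, the excursions must be taken off the subtree $T_b$ coded by $Z^b=\grob_b(\cE_b\bH)$, i.e.\ the trimmed tree \emph{together with} its cut-off tip segments. Given $\cE_b\bH$, they form along $T_b$ a Poisson process with intensity $\mathrm dy\,\bN(\,\cdot\,;\Gamma<b\wedge\mathrm{dist}(y,\mathscr M(Z^b)))$ (Lemma~\ref{PittHcomput}). On the tip segments the heights are capped by the distance to the tip. These segments contribute $2nb(\frac1r-\frac1b)+2n\log\frac rb$ to $\bmu(A(r))$ in the proof of Lemma~\ref{eraseqcomput}, which is exactly the mass-growth term $n(s-t)$ in Lemma~\ref{condlawDRcomput}. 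Your description, a Poisson family of sub-excursions with heights in $[h',h)$ hanging off the fixed trimmed tree at level $h$, misses every leaf born on a tip segment. It also spreads the new leaves over the old mass instead of the current one. Finally, you still need the location map $I(r,\cdot)$ of Proposition~\ref{nextjump} and a characterization such as Lemma~\ref{divseqidentif} to turn the conditional law of the next jump into equality in law.

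There is also a concrete scaling error. The cells of $\bcc(t)$ are the raw up/down lengths of $\cE_{\frac12(1-t)}\bH$, with no factor $2/(1-t)$; that factor is the normalization of Proposition~\ref{LLNnuc}. With $b=\frac12(1-t)$, lowering $b$ by $\mathrm db=\frac12\mathrm dt$ adds $\frac12\mathrm dt$ on each side of every nucleus. Moreover $\bN(\Gamma\in\mathrm db)=b^{-2}\mathrm db=2\,\mathrm dt/(1-t)^2$ per unit of unrescaled mass, which is exactly the intensity of $\PPi$. With your extra factor the growth is no longer linear, your check $(i)$ fails, and $\mathtt Z_\cdot(\bcc(t))$ could not satisfy (\ref{cvzigzagexpli}), where $2/(1-t)$ only rescales the argument of $\mathtt Z$.

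In Step~3, a pathwise crossing argument is a legitimate alternative to the paper's route. The paper uses a renewal law of large numbers for the i.i.d.\ block durations (Laplace transform $\cosh(b\sqrt{2\lambda})^{-2}$, from Lemma~\ref{durationLapla}) to get the time change $K(b,t/b)\to t$ for $B$. It then transfers this to $\bH$ via continuity of hitting times and the topping map. Your version, as written, is off: $h$ times the number $N_h(s)$ of $h$-alternations diverges, and the correct statement is $2h^2N_h(s)\to s$. A limit of the form ``$h$ times a count converges to a multiple of $s$'' holds only for crossing counts integrated over levels, i.e.\ the length of the trimmed path. You would then have to justify exchanging L\'evy's level-by-level limit with that integral, and handle the singular conditioning $\bN(\,\cdot\mid\Gamma=\frac12)$ of the middle piece. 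Your almost-sure claim is unsupported, though convergence in probability is all the theorem needs.
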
 
\noi
\textbf{Proof.} See Section \ref{LLNDRsubsec}. Actually, it strongly relies on Theorem \ref{mainth3}.  \cqfd 

\smallskip

We next explain how to obtain $\smash{\tZ_\cdot (\bcc(t))}$, and thus $\bcc(t)$, from the limiting process $(\bH, \bzeta)$ thanks to  \emph{erasure (or trimming)} of the real tree coded by $(\bH, \zeta)$. 
To this end, we first recall what erasure of real trees is, and we next briefly recall 
the encoding of real trees by continuous functions.

\emph{Real trees} are metric spaces which extend graph trees (from the metric point of view). Namely, there are obtained by glueing intervals of the real line without creating loop. More formally a metric space $(T,d)$ is a real tree if any pair of distinct points $\sigma, \sigma' \ino T$ can be joined by a single arc whose image is denoted by $\smash{\lgeo \sigma, \sigma' \rgeo}$ 
and which turns out to be a geodesic (see Definition \ref{realtrdef} in Section \ref{codeRtreesec} for more details). We only consider compact real trees and we equip $T$ with a distinguished point $\rho\ino T$ that is viewed as a \emph{root}. For all $\smash{b\ino \bbR_+}$, the \emph{$b$-erasure} of $(T,d,\rho)$ is the subset 
\begin{equation}
\label{berased1}
\smash{\bcE_bT= \{\rho\} \cup \big\{ \sigma \ino T: \exists\  \sigma' \! \ino T \; \, \textrm{such that}\; \,  \sigma \ino \lgeo \rho, \sigma' \rgeo \; \, \textrm{and} \; \,  d(\sigma, \sigma') \geqo b\big\} \; .}
\end{equation}
Namely, $\smash{\bcE_bT }$ is the set of points in $T$ whose subtree above has  height $\geqo b$ (see Figure \ref{Treeeraz}). 
\begin{figure}[htb]
\vspace{0.3cm}
\centering
\begin{minipage}{0.9\textwidth}
\centering 
\includegraphics[width=0.65\textwidth, height=0.25\textheight]{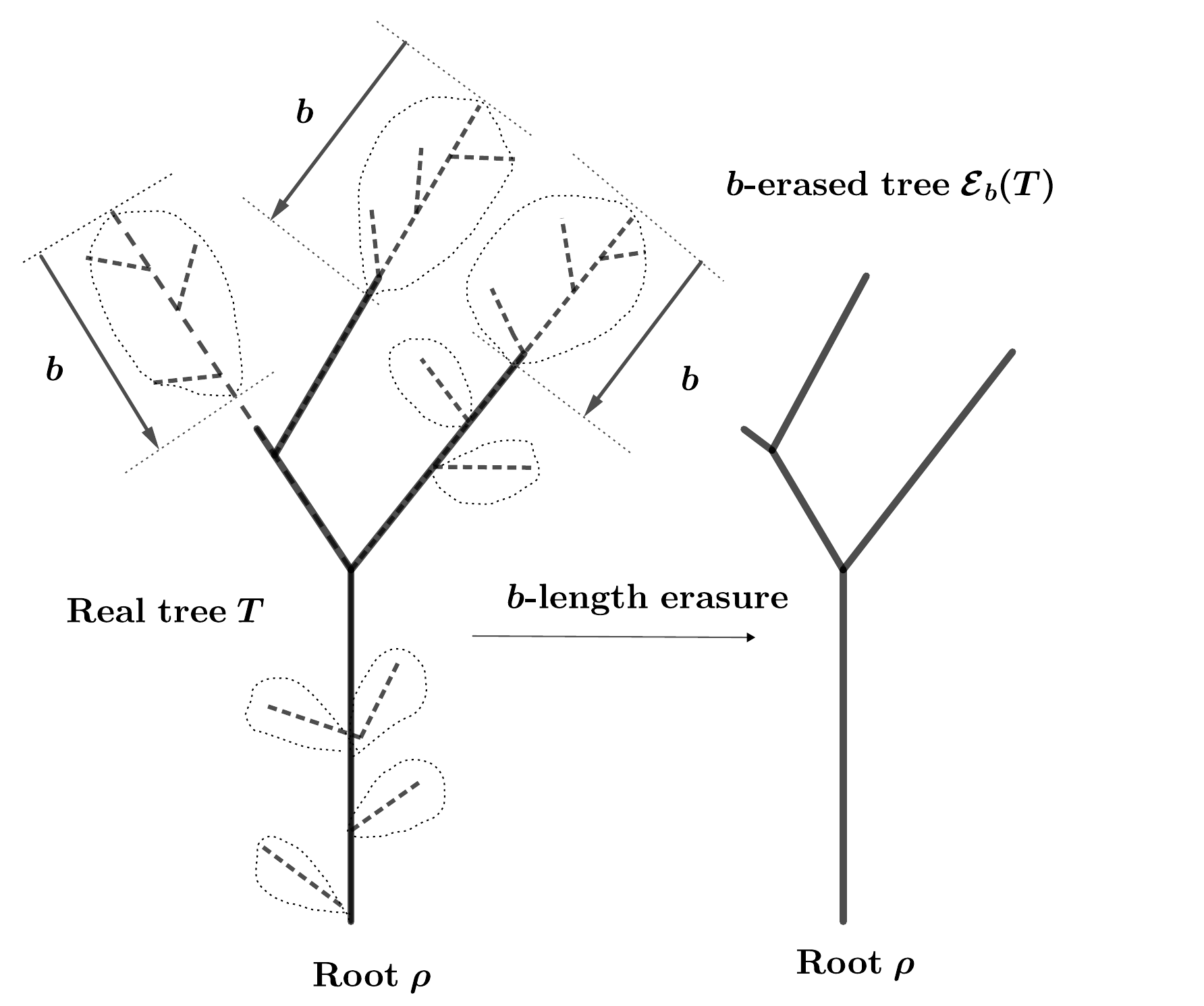}
\caption{{\footnotesize \emph{The parts of the real tree $T$ which are erased are represented with dashed lines. The $b$-erased subtree $\smash{\bcE_b (T)}$ is represented thanks to continuous lines. Each of the three leaves of $\smash{\bcE_b (T)}$ corresponds to a subtree in $T$ whose height is exactly $b$.} \cq}
\label{Treeeraz}}
\end{minipage}
\end{figure}  
We easily check the following properties (see  Evans, Pitman and Winter \cite{EPW06} for proofs). 

\smallskip

\begin{compactenum} 
\item[$(i)$] \emph{$\smash{\bcE_bT}$ is a connected compact subset of $T$ which contains $\rho$ (i.e., it is a subtree of $T$). }

\smallskip

\item[$(ii)$] \emph{$\smash{\bcE_b T}$ has a finite number of leaves (i.e., it is a finite type real tree: see Definition \ref{realtrdef}). }

\smallskip

\item[$(iii)$] \emph{$\smash{d_{\mathtt{Haus}} (\bcE_bT,T) \leqo b}$, where here $\smash{d_{\mathtt{Haus}}}$ stands for the Hausdorff distance on the space of compact subsets of $T$. }

\smallskip

\item[$(iv)$] \emph{Semigroup property: for all $\smash{b, b'\ino \bbR_+}$, } 
\begin{equation}
\label{semigrtreeera}
\smash{\bcE_{b'} (\bcE_b T)\eqo \bcE_{b+b'} T\; .}
\end{equation}
\end{compactenum} 
Erasure (or trimming) has be introduced for discrete trees in Kesten \cite{Kes86} and in Neveu \cite{Ne2}. It has been studied via encoding functions in Neveu and Pitman \cite{NP89I,NP89II} and Le Gall \cite{LG89} (see below) and it has been used to define in a projective way stable CRTs in Le Jan \cite{LJ91}. Approximation of compact rooted $\bbR$-trees via erasure has been introduced and studied systematically in Evans, Pitman and Winter \cite{EPW06}, see also D.~and Winkel \cite{DuWi19} for connections with Lévy trees. 
 
 We now explain why $\smash{\tZ_\cdot (\bcc(t))}$ results from the $\smash{\frac{_1}{^2} (1\!-\! t)}$-erasure of the real tree encoded by $(\bH, \bzeta)$ which is the limiting process appearing in (\ref{cvzigzagexpli}). To this end, for all $(H,\zeta) \ino \bC$ and for all $\smash{s,s'\ino \bbR_+}$, we introduce the following. 
\begin{equation}  
\label{mHdHdef}
\smash{m_H(s,s')= \min_{r\in [s\wedge s'\! , \, s\vee s']} H_r \quad \textrm{and} \quad d_H(s,s')= H_s + H_{s'} -2 m_H(s,s') \; .}
\end{equation} 
We easily check that $\smash{d_H}$ is a pseudo-distance on $[0, \zeta]$. We then denote by $\smash{s\sim_H s'}$ the equivalence relation $\smash{d_H(s,s')\eqo 0}$ and we define the \emph{real tree encoded by $H$} as the  
quotient space 
\begin{equation}
\label{HRtreedef}
\smash{ \cT_{H} := [0,  \zeta] / \sim_H \quad \textrm{equipped with the distance $d_H$.} }
\end{equation}
The metric space $\smash{(\cT_H, d_H)}$ is indeed a compact real tree (see D.~and Le Gall \cite{DuLG05}). We denote the canonical projection by $\smash{\pcH \! : \! [0, \zeta] \! \mapsto \! \cT_H}$ and we root $\smash{\cT_H}$ at the minimum of $H$, i.e., we define the root $\smash{\rho_H}$ of $\smash{\cT_H}$ as follows. 
\begin{equation}
\label{rootdef} 
\smash{\rho_H \! :=\! \pcH (s^* )  \quad \textrm{where $s^*\eqo \min \big\{  t\ino [0, \zeta] : H_t \eqo \min_{s\in \bbR_+} H_s \big\}$, }}
\end{equation}
which makes sense because if $\smash{H_s\eqo H_{s^*}}$, $\smash{d_H(s,s^*)\eqo 0}$ and $\smash{\pcH(s)\eqo \rho_H}$. 
For all $(H,\zeta)\ino \bC$ we use the  notation $\smash{\mathtt{Tree} (H)\eqo (\cT_H, d_H, \rho_H)}$. See Figure \ref{TreeCod}. 
\begin{figure}[htb]
\vspace{0.3cm}
\centering
\begin{minipage}{0.9\textwidth}
\centering 
\includegraphics[width=0.50\textwidth, height=0.20\textheight]{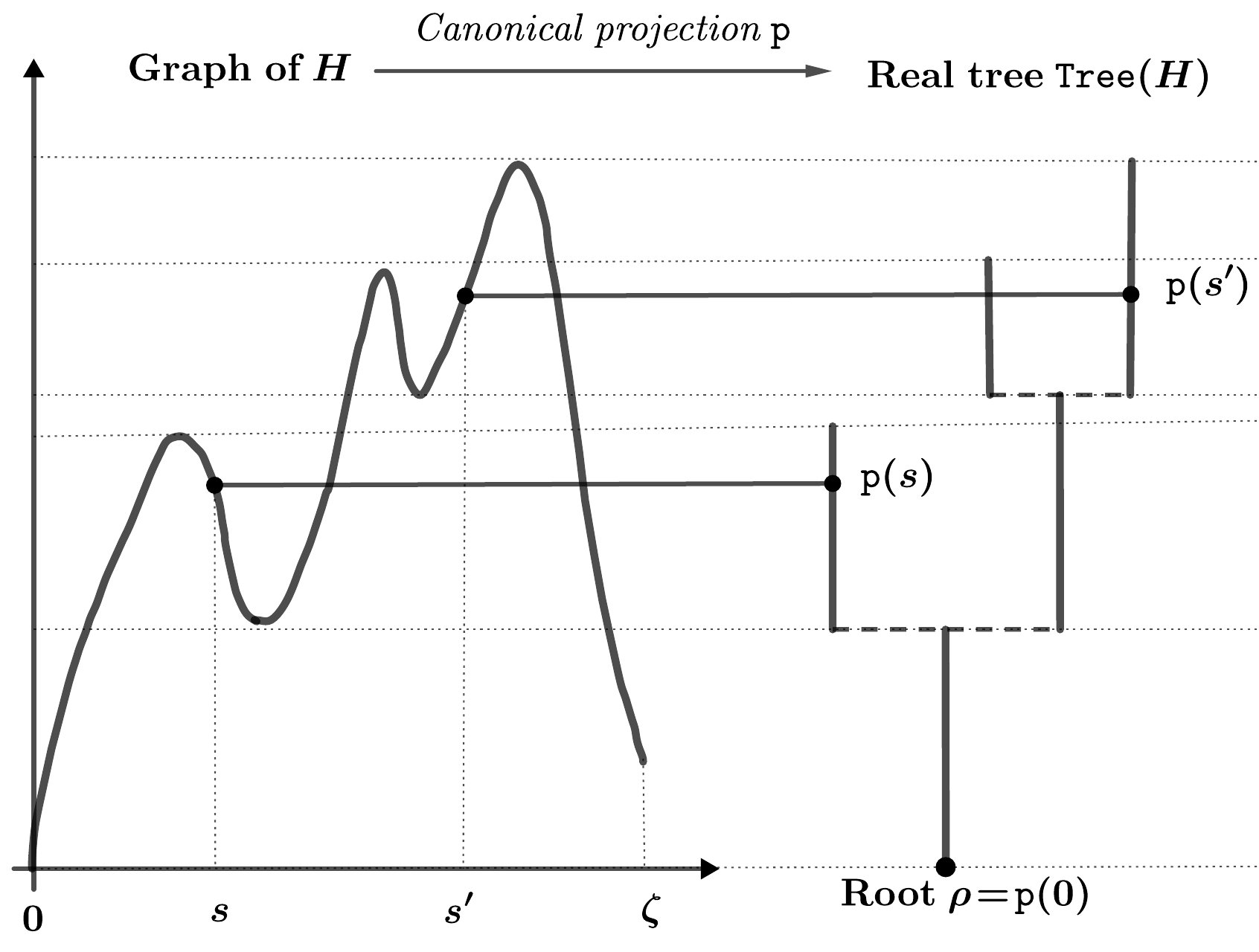}
\caption{{\footnotesize \emph{On the left: the graph of $H$. On the right: the tree encoded by $H$, which is denoted by 
$\smash{\mathtt{Tree} (H)}$. Horizontal dashed lines in $\smash{\mathtt{Tree} (H)}$ are symbolic and their length is null.} \cq }
\label{TreeCod}}
\end{minipage}
\end{figure}  
The process $\smash{\tZ_\cdot (\bcc(t))}$ is then derived from the limiting process $(\bH, \bzeta)$ in (\ref{cvzigzagexpli}) as follows: 
\begin{equation}
\label{eravsgrowtree}
\smash{ \textrm{a.s.~for all $t\ino [0, 1)$,} \quad  \mathtt{Tree} \big( \tZ_\cdot (\bcc(t))\big) \equiv \bcE_{\frac{_1}{^2} (1- t)} \big( \mathtt{Tree} (\bH)\big), }
\end{equation}
where $\equiv$ means that the rooted real trees are isometric. Roughly speaking 
linear growth corresponds, when time is reversed, to erasure. More precisely, a growth of $\mathrm d t$ of mass for each cell corresponds in reversed time to the erasure of a length of $\smash{\frac{_1}{^2} \mathrm dt}$ in the corresponding real tree. 
 
 Note that (\ref{eravsgrowtree}) is ambiguous firstly because many distinct functions encode the same real tree (see e.g.~Remark \ref{manycodings} in Section \ref{codeRtreesec}) and also because the exploration order induced on the real by the given encoding function in (\ref{eravsgrowtree}) plays an important role which is not taken into account if we only consider the real tree alone. 
We therefore need to be more specific and one way to proceed consists in defining erasure directly 
from encoding functions $(H,\zeta) \ino \bC$.
To this end, we first introduce the following notations. 
\begin{compactenum}

\smallskip

\item[$(a)$] For all $\smash{t\ino \bbR_+}$ we set 
\begin{equation}
\label{minreflecHdef}
\smash{ \underline{H}_t\eqo \min_{s\in [0, t]} H_s\quad \textrm{and} \quad H^+_t \eqo H_t \! -\! \underline{H}_t\; .}
\end{equation}

\item[$(b)$] For all $t\ino \bbR_+$, we define the $t$-shifted function (in time and space) by 
\begin{equation}
\label{shiftHdef}
\smash{\theta_t H_\cdot= H(t+ \cdot) \! -\! H_t \; 
\textrm{whose lifetime is defined by $(\zeta\! -\! t)_+$.} }
\end{equation}
\item[$(c)$] For all $b\ino (0, \infty)$ we then set $\smash{s^b_0 (H)\eqo 0}$ and 
\begin{equation}
\label{1erbtps}
\smash{s^b_1(H)\eqo \inf \Big\{ t\ino \bbR_+ : H^+_t + b= \max_{s\in [0, t]} H^+_s \Big\}}
\end{equation}
\end{compactenum}
\noindent
with the convention $\inf \emptyset \! :=\! \infty$; $\smash{s^b_1(H)}$ is introduced in Neveu and Pitman \cite{NP89I,NP89II}. We call it the \emph{first $b$-erasure time of $H$}: it is the first time $H$ completes 
an excursion of height $b$.

\smallskip

\noi
We next define the sequence $\smash{(\bss^b_n)_{n\in \bbN}}$ of successive $b$-erasure times of the limiting process $(\bH, \bzeta)$ in (\ref{cvzigzagexpli}) by setting $\smash{\bss_0^b\eqo 0}$ and for all $\smash{n\ino \bbN}$ 
\begin{equation}   
\label{beratime}
\smash{\bss_{n+1}^b = s^b_1 \big( \theta_{\bss^b_{n}} \bH \big) \; \, \textrm{if $\bss_n^n \leko \infty$} \quad  \textrm{and} \quad  \bss_{n+1}^b= \infty  \; \, \textrm{if $\bss_n^b \eqo \infty$} .}
 \end{equation} 
We also set $\smash{\bN_b \eqo \max \{ n\ino \bbN: \bss_n^b \leko \infty\}}$. Then the \emph{$b$-erasure of $\bH$} is the continuous piecewise affine process whose slopes are either equal to $1$ or to $-1$ and which takes the successive values 
$$ \smash{0\, ,\,  \bH_{\bss^b_1}\, ,\,  m_\bH (\bss^b_1, \bss^b_2) \, , \, \bH_{\bss^b_2} \, , \,   m_\bH (\bss^b_2, \bss^b_3)\, , \, \ldots \, , \,  m_\bH (\bss^b_{\bN_b-1}, \bss^b_{\bN_b})\,  ,\,  \bH_{\bzeta}. }$$  
We denote this zigzag function by $\smash{\cE_b \bH}$. Its lifetime is given by 
$$ \smash{\bM_b  := \!\!\!\!\!\!  \sum_{^{\; \, 1\leq k \leq \bN_b}} \!\!\!\!\! \Big( 2 \bH_{\bss^b_k} \! -\! m_\bH (\bss^b_{k-1}, \bss^b_k)  \! -\! m_\bH (\bss^b_k, \bss^b_{k+1}) \Big)}$$
where we have set conveniently $\smash{s^{b}_{\bN_b+1}\eqo \bzeta}$. See Figure \ref{Hunieteraz1}. 

\begin{figure}[htb]
\centering
\begin{minipage}{0.9\textwidth}
\centering 
\includegraphics[width=1\textwidth, height=0.40\textheight]{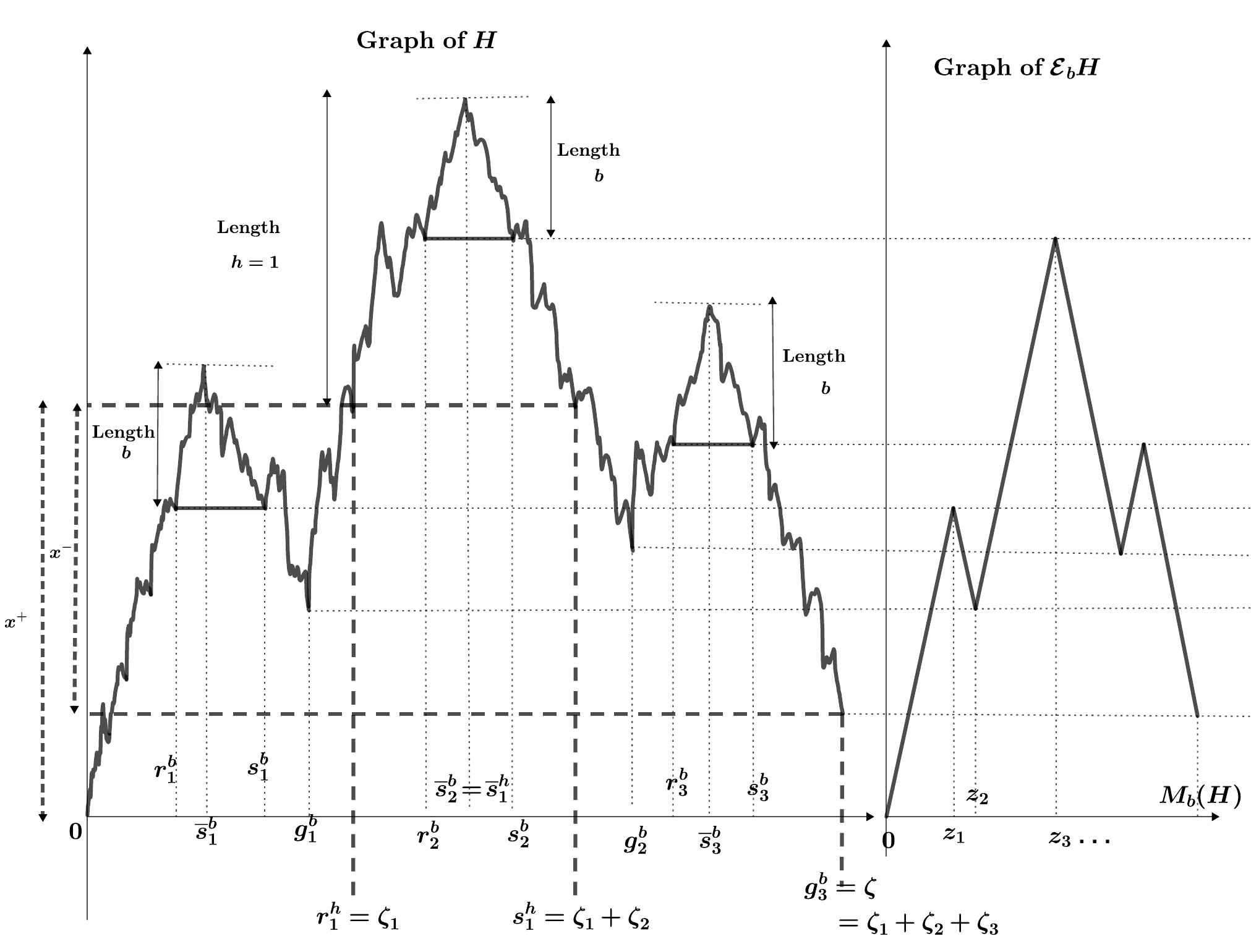}
\caption{{\footnotesize \emph{The function on the left hand side is a $h$-unicellular function $\smash{(\bH,\bzeta)}$ with initial cell 
$\smash{(x^+\!,\,  x^-)}$ (we recall that $h\eqo1$ in the introduction). The zigzag function on the right hand side of the figure is  the corresponding $b$-erased function $\smash{\mathcal E_b \bH}$ whose lifetime is $\smash{\mathbf M_b(\mathbf H)}$. Here, the scale of the horizontal axis is smaller than of the scale of the vertical axis in order to fit in the figure. }\cq}
\label{Hunieteraz1}}
\end{minipage}
\end{figure}

\smallskip

Let us denote $\smash{\mathtt{Tree} (\bH)}$ by $\smash{(\cT_\bH, d_\bH, \rho_\bH)}$. Then 
\begin{equation}
\label{eravsera}
\smash{\bcE_b \cT_\bH \eqo \bigcup_{1\leq k\leq \bN_b} \lgeo \rho_\bH, p_\bH (\bss^b_k) \rgeo  \quad \textrm{and} \quad \mathtt{Tree} (\cE_b \bH) \equiv \bcE_b \cT_\bH \; .}
\end{equation}
We next define the cell population associated with the $b$-erased function $\smash{\cE_b \bH}$ as 
\begin{eqnarray} 
\smash{\tC (\cE_b \bH) \eqo \big( (\bxx^+_k(b), \bxx^-_k(b)) \big)_{1\leq k\leq \bN_b}} \!\!\! & \textrm{where } &\!\!\! \!\!\!  \smash{\textrm{for all $1\leqo k\leqo \bN_b$ we have set}} \nonumber \\
\smash{\bxx^+_k(b)\eqo  \bH_{\bss^b_k} \! -\! m_\bH (\bss^b_{k-1}, \bss^b_k)} & \textrm{and} &  \smash{ \bxx^-_k(b)\eqo  \bH_{\bss^b_k} \! -\! m_\bH (\bss^b_k, \bss^b_{k+1})\; .} \label{bcelldef} 
\end{eqnarray}
Note that $\smash{\bM_b}$ is the total mass of $\smash{\tC(\cE_b \bH)}$. 
The following theorem asserts that the D-R branching process is actually the cell population generated by erasure of a Brownian path.
\begin{theorem}
\label{recovercell} Let $\smash{(\bcc(t))_{t\in [0, 1)}}$ be a D-R branching process with nuclei as defined above. 
Let $(\bH, \bzeta)$ be the Brownian unicellular process with initial cell $\smash{\bcc(0)\eqo (x^+,x^-)}$ which appears in (\ref{cvzigzagexpli}). Let $\smash{\mathtt{Z}_\cdot (\bcc(t))}$ be the zigzag function encoding $\bcc(t)$. Then, 
\begin{equation}
\label{recoverexpli}
\smash{ \textrm{a.s.~for all $t\ino [0, 1)$,} \quad \tZ_\cdot (\bcc(t)) = \cE_{\! \frac{_1}{^2} (1-t)} \bH \quad \textrm{and thus} \quad \bcc(t)\eqo \tC\big( 
\cE_{\! \frac{_1}{^2} (1-t)} \bH \big) \, .}
\end{equation}
In particular, $\smash{ \bmm_t \eqo \bM_{\! \frac{_1}{^2} (1-t)}}$ and $\smash{ \bnn_t \eqo \bN_{\! \frac{_1}{^2} (1-t)-}}$. 
\end{theorem}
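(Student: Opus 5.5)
The proof I propose is pathwise. It rests on the convergence already given by Theorem \ref{cvzigzag}, on a deterministic compatibility between erasure and the growth/division maps $\grob$-type operations of Definition \ref{GroDivdef}, and on a continuity argument in the erasure parameter.

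\textbf{Step 1: deterministic identities.} I first establish, for any zigzag function $Z=\tZ_\cdot(\bcc)$ of a cell population $\bcc$ and any $b\geq 0$, the following facts.
\begin{compactenum}
\item[$(i)$] Erasing $\tZ_\cdot(\mathtt{Gro}_{2b}(\bcc))$ by $b$ returns $Z$. Indeed, growth by a timespan $2b$ lengthens each rise and each descent around a nucleus by $b$, so every local maximum is raised by $b$. The erasure times $s^b_k$ are then exactly the new local maxima, and erasing lowers each of them by $b$.
\item[$(ii)$] Erasure commutes with the trees, so $\mathtt{Tree}(\cE_b H)\equiv \bcE_b\cT_H$ as in (\ref{eravsera}).
\item[$(iii)$] Erasure has the semigroup property at the level of encoding functions: $\cE_{b'}\cE_b H=\cE_{b+b'}H$. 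This is the functional analogue of (\ref{semigrtreeera}) and is proved by tracking the successive erasure times.
\item[$(iv)$] A division inserts a zero-length cell $(0,0)$ at location $y$. After growth by $2b$, this cell becomes a spike of height exactly $b$, which is erased by $\cE_b$.
\end{compactenum}
Together these give, for $t\le t'$,
\[
\cE_{\frac12(t'-t)}\,\tZ_\cdot(\bcc(t'))=\tZ_\cdot(\bcc(t)),
\]
up to the time rescaling $z\mapsto 2z/(1-t)$ used in Theorem \ref{cvzigzag}. I then check that the vertical rescaling is consistent with this, which I expect to be harmless.

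\textbf{Step 2: passage to the limit $t'\uparrow 1$.} Let $H^{t'}$ denote the rescaled zigzag in (\ref{cvzigzagexpli}). By Theorem \ref{cvzigzag}, $H^{t'}\to\bH$ in probability in $\bC$, so along a sequence $t'_n\uparrow 1$ the convergence holds almost surely. I then need continuity of $H\mapsto \cE_b H$ at $\bH$ for fixed $b$. This is the step I expect to be the main obstacle. Erasure is not continuous everywhere: it jumps when some subexcursion has height exactly $b$, or when two local maxima or minima are tied.

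To handle this, I would prove continuity at every $H$ satisfying three conditions: all local minima are distinct, no excursion above a local minimum has height exactly $b$, and the erasure times are isolated. For such $H$, the erasure times $s^b_k(H^n)$ converge to $s^b_k(H)$ and the values $m_H(\cdot,\cdot)$ converge, so the finitely many cells converge. I then verify that $\bH$ satisfies these conditions a.s. for each fixed $b$. This follows from the excursion structure: by (\ref{NItomeadefi}) and the fact that the law of $\Gamma$ under $\bN$ has a density by (\ref{normaIto}), only countably many subexcursion heights occur and they have continuous laws. The conditioned pieces in Definition \ref{unicelBrodef} are absolutely continuous with respect to these. This yields the identity for each fixed $t$ almost surely, and hence simultaneously for all rational $t$.

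\textbf{Step 3: all $t$ simultaneously, and the counts.} Both sides of the identity are càdlàg in $t$, or have left limits. For the left-hand side this comes from the construction (\ref{nuclDRbrdef}). For the right-hand side, $b\mapsto\cE_b\bH$ is monotone by the semigroup property and jumps only at heights of subexcursions. This extends the equality to all $t\in[0,1)$ a.s. Reading off the lifetime gives $\bmm_t=\bM_{\frac12(1-t)}$.

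For the cell count, the cells at time $t$ correspond to the erasure times $\bss^b_k$ at $b=\frac12(1-t)$. A division at time $t$ creates a new spike of height exactly $\frac12(1-t)$. Because $\bN_b$ counts subtrees of height $\ge b$, and because of the choice of right-continuity of $\bnn$, the count is $\bnn_t=\bN_{\frac12(1-t)-}$. I would also identify the unscaled time axis carefully, so that the rescaling factor $\frac{2}{1-t}$ in (\ref{cvzigzagexpli}) matches the masses in (\ref{recoverexpli}) once $b$ is written through $\tC(\cE_b\bH)$.
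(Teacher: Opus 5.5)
Your argument is sound in outline, but it runs the paper's logic in the opposite direction.

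\emph{The paper's route.} In the paper, (\ref{recoverexpli}) holds essentially by construction. One starts from $(\bH,\bzeta)$ with law $P_{x^+,x^-,h}$ and \emph{defines} $\bcc(t):=\tC(\cE_{\frac12(2h-t)}\bH)$. The work is in Theorem \ref{mainth3}, which shows that this erasure process has the D-R law. It uses the conditional law of $\bH$ given $\cE_b\bH$ (Theorem \ref{condi}), the Poisson structure of the excursions off the erased subtree (Lemma \ref{PittHcomput}), and the law of the next erasure (Lemma \ref{eraseqcomput}), matched with Lemma \ref{condlawDRcomput} through Lemma \ref{divseqidentif}. Then (\ref{cvproba1}) identifies this $\bH$ with the limit in (\ref{cvzigzagexpli}).

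\emph{Your route.} You take Theorem \ref{cvzigzag} as given and show a posteriori that whatever process realizes that limit must satisfy (\ref{recoverexpli}). You use only the deterministic identity $\cE_{\frac12(t'-t)}\tZ_\cdot(\bcc(t'))=\tZ_\cdot(\bcc(t))$ together with continuity of erasure at generic paths. The identity follows from $\tZ\circ\mathtt{Div}_y=\tZ$, the direct computation $\cE_b\,\tZ_\cdot(\mathtt{Gro}_{2b}(\bcc))=\tZ_\cdot(\bcc)$, and Lemma \ref{semigrera}.

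\emph{What each buys.} Your deduction is legitimate, and it shows that (\ref{recoverexpli}) is forced by the scaling limit through purely deterministic properties of erasure. It does not replace the probabilistic core, though. The paper's proof of Theorem \ref{cvzigzag} itself goes through Theorem \ref{mainth3}, i.e.\ through the coupling in which (\ref{recoverexpli}) is a definition. Your route also needs an extra lemma the paper avoids here: \emph{joint} continuity of $(b',H)\mapsto\cE_{b'}H$ at $(\frac12(1-t),\bH)$. It must be joint because $b'=\frac12(t'-t)$ moves with $t'$.

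\emph{Points to fix when writing it up.}
\begin{itemize}
\item There is no vertical rescaling in (\ref{cvzigzagexpli}). The time rescaling is harmless because erasure does not depend on the parametrization (Remark \ref{erafunrem} $(b)$).
\item A division does not insert a $(0,0)$ cell. It splits a cell by inserting $y$ twice into $\overline{\bcc}$, and all you use is $\tZ\circ\mathtt{Div}_y=\tZ$.
\item Your item $(ii)$ is false for general functions (Example \ref{eracounterex}); it holds only for $h$-unicellular ones with $h\geq b$. You do not need it.
\item The genericity you need in Step 2 is that $b$ is not a jump level of $b'\mapsto N_{b'}(\bH)$, and that $\bH$ crosses strictly at the erasure times $s^b_k$. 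Both are a.s.\ true for fixed $b\in(0,h)$.
\item In Step 3 neither side jumps. The map $t\mapsto\tZ_\cdot(\bcc(t))$ is continuous in $\bC$, since divisions leave the zigzag unchanged. The map $b\mapsto\cE_b\bH$ is continuous by the semigroup property. It is the cell population that jumps, and $\bcc(\cdot)$ is left- rather than right-continuous. So equality for rational $t$ extends at once to all $t$.
\item Passing to $\bcc(t)=\tC(\cE_{\frac12(1-t)}\bH)$ requires $\bcc(t)$ to be a minimal cell population. This holds because for $t\in(t_n,t_{n+1}]$ the population results from a growth of positive duration, so all $x^\pm_k>0$.
\item The count identity is cleanest from $\mathtt{n}(\cE_b\bH)=\bN_{b+}$ (Lemma \ref{unicelprop} $(ii)$), by taking right limits in $t$.
\end{itemize}
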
 
\noi
\textbf{Proof.} See Section \ref{LLNDRsubsec}. Actually it strongly relies on Theorem \ref{mainth3}.   \cqfd 

\smallskip

One nice aspect of the representation provided in Theorem \ref{recovercell} is that the law of the zigzag function $\smash{ \tZ_\cdot (\bcc(t))}$ can be  computed to some extent in terms of simple zigzag processes that alternatively go up and down during independent timespans distributed as exponential r.v.s. This allows in particular to prove the law of large numbers in (\ref{LLN}) as well as (\ref{limitduree}) thanks to simple probabilistic arguments.  
More precisely, let $\smash{ (\tee_n(b))_{n\in \bbN^*}}$ be a sequence of independent exponentially distributed r.v.s with mean $b$. For all $\smash{ z\ino \bbR_+}$ we set $\smash{ S_{b,z} \eqo \sum_{n\in \bbN^*} \un_{[0\, ,\,  \tee_1(b) + \ldots + \tee_n(b) ]} (z)}$, which is the associated homogeneous Poisson process with rate $1/b$. We define two random zigzag processes: the first one $\smash{ (Y_{b, z})_{z\in \bbR_+}}$ has an infinite lifetime and is given by 
\begin{equation}
\label{defYb}
\smash{ \forall z \ino \bbR_+, \quad Y_{b,z} = \int_0^z(-1)^{1+S_{b,y}}\,  \mathrm d y \, . }
\end{equation}
Theorem \ref{Brodecunicelth} in Section \ref{brodecsubsec} first shows that the $b$-erased process of the standard Brownian motion $B$ (that is defined in the same way as the $b$-erased processof $\bH$) has the same law as $\smash{ (Y_{b,z})_{z\in \bbR_+}}$. 
We define a second zigzag process $\smash{ (Z_{b,z})_{z\in \bbR_+}}$: its lifetime is $\smash{ \zeta_b } $ $\! := \! $ 
$\smash{ \inf \big\{ z\ino (0, \infty) : }$ $\smash{ Y_{b,  \tee_1(b)+ z }}$ $ \eqo $ $\smash{ -\tee_1 (b)   \big\}}$ and 
\begin{equation}
\label{defZb}
\smash{ \forall z \ino [0, \zeta_b] , \quad Z_{b,z} =Y_{b,  \tee_1(b)+ z} + \tee_1(b) \; . }
\end{equation}  
In other words $\smash{ (Z_{b,z})_{t\in \bbR_+}}$ is the \emph{first excursion of $\smash{ (Y_{b,z})_{z\in \bbR_+}}$ above its infimum}. Although it is not used in our article, $\smash{ Z_{b,\cdot}}$ has a nice interpretation in terms of trees proved in Le Gall \cite{LG89}. Namely, 
$\smash{ \mathtt{Tree} (Z_{b, \cdot})}$ is distributed as a critical binary Galton-Watson tree (i.e., individuals have 
$0$ or $2$ children with probability $1/2$) with independent and exponentially distributed edge-lengths with mean $b/2$. 

In Lemma \ref{beraazig} (and more generally in Theorem \ref{condi}) we compute the law of $\smash{ \tZ_\cdot (\bcc(t))}$ in terms of the processes $\smash{ Y_{b, \cdot}}$ and $\smash{ Z_{b, \cdot}}$ with $\smash{ b\eqo \frac{_1}{^2} (1\! -\! t)}$. To simplify, here we only state this result in the simpler case where the initial cell is $\smash{ \bcc(0)\eqo (0, 0)}$ and where $(\bH, \bzeta)$ has law $\smash{ \bN (\, \cdot \, | \, \Gamma \eqo \frac{_1}{^2})}$. In this case Theorem \ref{mainth3} asserts that 
\begin{equation}
\label{computZintro}
\smash{ \tZ_\cdot (\bcc(t))= \cE_b \bH  \overset{\textrm{(law)}}{=}Z_{b, \cdot} \; \, \textrm{under} \; \, \bP \big( \, \cdot \, \big|  \max_{s\in [0, \zeta_b]} Z_{b,s} \eqo \frac{_{_1}}{^{^2}} t \, \big) \; .}
\end{equation}
In particular, we see that the cells associated with 
$\smash{ \tZ_\cdot (\bcc(t))}$ are pairs of conditionned exponentially distributed r.v.s with mean 
$\smash{ b\eqo \frac{_1}{^2} (1\! -\! t)}$ that however keep strong independence properties. We prove indeed the following law of large numbers which implies (\ref{LLN}). 
\begin{proposition}
\label{LLNnuc}
Recall that $\smash{ \bcc(t)\eqo \big( \bcc_k(t) \big)_{1\leq k\leq \bnn_t}}$ is the 
D-R branching process with nuclei. Then  $\smash{ \frac{_1}{^2} (1-t)^2 \mathtt{n}_t \to \bzeta}$ in probability on $\smash{ \bbR_+}$ and 
 \begin{equation}
\label{LLNnucexpli}
\smash{ \frac{1}{\bnn_t} \! \sum_{^{1\leq k\leq \bnn_t}} \!\!\! \delta_{\frac{2}{1-t} \bcc_k (t)}
\xrightarrow[t\uparrow 1]{\textrm{in probability in $\mathcal M_1 (\bbR^2_+)$}} \,  \Lambda (\mathrm dy^+\mathrm dy^-)\! :=\! e^{-(y^+\! + y^-) }  \mathrm dy^+\mathrm dy^-.}
\end{equation}
where $\smash{ \mathcal M_1 (\bbR^2_+)}$ is the space of laws on $\smash{ \bbR^2_+}$ 
equipped with the topology of weak convergence. 
\end{proposition}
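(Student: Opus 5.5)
We derive both claims from the representation of Theorem \ref{recovercell}: $\bcc(t)=\tC(\cE_b\bH)$ with $b=\frac{_1}{^2}(1-t)$. We then study the $b$-erasure of the unicellular Brownian process $(\bH,\bzeta)$ as $b\downarrow 0$. The key input is the Brownian decomposition of Theorem \ref{Brodecunicelth}. For a standard Brownian motion $B$, the $b$-erasure $\cE_b B$ is distributed as the zigzag process $Y_{b,\cdot}$, whose successive up and down runs are i.i.d.\ exponential with mean $b$. Moreover, the erasure times $\bss^b_k$ are renewal times: $\bss^b_{k}-\bss^b_{k-1}$ are i.i.d., and by Neveu--Pitman and scaling they are distributed as $b^2\tau$ with $\bE[\tau]=1$ (with the normalization (\ref{normaIto})). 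Thus the cells $(\bxx^+_k(b),\bxx^-_k(b))$ extracted from $B$ form an i.i.d.\ sequence of pairs of independent $\mathrm{Exp}$ variables with mean $b$. The rescaled pairs $\frac{1}{b}(\bxx^+_k,\bxx^-_k)=\frac{2}{1-t}\bcc_k(t)$ are therefore i.i.d.\ with law exactly $\Lambda$.

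\textbf{Step 1 (Brownian motion).} Work on the window $[0,\bvaro_{-y}]$ or $[0,T]$ for a fixed $T$. The number of completed erasures $N_b(T)=\#\{k:\bss^b_k\le T\}$ satisfies $b^2N_b(T)\to T$ a.s.\ by the renewal law of large numbers; more precisely, one can use the a.s.\ result of Neveu--Pitman relating erasure counts and Brownian time. By the strong law of large numbers for the i.i.d.\ rescaled cells, the empirical measure $\frac{1}{N_b(T)}\sum_{k\le N_b(T)}\delta_{b^{-1}(\bxx^+_k,\bxx^-_k)}$ converges to $\Lambda$. To apply it along $b\downarrow 0$ jointly, I would test against a countable convergence-determining family of bounded Lipschitz functions $f$ and use a variance bound: by independence, the variance of the empirical average is $O(\|f\|_\infty^2 b^2/T)$, which gives convergence in probability. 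This suffices.

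\textbf{Step 2 (transfer to $\bH$).} The process $\bH$ is the concatenation of a Brownian rise, an excursion conditioned to have height $\frac{_1}{^2}$, and a Brownian descent, each conditioned on amplitude $<\frac{_1}{^2}$. Each piece is absolutely continuous with respect to Brownian motion (or an excursion under $\bN$) on a time window staying away from the conditioning. I would localize: on $[0,\bzeta-\varepsilon]$ minus small neighbourhoods of the concatenation points, the erasure structure of $\bH$ coincides with that of a Brownian path whose law has a bounded density with respect to the free law. Convergence in probability transfers under absolutely continuous changes of measure. The excluded pieces have Lebesgue length $O(\varepsilon)$ and hence contain $O(\varepsilon b^{-2})$ cells in probability. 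The cells straddling boundaries are only finitely many, of size $O(b)$ after rescaling, and are negligible. Summing gives $b^2\bN_b\to\bzeta$, which is $\frac{_1}{^2}(1-t)^2\bnn_t\cdot\frac{1}{2}$ up to the constant $\frac{_1}{^4}$; with the normalization fixed by $\bE[\tau]$, I expect this to match $\frac{_1}{^2}(1-t)^2\bnn_t\to\bzeta$. It also gives the empirical measure convergence (\ref{LLNnucexpli}). Alternatively, I would use Theorem \ref{condi}/Lemma \ref{beraazig}, which express $\cE_b\bH$ as $Z_{b,\cdot}$ conditioned on its maximum $=\frac{_1}{^2}t$ (plus boundary pieces). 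There the conditioning event has probability bounded below polynomially in $b$, while the fluctuation bounds are exponential; this route avoids path localization.

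\textbf{Main obstacle.} The delicate point is the conditioning and the concatenation with the excursion of fixed height, that is, making the change of measure uniform in $b$. I would handle it via the identity $\bN(\cdot)=\int\bN(\cdot|\Gamma=h)h^{-2}dh$ together with continuity in $h$ (Definition \ref{condheightIto}) and Brownian scaling. These show that conditioning on $\Gamma=\frac{_1}{^2}$ does not affect statements holding $\bN$-a.e.\ uniformly in $h$ in a neighbourhood. The last ingredient is the identification $\bnn_t=\bN_{b-}$: at fixed $t$, $\bN_{b-}=\bN_b$ a.s., so this causes no trouble.
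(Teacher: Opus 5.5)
Your architecture matches the paper's: an LLN for the i.i.d.\ $\mathrm{Exp}$ cells of $\cE_bB$, then a transfer to the rise, the excursion and the descent that make up $\bH$. However, there is a wrong constant, and there is a genuine gap at exactly the step you flag as the main obstacle.

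\textbf{The constant.} The claim $\bE[\tau]=1$ is false. Theorem \ref{Brodecunicelth} and Lemma \ref{durationLapla} give $\bE[\mathrm e^{-\lambda\tss^b_1}]=\cosh(b\sqrt{2\lambda})^{-2}$ (this is \eqref{BMapproxiKK}), hence $\bE[\tss^b_1]=2b^2$. Equivalently, each cell of $\cE_bB$ has mean mass $2b$, and Brownian time is $b$ times zigzag time. The correct renewal statement is therefore $2b^2N_b\to$ elapsed time. This is exactly $\frac12(1-t)^2\bnn_t\to\bzeta$, because $2b^2=\frac12(1-t)^2$. Your ``$b^2\bN_b\to\bzeta$'' would instead give $\frac14(1-t)^2\bnn_t\to\bzeta$, which is false, and ``I expect this to match'' leaves the inconsistency unresolved.

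\textbf{The gap.} The rise and the descent are harmless. $P^{\downarrow}_{x,h}$ is the law of $B_{\cdot\wedge\bvaro_{-x}}$ under a conditioning of probability $\mathrm e^{-x/h}>0$ by \eqref{ampliBh}, and the rise is its time reversal, so convergence in probability transfers directly. But $\bN(\cdot\,|\,\Gamma=\frac12)$ is a conditioning on a null event, and your argument for it does not work.
\begin{itemize}
\item The mixture identity only yields the averaged statement $\int\bN(\mathrm{bad}_b\,|\,\Gamma=h)\,h^{-2}\mathrm dh\to0$. Even with Brownian scaling this gives convergence only on average over the ratio $b/h$. Passing to the fixed value $h=\frac12$ along every $b\downarrow0$ would need a monotonicity in $b$ that the empirical measure of cells does not have. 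Weak continuity in $h$ does not help either, since the bad events are not continuity sets.
\item The localization route needs two things you do not supply. First, the cells of $\cE_b\bH$ inside a time window must depend only on the path in that window, up to $O(1)$ boundary cells. This is true, because erasure intervals are exactly the complete excursions of height $b$ (Remark \ref{sbnrems} $(b)$), but it has to be stated. Second, you need a bound $O(\varepsilon b^{-2})$ in probability, as $b\downarrow0$, on the number of cells in the excluded windows, where your absolute-continuity comparison is unavailable. The argument ``length $O(\varepsilon)$, hence $O(\varepsilon b^{-2})$ cells'' is the very LLN you are proving, applied where you have no access to it.
\item The alternative via Lemma \ref{beraazig} has the same defect. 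The event $\{\Gamma=a\}$ is null, so you would need $Q^b(\Gamma\geq a)=b/(a+b)$ together with topping (Lemma \ref{regverQbcondheight}).
\end{itemize}

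\textbf{How the paper avoids this.} It realizes the excursion exactly inside one Brownian path. Let $\mathtt H$ be the first excursion of $B$ above its infimum with height $\geq h$. Then $\mathtt{Top}_h(\mathtt H)$ has law $\bN(\cdot\,|\,\Gamma=h)$ (Lemma \ref{Topmore}), and $\cE_b\circ\mathtt{Top}_h=\mathtt{Top}_{h-b}\circ\cE_b$ (Remark \ref{eravstop}). So, up to boundary terms bounded by $4\sup|F|$, the cells of the erased excursion are the cells of $\cE_bB$ between zigzag times $\gamma(b)/b$ and $\delta(b)/b$. Since $(\gamma(b),\delta(b))\to(\gamma(0),\delta(0))$ in probability by \eqref{cvprobexcueqoh}, the uniform-in-time Brownian LLN \eqref{LLNBrown} applies directly.

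A smaller point: your variance bound is for a deterministic number of cells, but $N_b(T)$ is correlated with the cells. To pass to the random index you need a maximal inequality, or the monotonicity argument of Lemma \ref{Dinicvproba}.
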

 \noi
 \textbf{Proof.} See Section \ref{ProofLLNnucsec}. \cqfd

\smallskip

\noi
\textbf{Organization of the paper.} In Section \ref{LinGroDivsec} we explain how to encode by zigzag functions 
cell populations that deterministically evolve through linear growth and division. In Section \ref{lentherasec}, 
we recall how continuous functions encode rooted compact real trees; we recall length erasure of such trees and also 
the corresponding erasure procedure for encoding functions; finally, we discuss how length erasure of a given 
function/tree deterministically yields a linear growth-division process of cells. 
In Section \ref{randDRsec}, we finally prove that length erasure of conditioned Brownian paths corresponds to 
D-R continuous branching process: this is Theorem \ref{mainth3}, which entails 
Theorems \ref{cvzigzag} and \ref{recovercell}.

\smallskip

\noi
\textbf{Acknowledgments.} We would like to thank Elie Aïdékon and Bernard Derrida  for fruitful discussions.
We would also like to express our gratitude to the \emph{Tianyuan Mathematics Research Center} at Kunming 
that provided us with a welcoming environment for two weeks, which allowed us to finalize this article.

\section{Cell populations, encoding functions} 
\label{Cellpopsec}

\subsection{Linear growth-division processes}
\label{LinGroDivsec}
We recall Definitions \ref{popcelldef} and \ref{GroDivdef} and 
the notations therein. Here we fix $h\ino (0, \infty)$. This section focuses on cell populations that evolve on a time intervall $[0, 2h)$ solely through \emph{linear growth} and \emph{divisions}, i.e., functions $\smash{ t\ino [0, 2h) \! \mapsto \! \bcc(t)\ino \mathtt{Cell}}$ that satisfy the following properties. 
\begin{compactenum}

\smallskip

\item[$(i)$] The function $\smash{\bcc(\cdot)}$ is left-continuous and has a right-limit at each $t\ino [0, 2h)$.

\smallskip

\item[$(ii)$] The population explodes at time $2h$: $\smash{\lim_{t\uparrow 2h} \mathtt{n} (\bcc(t))\eqo \infty}$.

\smallskip

\item[$(iii)$] Between two splitting times, each cell grows linearly at unit rate. This necessarily implies that $\smash{t\mapsto \mathtt{n} (\bcc(t))}$ is non-decreasing and that    
\begin{equation}
\label{masssizerel}
\smash{\forall t\ino [0, 2h), \quad \tmm(\bcc(t))=  \tmm(\bcc(0)) + \int_0^t \!\!  \mathtt{n} (\bcc(s)) \, \mathrm d s\; .}
\end{equation}

\noi
\item[$(iv)$] Division of cells corresponds to the functions $\smash{\mathtt{Div}_y}$, $\smash{y\ino \bbR_+}$, as introduced in Definition \ref{GroDivdef}.

\smallskip

\noi
\item[$(v)$] There is one single initial cell, i.e., $\smash{\tnn(\bcc(0))\eqo 1}$.

\smallskip

\end{compactenum}

\noi
We see that such an evolution $\smash{\bcc( \cdot)}$ is completely characterized by its initial cell $\smash{\bcc(0)}$ and its sequence of splitting times and splitting locations $\smash{(t_n,y_n)_{n\in \bbN^*}}$: here, the $t_n $ are the (possibly repeated) jump times of $\smash{t\! \mapsto \! \tnn(\bcc(t))}$. More precisely we introduce the following. 
\begin{definition} 
\label{lingrodivevodef}(\emph{Linear growth-division evolutions}) $\,$ Let $\bcc(0) \ino \bbR^2_+$ be a cell and let $S\eqo ((t_n,y_n))_{n\in \bbN^*}$ be a $[0, 2h) \times \bbR_+$-valued sequence. For all $t\ino [0, 2h)$ we set 
$\bnn_t \eqo 1+\# \{ n\ino \bbN^* : t_n \leqo t \} \ino\bbN \cup \{\infty \}$.

\smallskip

\noindent 
$(a)$ We say that $S$ is a \emph{$\smash{\bcc(0)}$-compatible division sequence} if for all $\smash{n\ino \bbN^*}$ \begin{equation}
\label{divseqexpli}
\smash{0 \leko t_{n} \leqo t_{n+1} \leko \sup_{^{p\in \bbN^*}} t_p\eqo 2h \quad \textrm{and} \quad 0\leqo y_n \leqo \tmm(\bcc(0))+\!  \int_0^{t_n} \!\!\!\!\!  \bnn_t \, \mathrm d t\; .}
\end{equation}
The first condition in (\ref{divseqexpli}) implies that $\smash{\bnn_t \leko \infty}$ for all $t\ino [0,2h)$.

\smallskip

\noindent 
$(b)$ A $\smash{\bcc(0)}$-compatible division sequence $\smash{S\eqo (t_n,y_n)_{n\in \bbN^*}}$ is said to be \emph{nonmultiple} if furthermore $\smash{t_n \leko t_{n+1}}$ for all $\smash{n\ino \bbN^*}$.  

\smallskip

\noindent 
$(c)$ We remind the notations $\smash{\mathtt{Gro}_{t}}$ and $\smash{\mathtt{Div}_y}$ from Definition \ref{GroDivdef}. 
We set $\smash{t_0\eqo 0}$ and we first define recursively $\smash{\bgam_n \ino \mathtt{Cell}}$ for all $\smash{n\ino \bbN}$ by setting $\smash{ \bgam_0\eqo \bcc(0)}$, $\smash{\bgam_1 \eqo \mathtt{Div}_{y_1} (\mathtt{Gro}_{t_1} (\bcc(0)))}$ and 
$\smash{\bgam_{\! n+1}}$ $\eqo$ $\smash{\mathtt{Div}_{y_{n+1}} (\mathtt{Gro}_{t_{n+1}-t_{n}} (\bgam_n))}$. 
Then for all $t\ino (0, 2h)$ there is a unique 
$n\ino \bbN$ such that $\smash{t\ino (t_n, t_{n+1}]}$ and we set $\smash{\bcc(t) \eqo \mathtt{Grow}_{t-t_n} (\bgam_n)}$.
We call $\smash{(\bcc(t))_{t\in [0, 2h)}}$ the \emph{linear growth-division evolution with initial cell $\smash{\bcc(0)}$ and division sequence $\smash{S\eqo (t_n,y_n)_{n\in \bbN^*}}$}. \cq  
 \end{definition}
\begin{remark}
\label{defevorems}
\noi
$(a)$ First observe that $\smash{\mathtt{Gro}_t \circ \mathtt{Gro}_{t'}\eqo  \mathtt{Gro}_{t+t'}}$, 
$\smash{t,t'\ino \bbR_+}$. Moreover 
for all $\smash{\bcc \ino \mathtt{Cell}}$ the function $\smash{t\ino \bbR_+\! \mapsto \! \mathtt{Gro}_t (\bcc)}$ 
is clearly continuous and $\smash{\mathtt{Gro}_0 (\bcc)\eqo \bcc}$. Therefore the evolution $\smash{\bcc(\cdot)}$ in 
Definition \ref{lingrodivevodef} $(c)$ is left-continuous on $(0, \infty)$ and continuous on $\smash{\bigcup_{n\in \bbN} (t_n, t_{n+1})}$. Moreover if $\smash{t_n \leko t_{n+1}}$, then $\smash{\lim_{t\downarrow t_n} \bcc(t)\eqo \bgam_n}$. 
Therefore, $\smash{\bcc(\cdot)}$ has a right limit everywhere on $\smash{\bbR_+}$. 

\smallskip

\noi
$(b)$ Let us discuss what happens when multiple divisions occur at the same time. 
Namely suppose that $\smash{m, n\ino \bbN^*}$ are such that   
$\smash{t_{n-1} \leko  t_{n} \eqo  t_{n+m-1}\leko t_{n+m}}$. If $m\geqo 2$, then $\smash{\bgam_{n+m-1} \eqo (\mathtt{Div}_{y_{n+m-1}}}$ $ \circ$ $ \ldots$ $ \circ$ $\smash{ \mathtt{Div}_{y_{n +1}}) (\bgam_n)}$. 
Here, the order of divisions does not matter since $\smash{\mathtt{Div}_y \circ \mathtt{Div}_{y'}\eqo  \mathtt{Div}_{y'} \circ  \mathtt{Div}_y }$ for all 
$\smash{y,y'\ino \bbR_+}$. Next observe that $\smash{\tnn (\mathtt{Div}_y (\bcc))\eqo 1+ \tnn(\bcc)}$ and 
$\smash{\tnn(\mathtt{Gro}_t (\bcc))\eqo \tnn(\bcc)}$. Thus for all $\smash{s\ino (t_{n-1} , t_n]}$ and all $\smash{t\ino (t_{n+m-1}, t_{n+m}]}$ we get 
$\smash{\tnn(\bgam_{n+m-1}) \eqo 
\tnn(\bcc(t) ) } $ $\eqo$  $m$  $+ $ $\smash{ \tnn(\bcc(s) )} $ $\eqo$ $m$ $+$ $\smash{\tnn (\bgam_{n-1}) }$. Since $\smash{\tnn: \mathtt{Cell} \! \to \! \bbN^* }$ is continuous, $\smash{t\mapsto \tnn(\bcc(t))}$ is left-continuous, 
nondecreasing and its jump at time $\smash{t_n}$ has size $m$. We see that 
the set $\smash{\{ t_n ; n\ino \bbN \}}$ is exactly the set of the jump-times of $\smash{t\mapsto \tnn( \bcc(t))}$ 
and the previous arguments imply that 
$\smash{\tnn(\bcc(t) ) \eqo \bnn_t} $, for all $t\ino [0, 2h)$. By (\ref{divseqexpli}), jump-times accumulate at $2h$ which also entails $\smash{\lim_{t\uparrow 2h} \tnn(\bcc(t))\eqo \infty}$.

\smallskip

\noi
$(c)$ If the division-sequence $\smash{S\eqo (t_n, y_n)_{n\in \bbN^*}}$ is nonmultiple, we simply get 
$\smash{\bcc(t)\eqo \mathtt{Gro}_t (\bcc(0)) }$
for all $\smash{t\ino [0, t_1]}$, and $\smash{\bcc(t)\eqo  \mathtt{Gro}_{t-\btt_n} \big( \mathtt{Div}_{y_n} (\bcc(t_n))\big)}$ for all $\smash{n\ino \bbN^*}$ and all $\smash{t\ino (t_n, t_{n+1}]}$. Then $\smash{t\mapsto \tnn(\bcc(t))}$ has only jumps of size $1$ at each $\smash{t_n}$. \cq 
\end{remark}

The continuous D-R branching process on $[0, 2h)$ is then defined as the linear growth-division process whose 
division-sequence is derived from a single Poisson point process as specified below. 
\begin{definition}
\label{DRnuclei} (\emph{Continuous Derrida-Retaux branching process with nuclei}) $\,$ We fix $h \ino (0, \infty)$ and 
$\smash{\bcc(0)\eqo (x^+,x^-)\ino \bbR_+^2}$. Let $\smash{\PPi_h}$ be a Poisson point process on $\smash{[0, 2h) \times \bbR_+}$ with intensity 
$\smash{2\mathrm d t \, \mathrm d y /(2h\! -\! t)^2}$. Then the D-R branching process on $[0, 2h)$ with initial cell $\bcc(0)$ is the linear growth-division process $\smash{(\bcc(t))_{t\in [0, 2h)}}$ associated with the nonmultiple 
$\bcc(0)$-compatible division-sequence $\smash{(\btt_n, \byy_n)_{n\in \bbN^*}}$ defined by 
$\smash{\{ (\btt_n, \byy_n); \; n\ino \bbN^*\}\eqo \{ (t,y)\ino \PPi_h : y\leqo \bmm_t \}}$ where $\smash{(\bmm_t)_{t\in [0, 2h)}}$ is the unique solution on $[0, 2h)$ to the equation: 
\begin{equation}
\label{mtntcompatibis}
\smash{\underline{\texttt{Eq} (\bcc(0), \PPi_h)}: \quad \bmm_t  =  \mathtt{m} (\bcc (0))+\!  \int_{0}^t \!\!  \Big( 1+ \# \big\{ (r,y)\ino \PPi_h: r\leko s \;  \textrm{and} \; y \leko \bmm_r \big\}\Big) \mathrm d s \, . }
\end{equation}
Since the division-sequence is nonmultiple, we see that 
$\smash{\bcc(t)\eqo \mathtt{Gro}_t (\bcc(0)) }$ for all $\smash{t\ino [0, \btt_1]}$ and 
$\smash{\bcc(t)\eqo \mathtt{Gro}_{t-\btt_n} \big( \mathtt{Div}_{\byy_n} (\bcc(\btt_n))\big) }$ for all  $\smash{ t\ino (\btt_n, \btt_{n+1} ]}$ and all $\smash{n\ino \bbN^*}$. See Figure \ref{PoissCoupl}. \cq 
\end{definition}

We shall need to identify the law of two linear growth-division processes with the same initial cell. One way to proceed consists in showing that their division-sequences have the same law. To this end, we use the following result. 
\begin{lemma}
\label{divseqidentif} Let $h \ino (0, \infty)$, let 
$\smash{\bcc(0)\eqo (x^+,x^-)\ino \bbR_+^2}$, let $\smash{S\eqo (\btt_n, \byy_n)_{n\in \bbN^*}}$ and $\smash{S'\eqo (\btt'_n, \byy'_n)_{n\in \bbN^*}}$ be two random $\smash{[0, 2h) \! \times \! \bbR_+}$-valued sequences. For all $t\ino [0, 2h)$ we set 
$\smash{\bnn_t\eqo 1+\# \{ n\ino \bbN^*: \btt_n \leqo t \}}$ and $\smash{\bnn'_t\eqo 1+ \# \{ n\ino \bbN^*: \btt'_n \leqo t \}}$. We make the following assumptions. 
\begin{compactenum}

\smallskip

\item[$(a)$] A.s.~$S$ and $\smash{S'}$ are nonmultiple $\bcc(0)$-compatible division-sequences as in Definition \ref{lingrodivevodef}.  

\smallskip

\item[$(b)$] $\smash{(\btt_1, \byy_1)}$ and $\smash{(\btt'_1, \byy'_1)}$ have the same law. 

\smallskip

\item[$(c)$] For all $t\ino [0, 2h)$, all $\smash{n\ino \bbN^*}$ and all bounded and measurable $\smash{F \! :\! \bbR^2_+
 \! \to \! \bbR}$, there is a bounded and measurable $\smash{G \! :\! \bbR^{2n}_+ \! \to \! \bbR}$, which depends on $t,n$ and $F$, such that 
\begin{eqnarray*}
\smash{\bE \big[ F(\btt_{n+1}} \!\! \!\! \!\! \!\! &, & \!\! \!\! \!\! \!\! \smash{ \byy_{n+1})\un_{\{ \bnn_t=n+1\}} \big| (\btt_k, \byy_k)_{1\leq k\leq n} \big] \eqo G \big( (\btt_k, \byy_k)_{1\leq k\leq n} \big)} \\
& \textrm{and} &\smash{ \bE \big[ F(\btt'_{n+1} , \byy'_{n+1})\un_{\{ \bnn'_t=n+1\}} \big|  (\btt'_k, \byy'_k)_{1\leq k\leq n} \big] \eqo G \big( (\btt'_k, \byy'_k)_{1\leq k\leq n} \big).}
\end{eqnarray*}
\end{compactenum}
Then $S$ and $\smash{S'}$ have the same law. 
\end{lemma}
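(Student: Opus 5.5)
The plan is to prove by induction on $n$ that for every $n\ino \bbN^*$ the vectors $\smash{(\btt_k,\byy_k)_{1\leq k\leq n}}$ and $\smash{(\btt'_k,\byy'_k)_{1\leq k\leq n}}$ have the same law. Since the law of a random sequence is determined by its finite-dimensional marginals (Kolmogorov / monotone class), this gives the equality of the laws of $S$ and $S'$. The case $n=1$ is assumption $(b)$.

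For the induction step, assume $\smash{(\btt_k,\byy_k)_{k\leq n}}$ and $\smash{(\btt'_k,\byy'_k)_{k\leq n}}$ have the same law. Fix bounded measurable $\Phi : \bbR^{2n}_+\to\bbR$ and $F:\bbR^2_+\to \bbR$, and $t\ino[0,2h)$. On the event $\{\bnn_t=n+1\}$ we have $\btt_{n+1}\leqo t<\btt_{n+2}$. By assumption $(c)$,
\[
\bE\big[\Phi\big((\btt_k,\byy_k)_{k\leq n}\big)F(\btt_{n+1},\byy_{n+1})\un_{\{\bnn_t=n+1\}}\big]=\bE\big[\Phi\, G\big((\btt_k,\byy_k)_{k\leq n}\big)\big],
\]
and the same identity holds for the primed sequence with the same $G$. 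The induction hypothesis then makes the two right-hand sides equal. Hence the joint law of $\smash{((\btt_k,\byy_k)_{k\leq n+1})}$ restricted to $\{\bnn_t=n+1\}$ coincides with that of the primed version, for every $t$.

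It remains to remove the indicator. This is the main obstacle, since $\{\bnn_t=n+1\}$ also involves $\btt_{n+2}$. I would take $t$ along a countable dense set of $[0,2h)$. Because $S$ is nonmultiple, $\btt_{n+1}<\btt_{n+2}$ a.s., so $\{\btt_{n+1}<2h\}=\bigcup_{t\in \mathbb{Q}}\{\bnn_t=n+1\}$ up to null sets. Moreover $\btt_{n+1}<2h$ a.s., by compatibility, since $\sup_p \btt_p = 2h$ is not attained because the $t_p$ are strictly increasing.

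The union is not disjoint, so I would handle it with events of the form $\{\btt_{n+1}\leqo t<\btt_{n+2}\}$ intersected with conditions on $\btt_{n+1}$. Take $F$ of the form $F(s,y)=\un_{(r,t]}(s)f(y)$ with $r<t$. The resulting quantities characterize
\[
\bE\big[\Phi\, f(\byy_{n+1})\un_{\{r<\btt_{n+1}\leq t<\btt_{n+2}\}}\big].
\]
Letting $t\downarrow$ to values in $(r,r']$, I recover the law restricted to $\{\btt_{n+1}\in(r,r'']\}$ for small intervals above $r$; this uses right-continuity in $t$ and the fact that $\btt_{n+2}>\btt_{n+1}$ strictly. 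More concretely, as $t\downarrow r$ the event $\{r<\btt_{n+1}\leqo t<\btt_{n+2}\}$ is, up to $o$-terms, all of $\{r<\btt_{n+1}\leqo t\}$.

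A cleaner alternative is to use a partition $r=u_0<u_1<\dots<u_m=r'$ with mesh going to $0$. The sum over $i$ of the quantities for $(u_{i-1},u_i]$ with $t=u_i$ converges by dominated convergence to $\smash{\bE[\Phi f(\byy_{n+1})\un_{\{r<\btt_{n+1}\leq r'\}}]}$, because a.s.\ $\btt_{n+2}>\btt_{n+1}$, so eventually $\btt_{n+2}>u_i$. This identifies the joint laws on rectangles, and a monotone class argument then concludes the induction.
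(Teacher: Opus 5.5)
Your overall scheme is exactly the paper's: induction on the finite-dimensional marginals $\mathbf q_n=(\btt_k,\byy_k)_{k\le n}$, base case $(b)$, and then $(c)$ plus the induction hypothesis. For each $t$ this gives equality of $\bE[\Phi(\mathbf q_n)F(\btt_{n+1},\byy_{n+1})\un_{\{\bnn_t=n+1\}}]$ with its primed analogue. The problem is how you identify the event.

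Since $\bnn_t=1+\#\{k:\btt_k\le t\}$, one has $\bnn_t=n+1$ exactly when $n$ divisions have occurred by time $t$. For a nonmultiple sequence this means $\{\bnn_t=n+1\}=\{\btt_n\le t<\btt_{n+1}\}$, not $\{\btt_{n+1}\le t<\btt_{n+2}\}$. This is also why $(c)$ matches Lemma \ref{condlawDRcomput}: it describes the conditional law of the next division after $t$.

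As a result, your indicator-removal step fails as written. With $F(s,y)=\un_{(r,t]}(s)f(y)$, the product $F(\btt_{n+1},\byy_{n+1})\un_{\{\bnn_t=n+1\}}$ vanishes identically, because $\btt_{n+1}>t$ on $\{\bnn_t=n+1\}$. So every term in your partition sums is $0$. These sums cannot converge to $\bE[\Phi f(\byy_{n+1})\un_{\{r<\btt_{n+1}\le r'\}}]$, and they identify nothing.

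The fix is easy, and your ``main obstacle'' disappears. The correct event is a measurable function of $\mathbf q_{n+1}$ alone. So your identity, extended from product test functions by a monotone class argument, already says something stronger: for every $t$, the laws of $\mathbf q_{n+1}$ and $\mathbf q'_{n+1}$ restricted to $\{\btt_n\le t<\btt_{n+1}\}$ (resp.\ its primed version) coincide.

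The paper then integrates in $t\in[0,2h)$. This turns the indicator into the weight $\btt_{n+1}-\btt_n$, which is a.s.\ positive by nonmultiplicity. It yields $\bE[\Psi(\mathbf q_{n+1})(\btt_{n+1}-\btt_n)]=\bE[\Psi(\mathbf q'_{n+1})(\btt'_{n+1}-\btt'_n)]$. Dividing out the weight, via monotone truncations, gives equality of the laws of $\mathbf q_{n+1}$ and $\mathbf q'_{n+1}$.

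Alternatively, your partition idea works once the partition is taken along $\btt_n$ rather than $\btt_{n+1}$. Put $\un_{(u_{i-1},u_i]}(\btt_n)$ into $\Phi$ and take $t=u_i$. Since $0<\btt_n<\btt_{n+1}<2h$ a.s., the sum $\sum_i\un_{\{u_{i-1}<\btt_n\le u_i<\btt_{n+1}\}}$ tends to $1$ a.s. as the mesh goes to $0$ and $u_m\uparrow 2h$. Dominated convergence then removes the indicator.
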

\noi
\textbf{Proof.} It is sufficient to prove recursively for all $\smash{n\ino \bbN^*}$ that 
$$\smash{ \texttt{Rec}(n) \qquad   \mathbf q_n \! :=(\btt_k, \byy_k)_{1\leq k\leq n}
  \overset{\textrm{(law)}}{=} ( \btt'_k, \byy'_k)_{1\leq k\leq n}=: \! \mathbf q_n' .}$$
Since $(b)$ is $\smash{\texttt{Rec} (1)}$, we only need to prove that $(a),(b), (c), \smash{\texttt{Rec} (n)}\! \Longrightarrow \! \smash{\texttt{Rec} (n+1)}$; by $(c)$ there is a regularkernel $\smash{q\ino ([0,2h)\times \bbR_+)^{n}\! \mapsto K_q(\mathrm d s ,\mathrm d z) \ino \mathcal M_1 (\bbR^2_+)}$ such that for all bounded and measurable $\smash{\Phi \! :\!  ([0,2h) \! \times \! \bbR_+)^{n+1}\! \to \! \bbR}$, we get 
\begin{eqnarray}
\label{kercalcu}
\smash{ \bE \big[ \Phi\big(  \mathbf q_n, (\btt_{n+1}, \byy_{n+1} )\big) \un_{\{ \bnn_t=n+1\}}  \big] }\!\!\!\!  &= &\!\!\!\! \smash[t]{\bE \Big[ \int_{\bbR^2_+} \!\! \! \Phi \big( \mathbf q_n, (s,z) \big) K_{\mathbf q_n} (\mathrm d s ,\mathrm d z)  \Big] \quad \textrm{(by $(c)$)}} \nonumber \\
\!\!\!\! &= &\!\!\!\! \bE \Big[ \int_{\bbR^2_+} \!\! \! \Phi \big( \mathbf q'_n, (s,z) \big) K_{\mathbf q'_n} (\mathrm d s ,\mathrm d z)  \Big] \quad \textrm{(by $\texttt{Rec}(n) $)} \nonumber \\
\!\!\!\! &= &\!\!\!\! \smash{ \bE \big[ \Phi \big( \mathbf q'_n, (\btt'_{n+1}, \byy'_{n+1} )\big) \un_{\{ \bnn'_t=n+1\}}  \big] \quad \textrm{(by $(c)$)}. }
\end{eqnarray}
Since $S$ and $\smash{S'}$ are nonmultiple, we a.s.~get $\smash{\un_{\{ \bnn_t=n+1\}} \eqo \un_{\{ \btt_n < t \leq \btt_{n+1}\}}}$ and 
$\smash{\un_{\{ \bnn'_t=n+1\}} \eqo \un_{\{ \btt'_n < t \leq \btt'_{n+1}\}}}$. By integrating (\ref{kercalcu}) on $t$, we obtain 
$\smash{\bE [ \Phi(  \mathbf q_n, (\btt_{n+1}, \byy_{n+1} ))} $ $\smash{(\btt_{n+1} \!-\! \btt_{n} ) ] }$ $\smash{\eqo}$ $\smash{\bE [ \Phi ( \mathbf q'_n, (\btt'_{n+1}, \byy'_{n+1} ))} $ $\smash{(\btt'_{n+1} \!-\! \btt'_{n} )]}$, which easily implies $\smash{\texttt{Rec} (n+1)}$. This completes the proof of the lemma. \cqfd

\smallskip
  
The next lemma specifies the computations of conditional laws which are necessary to check Assumptions $(b)$ and $(c)$ in Lemma \ref{divseqidentif} for the division sequence of the D-R branching process as introduced in Definition \ref{DRnuclei}. 
\begin{lemma}
\label{condlawDRcomput} Let $ \smash{ h, \bcc(0), \PPi_h}$, $ \smash{(\btt_n, \byy_n)_{n\in \bbN^*}}$ and $ \smash{(\bcc(t))_{t\in [0, 2h)}}$ 
be as in Definition \ref{DRnuclei}. 
For any $t\ino [0, 2h)$ we set 
$ \smash{\bnn_t \eqo \tnn (\bcc(t))}$ and $ \smash{\bmm_t \eqo \tmm(\bcc(t))}$ and we denote by $ \smash{\mathscr F_{\! t}}$ the sigma field generated by $ \smash{\PPi_h  \cap   ([0,t] \! \times\!  \bbR_+)}$. For all $ \smash{y\ino \bbR_+}$ and 
$ \smash{n\ino \bbN^*}\! $, we define the following Borel probability measure on $ \smash{[0, 2h) \! \times \! \bbR_+}$ 
\begin{equation}
\label{condlawexpli}
 \smash{M_{y,n,t} (\mathrm d s , \mathrm dz) \eqo \frac{  2(2h\! -\! t)^{2n}}{(2h\! -\! s)^{2n+2}}  \mathrm{e}^{-2 (y + n(2h-t)) \big(  \frac{1}{2h-s} -\frac{1}{2h-t}  \big)}  \un_{[t,2h)} (s) \un_{[0, y+ n(s-t) ]} (z) \, \mathrm d s \mathrm dz }
\end{equation}
which is measurable with respect to $(y,n,t)$. Then a regular version of the conditional law of $ \smash{(\btt_{\bnn_t}, \byy_{\bnn_t})}$ given $ \smash{\mathscr F_t}$ is $ \smash{M_{\bmm_t, \bnn_t, t}}$. 
\end{lemma}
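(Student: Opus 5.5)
Note first that $\bnn_t = 1+\#\{n: \btt_n\leq t\}$, so $\btt_{\bnn_t}$ is the first splitting time strictly after $t$ and $\byy_{\bnn_t}$ is its location. The plan is to use the independence properties of the Poisson point process $\PPi_h$ together with the deterministic evolution of $\bmm$ between splits.

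\emph{Step 1.} Both $\bmm_t$ and $\bnn_t$ are $\mathscr F_t$-measurable. Indeed, the solution of $\texttt{Eq}(\bcc(0),\PPi_h)$ on $[0,t]$ depends only on $\PPi_h\cap([0,t]\times\bbR_+)$; this follows from the recursive construction, since one solves between successive atoms.

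\emph{Step 2.} Set $\PPi' = \PPi_h\cap((t,2h)\times\bbR_+)$. It is independent of $\mathscr F_t$ and is a Poisson point process with intensity $2\,\mathrm ds\,\mathrm dz/(2h-s)^2$ on $(t,2h)\times\bbR_+$. On the event that no atom of $\PPi'$ lies under the graph of $\bmm$ on $(t,s]$, the mass evolves deterministically as $\bmm_r = \bmm_t + \bnn_t (r-t)$, because the population size stays equal to $\bnn_t$. Hence $(\btt_{\bnn_t},\byy_{\bnn_t})$ is the first atom $(s,z)$ of $\PPi'$ in time lying in the region $D=\{(s,z): t<s<2h,\ z\leq y+n(s-t)\}$, evaluated at $y=\bmm_t$ and $n=\bnn_t$. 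Strict versus non-strict inequalities only matter on null sets.

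\emph{Step 3.} For fixed deterministic $(y,n)$, compute the law of the first atom of a Poisson point process in $D$. The probability that no atom falls in $D\cap\{s'\leq s\}$ is $\exp(-\Lambda(s))$, where
\[
\Lambda(s)=\int_t^s \frac{2(y+n(r-t))}{(2h-r)^2}\,\mathrm dr .
\]
The first atom therefore has density $\frac{2}{(2h-s)^2}\,\mathrm{e}^{-\Lambda(s)}\,\un_{[0,y+n(s-t)]}(z)$. Computing $\Lambda$ explicitly, write $y+n(r-t)=(y+n(2h-t))-n(2h-r)$. This gives
\[
\Lambda(s)=2\big(y+n(2h-t)\big)\Big(\frac{1}{2h-s}-\frac{1}{2h-t}\Big)-2n\log\frac{2h-t}{2h-s}.
\]
Exponentiating yields exactly the factor $(2h-t)^{2n}/(2h-s)^{2n}$ times the exponential in \eqref{condlawexpli}, so the result is $M_{y,n,t}$.

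\emph{Step 4.} Check that $M_{y,n,t}$ is a probability measure. This holds because $\Lambda(s)\to\infty$ as $s\uparrow 2h$, so the first atom exists a.s. Then conclude by the standard substitution lemma: the functional is a measurable function of an independent process $\PPi'$ and $\mathscr F_t$-measurable parameters $(\bmm_t,\bnn_t)$. Measurability in $(y,n,t)$ is evident from the formula.

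The main obstacle is Step 2. One must argue carefully that the first split after $t$ is determined by $\PPi'$ and the frozen parameters $(\bmm_t,\bnn_t)$, and that the true $\bmm$ coincides with the linear function up to and including that time. I would handle this by defining the candidate first atom in the deterministic region $D$ and verifying, via uniqueness of the solution to $\texttt{Eq}$, that it is indeed $(\btt_{\bnn_t},\byy_{\bnn_t})$.
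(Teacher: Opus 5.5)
Your proposal is correct and follows essentially the paper's proof. The paper likewise uses the independence of $\PPi_h\cap([t,2h)\times\bbR_+)$ from $\mathscr F_t$, identifies $(\btt_{\bnn_t},\byy_{\bnn_t})$ as the first atom below the line $z\leq \bmm_t+\bnn_t(s-t)$, and computes the void intensity $\int_t^s 2(y+n(r-t))(2h-r)^{-2}\,\mathrm dr$ exactly as you do; the only cosmetic difference is that it packages your Steps 2--4 into a single application of Mecke's formula instead of invoking the first-atom density and a freezing lemma separately.
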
   
\noi
\textbf{Proof.} We fix $ \smash{n\ino \bbN^*}$, $t\ino [0, 2h)$ and $ \smash{y\ino \bbR_+}$. For all $s\ino [t, 2h)$ we define the following subset of 
$ \smash{[t, 2h)\! \times \!  \bbR_+}$:
$$ \smash{ A^t_{s,n,y} \eqo \big\{  (r,z)\ino [t, s) \! \times \!  \bbR_+ : z\leqo y+n(r\! -\! t)\big\} \; .}$$
We then fix $ \smash{F\! : \!  [0, 2h) \times \bbR_+ \! \to \! \bbR}$, bounded and measurable. 
We recall from Definition \ref{DRnuclei} that $ \smash{\mu (\mathrm ds ,\mathrm dy)\! := \! 2\mathrm d s \, \mathrm d y /(2h\! -\! s)^2}$ is the intensity measure of $ \smash{\PPi_h}$. Since $ \smash{\PPi_h \cap ([t, 2h)\!  \times \! \bbR_+)}$ is a Poisson point process independent of $ \smash{\mathscr F_{\! t}}$, Mecke's formula implies the following. 
\begin{eqnarray}\bE \big[ F(\btt_{\bnn_t} , \byy_{\bnn_t}) \big| \mathscr F_{\! t} \big] \!\! \!\! \!\! & =&\!\! \!\! \!\!   \bE \Big[  \sum_{(s,y) \in \PPi_h} \!\!\! \!\! F(s,y) \, 
\un_{[t, 2h) } (s) \un_{[0, \bmm_t + (s-t)\bnn_t ]} (y)  \un_{\{ \# (\PPi_h \cap A^t_{s, \bnn_t, \bmm_t} ) = 0\}} \Big| \mathscr F_{\! t} \Big] \nonumber  \\
\!\! \!\! \!\! &=&\!\! \!\! \!\!  \int_t^{2h}\!\!  \frac{2\mathrm ds}{(2h \! -\! s)^2} \int_0^\infty \!\! \!\!\!  \mathrm dy \, F(s,y) \un_{[0, \bmm_t + (s-t)\bnn_t]} (y) e^{-\mu (A^t_{s, \bnn_t, \bmm_t})} . \label{Mecke1}
\end{eqnarray}
An elementary computation then implies 
$$ \smash{\mu ( A^t_{s,n,z})  \! = \! \int_t^{s}\!  \frac{2 (y+n(r\! -\! t)) }{(2h \! -\! r)^2} \mathrm dr= 2(y+n(2h \! -\! t)) \Big( \tfrac{1}{2h-s} \! -\! \tfrac{1}{2h-t}\Big) + 2n \log \tfrac{2h-s}{2h-t} }$$
which implies (\ref{condlawexpli}) by (\ref{Mecke1}). \cqfd 

\subsection{Encoding cell populations with zigzag functions}
\label{Encozigzagsec}
The goal of this section is to explain how to encode cell populations 
by broken line functions that we call \emph{zigzag functions}. 
To this end we first provide a suitable framework to work with continuous process endowed with a lifetime. We first denote by 
$\smash{\bC^0 (\bbR_+, \bbR)}$ the space of continuous functions from $\smash{\bbR_+}$ to $\bbR$ equipped with the following metric 
\begin{equation}
\label{distC0polish} 
\smash{\delta (H,H')= \sum_{p\in \bbN^*} 2^{-p} \min \big( 1\, , \, \max_{t \in [0,\,  p]} |H_t \! -\! H'_t|   \big)}
\end{equation}
which induces the topology of uniform convergence on every compact intervals. It makes of $\smash{\bC^0 (\bbR_+, \bbR)}$ a Polish space. 
\begin{definition}
\label{proclifetimedef} (\emph{Continuous functions with lifetime})$\,$ We set 
$$\smash{ \bC = \big\{ (H, \zeta) \ino  \bC^0 (\bbR_+, \bbR) \! \times \! [0, \infty] : H_0 \eqo 0 \; \, \textrm{and} \; \, \forall t\ino (\zeta, \infty) : H_t\eqo H_\zeta \big\} }$$ 
and we equip $\bC$ with the metric $d$ which is defined for all $\smash{(H,\zeta), (H'\! ,\zeta')\ino \bC}$ by 
$$ \smash{d\big((H,\zeta), (H'\! , \zeta')  \big)= \delta (H,H') + \big| \mathrm{arctan}\,  \zeta -\mathrm{arctan} \, \zeta'\big| \, ,}$$
with the convention that $\smash{\mathrm{arctan} (\infty)\eqo \pi/2}$. 
We denote $\smash{\bC_{\! f} \eqo \{ (H, \zeta) \ino \bC : \zeta \leko \infty \}}$ the subspace of finite lifetime functions and we denote by $\partial $ the unique function of $\bC$ whose lifetime is equal to $0$ ($\partial$ is therefore constant to $0$).  \cq 
\end{definition}
\begin{remark}
\label{remlifetime} We check that $(\bC, d)$ is Polish. Moreover $\smash{(H, \zeta) \ino \bC \! \mapsto \! H \ino \bC^0 (\bbR_+, \bbR)}$ and $\smash{(H, \zeta)}$ $ \ino $ $\smash{\bC}$ $ \! \mapsto \! $ $\smash{\zeta \ino [0, \infty]}$ are continuous. Within this framework if $(H, \zeta) \ino \bC$ it does not mean that 
$\zeta$ is a function of $H$ (indeed, $\smash{(H, \zeta') \ino \bC}$ for all $\smash{\zeta' \geqo \zeta}$). 
However, when there is no ambiguity, we sometimes abuse notation and we 
write $\zeta_H$ to denote 'the' lifetime of $H$.   \cq 
\end{remark}
\begin{definition}
\label{concadef}(\emph{Concatenation of functions}) Let  $\smash{(H,\zeta), (H'\! ,\zeta')\ino \bC}$. The concatenation of  $(H,\zeta)$ with $\smash{(H'\! ,\zeta')}$ is the function $\smash{(H\! \centerdot H'\! , \zeta_{H\centerdot H'}) \ino \bC}$ that is 
given by 
$\smash{\zeta_{H \centerdot H'}}$ $ \eqo $ $ \zeta$ $+$  $\smash{\zeta' }$ and $\smash{(H \! \centerdot H')_t }$ $\eqo$  $\smash{H_t}$ $+$ $\smash{ H'_{(t-\zeta)_+}}$, for all $\smash{t\ino \bbR_+ }$. 
Here, we use the convention $x+ \infty\eqo \infty$, which implies that $\smash{(H\! \centerdot H'\! , \zeta_{H \centerdot H'})\eqo (H, \infty)}$ if $\zeta\eqo \infty$. \cq
\end{definition}
\begin{remark}
\label{remconca} $(a)$ Note that $H\! \centerdot \partial\eqo \partial \centerdot H\eqo H$. 

\smallskip

\noi
$(b)$ Since concatenation is associative there is no ambiguity in the notation $\smash{(H^1 \! \centerdot \ldots \centerdot H^n, \zeta_{H^1 \centerdot \ldots \centerdot H^n})}$ for all $\smash{(H^k\! , \zeta_k)\ino \bC}$, $1\leqo k\leqo n$.

\smallskip

\noi
$(c)$ We easily check that concatenation is a continuous function from $\smash{\bC^2}$ to 
$\bC$.  \cq 
\end{remark}
\begin{definition}
\label{zigzagdef} (\emph{Zigzag functions}) $(a)$ $(Z, \zeta_Z)\ino \bC$  is a zigzag function if the following holds true. 

\begin{compactenum} 

\smallskip

\item[$-$] If $\zeta_Z \leko \infty$, there is a sequence of \emph{sign-changes} $\smash{ z_0\eqo 0\leqo z_1 \leqo \ldots \leqo z_{2n-1} \leqo z_{2n}\eqo \zeta_Z}$ such that 
\begin{equation} 
\label{zigzagexpli}
\smash{  \forall z\ino [0, z_{2n} ] , \quad Z_z = \int_0^z \!\!  \Big(\!\! \!\!  \!\!  \sum_{^{\quad 1\leq j\leq 2n}} \!\! \!\! \!\! (-1)^{j+1} \un_{(z_{j-1}, z_{j} ] } (y) \Big) \mathrm d y\,  .} 
\end{equation}
\item[$-$] If $\smash{ \zeta_Z\eqo \infty}$, then for all $\smash{ t\ino \bbR_+}$, $\smash{ (Z_{\cdot \wedge t}, t)}$ is a zigzag function with a finite lifetime as above. 
\end{compactenum}

\smallskip

\noi
$(b)$ If $\smash{ (Z,\zeta_Z)}$ is a zigzag function we denote by $\smash{ \mathscr M (Z)}$ the set of times at which $\smash{ Z_\cdot}$ reaches a strict local maximum. This set has no limit point even when $\smash{ \zeta_Z\eqo \infty}$. 
We also set $\smash{ \tnn(Z)\eqo \# \mathscr M (Z)}$, which is possibly infinite if $\smash{ \zeta_Z\eqo \infty}$. \cq
\end{definition}
Zigzag functions are continuous piecewise affine functions whose slopes are either equal to $1$ or equal to $-1$. Note that distinct sequences of sign-changes may yield the same zigzag function. 

Let $\smash{ (Z, \zeta_Z)}$ be a zigzag function with a finite lifetime 
associated with a sequence of sign-changes $\smash{ (z_j)_{1\leq j\leq 2n}}$. It starts rising (i.e., $\smash{ \dot{Z}(0+)\eqo 1}$) if and only if $\smash{ 0\leko z_1}$ and in this case $\smash{ 0\! \notin \! \mathscr M (Z)}$. It starts falling  (i.e., $\smash{ \dot{Z}(0+)\eqo -1}$) if and only if $\smash{ z_1\eqo 0\leko z_2}$  and in this case $\smash{ 0\! \in \! \mathscr M (Z)}$. Similarly, it ends falling (i.e., $\smash{ \dot{Z}(\zeta_{Z}-)\eqo -1}$)  if and only if $\smash{ z_{2n-1} \leko z_{2n}}$ and in this case $\smash{ \zeta_Z \! \notin \! \mathscr M (Z)}$ and it ends rising (i.e., $\smash{ \dot{Z}(\zeta_{Z}-)\eqo 1}$)  if and only if $\smash{ z_{2n-2} \leko z_{2n-1} \eqo z_{2n}}$ and in this case $\smash{ \zeta_Z \! \in \! \mathscr M (Z)}$. 

\begin{definition}
\label{concadef} (\emph{Zigzag functions associated with cell populations}) Let $\smash{ \bcc\eqo (c_k)_{1\leq k \leq n} \ino \mathtt{Cell}}$. We associate with $\bcc$ the zigzag function $\smash{ \mathtt{Z}_\cdot (\bcc)}$ whose sequence of sign-changes is $\smash{ \overline{\bcc}\eqo (z_{j})_{1\leq j\leq 2n}}$, the cumulated version of $\bcc$. See Figure \ref{ZigzagCellPop}.\cq 
\end{definition}
\begin{remark}
\label{remlifetime} $(a)$ Let $\smash{ x\ino \bbR}$. We define $\smash{(\phi_x, |x|)\ino \bC}$ by setting 
$\smash{\phi_x(t) \eqo \varepsilon (t \wedge |x|)}$, $\smash{t\ino \bbR_+}$, where $\varepsilon$ is the sign of 
$x$ (and if $x\eqo 0$, $(\phi_0, 0)\eqo \partial$). Let $\smash{\bcc\eqo (c_k)_{1\leq k \leq n}  \ino \mathtt{Cell}}$. 
Then we observe that the lifetime of $\smash{\mathtt{Z}_\cdot (\bcc)}$ is $\smash{\tmm (\bcc)\eqo |c_1|+ \ldots |c_n|}$, the total mass of $\bcc$, and we see that $\smash{\mathtt{Z}_\cdot (\bcc)}$ $ \eqo$ $\smash{\phi_{x^+(c_1)} }$ $\centerdot$ $ \smash{\phi_{-x^-(c_1)}}$ $\centerdot$ $\ldots$ $\centerdot$ $\smash{\phi_{x^+(c_n)} }$ $\centerdot$ $ \smash{\phi_{-x^-(c_n)}}$,  
where we recall that $\smash{c_k\eqo (x^+(c_k),x^-(c_k))}$, $1\leqo k\leqo n$.

\smallskip

\noi
$(b)$ Any zigzag function with a finite lifetime can be seen as the zigzag function of at least one cell population. However 
$\smash{\mathtt{Z}: \mathtt{Cell} \! \to \! \bC}$ is not injective: see the following lemma.  \cq 
\end{remark}
\begin{lemma}
\label{mincellzigzag} Let $\smash{(Z, \zeta_Z)}$ be a zigzag function such that $\smash{\zeta_Z\leko \infty}$. Let $\smash{\bcc\eqo (c_k)_{1\leq k \leq n}\ino \mathtt{Cell}}$ with cumulated version 
$\smash{\overline{c}\eqo (z_j)_{1\leq k\leq 2n}}$. We assume $\smash{(Z, \zeta_Z)\eqo (\mathtt{Z}_\cdot (\bcc), \tmm(\bcc))}$. Then the following is true.

\begin{compactenum}

\smallskip

\item[$(i)$] $\smash{\tnn(\bcc) \geqo \tnn (Z)}$ and there is a unique cell population $\smash{\bcc'\ino \mathtt{Cell}}$ such that $\smash{\tnn(\bcc')\eqo \tnn (Z)}$ and $\smash{(Z, \zeta_Z)}$ $\eqo$  $\smash{(\mathtt{Z}(\bcc'), \tmm(\bcc)')}$ which is called the \emph{minimal cell population of $\smash{(Z, \zeta_Z)}$}. Moreover, $\smash{\mathscr M (Z)}$ $\eqo$  $\smash{\{ z'_{2l-1}; 1\leqo l\leqo \tnn(\bcc') \}}$, where $\smash{\overline{\bcc}'\eqo (z'_{j})_{1\leq j \leq 2\tnn(\bcc')}}$ is the cumulated version of $\smash{\bcc'}$. 

\smallskip

\item[$(ii)$] $\smash{\bcc\eqo (c_k)_{1\leq k\leq n}}$ is the minimal cell population of $\smash{(Z, \zeta_Z)}$ if and only if 
$\smash{ x^-(c_1) }$ $\smash{ x^+(c_2)}$ $ \smash{x^-(c_2)}$ $ \ldots$ $ \smash{x^+(c_{n-1})}$ $\smash{x^-(c_{n-1})}$ $ \smash{x^+(c_n)}$ $ \geko 0$ 
which is equivalent to $\smash{z_1 \leko z_2 \leko  \ldots \leko z_{2n-2} \leko z_{2n-1}}$. 
\end{compactenum}
\end{lemma}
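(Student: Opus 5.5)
The argument is elementary and rests only on the explicit formula (\ref{zigzagexpli}) for $\mathtt{Z}_\cdot(\bcc)$. I would first record the basic observation: on $[0,\zeta_Z]$ the derivative of $\mathtt{Z}_\cdot(\bcc)$ is $+1$ on the intervals $(z_{2k-2},z_{2k-1})$ and $-1$ on the intervals $(z_{2k-1},z_{2k})$. Some of these intervals may be empty.

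A point of $[0,\zeta_Z]$ is a strict local maximum exactly when it is preceded by a rise (or is $0$) and followed by a fall (or is $\zeta_Z$). From this, every $z\in\mathscr M(Z)$ equals $z_{2k-1}$ for some $k$. Indeed, if $z$ were not of this form, then near $z$ the slope pattern would be a nondegenerate rise, or a fall, or a fall followed by a rise, and none of these gives a strict maximum. Distinct elements of $\mathscr M(Z)$ must use distinct indices $k$, since $z_{2k-1}$ takes a single value. This gives $\tnn(\bcc)\geq\tnn(Z)$.

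For the existence part of $(i)$, I would build the \emph{reduced} sign-change sequence. Write $Z$ as an alternating concatenation $\phi_{a_1}\centerdot\phi_{-b_1}\centerdot\cdots\centerdot\phi_{a_m}\centerdot\phi_{-b_m}$, using Remark \ref{remlifetime}$(a)$. Here all inner lengths $b_1,a_2,\dots,b_{m-1},a_m$ are strictly positive, while $a_1\geq 0$ and $b_m\geq 0$. Such a decomposition is obtained by merging consecutive pieces of equal slope and deleting zero-length pieces: the maximal intervals of constant slope of $Z$ are unique. The choice $a_1=0$ or $b_m=0$ only records whether $Z$ starts falling or ends rising.

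I would then set $c'_k=(a_k,b_k)$. The peaks $z'_{2k-1}$ are precisely the times where a positive-length rise is followed by a positive-length fall. The boundary cases are handled the same way: $0$ counts as a peak when $a_1=0$, and $\zeta_Z$ counts as a peak when $b_m=0$. So these peaks are exactly the strict local maxima, which gives $\mathscr M(Z)=\{z'_{2l-1}\}$ and $\tnn(\bcc')=\tnn(Z)$.

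For $(ii)$, and for the uniqueness in $(i)$, I would show that $\tnn(\bcc)=\tnn(Z)$ holds if and only if the inner products $x^-(c_1)x^+(c_2)\cdots x^+(c_n)$ are all strictly positive. This positivity condition is just $z_1<z_2<\dots<z_{2n-1}$ rewritten. If it holds, each $z_{2k-1}$ is a genuine strict local maximum, so $\tnn(Z)=n$. If some inner length vanishes, say $x^-(c_k)=0$ with $k<n$, then $z_{2k-1}=z_{2k}$. Two consecutive rises then merge, and $z_{2k-1}$ is not a strict maximum. The case $x^+(c_k)=0$ with $k>1$ is symmetric: two falls merge, and the peaks $z_{2k-3}$ and $z_{2k-1}$ coincide. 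In both cases fewer than $n$ indices produce distinct strict maxima, so $\tnn(Z)<n$.

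Uniqueness then follows. A population with $\tnn(\bcc')=\tnn(Z)$ has strictly increasing inner sign-changes, and a strictly increasing sign-change sequence is forced to coincide with the maximal constant-slope intervals of $Z$. Hence such a $\bcc'$ is determined by $Z$.

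The main obstacle is the careful bookkeeping at the endpoints, i.e.\ when $z_1=0$ or $z_{2n-1}=z_{2n}$. This is exactly why the positivity condition in $(ii)$ omits $x^+(c_1)$ and $x^-(c_n)$. These cases need to be checked against the definition of a strict local maximum at $0$ and at $\zeta_Z$, using the discussion following Definition \ref{zigzagdef}.
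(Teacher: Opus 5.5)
Your overall plan is the natural one, and since the paper leaves this proof to the reader there is no different route to compare with. You show $\mathscr M(Z)\subset\{z_{2k-1}\,;\,1\le k\le n\}$, build the reduced alternating decomposition for existence, and derive (ii) and uniqueness from the criterion $\mathtt n(Z)=n \iff z_1<z_2<\dots<z_{2n-1}$. The gap is in your proof of the ``only if'' half of that criterion, on which your uniqueness step also relies: the degenerate-case analysis is false as written.

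\begin{itemize}
\item When $x^-(c_k)=0$ with $k<n$, the point $z_{2k-1}$ can be a strict maximum. For $\mathbf c=((1,0),(0,1))$ one gets $(z_1,z_2,z_3,z_4)=(1,1,1,2)$ and $Z=\psi_1$, whose strict maximum is $z_1=1$.
\item When $x^+(c_k)=0$ with $k>1$, the peaks $z_{2k-3}$ and $z_{2k-1}$ need not coincide. For $\mathbf c=((0,1),(0,1))$ one gets $(0,1,1,2)$ and $Z=\phi_{-2}$, with $z_1=0\neq 1=z_3$. Here $z_3$ lies inside a fall, and this configuration is not covered by your first case since $x^-(c_1)>0$.
\end{itemize}

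Each of your two cases really splits into two subcases, and you attached to each case only one of the two possible mechanisms. In particular, the second case is not the mirror image of the first.

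The repair is short.
\begin{itemize}
\item If $x^-(c_k)=0$ with $k<n$, then either $x^+(c_{k+1})>0$, so $Z$ rises immediately after $z_{2k-1}$ and $z_{2k-1}\notin\mathscr M(Z)$; or $x^+(c_{k+1})=0$, and then $z_{2k-1}=z_{2k+1}$.
\item Symmetrically, if $x^+(c_k)=0$ with $k>1$, then either $x^-(c_{k-1})>0$ and $Z$ falls immediately before $z_{2k-1}$; or $z_{2k-3}=z_{2k-1}$.
\end{itemize}
Since you already know $\mathscr M(Z)\subset\{z_{2l-1}\}$, every subcase gives $\#\mathscr M(Z)\le n-1$.

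A cleaner way to say the same thing: $\mathtt n(Z)=n$ holds iff the $z_{2l-1}$ are pairwise distinct and all are strict maxima. For two consecutive ones $z_{2l-1}<z_{2l+1}$, right slope $-1$ at $z_{2l-1}$ forces $z_{2l}>z_{2l-1}$, and left slope $+1$ at $z_{2l+1}$ forces $z_{2l+1}>z_{2l}$.

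With this fix, the rest of your argument goes through: existence, the identity $\mathscr M(Z)=\{z'_{2l-1}\}$, and uniqueness via the maximal intervals of constant slope.
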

\noi
\textbf{Proof.} We leave the details to the reader. \cqfd
\begin{definition}
\label{cellfromzigzag} (\emph{Cell populations associated with  zigzag functions}) Let $\smash{(Z, \zeta_Z)}$ be a zigzag function with a finite lifetime. We denote by $\smash{\mathtt C(Z)}$ its minimal cell population as defined in Lemma \ref{mincellzigzag} $(i)$, and we simply call it the cell population of $Z$. Its total mass is $\smash{\zeta_Z}$ and its size is 
$\smash{\tnn (Z)}$, which is the number of strict local maxima of $Z$. \cq  
\end{definition}

\section{Erasure of trees and of functions}
\label{lentherasec}
A linear growth-divisions evolution yields a dynamics on the encoding zigzag functions that is  understood in a simple way in terms of \emph{erasure}, a geometric procedure on the real tree encoded by the zigzag functions. To explain this we first explain what erasure of compact real trees is. We next recall relevant properties of encoding of real tree by continuous functions and we discuss specific related results for zigzag functions and subtree decompositions. Finally we define erasure of functions and we discuss  connections between linear growth-division evolutions and erasure processes of functions. This section is completely deterministic. 
\subsection{Real trees encoded by continuous functions}
\label{codeRtreesec} 
\begin{definition}
\label{realtrdef} $(a)$ A metric space. $(T, d)$ is a \textit{real tree} if the following hold true.
\begin{compactenum} 

\smallskip

\item[$-$] For all distinct $\smash{ \sigma_1, \sigma_2 \! \in\!  T}$, there is a unique isometry 
$\smash{ f\! : \! [0,d(\sigma_1,\sigma_2)] \! \rightarrow \! T}$ such
that $\smash{ f(0)\!=\! \sigma_1}$ and $\smash{ f(d(\sigma_1,\sigma_2))\! =\! \sigma_2}$. We use the notation  
$\smash{ \lgeo \sigma_1,\sigma_2\rgeo \! :=\! f([0,d (\sigma_1,\sigma_2)])}$.

\smallskip

\item[$-$] For any continuous injective function 
$\smash{ q: [0, 1] \! \rightarrow \! T}$,  $\smash{ q([0,1]) \! = \! \lgeo q(0), q(1)\rgeo}$. 
\end{compactenum}

\smallskip

\noi
We use also the notation $\smash{ \lgeo\sigma, \sigma'\lgeo \, =\! \lgeo \sigma , \sigma' \rgeo \backslash \{ \sigma'\}}$ and similarly $\smash{ \, \rgeo \sigma, \sigma' \rgeo}$ and $\smash{ \, \rgeo \sigma, \sigma' \lgeo\, }$. 
We distinguish a point $\smash{ \rho\! \in \! T}$ that is viewed as the root and we speak of $(T, d, \rho)$ as a \textit{rooted real tree}.

\smallskip

\noi
$(b)$ Let $(T,d,\rho)$ be a compact rooted  real tree. A \emph{subtree} of $T$ is a compact connected subset $\smash{ T'\! \subset \! T}$ such that $\smash{ \rho \ino T'}$. We observe that $\smash{ (T',d,\rho)}$ is a compact rooted real tree too.

\smallskip

\noi
$(c)$ Let $(T,d,\rho)$ be a compact rooted real tree. 
We next define the \emph{set of leaves} of $(T,d,\rho)$ by 
$\smash{ \mathtt{Lf} (T)\eqo \{ \sigma \ino T\backslash \{ \rho \}  : T\backslash \{ \sigma \} \; \textrm{is connected} \}}$. 
The root is never be considered as a leaf of $T$, even if $T\backslash \{ \rho \}$ is connected (and therefore, $\smash{ \mathtt{Lf} (T)}$ depends on the choice of a root).

\smallskip

\noi
$(d)$ Let $(T,d,\rho)$ be a compact rooted real tree. It is a \emph{finite type real tree} if $\smash{ \# \mathtt{Lf} (T) \leko \infty}$. \cq 
\end{definition}

Let us introduce additional notations. Let $(T,d, \rho)$ be a compact rooted real tree. We define its \emph{total} height by 
$\smash{\bGam (T)\eqo \max_{\sigma \in T} d(\rho, \sigma) }$
which is finite since $T$ is compact. Note that $\smash{\bGam (T)}$ depends on the choice of a root. 

For all pair of points $\smash{\sigma, \sigma'\ino T}$, we also define their \emph{branch point} (or their \emph{most recent common ancestor} if $T$ is viewed as a family tree whose $\rho$ is the oldest ancestor). Namely it is the unique point of $T$, which is denoted by $\smash{\sigma \wedge \sigma'}$, such that 
\begin{equation}  
\label{brachpoint}
\smash{\lgeo \rho, \sigma \wedge \sigma' \rgeo \eqo  \lgeo \rho, \sigma \rgeo\cap \lgeo \rho, \sigma' \rgeo\; .}
\end{equation}
The branch point $\smash{\sigma \wedge \sigma'}$ is also the closest point to $\rho$ in $\smash{\lgeo \sigma, \sigma'\rgeo}$ and we get 
$ \smash{2d(\rho, \sigma \wedge \sigma') \eqo d(\rho, \sigma) } $ $+$ $\smash{ d(\rho, \sigma')}$ $\! -\! $ $\smash{d(\sigma, \sigma')}$. Note that the definition of branch points depends on the choice of a root.

\smallskip

We next discuss basic properties of the encoding of real trees by functions, as defined in the introduction. 
Let $\smash{ (H, \zeta) \ino \bC_{\! f}}$. We recall from (\ref{mHdHdef}) the definition of $\smash{m_H (\cdot, \cdot)}$ and of $\smash{d_H(\cdot, \cdot)}$. We also recall from (\ref{HRtreedef}) the definition of the real tree $\smash{(\mathcal T_{\! H}, d_H)}$ and the notation $\smash{\pcH\! :\! [0, \zeta] \! \to \! \mathcal  T_{\! H}}$ for the canonical projection. We remind that 
$\smash{\mathcal T_{\! H}}$ is rooted at the minimum of $H$: see (\ref{rootdef}). The following lemma uses the encoding function to express the fact of being a leaf or a branch point.
\begin{lemma}
\label{leafcoding} We keep the previous notations. We assume that $(H,\zeta)\! \neq \! \partial$. Then the following holds.  
\begin{compactenum}

\smallskip

\item[$(i)$]  Let $\smash{ s\ino [0, \zeta]}$. Then $\smash{ \pcH(s) \! \notin \!  \mathtt{Lf} ( \mathcal T_{\! H} ) }$ if and only if there exists $\smash{ s'\ino [0, \zeta]}$ such that $\smash{H_s} $ $\eqo$ $\smash{ m_H(s,s')\leko H_{s'}}$.

\smallskip

\item[$(ii)$]  Let $\smash{r\ino [s,s'] \! \subset \! [0, \zeta]}$. Then $\smash{\pcH(r)\eqo \pcH(s) \wedge \pcH(s')}$ if and only if $\smash{H_r \eqo m_H(s,s')}$. In this case $\smash{d_H\big( \rho_H, \pcH(s) \wedge \pcH(s')\big) \eqo m_H(s,s') -\min_{u\in [0, \zeta]} H_u}$. 
\end{compactenum}
\end{lemma}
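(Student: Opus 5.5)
We prove both items directly from the definitions of $d_H$, $m_H$ and the quotient $\mathcal T_H$, using the geodesic description of the encoded tree. Throughout, set $\underline{m}\eqo \min_{u\in[0,\zeta]} H_u$, so that $d_H(\rho_H,\pcH(s))\eqo H_s-\underline{m}$: indeed $m_H(s^*,s)\eqo \underline m$ because $H_{s^*}\eqo \underline m$ is the global minimum.

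We start with $(ii)$, which is the basic geodesic fact. For $s\leqo s'$, the arc $\lgeo \pcH(s),\pcH(s')\rgeo$ is the set of points $\pcH(r)$, $r\ino[s,s']$, with $H_r\eqo \min_{[s,r]}H$ or $H_r\eqo\min_{[r,s']}H$ (the standard description, see \cite{DuLG05}). Let $r_0\ino[s,s']$ satisfy $H_{r_0}\eqo m_H(s,s')$. Then $d_H(s,r_0)+d_H(r_0,s')\eqo H_s+H_{s'}-2m_H(s,s')\eqo d_H(s,s')$, so $\pcH(r_0)\ino\lgeo \pcH(s),\pcH(s')\rgeo$. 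Next we compute the distance to the root. We have $d_H(\rho_H,\pcH(r_0))\eqo m_H(s,s')-\underline m$, and this equals $\tfrac12\big(d(\rho,\sigma)+d(\rho,\sigma')-d(\sigma,\sigma')\big)$ for $\sigma\eqo \pcH(s)$, $\sigma'\eqo \pcH(s')$. By the identity recalled after (\ref{brachpoint}), this is $d(\rho,\sigma\wedge\sigma')$. Since $\sigma\wedge\sigma'$ is the unique closest point to $\rho$ on $\lgeo\sigma,\sigma'\rgeo$ (distances to $\rho$ are strictly larger elsewhere on the arc), we conclude $\pcH(r_0)\eqo\sigma\wedge\sigma'$. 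This gives the "if" direction and the distance formula.

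For the converse, suppose $\pcH(r)\eqo\sigma\wedge\sigma'$ with $r\ino[s,s']$. Then $H_r-\underline m\eqo d(\rho,\sigma\wedge\sigma')\eqo m_H(s,s')-\underline m$, hence $H_r\eqo m_H(s,s')$.

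For $(i)$, suppose first that there is $s'$ with $H_s\eqo m_H(s,s')\leko H_{s'}$. We must show $\sigma\eqo\pcH(s)$ is not a leaf. If $\sigma\eqo\rho$ this holds by definition, so assume $\sigma\neq\rho$. By $(ii)$ (applied with $r\eqo s$), $\sigma\eqo\sigma\wedge\pcH(s')$, and $\pcH(s')\neq\sigma$ since $d_H(s,s')\eqo H_{s'}-H_s\geko0$. So $\sigma$ lies strictly inside $\lgeo\rho,\pcH(s')\rgeo$, and removing an interior point of a geodesic disconnects a real tree: $\rho$ and $\pcH(s')$ end up in different components. Hence $\sigma$ is not a leaf.

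Conversely, suppose $\sigma\eqo\pcH(s)$ is not a leaf. If $\sigma\eqo\rho$, then $H_s\eqo\underline m$, and we take $s'$ with $H_{s'}\geko\underline m$; such $s'$ exists because $H$ is nonconstant on $[0,\zeta]$ (the degenerate case of a constant $H$ with $\zeta\geko0$ gives a one-point tree, where the claim is checked directly). If $\sigma\neq\rho$, then $T\backslash\{\sigma\}$ is disconnected, so there is a component $U$ not containing $\rho$. By path-connectedness of $[0,\zeta]$ and continuity of $\pcH$, the preimage $\pcH^{-1}(U)$ is a nonempty open set. Take $s'$ in it; it lies on one side of $s$, say $s'\geko s$. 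Every path from $\pcH(s')$ to $\rho$ passes through $\sigma$, so $\sigma\ino\lgeo\rho,\pcH(s')\rgeo$, i.e.\ $\sigma\eqo \sigma\wedge\pcH(s')$. By $(ii)$, $H_s\eqo m_H(s,s')$, and $H_{s'}\geko H_s$ because $\pcH(s')\neq\sigma$.

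The main obstacle is this last step: producing $s'$ from the topological non-leaf condition. It is where continuity of $\pcH$ and the fact that $\mathcal T_H$ is the continuous image of an interval are used. If that becomes delicate, a fallback is to pick any point of $U$, write it as $\pcH(s')$ (possible since $\pcH$ is surjective), and argue through branch points as above.
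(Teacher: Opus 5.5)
The paper gives no proof of this lemma; it is left to the reader. For $H$ nonconstant on $[0,\zeta]$, your argument correctly supplies one.

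\begin{itemize}
\item Part $(ii)$ follows as you say from three facts: the additivity $d_H(s,r)+d_H(r,s')=d_H(s,s')$ when $H_r=m_H(s,s')$; the formula $d_H(\rho_H,\pcH(u))=H_u-\min_{[0,\zeta]}H$; and uniqueness of the point of $\lgeo \sigma,\sigma'\rgeo$ closest to $\rho_H$.
\item Part $(i)$ follows from $(ii)$ with $r=s$, together with the fact that a point $\sigma\neq\rho_H$ separates $\rho_H$ from $\pcH(s')$ exactly when it is an interior point of $\lgeo \rho_H,\pcH(s')\rgeo$.
\end{itemize}

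\emph{The degenerate case.} Your parenthetical about it is wrong. Take $\zeta>0$ and $H\equiv 0$ on $[0,\zeta]$, which is allowed since only $\partial$ is excluded. Then $\mathcal T_H=\{\rho_H\}$, and every $\pcH(s)$ is a non-leaf because the root is never a leaf. Yet no $s'$ with $H_s<H_{s'}$ exists, so the ``only if'' half of $(i)$ fails. The claim cannot be ``checked directly'' there. The lemma tacitly requires $H$ to be nonconstant on $[0,\zeta]$ (equivalently, $\mathcal T_H$ not a single point), and you should state that restriction explicitly.

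\emph{Your ``main obstacle''.} This step is not actually an obstacle. Surjectivity of $\pcH$ alone gives some $s'$ with $\pcH(s')\in U$; openness of $\pcH^{-1}(U)$ is never used. Then, since $\rho_H\notin U$, connectedness of the geodesic $\lgeo\rho_H,\pcH(s')\rgeo$ forces $\sigma$ to lie on it. So your fallback is in fact the whole argument.
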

\noi
\textbf{Proof.} We leave the details to the reader. \cqfd 
\begin{definition}
\label{Treedef} $(a)$ Let $\smash{(H, \zeta ) \ino \bC_{\! f}}$. For all $\smash{ \sigma,\sigma'\ino \mathcal T_{\! H}}$, we write 
$\smash{\sigma\! <_H \! \sigma' }$ if $\sigma$ is visited for the first time before $\smash{\sigma'}$, i.e., $\smash{\min 
\pcH^{-1} (\{ \sigma \})  \leko   \min \pcH^{-1} (\{ \sigma' \}) }$. It defines a linear order on $\smash{\mathcal T_{\! H}}$  that we call the \emph{$H$-exploration order}.

\smallskip

\noi
$(b)$ Let $\smash{(T,d,\rho, <), (T',d',\rho', <')}$ be ordered rooted compact real trees. We write $\smash{(T,d,\rho, <)}$ $ \equiv$ $ \smash{(T',d',\rho', <')}$ to mean that there is a bijective isometry $\smash{\phi\! : \! T \! \to \! T'}$ such that $\smash{\phi(\rho)\eqo \rho'}$ and such that $\smash{\sigma \! <\! \gamma}$ if and only if $\smash{\phi(\sigma) \! <'\! \phi(\gamma)}$, for all $\smash{\sigma, \gamma\ino T}$. \cq 
\end{definition}

 Let us discuss here basic properties of the real tree encoded by a zigzag function $(Z, \zeta_Z)$. We focus on functions  starting with a rise and ending with a fall. Namely we assume 
\begin{equation} 
\label{upanddown}
\dot{Z} (0+)\eqo 1 \quad \textrm{and} \quad \dot{Z} (\zeta_Z-)\eqo -1  \; .
\end{equation}
We denote by $(z_j)_{1\leq j\leq 2\tnn (Z)}$ the cumulated version of the minimal cell associated with $(Z, \zeta_Z)$, as defined in Lemma \ref{mincellzigzag} $(i)$: namely $\tnn (Z)\eqo \# \mathscr M (Z)$ where $\mathscr M (Z)\eqo \{ z_{2k-1} \, ; 1\leqo k\leqo \tnn (Z)\}$ is the set of times at which $Z$ reaches its strict local maxima. We set $z_0\eqo 0$ and we recall that $z_{2n}\eqo \zeta_Z$. We note that (\ref{upanddown}) entails $\mathscr M (Z) \! \subset \! (0, \zeta_Z)$. We use the notation $\mathtt{Tree} (Z)\eqo (\mathcal T_{\! Z}, d_Z, \rho_Z , <_Z)$ and we denote as usual by $\mathtt p_{Z}\! : \! [0, \zeta_Z] \! \to \! \mathcal T_{\! Z}$ the canonical projection. 
\begin{lemma}
\label{zigzagtree} Let $\smash{(Z, \zeta_Z)}$ be a zigzag function satisfying (\ref{upanddown}). 
We keep the previous notations and set $\smash{\sigma_0\eqo   \mathtt{p}_Z (0)}$, 
$\smash{\sigma_{\! \tnn (Z)}\eqo  \mathtt{p}_Z(\zeta_Z)}$ and 
$\smash{\{ \sigma_1 \! <_Z \! \ldots  \! <_Z  \sigma_{\! \tnn (Z)} \}} $ $\! :=\!$ $\smash{ \mathtt{Lf} (\mathcal T_{\! Z})}$
(recall that $\smash{ \tnn (Z)\eqo \# \mathtt{Lf} (\mathcal T_{\! Z})}$). 
Then the following holds true. 
\begin{compactenum}

\smallskip

\item[$(i)$] For all $\smash{ 1\leqo k \leqo \tnn (Z)}$, $\smash{ \mathtt p^{-1}_Z (\{ \sigma_k \}) \eqo \{ z_{2k-1} \}}$ 
and $\smash{ [z_{2k-1}, z_{2k+1}]\cap \, \mathtt p_Z^{-1} (\{ \sigma_k \wedge \sigma_{k+1}\} )\eqo \{ z_{2k} \}}$. 
Moreover $\smash{ \mathtt p_Z^{-1} (\{ \sigma_0 \}) \cap [0, z_1]\eqo \{ 0\}}$.

\smallskip

\item[$(ii)$]  Let $\smash{ (Z'\! , \zeta_{Z'})}$ be a zigzag function such that $\smash{ Z'_{\zeta_{Z'}}\! \! \eqo 
Z_{\zeta_Z}}$ and $\smash{ \mathtt{Tree} (Z) \! \equiv \! \mathtt{Tree} (Z')}$. Then, $\smash{ (Z'\!,\zeta_{Z'})} $ $\eqo$ $\smash{  (Z, \zeta_{Z})}$.  
\end{compactenum}
\end{lemma}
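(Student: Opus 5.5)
For part $(i)$ I will read everything off the explicit shape of $Z$ using Lemma \ref{leafcoding}. Write $n\eqo \tnn(Z)$. Because $Z$ starts with a rise, ends with a fall, and $(z_j)$ comes from the minimal cell population, Lemma \ref{mincellzigzag} $(ii)$ gives $0\eqo z_0\leko z_1\leko\ldots\leko z_{2n}\eqo\zeta_Z$. Thus $Z$ is strictly increasing on $[z_{2k-2},z_{2k-1}]$ and strictly decreasing on $[z_{2k-1},z_{2k}]$.

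\textbf{Leaves.} By Lemma \ref{leafcoding} $(i)$, $\mathtt p_Z(s)$ is not a leaf iff some $s'$ satisfies $Z_s\eqo m_Z(s,s')\leko Z_{s'}$.
\begin{compactenum}
\item[$-$] If $s$ is not a strict local maximum, then $Z$ increases strictly on one side of $s$, so such an $s'$ exists (take $s'$ close to $s$).
\item[$-$] If $s\eqo z_{2k-1}$, then $Z_r\leko Z_s$ for $r$ near $s$, so any $s'$ with $m_Z(s,s')\eqo Z_s$ must satisfy $Z\geqo Z_s$ on $[s\wedge s',s\vee s']$. This forces $s'\eqo s$, hence no such $s'$ exists.
\end{compactenum}
So $\mathtt p_Z(z_{2k-1})$ is a leaf, and every leaf has a preimage in $\{z_1,z_3,\ldots,z_{2n-1}\}$.

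\textbf{Preimages of leaves and order.} Next, $\mathtt p_Z^{-1}(\{\sigma\})\eqo\{z_{2k-1}\}$: any other $s$ with $d_Z(s,z_{2k-1})\eqo0$ would need $Z\geqo Z_{z_{2k-1}}$ between them, which is impossible by strictness. So the $n$ leaves are distinct. Their first-visit times are the $z_{2k-1}$ themselves, hence $\sigma_k\eqo\mathtt p_Z(z_{2k-1})$.

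\textbf{Branch points.} By Lemma \ref{leafcoding} $(ii)$, $r\ino[z_{2k-1},z_{2k+1}]$ satisfies $\mathtt p_Z(r)\eqo\sigma_k\wedge\sigma_{k+1}$ iff $Z_r\eqo m_Z(z_{2k-1},z_{2k+1})$. On this interval $Z$ falls strictly to $z_{2k}$ and then rises strictly, so the minimum is attained only at $z_{2k}$.

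\textbf{Root side.} Similarly, $Z$ is strictly increasing on $[0,z_1]$, so $d_Z(0,s)\eqo Z_s\geko0$ for $s\ino(0,z_1]$, giving $\mathtt p_Z^{-1}(\{\sigma_0\})\cap[0,z_1]\eqo\{0\}$.

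For part $(ii)$ I will reconstruct $Z$ from the ordered tree and the endpoint value $Z_{\zeta_Z}$. Let $\phi$ be the order-preserving isometry $\mathtt{Tree}(Z)\to\mathtt{Tree}(Z')$. Rooted isometries preserve leaves, and order preservation sends $\sigma_k$ to $\sigma'_k$. In particular $\tnn(Z)\eqo\tnn(Z')$, provided $Z'$ also satisfies (\ref{upanddown}).

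To get (\ref{upanddown}) for $Z'$, I will argue that the hypotheses force it. A zigzag starting with a fall has $0\ino\mathscr M(Z')$, and then $\mathtt p_{Z'}(0)$ is a leaf visited first (a tree property). I expect to handle this by comparing to $\sigma_0$; the normalisation $Z'_{\zeta_{Z'}}\eqo Z_{\zeta_Z}$ pins the levels. I am least sure of this step and may need to restrict the statement to $Z'$ also satisfying (\ref{upanddown}), which is likely the intended reading.

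The minimal cell population is determined by the successive values
$$0,\; Z_{z_1},\; Z_{z_2},\;\ldots,\; Z_{z_{2n-1}},\; Z_{z_{2n}}.$$
Because both $\mathtt p_Z(0)$ and $\mathtt p_Z(\zeta_Z)$ are defined intrinsically, the differences between these values are tree distances:
\begin{compactenum}
\item[$-$] $Z_{z_1}\eqo d(\sigma_0,\sigma_1)$;
\item[$-$] $Z_{z_{2k-1}}\! -\! Z_{z_{2k}}\eqo d(\sigma_k,\sigma_k\wedge\sigma_{k+1})$, using part $(i)$, and likewise for the rise to $z_{2k+1}$;
\item[$-$] the final descent has length $Z_{z_{2n-1}}\! -\! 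Z_{\zeta_Z}\eqo Z_{z_{2n-1}}\! -\! Z'_{\zeta_{Z'}}$, which is fixed by the normalisation.
\end{compactenum}
The isometry $\phi$ maps $\sigma_0$ and $\sigma_k\wedge\sigma_{k+1}$ (branch points are defined metrically) to their counterparts for $Z'$. Hence every increment coincides, the sign-change sequences coincide, and $Z\eqo Z'$. The main obstacle is identifying $\phi(\sigma_0)\eqo\sigma'_0$, because $\sigma_0$ is not the root. I would handle it by characterising $\sigma_0$ as the point of $\lgeo\rho_Z,\sigma_1\rgeo$ at height $Z_0\! -\!\min Z\eqo -\min Z$. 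This height is determined by $\bGam$-type data together with $Z_{\zeta_Z}$, once one notes that $\min Z$ is read off the tree as $Z_{\zeta_Z}\! -\! d(\rho,\sigma_n\text{-side endpoint})$.
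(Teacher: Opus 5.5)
Your part $(i)$ is fine; it is exactly the elementary consequence of Lemma \ref{leafcoding} that the paper leaves to the reader. Part $(ii)$ has two gaps.

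\textbf{First gap: $Z'$ is never shown to satisfy (\ref{upanddown}).} You never prove that $Z'$ starts with a rise and ends with a fall, and you propose to add this as a hypothesis. That changes the lemma. The statement does not assume it, and the paper's proof spends its first half deriving it. It is not automatic from the ordered tree. For instance, $\psi_1\eqo\phi_1\centerdot\phi_{-1}$ and $\phi_1$ code the same ordered rooted tree: a unit segment rooted at one end and explored upward. Yet $\phi_1$ ends with a rise. It is precisely the hypothesis $Z'_{\zeta_{Z'}}\eqo Z_{\zeta_Z}$ that has to exclude this, and your proposal never uses it for that purpose.

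\textbf{Second gap: the identification of $\sigma_0$ is circular.} Write $\iota\colon\mathcal T_{\! Z}\to\mathcal T_{\! Z'}$ for your isometry, to avoid a clash with the paper's $\phi_x$. You flag $\iota(\sigma_0)\eqo\mathtt p_{Z'}(0)$ as the main obstacle. Your fix locates $\sigma_0$ on $\lgeo\rho_Z,\sigma_1\rgeo$ at height $-\min Z$ and reads $\min Z$ off $\mathtt p_Z(\zeta_Z)$. But nothing tells you that $\iota(\mathtt p_Z(\zeta_Z))\eqo\mathtt p_{Z'}(\zeta_{Z'})$: the endpoint is not characterised by the metric, the root, or the first-visit order.

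\textbf{The missing idea.} The exploration order pins down $\sigma_0$ directly. $\mathtt p_Z(0)$ is the $<_Z$-least point of $\mathcal T_{\! Z}$ and $\mathtt p_{Z'}(0)$ is the $<_{Z'}$-least point of $\mathcal T_{\! Z'}$, so $\iota(\sigma_0)\eqo\mathtt p_{Z'}(0)$ immediately.

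This closes both gaps:
\begin{compactenum}
\item[$(a)$] \emph{$Z'$ starts with a rise.} Since $Z$ starts with a rise, $\sigma_0$ is not a leaf, so neither is $\mathtt p_{Z'}(0)$. If $Z'$ started with a fall, then $0\ino\mathscr M(Z')$ and your leaf criterion would make $\mathtt p_{Z'}(0)$ a leaf (it is not the root, since $\min Z'\leko 0$).
\item[$(b)$] \emph{Part $(i)$ applies to $Z'$.} Your arguments for $(i)$ go through for a zigzag that merely starts with a rise. If it ends rising, the last leaf is $\mathtt p_{Z'}(\zeta_{Z'})$. So $\iota$ sends $\sigma_k$ to $\mathtt p_{Z'}(z'_{2k-1})$ and $\sigma_k\wedge\sigma_{k+1}$ to $\mathtt p_{Z'}(z'_{2k})$.
\item[$(c)$] \emph{$Z'$ ends with a fall.} Your increment bookkeeping then gives $Z'_{z'_j}\eqo Z_{z_j}$ for all $j\leqo 2n\! -\! 1$. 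Hence $Z'_{z'_{2n-1}}\eqo Z_{z_{2n-1}}\geko Z_{\zeta_Z}\eqo Z'_{\zeta_{Z'}}$, which forces $z'_{2n-1}\leko\zeta_{Z'}$.
\item[$(d)$] \emph{Conclusion.} The final descents now match, and $Z'\eqo Z$.
\end{compactenum}

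\textbf{Comparison with the paper.} The paper uses the same least-point argument for the initial rise and deduces $\min Z\eqo\min Z'$ from $d(\rho_Z,\sigma_0)$. It then proves the final fall through the $<_{Z'}$-maximal point, and reconstructs $Z$ from the heights $d_Z(\rho_Z,\cdot)+\min Z$ rather than from successive increments. Once repaired as above, your route is equally valid and avoids the maximal-point step. Incidentally, that step is slightly delicate: when $Z_{\zeta_Z}<\min_{[0,z_{2n-1}]}Z$, the $<_Z$-maximal point is the root, not $\sigma_{\tnn(Z)}$.
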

\noi
\textbf{Proof.} $(i)$ is an elementary consequence of Lemma \ref{leafcoding}: we leave the details to the reader. 
To prove $(ii)$, we first show that $\smash{ (Z'\!,\zeta_{Z'})}$ starts with a rise 
and that $Z$ and $\smash{ Z'}$ have the same minimal value. By definition of the linear order $\smash{ <_Z}$, 
$\smash{ \sigma_0\eqo \mathtt p_Z(0)}$ is the $\smash{ <_Z}$-least point of $\smash{ \mathcal T_{\! Z}}$ 
and by (\ref{upanddown}) it cannot belong to $\smash{ \mathtt{Lf} (\mathcal T_{\! Z})}$. Thus, 
the $\smash{ <_{Z'}}$-least point of $\smash{ \mathcal T_{\! Z'}}$, which is $\smash{ \sigma'_{\! \bullet}\! :=\! \mathtt p_{Z'}(0)}$ cannot belong to $\smash{ \mathtt{Lf}(\mathcal T_{\! Z'})}$, which implies $\smash{ \dot{Z}'(0+)\eqo 1}$. 
To simplify, we set $\smash{ \underline{m}\eqo \min_{z\in [0, \zeta_Z]} Z_z}$ and $\smash{ \underline{m}'\eqo \min_{z\in [0, \zeta_{Z'}]} Z'_s}$. Then, note that 
$\smash{ -\underline{m}\eqo d_Z(\rho_Z, \sigma_0)\eqo d_{Z'}(\rho_{Z'}, \sigma'_\bullet)\eqo -\underline{m}'}$. 

To prove next that $\smash{ \dot{Z}'(\zeta_{Z'}-)\eqo -1}$, we denote by $\smash{ \sigma'_{\! *}}$ the 
$\smash{ <_{Z'}}$-maximal point of $\smash{ \mathcal T_{\! Z'}}$. By (\ref{upanddown}), the leaf 
$\smash{ \sigma_{\! \tnn (Z)}}$ of $\smash{ \cT_{\! Z}}$ is the $\smash{ <_{Z}}$-maximal point of 
$\smash{ \mathcal T_{\! Z}}$ 
and by $(i)$ we get $\smash{ d_Z (\rho_Z, \sigma_{\! \tnn (Z)})}$ $ \geko$ $\smash{  d_Z (\rho_Z, \mathtt p_Z(\zeta_Z))}$ $
\eqo$ $\smash{  Z_{\zeta_{Z}}\! -\!  \underline{m}}$. Since $\smash{ \underline{m}\eqo \underline{m}'}$ and 
since $\smash{ Z_{\zeta_{Z}}\eqo Z'_{\zeta_{Z'}}}$, we therefore get $\smash{ d_{Z'} (\rho_{Z'}, \sigma'_{\! *})} $ 
$ \geko$ $\smash{  d_{Z'} (\rho_{Z'}, p_{Z'}(\zeta_{Z'})) }$ $\eqo$ $\smash{  Z'_{\zeta_{Z'}}\! -\!  \underline{m}'}$. 
But if $\smash{ \dot{Z}'(\zeta_{Z'}-)\eqo 1}$, then $\smash{ \mathtt{p}_{Z'} (\zeta_{Z'})\eqo \sigma'_*}$, which contradicts the previous strict inequality. Therefore,  
$\smash{ \dot{Z}'(\zeta_{Z'}-)\eqo -1}$ and $\smash{ (Z'\! , \zeta_{Z'})}$ satisfies (\ref{upanddown}). 

We set $\smash{ \mathtt{Lf} (\mathcal T_{\! Z'}) \eqo \{ \sigma'_1\! <_{Z'}\!  \ldots <_{Z'} \! \sigma'_{\! \mathtt n(Z')}\}}$. 
We first note that $\smash{ \tnn (Z) \eqo \tnn(Z')}$ and 
we apply $(i)$ to $Z$ and $\smash{ Z'}$ to see that 
$Z$ (resp.~$\smash{ Z'}$) is the unique zigzag function taking successively the values $0$, $\smash{ d_Z (\rho_Z, \sigma_1)+ \underline{m}\, }$,  $\smash{ d_Z (\rho_Z, \sigma_1\wedge \sigma_2) + \underline{m}\, }$, $\, \ldots\, $, $\smash{ d_Z (\rho_Z, \sigma_{\tnn (Z)-1}\wedge \sigma_{\tnn (Z)}) + \underline{m}\, }$,  
$\smash{ d_Z (\rho_Z, \sigma_{\tnn (Z)}) + \underline{m}\, }$ and $\smash{ Z_{\zeta_Z}}$ 
(resp.~the values $0$, $\smash{ d_{Z'} (\rho_{Z'}, \sigma'_1)+ 
\underline{m}'}$,  $\smash{ d_{Z'} (\rho_{Z'}, \sigma'_1\wedge' \sigma'_2) +
 \underline{m}'}$, $\, \ldots\, $, $\smash{ d_{Z'} (\rho_{Z'}, \sigma'_{\! \tnn(Z')-1} } $ $\!\wedge' \! $ $\smash{  \sigma'_{\! \tnn(Z')}) + \underline{m}'}$,  
$\smash{ d_{Z'} (\rho_{Z'}, \sigma'_{\! \tnn(Z')}) + \underline{m}'}$ and $\smash{ Z'_{\zeta_{Z'}}}$). This implies 
$\smash{ (Z'\!,\zeta_{Z'})\eqo (Z, \zeta_{Z})}$, which completes the proof of the lemma. 
\cqfd

\begin{remark}
\label{manycodings} Lemma \ref{zigzagtree} $(ii)$ is not trivial because except for the point tree, each ordered rooted compact real is coded by uncountably many  continuous functions. Namely, let $\smash{ (H,\zeta), (H'\! , \zeta') \ino \bC_{\! f}}$. Let us assume that there is a nondecreasing continuous and surjective function $\smash{ J\! :\! [0, \zeta]\! \to \! [0, \zeta']}$ such that 
$\smash{ H'_{\! J(t)}\eqo H_t}$, $\smash{ t\ino [0, \zeta]}$. Then $\smash{ \mathtt{Tree} (H)\equiv \mathtt{Tree}(H')}$.  \cq 
\end{remark}

\subsection{Subtree decompositions}
\label{subtreesec}
In this section we provide a decomposition of the real tree encoded by a continuous function along the subtree spanned by a finite set of times (i.e., a finite dimensional \emph{marginal tree}) by providing a local time and a reflected process 
on this subtree. More precisely, we introduce the following. 
\begin{definition}
\label{margtreedef} 
Let $\smash{ (H, \zeta)\ino \bC_{\! f}}$ and let $\smash{\bss \eqo (s_k)_{1\leq k\leq n}}$ be real numbers such that $\smash{0\leqo s_1 \! \leq \ldots \leq \! s_n \leqo \zeta}$. We set $\smash{s_0\eqo 0}$ and $\smash{s_{n+1} \eqo \zeta}$ and we suppose that 
\begin{equation}
\label{Hascend}
\smash{m_H(0,s_1) \eqo 0 \quad \textrm{and} \quad 
m_H(s_n, \zeta)\eqo H_\zeta \; .}
\end{equation}
We use the notation $\smash{\mathtt{Tree} (H)\eqo (\mathcal T_{\! H}, d_H, \rho_H, <_H)}$ and we denote the canonical projection by $\smash{\pcH}$, as usual. We introduce the following.

\smallskip

\noi
$(a)$ The \emph{$\bss$-marginal subtree of $\smash{\mathtt{Tree} (H)}$} is the finite type subtree of $\smash{\mathcal T_{\! H}}$ given by 
\begin{equation}
\label{margtreeexpli}
\smash{T=\bigcup_{^{1\leq k\leq n}} \lgeo \rho_H, \pcH(s_k)\rgeo \; .}
\end{equation}

\noi
$(b)$ The cell population $\smash{ \bcc \eqo ((x^+_k, x^-_k))_{1\leq k\leq n }}$ 
associated with $\smash{(H, \zeta, \bss)}$ is given by 
\begin{equation}
\label{celpopmargtree}
\smash{\forall k\ino \{ 1, \ldots, n\}, \qquad x^+_k \eqo H_{s_k}\! -\! m_H(s_{k-1}, s_k) \quad \textrm{and} \quad x^-_k \eqo H_{s_k}\! -\! m_H(s_k,s_{k+1} )  .}
\end{equation}

\noi
$(c)$ For all $\smash{\sigma \ino \mathcal T_{\! H}}$, we define 
$\smash{\mathtt{proj}_T(\sigma) \ino T}$ as the closest point of $T$ to $\sigma$, i.e.,  
$\smash{\lgeo \rho_H,  \mathtt{proj}_T (\sigma) \rgeo} $ $\eqo$ $\smash{ \lgeo \rho_H, \sigma \rgeo \cap T}$. We call $\smash{\mathtt{proj}_T\! : \!  \mathcal T_{\! H}\! \to \! T}$ the \emph{projection on} $T$, which is $1$-Lipschitz.  \cq 

\end{definition}
\begin{remark}
\label{subtreerem} $(a)$ By (\ref{Hascend}) we get
$\smash{ \pcH(0) \ino \lgeo \rho_H, \pcH(s_1)\rgeo}$ and $\smash{ \pcH(\zeta) \ino \lgeo \rho_H, \pcH(s_n)\rgeo}$.

\smallskip

\noi
$(b)$ Let us denote by $\smash{ (Z, \zeta_Z)\eqo (\mathtt{Z}_\cdot (\bcc) , \tmm(\bcc))}$ the zigzag function encoding the cell population $\bcc$ associated with $\smash{ (H, \zeta, \bss)}$ as defined above (see also Definition \ref{concadef} for 
$\smash{ \mathtt{Z}_\cdot (\bcc)}$).
We get $\smash{ Z_{\zeta_{Z}}\eqo H_\zeta}$ and $\smash{ \underline{Z}_{\zeta_Z}\eqo \underline{H}_\zeta}$, and we also check that $\smash{ \mathtt{Tree} (Z)\equiv \big( T, d_H, \rho_H, <_H)}$. \cq 
\end{remark}

The following lemma provides a decomposition of the function $(H,\zeta)$ along the subtree $T$ via a local time at $T$ and a reflected process on $T$. 
\begin{lemma}
\label{loctimlem} Let $\smash{ (H, \zeta)\ino \bC_{\! f}}$ and let $\smash{ \bss \eqo (s_k)_{1\leq k\leq n}}$ be real numbers such that $\smash{ 0\leqo s_1 \! \leq \ldots \leq \! s_n \leqo \zeta}$. We set $\smash{ s_0\eqo 0}$ and $\smash{ s_{n+1} \eqo \zeta}$ and we assume (\ref{Hascend}). We denote by $\smash{ (Z, \zeta_Z)}$ the zigzag function of the cell population associated with $\smash{ (H, \zeta, \bss)}$ as in Definition \ref{margtreedef} $(b)$. We denote the tree coded by $(H, \zeta)$ by  
$\smash{ \mathtt{Tree} (H)\eqo (\mathcal T_{\! H}, d_H, \rho_H, <_H)}$ and we recal that $\smash{ \pcH \! :\!  [0, \zeta] \! \to \! \mathcal T_{\! H}}$ is the canonical projection. We denote by $T$ the $\bss$-marginal subtree of $\smash{ \mathcal T_{\! H}}$ as in (\ref{margtreeexpli}). Then the following holds true. 

\begin{compactenum}

\smallskip

\item[$(i)$] For all $\smash{ t\ino [0, \zeta]}$, we set $\smash{\mathcal H_t \eqo d_H \big( \pcH(t) , \mathtt{proj}_T(\pcH(t)) \big)}$. Then $\smash{ (\mathcal H, \zeta) \ino \bC_{\! f}}$ and $\smash{\mathcal H_0\eqo \mathcal H_\zeta\eqo 0}$. We call 
$\smash{(\mathcal H, \zeta)}$ the \emph{reflected process on $T$.} 
 
\smallskip

\item[$(ii)$] There is a \emph{unique} continuous nondecreasing and surjective function $\smash{ J\! :\! [0, \zeta] \! \to \! [0, \zeta_Z]}$ such that $\smash{ d_H (\rho_H, \mathtt{proj}_T (\pcH(t)))\eqo Z_{\! J_t}\! -\! \underline{Z}_{\zeta_Z}}$. We call $J$ the \emph{local time of $H$ at $T$}. We thus get $\smash{H_t\eqo \mathcal H_t + Z_{\! J_t}}$, for all $\smash{t\ino [0, \zeta]}$. 

\smallskip

\item[$(iii)$] For all $\smash{ k\ino \{ 0, \ldots, n\}}$ and all $\smash{ t\ino [s_k, s_{k+1}]}$, we get $\smash{Z_{J_t}\eqo m_H(t,s_k) \vee m_H(t,s_{k+1}) }$ and 
\end{compactenum}

\vspace{-5mm}

\begin{equation}
\label{decompexpli}
\smash{J_t =   H_{s_k} \! -\! m_H(s_k,s_{k+1}) + m_H(t, s_{k+1}) \! -\! m_H(t,s_k)+ \!\! \! \sum_{^{0\leq j\leq k}} \!\! d_H(\pcH(s_{j-1}), \pcH( s_j)). }
\end{equation}
\end{lemma}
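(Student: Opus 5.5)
The plan is to derive all three items from one geometric fact. For $t\in[s_k,s_{k+1}]$, the projection $\mathtt{proj}_T(\pcH(t))$ is the highest point of $\lgeo \rho_H,\pcH(t)\rgeo$ that belongs to one of the branches $\lgeo\rho_H,\pcH(s_j)\rgeo$, and only the two neighbouring branches $j=k,k+1$ matter. Throughout, write $\underline m=\min_{[0,\zeta]}H$. By (\ref{Hascend}) we have $\underline m=\underline H_\zeta$, and by Remark \ref{subtreerem} $(b)$ this also equals $\underline Z_{\zeta_Z}$.

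\emph{Item $(i)$.} Since $\mathtt{proj}_T$ is $1$-Lipschitz and $\pcH$ and $d_H$ are continuous, $\mathcal H$ is continuous. Remark \ref{subtreerem} $(a)$ gives $\pcH(0),\pcH(\zeta)\in T$, hence $\mathcal H_0=\mathcal H_\zeta=0$.

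\emph{The key fact.} For each $j$, the intersection $\lgeo\rho_H,\pcH(t)\rgeo\cap\lgeo\rho_H,\pcH(s_j)\rgeo$ is $\lgeo\rho_H,\pcH(t)\wedge\pcH(s_j)\rgeo$. By Lemma \ref{leafcoding} $(ii)$, this branch point is at distance $m_H(t,s_j)-\underline m$ from $\rho_H$. Hence
$$d_H(\rho_H,\mathtt{proj}_T(\pcH(t)))=\max_{1\le j\le n} m_H(t,s_j)-\underline m .$$
For $j\le k$ the interval between $t$ and $s_j$ contains the interval between $t$ and $s_k$, so $m_H(t,s_j)\le m_H(t,s_k)$. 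Symmetrically, $m_H(t,s_j)\le m_H(t,s_{k+1})$ for $j\ge k+1$. The maximum therefore reduces to $m_H(t,s_k)\vee m_H(t,s_{k+1})$, which gives the first claim of $(iii)$ once $J$ is constructed. The boundary pieces $k=0$ and $k=n$ use (\ref{Hascend}) in the same way.

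Since $\mathtt{proj}_T(\pcH(t))\in\lgeo\rho_H,\pcH(t)\rgeo$, we also get $H_t-\underline m=d_H(\rho_H,\pcH(t))=\mathcal H_t+d_H(\rho_H,\mathtt{proj}_T(\pcH(t)))$. This yields $H_t=\mathcal H_t+Z_{J_t}$ as soon as $J$ satisfies the identity in $(ii)$.

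\emph{Construction of $J$ (items $(ii)$ and $(iii)$).} Define $J$ on each $[s_k,s_{k+1}]$ by (\ref{decompexpli}). Let $r_k$ be the first argmin of $H$ on $[s_k,s_{k+1}]$.
\begin{itemize}
\item On $[s_k,r_k]$, $m_H(t,s_{k+1})=m_H(s_k,s_{k+1})$ is constant and $m_H(t,s_k)$ decreases from $H_{s_k}$ down to $m_H(s_k,s_{k+1})$.
\item On $[r_k,s_{k+1}]$, $m_H(t,s_k)$ is constant and $m_H(t,s_{k+1})$ increases up to $H_{s_{k+1}}$.
\end{itemize}
So $J$ is continuous and nondecreasing on each piece. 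I would then check from (\ref{celpopmargtree}) that the constant $\sum_{j\le k}d_H(\pcH(s_{j-1}),\pcH(s_j))$, combined with the other terms, places $J_{s_k}$ at the sign-change $z_{2k-1}$ of $\overline{\bcc}$ (the $k$-th peak) and places $J_{r_k}$ at $z_{2k}$. This uses $d_H(\pcH(s_{j-1}),\pcH(s_j))=x^-_{j-1}+x^+_j$.

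On $[s_k,r_k]$ the function $J$ then runs through the descending side of cell $k$, where $Z$ has slope $-1$, and $Z_{J_t}=m_H(t,s_k)$. On $[r_k,s_{k+1}]$ it runs through the ascending side of cell $k+1$, where $Z$ has slope $+1$, and $Z_{J_t}=m_H(t,s_{k+1})$. In both cases this equals the maximum computed above. Consecutive pieces match at the points $s_k$, so $J$ is globally continuous and nondecreasing. Surjectivity follows from $J_0=0$ and $J_\zeta=\zeta_Z$.

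\emph{Uniqueness and the main obstacle.} The step I expect to need most care is the bookkeeping of (\ref{decompexpli}) against $\overline{\bcc}$: the indexing of the sum, the conventions for $k=0$ and $k=n$ and $s_{-1}$, and degenerate cells with $x^\pm_k=0$. For uniqueness, I would argue that any continuous nondecreasing surjective $J'$ with the same values of $Z_{J'_t}$ must satisfy $J'_{s_k}=z_{2k-1}$ and $J'_{r_k}=z_{2k}$. This is forced by monotonicity together with the attained extreme values $H_{s_k}$ and $m_H(s_k,s_{k+1})$. Between these times $Z$ is strictly monotone, hence injective, so $J'_t$ is determined by $Z_{J'_t}$.
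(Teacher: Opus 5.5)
Your proposal is correct and follows the paper's proof essentially step by step. The paper also obtains $d_H(\rho_H,\mathtt{proj}_T(\pcH(t)))=\max_{0\le k\le n+1}m_H(t,s_k)-\underline H_\zeta$, phrasing it through $\mathcal H_t=\min_k\,(H_t-m_H(t,s_k))$ instead of through the geodesic intersections $\lgeo\rho_H,\pcH(t)\rgeo\cap\lgeo\rho_H,\pcH(s_j)\rgeo$. It then reduces this to $m_H(t,s_k)\vee m_H(t,s_{k+1})$ on $[s_k,s_{k+1}]$, and builds $J$ piecewise on $[s_k,t_k]$ and $[t_k,s_{k+1}]$ ($t_k$ an argmin) so that $J_{s_k}=z_{2k-1}$ and $J_{t_k}=z_{2k}$, which is exactly your bookkeeping.

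The one thing you add is an argument for uniqueness, which the paper leaves implicit. It does go through: if $t_*$ is the infimum of the open set $\{J\neq J'\}$, then $J_{t_*}=J'_{t_*}$, and $Z$ is injective on a small interval to the right of this common value, which forces $J=J'$ just after $t_*$.
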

\begin{remark} 
\label{connTT}
\emph{(Connected components of $\smash{ \mathcal T_{\! H} \backslash T}$)} We keep the same notations as in Lemma \ref{loctimlem} and we denote by $\smash{ (\alpha_j, \beta_j)}$, $\smash{ j\ino \mathcal J}$, the connected components of 
$\smash{ \{ t\ino [0, \zeta] : \mathcal H_t \geko 0\}}$. 
We then set $\smash{ \mathscr T^o_j\eqo \pcH((\alpha_j, \beta_j)) \! \subset \! \cT_{\! H}}$. Then, 
the $\smash{ \mathscr T^o_j}$, $\smash{ j\ino \mathcal J}$, are the connected components 
of $\smash{ \cT_{\! H}\backslash T}$. Moreover $\smash{ \gamma_j\! :=\! \pcH(\alpha_j) \eqo \pcH (\beta_j)}$ is such that 
$\smash{ \{ \gamma_j\} \cup \mathscr T^o_j}$ is the closure of $\smash{ \mathscr T^o_j}$. \cq  
\end{remark}
\noi
\textbf{Proof of Lemma \ref{loctimlem}.} Since $\smash{ \mathtt{proj}_T}$ and $\smash{ \pcH}$ are continuous, so is 
$\smash{\mathcal H}$. 
By Remark \ref{subtreerem} $(a)$, $\smash{\pcH(0)}$ and $\smash{\pcH(\zeta)}$ belong to $T$. Thus, $\smash{\mathcal H_0\eqo \cH_\zeta\eqo 0}$ and it proves $(i)$.  
We next prove $(ii)$ and $(iii)$ in the same time. Let $\smash{ \bcc}$ be the cell population associated with $\smash{(H, \zeta, \bss)}$ as in Definition \ref{margtreedef} $(b)$. We denote by 
$\smash{\overline{\bcc}\eqo (z_j)_{1\leqo j\leq 2n}}$ the cumulated version of $\smash{\bcc}$. We set $\smash{z_0\eqo 0}$ and we recall that $\smash{z_{2n}\eqo \tmm (\bcc)\eqo \zeta_Z}$.  
Let $\smash{0\leqo k\leqo n}$ and $\smash{t_k\ino [s_k, s_{k+1}]}$ be such that $\smash{H_{t_k}\eqo m_H(s_k,s_{k+1})}$. By definition of $\smash{Z_\cdot \eqo \mathtt{Z}_\cdot (\bcc)}$, we get 
\begin{equation}
\label{expliZ}
\smash{H_{s_k}\eqo Z_{z_{2k-1}}\, , \; 1\leqo k\leqo n \quad \textrm{and} \quad m_H(s_k, s_{k+1}) \eqo H_{t_k}\eqo Z_{z_{2k}}\, , \; 0\leqo k\leqo n\; .}
\end{equation} 
Next, for all $\smash{t\ino [0, \zeta]}$ we observe that $\smash{d_H \big(\pcH(t), \lgeo \rho_H, \pcH(s_k) \rgeo \big) } $ $\eqo$ $\smash{ d_H(\pcH (t), \pcH (t) \!  \wedge \! \pcH(s_k))} $ $\eqo$ $\smash{ H_t \! -\! m_H(t,s_k)}$. 
Then, by definition of $T$, 
$\smash{d_H \big( \pcH(t), T\big)\eqo \min_{1\leq k\leq n}   d_H  \big( \pcH(t),  \lgeo \rho_H, \pcH(s_k) \rgeo \big)}$. 
By (\ref{Hascend}) and by Remark \ref{subtreerem} $(a)$ we furthermore get 
$$\smash{\mathcal H_t\eqo  d_H \big( \pcH(t) , \mathtt{proj}_T(\pcH(t)) \big) \eqo d_H \big( \pcH(t), T\big)\eqo  \min_{0\leq k\leq n+1}   d_H  \big( \pcH(t),  \lgeo \rho_H, \pcH(s_k) \rgeo \big).}$$
Therefore
$\smash{d_H \big( \pcH(t), \mathtt{proj}_T(\pcH(t) )\big) \eqo H_t \! -\! H'_t}$ where we have set $\smash{H'_t 
\eqo \max_{0\leq k\leq n+1} m_H(t,s_k)}$ for all $\smash{t\ino [0, \zeta]}$. 
Thus, $\smash{d_H \big(\rho_H ,  \mathtt{proj}_T(\pcH(t) )\big)\eqo H'_t \! -\!   \underline{H}_\zeta}$. 
For $0\leqo k\leqo n$ and $t\ino [s_k, s_{k+1}]$, we next observe that $\smash{ H'_t\eqo m_H(t,s_k) \vee m_H(t, s_{k+1})}$ and the following holds. 

\noi
$1^o)$ $\smash{ H'}$ is nonincreasing on $\smash{ [s_k, t_k]}$ from the value $\smash{ H'_{s_k}\eqo H_{s_k}\eqo Z_{z_{2k-1}}}$ to the value $\smash{ H'_{t_k}\eqo H_{t_k}\eqo Z_{z_{2k}}}$. Assume that $J$ is well-defined at time $\smash{ s_k}$ and such that $\smash{ J_{s_k}\eqo z_{2k-1}}$. We define $J$ for all $\smash{ t\ino [s_k, t_k]}$ by setting 
$\smash{ J_t\eqo J_{s_k} + H_{s_k} \! -\! m_H (t,s_k)} $. Then we get $\smash{ H'_t\eqo Z_{\! J_t}}$ and $\smash{ J_{t_k}\eqo z_{2k}}$.

\noi
$2^o)$ $\smash{ H'}$ is nondecreasing on $\smash{ [t_k, s_{k+1}]}$ from the value $\smash{ H'_{t_k}\eqo H_{t_k}\eqo Z_{z_{2k-1}}}$ to the value $\smash{ H'_{s_{k+1}} } $ $\eqo$ $\smash{  H_{s_{k+1}}} $ $\eqo$ $\smash{  Z_{z_{2k+1}}}$. Assume that $J$ is well-defined at time $\smash{ t_k}$ and such that $\smash{ J_{t_k}\eqo z_{2k}}$. We define $J$ for all $\smash{ t\ino [t_k, s_{k+1}]}$ by setting 
$\smash{ J_t\eqo J_{t_k} + m_H (t,s_{k+1})\! -\! m_H(s_k, s_{k+1}) }$. Thus, $\smash{ H'_t\eqo Z_{\! J_t}}$ and $\smash{ J_{s_{k+1}}\eqo z_{2k+1}}$. 

\noi
This provides a recursive construction of $J$ which satisfies $(ii)$ and $(iii)$. \cqfd

\subsection{Erasure of functions}
\label{erafunsec}
In this section we introduce erasure of functions, which corresponds to erasure of the encoded real trees. 
A key point point is to define $b$-unicellular functions and to provide for any path a unique decomposition into $b$-unicellular functions.

\smallskip 

\noi
\textbf{Decomposition into unicellular functions.}  For all  $\smash{ (H, \zeta) \ino \bC}$ we recall from (\ref{minreflecHdef}) the definition of 
$\smash{ \underline{H}}$ and of $\smash{ H^+\! \eqo H \! -\! \underline{H}}$, and we denote by $\smash{ (\overline{H}, \zeta)\ino \mathbf C}$ the maximal function: namely, for all $\smash{ t\ino \bbR_+}$ we  set $\smash{ \, \overline{\! H}_t\eqo \max_{s\in [0, t]} H_s}$. We easily check that 
$\smash{ (H, \zeta) \! \mapsto \! (\underline{H}, \zeta)}$, $\smash{ (H, \zeta) \! \mapsto\!  (H^+, \zeta)}$ and $\smash{ (H, \zeta) \! \mapsto \!  \, (\overline{\! H}, \zeta)}$ are continuous functions from $\bC$ to $\bC$. It also implies that 
\begin{equation}
\label{contimH}
\smash{ \big(s,s', (H, \zeta) \big) \ino \bbR_+^2 \! \times \! \bC \mapsto m_H(s,s') \ino \bbR \; \, \textrm{and} \; \,  (H, \zeta) \ino  \bC_{\! f} \mapsto \, \overline{\! H}_{\! \zeta} \ino \bbR  \; \,  \textrm{are continuous.} }
\end{equation}
Let $\smash{ t\ino \bbR_+}$. We next recall from (\ref{shiftHdef}) the definition of $\smash{ \theta_t H}$, whose lifetime is $\smash{ (\zeta \! -\! t)_+}$. We also define the function stopped at $t$, $\smash{ H_{\cdot \wedge t}}$ and the function reversed at $t$ $\smash{ \overleftarrow{H}^t\! :=\! H_{(t- \, \cdot)_+}}$, both with lifetime $t$. If $\smash{ \zeta\leko \infty}$, we simply denote $\smash{ \overleftarrow{H}^\zeta}$ by $\smash{ \overleftarrow{H}}$. 
We easily check that $\smash{ (t, (H, \zeta)) \! \mapsto \! (\theta_t H, (\zeta \! -\! t)_+)}$, 
$\smash{  (t, (H, \zeta)) \! \mapsto \! ((H_{\cdot \wedge t}, t)}$ and $ \smash{ (t, (H, \zeta)) \! \mapsto \! ( \overleftarrow{H}^t,t)}$ are continuous functions and we also check that $\smash{ (H, \zeta)\! \mapsto \! (\overleftarrow{H}, \zeta)}$ is continuous on $\bC_{\! f}$. Then we introduce the following notation for hitting times: for all $\smash{ x\ino \bbR}$ we set 
\begin{equation}
\label{hittim}
\smash{ \varrho_{x} (H) \eqo \inf \big\{ t\ino (0, \infty): H_s \eqo x \big\}\; , }
\end{equation}
with the convention $\inf \emptyset \eqo \infty$. Since $\smash{ H_0\eqo 0}$, $\smash{ \varrho_0 (H)}$ is a special case. 
\begin{lemma}
\label{regprophit} The following assertions holds true. 
\begin{compactenum}

\smallskip

\item[$\!\!\! (i)$] For all $\smash{ x\ino \bbR}$, $\smash{ (H, \zeta)\ino \bC \! \mapsto \! \varrho_x(H)\ino [0, \infty]}$ is measurable.  

\smallskip

\item[$\!\!\! (ii)$] $\! \smash{ (y, (H',\zeta'))\! \ino \bbR\! \times \! \bC \! \mapsto \!  \varrho_y (H')\ino [0, \infty]}$ is continuous at $\smash{ (x, (H, \zeta))\ino (-\infty, 0] \! \times \! \bC}$ if
\begin{equation}
\label{hittcontcrit}
\smash{ \varrho_x(H) \leko \infty \quad \textrm{and} \quad \forall \varepsilon \ino (0, \infty), \quad H_{\varrho_x(H)} \eqo x >
m_H(\varrho_{x} (H), \varrho_x(H)+ \varepsilon) }
\end{equation}
\end{compactenum}
\end{lemma}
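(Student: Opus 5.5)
For $(i)$ I would write $\varrho_x$ as a countable limit of measurable functionals. Fix $x$ and work with the open sublevel sets. For $x<0$ one has
$\{\varrho_x(H) \leqo t\} = \bigcap_{p\in\bbN^*}\bigcup_{q\in \mathbb Q\cap(0,t]} \{ |H_q - x| < 1/p\}$ by continuity of $H$ (the infimum is attained on the closed set $\{H=x\}\cap[0,t]$ if nonempty, and rationals approximate it). Each $\{|H_q-x|<1/p\}$ is measurable since $(H,\zeta)\mapsto H_q$ is continuous on $\bC$. For $x\geko 0$ the same formula works. For the special case $x=0$, where the infimum is over $(0,\infty)$, I would replace $(0,t]$ by $\mathbb Q\cap[1/m,t]$ and take a union over $m$. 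This gives measurability of $\{\varrho_x\leqo t\}$ for every $t$, hence $(i)$.

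For $(ii)$, take $(x_n,H^n)\to(x,H)$ with $x\leqo 0$, and set $r=\varrho_x(H)<\infty$. I would prove $\limsup_n \varrho_{x_n}(H^n)\leqo r$ and $\liminf_n \varrho_{x_n}(H^n)\geqo r$ separately.

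\emph{Upper bound.} Fix $\varepsilon>0$. By (\ref{hittcontcrit}) there is $u\in(r,r+\varepsilon)$ with $H_u<x$. Uniform convergence on $[0,r+\varepsilon]$ gives $H^n_u<x_n$ for large $n$. Now $H^n_0=0\geqo x_n$ for large $n$ (when $x<0$, since then $x_n<0$ eventually; when $x=0$, see below). The intermediate value theorem then gives a hitting time of $x_n$ in $[0,u]$, except in the degenerate case where it is $0$, which must be handled when $x_n\geqo 0$. In the case $x=0$, condition (\ref{hittcontcrit}) says $H$ immediately goes strictly below $0$ after $r$. If $x_n>0$, I use instead a point where $H^n<x_n$ together with the fact that $\varrho_{x_n}$ is taken over $(0,\infty)$, so it is at most the first return. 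This needs a little care, and it is the step where $x\leqo 0$ matters.

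\emph{Lower bound.} Fix $\varepsilon>0$. If $x<0$, then on $[0,r-\varepsilon]$ we have $H>x$, so $\min_{[0,r-\varepsilon]}H - x=\delta>0$, and uniform convergence yields $H^n>x_n$ on that interval for large $n$. Hence $\varrho_{x_n}(H^n)\geqo r-\varepsilon$. If $x=0$, then $H$ is nonzero on $(0,r)$, but near $0$ the value $H^n$ may hit $x_n$ immediately. Here I would use that the convergence is only claimed with the same definition. On $[\eta,r-\varepsilon]$ we have $|H|\geqo\delta>0$, but near time $0$, $H^n$ may cross $x_n$ at small times. This is the main obstacle: I expect continuity at $x=0$ to require $x_n\to0$ with $H$ not oscillating at $0$. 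I would try to handle it by noting that if $H>0$ on $(0,r)$ or $H<0$ on $(0,r)$ (by connectedness $H$ has constant sign there), then for $x_n$ of the opposite sign no early hitting occurs. Otherwise I would restrict to the case $x<0$, which is the one used for the hitting times $\bvaro_{-y}$ in the applications.

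Finally, I conclude by letting $\varepsilon\to0$, combining with the arctan metric on lifetimes where needed.
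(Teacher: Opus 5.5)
Your part $(i)$ is fine, and so is your proof of $(ii)$ when $x<0$. There $r=\varrho_x(H)\in(0,\infty)$ and $H>x$ on $[0,r)$, so $\min_{[0,r-\varepsilon]}H>x$ gives $\varrho_{x_n}(H^n)\geq r-\varepsilon$ eventually. For the other side, a time $u\in(r,r+\varepsilon)$ with $H_u<x$, together with $H^n_0=0>x_n$ and the intermediate value theorem, gives $\varrho_{x_n}(H^n)<r+\varepsilon$.

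The gap is the case $x=0$. The statement explicitly includes it, you leave it open, and the two repairs you sketch do not work. For $x_n>0$ you have $H^n_0=0<x_n$ and $H^n_u<x_n$, so there is no crossing to exploit, and ``at most the first return'' gives no bound on $\varrho_{x_n}(H^n)$. In the lower bound, the sign of $H$ on $(0,r)$ does not prevent early hitting. $H^n$ is only uniformly close to $H$, so it can cross a level $x_n\to 0$ (or $0$ itself) at small positive times, whatever the sign of $x_n$.

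In fact no argument can close this gap, because $(ii)$ is false at $x=0$. Take $H=\phi_1\centerdot\phi_{-2}$, i.e.\ $H_t=t$ on $[0,1]$, $H_t=2-t$ on $[1,3]$, and $H_t=-1$ afterwards. Then $\varrho_0(H)=2$ and $m_H(2,2+\varepsilon)=-(\varepsilon\wedge 1)<0$ for every $\varepsilon>0$, so (\ref{hittcontcrit}) holds. Yet $(1/n,H)\to(0,H)$ while $\varrho_{1/n}(H)=1/n\to 0\neq 2$. The failure persists with the level frozen at $0$: $H^n_t=H_t-2\min(t,1/n)$ converges to $H$ uniformly, but $\varrho_0(H^n)=2/n\to 0$. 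A suitable small ramp near time $0$ destroys continuity in the same way at every $(0,H)$ with $\varrho_0(H)>0$.

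So the right conclusion is to state $(ii)$ for $x<0$ only, which is exactly what your argument proves. You should also note that this is the only case the paper uses: the hitting times $\bvaro_{-x}$, $\varrho_{-x}(Y_{b,\cdot})$ and $\varrho_{-x}(B_b)$ with $x>0$. Positive levels, as for $\mathtt t(b)$, reduce to negative ones via $H\mapsto -H$.
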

\noi
\textbf{Proof.} It is left to the reader. \cqfd

\smallskip

Let $b\ino (0, \infty)$. We now introduce functions that encode real trees whose $b$-erasure reduces to a single branch, i.e., whose associated cell population has a single cell. 
\begin{definition}
\label{elemfundef} (\emph{Descents, rises, excursions and unicellular functions}) 
Let $\smash{ x, b\ino (0, \infty)}$, let $\smash{ x^+}$, $\smash{x^-}$ $\ino$ $\smash{\bbR_+}$ and let $\smash{ (H, \zeta)\ino \bC_{\! f}}$.

\smallskip

\noi
$(a)$  $\smash{ (H, \zeta)}$ is a \emph{descent by $x$ with amplitude less than $b$} if 
$\smash{H_\zeta\eqo \underline{H}_\zeta\eqo -x}$ and $ \smash{\max_{t\in [0, \zeta] }H^+_t \leko  b}$. We denote by $\smash{\bC^{_\downarrow}_{^{x,b}}}$ the space of such functions. By convention we set $\smash{\bC^{_\downarrow}_{^{0,b}}\eqo \{ \partial \}}$, the null lifetime function. We also set $\smash{\bC^{_\downarrow}_{^{b}} \eqo \bigcup_{y\in \bbR_+} \bC^{_\downarrow}_{^{y,b}}}$, which is the space of \emph{descents with amplitude less than $b$.} 

\smallskip

\noi
$(b)$ $\smash{(H, \zeta)}$ is a \emph{rise by $x$ with amplitude less than $b$} if for all $\smash{ t\ino (0, \zeta]}$, $\smash{0 \leko H_t \leko m_H(t,\zeta) + b }$ and $\smash{H_\zeta \eqo x }$.
We denote by $\smash{\bC^{_\uparrow}_{^{x,b}}}$ the space of such functions. By convention we set  $\smash{\bC^{_\uparrow}_{^{0,b}}\eqo \{ \partial \}}$ and we also set $\smash{\bC^{_\uparrow}_{^{b}} \eqo \bigcup_{y\in \bbR_+} \bC^{_\uparrow}_{^{y,b}}}$, which is the space of \emph{rises with amplitude less than $b$.}

\smallskip

\noi
$(c)$ $\smash{(H,\zeta)}$ is an \emph{excursion} if $\smash{H_t \geko H_0\eqo H_\zeta\eqo 0 }$ for all $\smash{t\ino (0, \zeta)}$. We denote by $\smash{\mathtt{Exc}}$ the space of excursions. We then recall the notation $\smash{\Gamma (H) \eqo \, \overline{\! H}_{\! \zeta}}$ for the height of $\smash{(H, \zeta)}$ and we introduce the space $\smash{\mathtt{Exc}(b)\eqo \big\{ (H,\zeta) \ino \mathtt{Exc} : \Gamma (H)\eqo b \big\}}$ of excursions whose height is equal to $b$ (and by convention 
$\smash{\mathtt{Exc}(0)}$ reduces to the null lifetime function).  

\smallskip

\noi
$(d)$ $\smash{(H,\zeta)}$ is \emph{$b$-unicellular with initial cell $\smash{(x^+,x^-)}$} if $\smash{H\eqo H^{{1}}  \! \centerdot   H^{{2}}\!  \centerdot H^{{3}}}$ where $\smash{(H^{{1}}, \zeta_1)  \ino \bC^{_\uparrow}_{^{\!x^+\!\!, \,  b}}}$, $\smash{(H^{{2}}, \zeta_2)  \ino \mathtt{Exc} (b)}$ and $\smash{(H^{{3}} , \zeta_3) \ino \bC^{_\downarrow}_{^{\! x^-\!\! , \, b}}}$. 
We denote by $\smash{\bC_{x^+\!\! ,\, x^-\!\! ,\,  b}}$ the space of $b$-unicellular functions with initial cell 
$\smash{(x^+\! ,\, x^-)}$.  
See Figure \ref{Hunieteraz1}.  \cq 
\end{definition}
\begin{remark}
\label{elemfunrem} $(a)$ Note that $\smash{\bC_{0,0, b}\eqo \mathtt{Exc} (b)}$ and that $\smash{\bC_{x,x,b} \! \subset \! \mathtt{Exc} (b+x)}$, the inclusion being strict as soon as $\smash{x\geko 0}$. 

\smallskip

\noi
$(b)$ If $\smash{(H, \zeta) \ino \bC^{_{\uparrow}}_{^{x,b}}}$, then $\smash{(\overleftarrow{H}, \zeta) \ino  \bC^{_{\downarrow}}_{^{x,b}}}$. If $\smash{(H, \zeta) \ino \bC^{_{\downarrow}}_{^{x,b}}}$ and if $\smash{\varrho_{-x}(H)\eqo \zeta}$, then 
$\smash{(\overleftarrow{H}, \zeta) \ino \bC^{_{\uparrow}}_{^{x,b}}}$.

\smallskip

\noi
$(c)$ Let $\smash{(H,\zeta)\ino \bC_{x^+\!, x^-\! , b}}$ for some $b\ino (0, \infty)$ and for some $\smash{x^+\!, x^-\ino \bbR_+}$. Then 
\begin{equation}
\label{ptcenter}
\smash{m_H \big( 0,s^*(H)\big)\eqo 0 \; \textrm{and} \; m_H \big( s^*(H), \zeta \big)\eqo H_\zeta\, , \; \textrm{where} \;  s^*(H) \eqo \inf \big\{ s\ino [0, \zeta] : H_s\eqo \, \overline{\! H}_{\! \zeta}  \big\}. }
\end{equation}
\emph{Indeed}, let $\smash{(H^i, \zeta_i)}$, $i\ino \{ 1,2,3\}$, be as in Definition \ref{elemfundef} $(d)$.  Then observe that $\smash{s^*(H)  \ino [\zeta_1, \zeta_1+ \zeta_2]}$ and $\smash{s^*(H) \! -\! \zeta_1}$ is the time at which $\smash{H^2}$ reaches its maximal value. Then (\ref{ptcenter}) is an immediate consequence of the definition $\smash{H\eqo H^1\centerdot H^2\centerdot H^3 }$. \cq 
\end{remark}

We now explain how to derive from a $b$-unicellular function $(H, \zeta)$ the unique triplet of functions $\smash{(H^i\! , \zeta_i)}$, $i\ino \{ 1,2,3\}$ as in Definition \ref{elemfundef} $(d)$. To this end we recall from (\ref{1erbtps}) the definition of the first $b$-erasure time $\smash{s^b_1(H)}$. We then set 
$\smash{r^b_1(H)\eqo 0}$ if $\smash{s^b_1(H)\eqo \infty }$ and otherwise 
\begin{equation}
\label{r1bHdef}
\smash{r^b_1(H)\eqo \sup \big\{ t\ino [0, s^b_1(H)) : H^+_t \eqo H^+_{\! s^b_1(H)}\big\}\; .}
\end{equation} 
In other words $\smash{r^b_1(H)}$ is the \emph{beginning of the first complete excursion of $H$ of height $b$}. 
When there is no ambiguity, we write $\smash{s^b_1}$ and $\smash{r^b_1}$ instead of 
$\smash{s^b_1(H)}$ and $\smash{r^b_1(H)}$.

\begin{lemma}
\label{uniqextract} Let $\smash{ (H, \zeta) \ino \bC_{x^+\!, x^-\! , b}}$ where $\smash{ x^+\! , x^-\ino \bbR_+}$ and $b\ino (0, \infty)$. Let $\smash{ (H^i\! , \zeta_i)}$, $i\ino \{ 1,2,3\}$ as in Definition \ref{elemfundef} $(d)$. Then 
$\smash{ s^{_b}_1 \leko \infty}$ and 
$$ \smash{ (H^1\! , \zeta_1) \eqo (H_{\cdot \, \wedge r^b_1}, r^b_1 ), \quad  
 (H^2\! , \zeta_2) \eqo
(H_{(r^b_1+\,  \cdot ) \wedge s^b_1}, s^b_1 \! -\! r^b_1) , \quad  (H^3\! , \zeta_3) \eqo ( H_{s^b_1+ \, \cdot }, \zeta \! -\! s^b_1) }
$$
and in particular $\smash{ x^+\eqo H_{r^b_1}\eqo H_{s^b_1}\, }$, $\smash{ x^-\eqo H_{s^b_1}\! -\! H_\zeta}$ and $\smash{ b\eqo \overline{H}_\zeta \! -\! H_{s^b_1}}$. 
\end{lemma}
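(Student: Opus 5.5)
The plan is to track the reflected process $H^+ = H - \underline{H}$ across the three pieces of the decomposition $H = H^1\centerdot H^2\centerdot H^3$. From this I will read off directly that $s^b_1$ and $r^b_1$ fall exactly at the junctions $\zeta_1+\zeta_2$ and $\zeta_1$. Once these two identities hold, the formulas for the three pieces follow from the definition of concatenation. The values $x^+, x^-, b$ then follow from $H^1_{\zeta_1}=x^+$, $\Gamma(H^2)=b$ and $H^3_{\zeta_3}=-x^-$.

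First step: on $[0,\zeta_1]$. By Definition \ref{elemfundef} $(b)$ we have $H_t>0$ for $t\in(0,\zeta_1]$, hence $\underline{H}_t=0$ and $H^+_t=H_t$ there. The rise condition $H_t< m_H(t,\zeta_1)+b$ means exactly that $H$ never drops by $b$ below an earlier value before time $\zeta_1$. More precisely, for $u\le t\le \zeta_1$ we want $H_u - H_t < b$. Since $H_u< m_H(u,\zeta_1)+b\le H_t+b$, this holds. Consequently $\max_{[0,t]}H^+ - H^+_t<b$ for all $t\le\zeta_1$, so $s^b_1>\zeta_1$ (also when $x^+=0$, where the piece is trivial).

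Second step: on $[\zeta_1,\zeta_1+\zeta_2]$. Here $H=x^++H^2_{\cdot-\zeta_1}\ge x^+$, so $\underline H$ stays $0$ and $H^+=H$. The rise condition already gives $\max_{[0,\zeta_1]}H< x^++b$. Hence, before $H^2$ reaches its height $b$, the running maximum minus the current value stays $<b$. After the time $\tau$ where $H^2$ hits $b$, the running maximum is $x^++b$. The first time $H^+_t+b=\max_{[0,t]}H^+$ is then the first return of $H$ to $x^+$ after $\tau$, which is $\zeta_1+\zeta_2$ because $H^2$ is an excursion. The equality case of the first step needs a little care: if the maximum of $H$ on $[0,\tau]$ is attained inside $H^1$, it is $<x^++b$, so no drop of size $b$ occurs earlier. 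Thus $s^b_1=\zeta_1+\zeta_2<\infty$.

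Third step: identify $r^b_1$. It is the last time before $s^b_1$ at which $H^+=H^+_{s^b_1}=x^+$. Since $H^2>0$ on the open interval, the last time $H=x^+$ before $\zeta_1+\zeta_2$ is at most $\zeta_1$, and it equals $\zeta_1$ since $H_{\zeta_1}=x^+$. Hence $r^b_1=\zeta_1$. The descent piece is not needed for locating these times, since it only comes after $s^b_1$. The main obstacle is the careful handling of the strict versus non-strict inequalities in the definition of $s^b_1$, particularly the degenerate cases $x^+=0$ or $x^-=0$ (null lifetime pieces), which I would treat separately but identically.
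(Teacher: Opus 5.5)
Your proposal is correct and follows essentially the same route as the paper. Both arguments note that $H^+=H$ on $[0,\zeta_1+\zeta_2]$ and use the rise condition to get $\overline H_t-H_t<b$ on $[0,\zeta_1]$, which yields $s^b_1>\zeta_1$ and $\overline H_{\zeta_1}-H_{\zeta_1}<b$. They then read off $s^b_1=\zeta_1+\zeta_2$ and $r^b_1=\zeta_1$ from $\overline H_{\zeta_1+t}-H_{\zeta_1+t}=(\overline H_{\zeta_1}-H_{\zeta_1})\vee\overline{H^2}_t-H^2_t$ on the excursion piece.

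One small correction: contrary to your remark that the descent piece is not needed, the identity $b=\overline H_\zeta-H_{s^b_1}$ does use it. Since $(H^3)^+<b$ and $\underline{H^3}\le 0$, you get $H^3<b$, so $H$ stays below $x^++b$ after $s^b_1$.
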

\noi
\textbf{Proof.} By definition $\smash{ H_t\eqo H^+_t} $ for all $\smash{ t\ino [0, \zeta_1+ \zeta_2]}$. Then for all 
$\smash{ t\ino (0, \zeta_1]}$, $\smash{ 0\leko H_t \leko b+m_H (t,\zeta_1)}$. Since $\smash{ t \! \mapsto \! m_H(t,\zeta_1)}$ is nondecreasing on 
$\smash{ [0, \zeta_1]}$, we easily get $\smash{ 0\leko \overline{H}_t \leko b+ m_H(t, \zeta_1) \leqo b+ H_t}$. This implies 
$\smash{ s^b_1 \geko \zeta_1}$. Next, for all $\smash{ t\ino [0, \zeta_2]}$ we observe that 
$\smash{ \overline{H}_{\zeta_1+t}\eqo \overline{H}_{\zeta_1}\vee \max_{s\in [0, t]} 
H_{\zeta_1+s}\eqo H_{\zeta_1}} $ $+$ $\smash{  (\overline{H}_{\zeta_1} \! -\!  H_{\zeta_1} ) \vee \overline{H}^{_2}_t }$ 
and $\smash{ H_{\zeta_1+t}\eqo H_{\zeta_1}+ H^{_2}_t}$.  This implies that $\smash{ \, \overline{\! H}_{\! \zeta_1+t}\! -\! H_{\zeta_1+t} 
\eqo (\overline{H}_{\zeta_1} \! -\!  H_{\zeta_1} )  \vee  \overline{H}^{_2}_t} $  $ \! -\!$ $\smash{  H^{_2}_t}$. Since 
$\smash{ \, \overline{\! H}_{\! \zeta_1}\! -\! H_{\zeta_1}  \leko b}$, we get $\smash{ s^b_1\eqo \zeta_1+ \zeta_2}$ and $\smash{ r^b_1\eqo \zeta_1}$, which implies the desired result.  \cqfd

\smallskip

In Proposition \ref{buniceldec} we show for all $b\ino (0, \infty)$ that any function with finite lifetime can be \emph{uniquely written as the concatenation of a descent with amplitude less than $b$, a certain number (possibly null) of $b$-unicellular functions and a rise of amplitude less than $b$}. To this end, for all $(H, \zeta) \ino \bC$ we introduce the following.
\begin{equation}
\label{g0bdef}
\smash{ g^b_0(H)\eqo \sup \big\{ t\ino [0, s^b_1(H) ): H^+_t \eqo 0 \big\} \, .}
\end{equation}
Recall the notation $\smash{ \theta_t H\eqo H_{t+\, \cdot }\! -\! H_t}$, whose lifetime is $\smash{ (\zeta \! -\! t)_+}$. We recursively define the times $\smash{ s_n^b(H)}$, $\smash{ r_n^b(H)}$ and $\smash{ g_n^b(H)}$, $\smash{ n\ino \bbN^*}$, as follows: if $\smash{ s^b_n(H) \leko \infty}$, then we set 
\begin{eqnarray}
\label{snbdef}
\smash{ s^b_{n+1} (H)\eqo s^b_{n} (H)+ s^b_1\big( \theta_{s^b_n(H)}H\big) }\, ,   \!\!\!\!  \!\!\!\! & & \!\!\!\!  \!\!\!\! \smash{ r^b_{\! n+1} (H)\eqo s^b_{n} (H)+ r^b_1\big( \theta_{s^b_n(H)}H\big) } \nonumber \\
\label{snbdef} & \textrm{and} &  \smash{ g^b_{n} (H)\eqo s^b_{n} (H)+ g^b_0\big( \theta_{s^b_n(H)}H\big), }
\end{eqnarray} 
and if $\smash{ s^b_n(H) \eqo \infty}$, we set $\smash{ s^b_{n+1} (H)\eqo r^b_{\! n+1} (H)\eqo  g^b_{n} (H)\eqo \infty}$. 
It is also convenient to set $\smash{ s^b_{0} (H)} $ $\eqo$ $\smash{  r^b_{0} (H)}$ $\eqo  0$. We also define 
\begin{equation}
\label{NbHdef}
\smash{ N_b(H) \eqo \max \big\{ n\ino \bbN : s^b_n(H) \leko \infty \big\} \; .}
\end{equation}
If $\smash{ N_b(H)\eqo 0}$, we observe that 
$\smash{ s^b_{0} (H)\eqo r^b_{0} (H)\eqo  0}$ but $\smash{ g^b_0(H)\eqo \sup \{ t\ino [0, s^b_1(H) ): H^+_t \eqo 0 \}}$ is possibly $>0$. When there is no ambiguity, we simply write $\smash{ s^b_n}$, $\smash{ r^b_n}$, $\smash{ g^b_n}$ and $\smash{ N_b}$.

\begin{remark}
\label{sbnrems} $(a)$ $\smash{ (H,\zeta)\ino \bC_{\! f}}$ is $b$-unicellular if and only if 
$\smash{ N_b(H)\eqo 1}$, $\smash{ g^b_0(H)\eqo 0}$ and $\smash{ g^b_1(H)\eqo \zeta}$. Furthermore 
$\smash{ (H, \zeta) \ino \bC_{x^+\!, x^-\! , b}}$ where $\smash{ x^+\eqo  H_{s^b_1}\, }$, $\smash{ x^-\eqo H_{s^b_1}\! -\! H_\zeta}$.

\smallskip

\noi
$(b)$ Let $\smash{ (H,\zeta)\ino \bC_{\! f}}$ and let $\smash{ 0\leqo t_0\leko  t_1\leqo \zeta}$ be such that 
$\smash{ H_t \geko H_{t_0}\eqo H_{t_1}}$ for all $\smash{ t\ino (t_0, t_1)}$. We set 
$\smash{ b_0 \eqo \max_{t\in [t_0, t_1]} H_t \! -\! H_{t_0}}$ and $\smash{ \overline{t}\eqo 
\inf \{ t\ino [t_0, t_1] : H_t \eqo \max_{s\in [t_0, t_1]} H_s \}}$.
 Then for all $\smash{ b\ino (0, b_0]}$, it is possible to define $\smash{ \gamma_b\eqo 
\max \{ t\ino [t_0, \overline{t} \, ]: H_t \eqo H_{\overline{t}  } \! -\! b \}}$ and $\smash{ \delta_b\eqo 
\min \{ t\ino [\,  \overline{t}, t_1 ]: H_t \eqo H_{\overline{t} } \! -\! b \}}$ and we see  that 
\begin{equation}
\label{excubspot}
\smash{ N_b(H) \geqo 1 \quad \textrm{and} \quad \exists \, k_b\ino \{ 1, \ldots, N_b(H) \} : \gamma_b\eqo r^b_{k_b}(H) \; \textrm{and} \; \delta_b \eqo s^b_{k_b}(H) \, .}
\end{equation}

\noi
$(c)$ Let $\smash{ b\ino (0, \infty)}$, $\smash{ (H, \zeta) \ino \bC_{\! f}}$ and $\smash{ 0\leqo s \leko t\leqo \zeta}$ be such that $\smash{
\big\{ k\ino \{ 1, \ldots, N_b(H) \} : g_k^b(H) \ino [s,t] \big\}} $ $\! =:\! L$ $ \! \neq \! $ $\emptyset $. Then we observe that $\smash{ \min_{r\in [s,t]} H_r \eqo \min_{k\in L} H_{\! g^b_k} }$. \cq 
\end{remark}
\begin{proposition} ($b$-unicellular decomposition)  
\label{buniceldec} Let $\smash{ b\ino (0, \infty)}$ and $\smash{ (H, \zeta)\ino \bC_{\! f}}$. 
\begin{compactenum}

\smallskip

\item[$\mathbf(i)$] There is a unique $\smash{ \bC_{\! f}}$-valued sequence $\smash{ (H^k\! ,\zeta_k)_{0\leq k\leq n+1} }$ such that 
\begin{equation}
\label{buniceldecexpli} 
\smash{ H\eqo H^0\! \centerdot  H^1\! \centerdot \ldots  \centerdot  H^n \! \centerdot  H^{n+1} }
\end{equation}
where $\smash{ (H^0\! ,\zeta_0)}$ is a descent with amplitude $<\!  b$, 
$\smash{ (H^{n+1}\! ,\zeta_{n+1})}$ is a rise with amplitude $<\! b$ and where the $\smash{ (H^k\! ,\zeta_k)_{1\leq k\leq n}}$ are $b$-unicellular, if $n\geqo 1$. 

\smallskip

\item[$\mathbf(ii)$] Moreover 
\begin{equation}
\label{bordure}
\smash{n\eqo N_b (H), \quad  (H^0\! ,\zeta_0)\eqo \big( H_{\cdot \, \wedge g^b_0}\,  , g^b_0 \big)  , \; \, 
(H^{n+1}\! ,\zeta_{n+1})\eqo \big( \theta_{\! g^b_n} H \, , (\zeta \! -\! g^b_n)_+ \big)  }
\end{equation}
and for all $1\leqo k\leqo n$, if $n\geqo 1$, 
\begin{equation}
\label{coredec}
\smash{(H^k\! ,\zeta_k)\eqo \big( H_{(g^b_{k-1} + \, \cdot)\wedge g^b_k }\, , g^b_k\! -\! g^b_{k-1} \big) \; . }
\end{equation}
For all $\smash{k\ino \{ 1, \ldots , N_b (H)\} }$ we set $\smash{\tU^b_k(H)\eqo (H^k\! ,\zeta_k)}$ which is the \emph{$k$-th $b$-unicellular part of $(H, \zeta)$}. For all $\smash{k\geko N_b(H)}$ we simply set $\smash{\tU_k^b (H)\eqo \partial}$. 
\end{compactenum}
\end{proposition}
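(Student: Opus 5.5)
We prove existence and the explicit formulas (ii) together, by checking that the pieces cut out at the times $g^b_k$ have the required types. Uniqueness is handled afterwards by showing that any admissible decomposition must cut at these same times.

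\emph{Existence.} First, $N_b(H)\leko\infty$. Between consecutive erasure times $s^b_k\leko s^b_{k+1}$ the function $H$ completes an excursion of height $b$, so by uniform continuity of $H$ on $[0,\zeta]$ we get $s^b_{k+1}-s^b_k\geqo\delta$ for some $\delta\geko 0$; hence only finitely many $s^b_k$ are finite. The three kinds of pieces are then checked as follows.

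\begin{compactenum}
\item[$-$] \emph{Initial piece.} By (\ref{g0bdef}), $g^b_0$ is the last zero of $H^+$ before $s^b_1$, i.e.\ the last time before $s^b_1$ at which $H$ attains its running minimum. On $[0,g^b_0]$ no excursion above the running minimum completes height $b$, since $g^b_0\leqo r^b_1$. So $\max H^+\leko b$ on $[0,g^b_0]$ and $H_{g^b_0}=\underline H_{g^b_0}$. This makes $(H_{\cdot\wedge g^b_0},g^b_0)$ a descent with amplitude $<b$.
\item[$-$] \emph{Middle pieces.} Fix $k$ and set $\tilde H=\theta_{g^b_{k-1}}H$ restricted to lifetime $g^b_k-g^b_{k-1}$ (with $g^b_{-1}$ read as $0$ for $k=1$, using $g^b_0$). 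After the shift, $0$ is a time of running minimum, so $\tilde H\ge 0$ until $\tilde H$ returns to its minimum. The point $r^b_k$ is the beginning of an excursion above the running minimum that reaches height $b$ and finishes at $s^b_k$. Let $\tilde r=r^b_k-g^b_{k-1}$ and $\tilde s=s^b_k-g^b_{k-1}$. Then $\tilde H$ on $[0,\tilde r]$ is a rise with amplitude $<b$: the condition $0\leko H_t\leko m_H(t,\tilde r)+b$ expresses precisely that $s^b_1(\tilde H)\geqo\tilde r$, i.e.\ no height-$b$ fall from a past maximum occurs before $\tilde r$ (compare the proof of Lemma \ref{uniqextract}). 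On $[\tilde r,\tilde s]$ the function is an excursion of height exactly $b$ by the definition of $s^b_1$. The remaining part up to $g^b_k-g^b_{k-1}$ is a descent with amplitude $<b$, by the argument used for the initial piece applied to $\theta_{s^b_k}H$ and its own $g^b_0$. So each middle piece belongs to $\bC_{x^+,x^-,b}$.
\item[$-$] \emph{Final piece.} $\theta_{g^b_n}H$ satisfies $s^b_1=\infty$ because $n=N_b(H)$. Starting from a running minimum, this forces every $t$ to satisfy $H_t\leko m_H(t,\zeta)+b$, so the final piece is a rise with amplitude $<b$.
\end{compactenum}

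\emph{Uniqueness.} Suppose $H=K^0\centerdot K^1\centerdot\cdots\centerdot K^{m+1}$ is any decomposition of the stated type. We show by induction on $j$ that the cumulative lifetimes $\zeta_{K^0}+\cdots+\zeta_{K^j}$ equal $g^b_j(H)$.
\begin{compactenum}
\item[$-$] The descent $K^0$ and the rising part of $K^1$ contain no completed height-$b$ excursion above the running minimum. Moreover, the end of the descent is a running minimum, because $H_{\zeta_0}=\underline H_{\zeta_0}$. This is the point that makes a rise glued after a descent unable to create a new height-$b$ excursion.
\item[$-$] Hence, by Lemma \ref{uniqextract} applied to $K^1$, $s^b_1(H)$ is the end of the excursion part of $K^1$.
\item[$-$] Next we identify $g^b_0(H)=\zeta_{K^0}$. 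The running minimum is attained at $\zeta_{K^0}$. It is not attained again before $s^b_1$, since the rise has $H_t>0$ relative to that level on $(0,\zeta_1]$ and the excursion stays strictly positive.
\item[$-$] For the step from $k$ to $k+1$, we apply the same reasoning to $\theta_{s^b_k}H$. Its initial segment is the descent part of $K^k$ followed by $K^{k+1}$. This uses the shift relations (\ref{snbdef}) and the fact that a descent concatenated with a descent is again a descent with amplitude $<b$ relative to the new minimum.
\item[$-$] Finally, $m=N_b(H)$, because the last rise contains no height-$b$ erasure.
\end{compactenum}

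\emph{Main obstacle.} The delicate step is the rise characterization: showing that the condition $H_t\leko m_H(t,\zeta)+b$ for all $t$ is equivalent to $s^b_1=\infty$ for a function starting at its running minimum, and that this property is preserved when pieces are concatenated. I would prove it directly: if $s^b_1<\infty$, pick $u\leqo s^b_1$ attaining $\max_{[0,s^b_1]}H^+$; then $H_{s^b_1}$ together with the subsequent minimum violates the inequality at $t=u$. Conversely, a violation at $t$ yields a fall of size $b$ from a running maximum, which gives $s^b_1<\infty$. Everything else is bookkeeping of the kind done in Lemma \ref{uniqextract}.
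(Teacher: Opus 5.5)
Your proposal is correct and follows essentially the paper's route. Existence is shown by checking that the pieces cut at the times $g^b_k$ are a descent with amplitude $<b$, $b$-unicellular functions and a final rise (using $g^b_0<s^b_1$ for the descent and $s^b_1(\theta_{g^b_n}H)=\infty$ for the rise), and uniqueness by an induction that applies the one-cell argument (the descent ends at a running minimum, then Lemma~\ref{uniqextract}) to the shifted paths $\theta_{s^b_k}H$; your uniform-continuity argument for $N_b(H)<\infty$ and your explicit check that the middle pieces are $b$-unicellular fill in points the paper leaves implicit.
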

\noi
\textbf{Proof.} We first prove that the decomposition exists and we start by proving that $\smash{( H (\cdot  \wedge g^{_b}_{^0}) \,  ,g^{_b}_{^0})}$ is a descent with amplitude $<\!  b$, which is trivial if 
$\smash{g^{_b}_{^0}\eqo 0}$. So let us assume that $\smash{g^{_b}_{^0}\geko 0}$. By definition $\smash{\underline{H}(g^{_b}_{^0})\eqo H (g^{_b}_{^0})}$, i.e., $\smash{H^{+}({g^{_b}_{^0}})\eqo 0}$. We then set 
$\smash{b_*\! \eqo \max \{ H^+(t);  t\ino [0, g^{_b}_{^0}] \}}$. Since $\smash{H^{+} ({g^{_b}_{^0}}) \eqo 0}$, we  get $\smash{s^{_{b_*}}_{^1} \leqo g^{_b}_{^0}}$. 
It implies $\smash{b_*\leko b}$ since $\smash{s^{_b}_{^1} \geko g^{_b}_{^0}}$, by definition. This proves the desired result. 

We next take $n$ and $\smash{(H^k\! ,\zeta_k)_{1\leq k\leq n+1} }$ as in (\ref{bordure}) and (\ref{coredec}). 
Clearly (\ref{buniceldecexpli}) holds true. It is also clear from the definition and from Lemma \ref{uniqextract} 
that 
$\smash{(H^k\! ,\zeta_k)_{1\leq k\leq n}}$ are $b$-unicellular. To complete the proof of the existence of the decomposition, 
it remains to prove that $\smash{(H^{n+1}\! ,\zeta_{n+1})}$ is a rise with an amplitude $<\! b$.  To simplify 
the notation, 
we set $\smash{H'(t)\eqo H^{n+1}(t)\! -\! m_{H^{n+1}} (t, \zeta_{n+1})}$ and $\smash{b_* \eqo \max \{ H'(t) ;t\ino 
[0, \zeta_{n+1}] \}}$. By definition, $\smash{H^{n+1}(t) \geko 0}$ for all $\smash{t\ino (0, \zeta_{n+1}]}$ and 
$\smash{H'(0)}$ $\eqo$ $\smash{ H' (\zeta_{n+1})} $ $\eqo 0$. Therefore $\smash{s^{_{b_*}}_{^1}(H^{n+1}) \leqo \zeta_{n+1}}$ and 
we get $\smash{b_* \leko b}$ because $\smash{s^{_b}_{^1}(H^{n+1}) \eqo \infty}$ since $n$ $\eqo$ $\smash{ N_b(H)}$. 
Thus, $\smash{(H^{n+1}\! ,\zeta_{n+1})}$ is a rise with amplitude $<\! b$. It completes the proof of the existence. 

  We now prove the uniqueness of the decomposition by showing recursively 
 for all $n\ino \bbN$ the following property $\smash{\mathtt{Prop} (n)}$:  
\emph{if $\smash{(H^k\! ,\zeta_k)_{0\leq k\leq n+1} }$ are as in $(i)$, then  (\ref{bordure}) and (\ref{coredec}) hold true}. 
To prove $\smash{\mathtt{Prop} (0)}$ we immediately see that if $n\eqo 0$ in $(i)$ 
then $\smash{s^{_b}_{^1} \eqo \infty}$ and $\smash{\zeta_0 \eqo g^{_b}_{^0}}$, which implies (\ref{bordure}) (note that (\ref{coredec}) has no meaning when $n\eqo 0$). 

We next assume  $n\eqo 1$ in $(i)$ and we prove $\smash{\mathtt{Prop} (1)}$. By definition $\smash{H^+(t) \leko b}$ 
for all $\smash{t\ino [0, \zeta_0]}$ and $\smash{H^+ (\zeta_0) \eqo 0}$. For all $\smash{t\ino (0, s^{_b}_{^1}(H^1))}$, 
we then see that $\smash{H^1(t) \geko 0}$, so we get $\smash{H^+ \! (\zeta_0+t) }$ $\eqo$ $\smash{ (H^1)^+ (t)}$ $ \geko 0$.
Since $\smash{\max \{H^+(t)  ; t\ino [0, \zeta_0] \} \leko b}$, we get $\smash{s^{_b}_{^1}(H)\eqo 
\zeta_0+ s^{_b}_{^1}(H^1)}$ and $\smash{r^{_b}_{^1} (H)\eqo \zeta_0+ r^{_b}_{^1}(H^1)}$. Since 
$\smash{H^+ (\zeta_0) \eqo 0}$ 
and $\smash{H^+ (\zeta_0+t)  \geko 0}$ for all $\smash{t\ino (0, s^{_b}_{^1}(H^1))}$, we next obtain 
$\smash{g^{_b}_{^0} (H)\eqo \zeta_0}$. We then see that $\smash{g^{_b}_{^1}(H)\eqo \zeta_0+\zeta_1}$, 
which implies (\ref{bordure}) and (\ref{coredec}). 

 Let $n\ino \bbN$ be such that $n\geqo 2$. We assume for all $0\leqo k\leqo n\! -\! 1$ that 
$\smash{\mathtt{Prop} (k)}$ holds true and we prove that it implies $\smash{\mathtt{Prop} (n)}$. We assume that (\ref{buniceldecexpli}) holds true and we first apply $\smash{\mathtt{Prop} (1)}$ to $\smash{H^0\! \centerdot H^1}$ to see that 
$\smash{(H^0\!, \zeta_0)\eqo (H( \cdot \wedge g^{_b}_{^0}), g^{_b}_{^0})}$ and $\smash{(H^1\! , \zeta_1)\eqo 
(H ( (g^{_b}_{^0}+ \cdot)\wedge g^{_b}_{^1})), g^{_b}_{^1}\! -\! g^{_b}_{^0})}$. We then set 
$\smash{(H'^{0}, \zeta_0')\eqo (H ( (s^{_b}_{^1}+ \cdot)\wedge g^{_b}_{^1})),g^{_b}_{^1}\! -\! s^{_b}_{^1})}$. Then we see that 
$\smash{\theta_{\! s^b_1} H\eqo H'^0\! \centerdot H^2 \! \centerdot \cdots \centerdot H^{n+1}}$. 
We then apply $\smash{\mathtt{Prop} (n\! -\! 1)}$ to $\smash{\theta_{\! s^b_1} H}$ which easily implies (\ref{bordure}) and (\ref{coredec}). It completes the proof. \cqfd

\begin{remark}
\label{infinidec} Lemma \ref{buniceldec} extends to functions $\smash{(H,\zeta) \ino \bC}$ such that $\smash{N_b(H)\eqo \infty}$, which implies that $\smash{\zeta \eqo \lim_{n\to \infty} s^{_b}_{^n} (H) \eqo \infty}$. 
Namely, in this case there exists a unique sequence $\smash{(H^n,\zeta_n)_{n\in \bbN}}$ such that $\smash{(H^0, \zeta_0)}$ is a descent with amplitude $<\! b$, such that the functions 
$\smash{(H^n, \zeta_n)_{n\in \bbN^*}}$ are $b$-unicellular, such that $\smash{H\eqo H^0\! \centerdot H^1 \! \centerdot \ldots \centerdot H^n \! \centerdot \ldots }$ (this infinite concatenation makes sense), such that $\smash{(H^0\!, \zeta_0)\eqo (H( \cdot \wedge g^{_b}_{^0}), g^{_b}_{^0})}$ and such that (\ref{coredec}) hold true.  \cq 
\end{remark}

\begin{definition} (\emph{Erasure of functions})
\label{erafundef} 
Let $\smash{ b\ino (0, \infty)}$ and $\smash{ (H,\zeta)\ino \bC_{\! f}}$. The $b$-erased function $H$ is the zigzag function 
$\smash{ \cE_bH} $ whose lifetime is denoted by $\smash{ M_b(H)}$ and which takes the successive values 
$ \smash{ 0, H_{\! g^b_0}, H_{\! s^b_1}, H_{\! g^b_1}, \ldots,  H_{\! g^b_{N_b-1} }, H_{\! s^b_{N_b}}, H_{\! g^b_{N_b}} , H_{\! \zeta} }$. 
More precisely, $\smash{ \cE_bH }$ is the zigzag function associated with the sign-changes sequence $\smash{ (z_j)_{1\leq j \leq 2N_b+4}}$ which is defined by $\smash{ z_0\eqo z_1\eqo 0}$ and 
\begin{equation}
\label{signseqera}
\smash{  \forall k\ino \{ 1, \ldots, N_b +2\}, \quad 
 z_{2k-1}\eqo z_{2k-2} +  H_{\! s^b_{k-1}} \!\!\!  - H_{\! g^b_{k-2}} \; \,  \textrm{and} \; \,   z_{2k} \eqo z_{2k-1}+ H_{\! s^b_{k-1}}\!\! \! - H_{\! g^b_{k-1}} }
 \end{equation}
with the conventions $\smash{ g_{^{-1}}^{_b}\eqo s^{_b}_{^0}\eqo 0}$ and 
$\smash{ g_{^{N_b+1}}^b\eqo s^{_b}_{^{N_b+1}} \eqo \zeta}$.  \cq 
\end{definition}
\begin{example} $(a)$ Let us recall from Remark \ref{remlifetime} $(a)$ the definition of the elementary functions 
$\smash{ (\phi_x, |x|)\ino \bC_{\! f}}$, for all $\smash{ x\ino \bbR}$. We set $\smash{ H\eqo \phi_{-2} \centerdot \phi_{2}}$, whose lifetime is $\smash{ \zeta\eqo  4}$. Then, for all $b\ino (0, \infty)$, $\smash{ N_b (H)\eqo 0}$, $\smash{ \cE_b H\eqo H}$ and $\smash{ M_b (H)\eqo \zeta\eqo 4}$. 

\smallskip

\noi
$(b)$ Let us define $\smash{ H\eqo \phi_{1} \centerdot \phi_{-2}\centerdot \phi_1\centerdot \phi_{-2}\centerdot \phi_{3} \centerdot \phi_{-2}} \ino \bC_{\! f}$, whose lifetime is $\smash{ \zeta\eqo 11}$. Then $\smash{ N_1(H)\eqo 3}$, $\smash{ \cE_1H\eqo \phi_{-2} \centerdot  \phi_{2}\centerdot \phi_{-1}}$ 
and $\smash{ M_1(H)\eqo 5}$. \cq 
\end{example}

\begin{remark}
\label{erafunrem} $(a)$ $b$-erasure extends obviously to continuous functions $H$  such that $\smash{ N_b(H)\eqo \infty}$ and $\smash{ \lim_{n\to \infty} s^{_b}_{n} (H)\eqo \infty}$.

\smallskip

\noi
$(b)$ $b$-erasure does not depend on a specific parametrization of $\smash{ (H, \zeta) \ino \bC_{\! f}}$. Indeed let $\smash{ (H'\! , \zeta')\ino \bC_{\! f}}$ be such that $\smash{ H_t\eqo H'_{\! J_t}}$ where $\smash{ J\! :\! [0, \zeta] \! \to \! [0, \zeta']}$ is continuous nondecreasing and surjective. Then 
$\smash{ (\cE_b H, M_b(H))\eqo (\cE_bH' \! , M_b(H'))}$. 
 
\smallskip

\noi
$(c)$ The sign-changes sequence $\smash{ (z_j)_{1\leq j \leq 2N_b +4}}$ in (\ref{signseqera}) may not correspond to a minimal cell: it is for instance possible that $\smash{ H_{s^b_n}\eqo H_{g^b_n}}$ and therefore 
$\smash{ N_b}$ may be larger than the number of strict local maxima of $\smash{ \cE_b H}$.

\smallskip

\noi
$(d)$ Let $\smash{ h\ino (0, \infty)}$ and $\smash{ x^+\! , \, x^-\ino \bbR_+}$. If $\smash{ (H, \zeta) \ino \bC_{x^+\! , \, x^-\! ,\,  h}}$, then 
$\smash{ (\cE_b H, M_b (H))\ino \bC_{x^+\! , \, x^-\! ,\,  h-b}}$ for all $b\ino (0, h]$. \cq 
\end{remark}

Let $\smash{ (T,d,\rho, <)}$ be an ordered rooted compact real tree and let $\smash{ b\ino \bbR_+}$. 
We recall from (\ref{berased1}) the definition of the $b$-erased tree $\smash{ \bcE_bT}$. We root $\smash{ \bcE_bT}$ 
at $\rho$ and we equip it with the restrictions of the metric $d$ and of the order $<$. Thus $\smash{ (\bcE_b T, d, \rho, <)}$ 
is an ordered rooted compact real tree. Moreover if $\smash{ (T',d',\rho', <')}$ is an ordered rooted compact real tree 
such that $\smash{ (T',d',\rho', <')\! \equiv \! (T,d,\rho, <)}$, then 
$\smash{ (\bcE_b T',d',\rho', <')\! \equiv \! (\bcE_b T,d,\rho, <)}$. 
We also recall from (\ref{semigrtreeera}) semigroup property and we easily see,  
for all $\smash{ b, b' \ino \bbR_+}$, that $\smash{ \bcE_b (\bcE_{b'} T)\eqo \bcE_{b+b'} T}$, as ordered 
rooted compact real tree.   
The following example shows that $\smash{ \bcE_b (\mathtt{Tree} (H))}$ is not necessary equal to 
$\smash{ \mathtt{Tree} (\cE_b H)}$. However if we restrict to $h$-unicellular functions with $h\geqo b$, the next 
lemma shows that it is the case.  
\begin{example}
\label{eracounterex} Let $\smash{ H\eqo \phi_{-1} \centerdot \phi_{1} \centerdot \phi_{-1} \centerdot \phi_{1} }$ whose lifetime is 
$\zeta\eqo 4$. Then 
$\smash{ \mathtt{Tree} (H)}$ is a star-tree consisting of three segments of unit length sharing one 
endpoint, which is the root. Therefore, 
$\smash{ \bcE_1 (\mathtt{Tree} (H))}$ reduces to the point tree. 
However $\smash{ \cE_1 H\eqo \phi_{-1} \centerdot \phi_{1} }$. Its lifetime is therefore $\smash{ M_1(H)\eqo 2}$ and 
$\smash{ \mathtt{Tree} (\cE_1H)}$ is the star-tree consisting of two segments of unit length sharing one endpoint, which is the root. \cq
\end{example}
\begin{lemma}
\label{unicelprop} Let $\smash{ h\geqo b \geko 0}$ and let $\smash{ (H, \zeta) \ino \bC_{\! f}}$ be $h$-unicellular. 
We recall from Definition \ref{zigzagdef} $(b)$ the notation notations $\smash{ \mathscr M (\cE_bH)}$ and $\smash{ \tnn (\cE_b H)\eqo \# \mathscr M (\cE_bH)}$, we recall from (\ref{NbHdef}) the notation $\smash{ N_b(H)}$ and we recall $\smash{ (\cE_b H , M_b(H))}$ from Definition \ref{erafundef}. Then the following holds true. 
\begin{compactenum}

\smallskip

\item[$(i)$] $\smash{ m_H(0, s^{_b}_{^1})\eqo 0}$ and $\smash{  m_H(s^{_b}_{^{N_b}}, \zeta)\eqo H_\zeta}$

\smallskip

\item[$(ii)$] $\smash{ b\ino (0, h] \! \mapsto \! N_b(H)}$ is nonincreasing left-continuous, $\smash{ b\ino (0, h] \! \mapsto \! \tnn (\cE_b H)}$ is nonincreasing rigth-continuous and $\smash{ N_{b+} (H)\eqo\tnn (\cE_b H)}$. 

\smallskip

\item[$(iii)$] We recall that $\smash{ \mathtt{Tree} (H)\eqo (\cT_{\! H}, d_H, \rho_H , <_H)}$. We also recall from Definition \ref{margtreedef} $(a)$ that
$\smash{ T\eqo \bigcup_{1\leq k\leq N_b } \lgeo \rho_H, \pcH(s^{_b}_{^k}) \rgeo}$ is the marginal tree associated with 
$\smash{ (H, \zeta, (s^{_b}_{^k})_{1\leq k\leq N_b})}$. Then, 
\begin{equation}
\label{eraera}
\smash{ \mathtt{Tree} (\cE_b H) \equiv (T, d_H, \rho_H, <_H) = \bcE_b \big( \mathtt{Tree} (H)\big). }
\end{equation} 
In particular it implies $\smash{ b+ 
\max \big\{ (\cE_bH )(z) \, ; \, z\ino [0, M_b] \big\} \eqo \max \big\{ H_t \, ; \, t\in [0, \zeta] \big\}}$.  
\end{compactenum}
\end{lemma}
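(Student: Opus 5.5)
We work throughout with the decomposition $H\eqo H^1\centerdot H^2\centerdot H^3$ of Definition \ref{elemfundef} $(d)$, where $(H^1,\zeta_1)\ino \bC^{_\uparrow}_{^{x^+,h}}$, $(H^2,\zeta_2)\ino \mathtt{Exc}(h)$ and $(H^3,\zeta_3)\ino \bC^{_\downarrow}_{^{x^-,h}}$. The key observation is that $b\leqo h$ places every $b$-erasure time inside a controlled window, and the three claims follow from it in order.

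\emph{Step 1: the erasure times and part $(i)$.} We first show $g^b_0(H)\eqo 0$ and $s^b_1\leqo s^*(H)$. The bound $s^b_1\leqo s^*(H)$ holds because the excursion $H^2$ has height $h\geqo b$; applying Remark \ref{sbnrems} $(b)$ to this excursion produces a $b$-erasure time. We then show $g^b_{N_b}\eqo \zeta$ and $s^b_{N_b}\geqo s^*(H)$. This uses two facts: $H^3$ is a descent with amplitude $<h$, and $H^+$ stays $<b$ after the last $b$-excursion of $H$. Indeed, $H^1$ is a rise, so $H^+\eqo H$ on $[0,\zeta_1+\zeta_2]$ and $H$ never returns to its running minimum $0$ before time $\zeta_1+\zeta_2$; this gives $g^b_0\eqo 0$. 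The identity $m_H(0,s^b_1)\eqo 0$ then follows, because $H\geko 0$ on $(0,\zeta_1+\zeta_2)$ and $m_H(0,\cdot)\eqo 0$ there. The identity $m_H(s^b_{N_b},\zeta)\eqo H_\zeta$ is argued the same way, using the time reversal of Remark \ref{elemfunrem} $(b)$: $H_\zeta$ is the overall minimum after $s^*(H)$. This uses (\ref{ptcenter}).

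\emph{Step 2: part $(iii)$.} With $(i)$ established, the hypothesis (\ref{Hascend}) holds for $\bss\eqo(s^b_k)_{1\leq k\leq N_b}$. The cell population of Definition \ref{margtreedef} $(b)$ has cells $x^+_k\eqo H_{s^b_k}\!-\!m_H(s^b_{k-1},s^b_k)$. By Remark \ref{sbnrems} $(c)$, together with $g^b_0\eqo 0$ and $g^b_{N_b}\eqo\zeta$, we have $m_H(s^b_{k-1},s^b_k)\eqo H_{g^b_{k-1}}$. Hence this population is exactly the one given by the sign-changes (\ref{signseqera}), and its zigzag function is $\cE_bH$. Remark \ref{subtreerem} $(b)$ then gives $\mathtt{Tree}(\cE_bH)\equiv(T,d_H,\rho_H,<_H)$. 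It remains to prove $T\eqo\bcE_b(\mathtt{Tree}(H))$.

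For the inclusion $\subset$, each $\pcH(s^b_k)$ tops a completed excursion of height $b$ on $[r^b_k,s^b_k]$. Therefore $\pcH(s^b_k)$ has a descendant at distance $b$, and every point of $\lgeo\rho_H,\pcH(s^b_k)\rgeo$ does too. For the inclusion $\supset$, take $\sigma$ and $\sigma'$ with $\sigma\ino\lgeo\rho_H,\sigma'\rgeo$ and $d_H(\sigma,\sigma')\geqo b$. Choose $t_0\leko t_1$ bracketing the subtree above $\sigma$, so that $H$ exceeds $H_{t_0}\eqo H_{t_1}$ on $(t_0,t_1)$ with height $\geqo b$. Remark \ref{sbnrems} $(b)$ then yields some $s^b_k\ino(t_0,t_1)$, which places $\sigma$ on $\lgeo\rho_H,\pcH(s^b_k)\rgeo$. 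The degenerate case $\sigma\eqo\rho_H$ is handled by definition. This is where $h\geqo b$ matters; Example \ref{eracounterex} shows the identity fails without it, because the root may lie strictly inside a descent. Step 1 rules that out, since $\rho_H$ lies between the end of the rise and the excursion. The height identity follows by comparing $\bGam(\bcE_bT)\eqo\bGam(T)-b$ with the maximum of the encoding functions, using $\underline{Z}_{\zeta_Z}\eqo\underline H_\zeta$.

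\emph{Step 3: part $(ii)$.} Monotonicity comes from (\ref{eraera}) and the semigroup property: $\bcE_b T$ shrinks as $b$ grows, so its leaf count decreases. By Lemma \ref{zigzagtree} $(i)$, this leaf count equals $\tnn(\cE_bH)$. $N_b(H)$ counts completed $b$-excursions with multiplicity; such excursions persist for $b'\uparrow b$ and are lost only when $b$ exceeds their height, which gives left-continuity. Leaves of $\bcE_bT$ correspond to maximal subtrees of height exactly $b$. Those of height $\geko b$ survive a small increase of $b$, while those where several $b$-excursions share a common top merge; this gives right-continuity of $\tnn(\cE_bH)$ and the identity $N_{b+}\eqo\tnn(\cE_bH)$.

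I expect the main obstacle to be the bookkeeping in Step 3. The multiplicity issue of Remark \ref{erafunrem} $(c)$ has to be matched carefully against strict local maxima, which requires checking exactly when $H_{s^b_k}\eqo H_{g^b_k}$ occurs.
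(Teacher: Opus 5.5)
\paragraph{Main gap: the reverse inclusion in $(iii)$.} Your plan for $(iii)$ follows the paper: first the easy inclusion $T\subset\bcE_b\cT_{\! H}$, then Remark \ref{sbnrems} $(b)$ for the reverse one. Your check via Remark \ref{sbnrems} $(c)$ that the marginal-tree population coincides with (\ref{signseqera}) is correct, and more explicit than the paper. However, the reverse inclusion $\bcE_b\cT_{\! H}\subset T$ is not proved.

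Write $\sigma=\pcH(r)$ and $\sigma'=\pcH(\bar s)$ with $H_r=m_H(r,\bar s)\le H_{\bar s}-b$. Remark \ref{sbnrems} $(b)$ applies only once you know that $H$ comes back to level $H_r$ on \emph{both} sides of $\bar s$, i.e.\ $m_H(0,\bar s)\vee m_H(\bar s,\zeta)\le H_r$. Even then it applies to the excursion of $H$ above $H_r$ that contains $\bar s$. It does not apply to the whole coding interval of the subtree above $\sigma$, which contains several such excursions when $\sigma$ is a branch point.

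That inequality is the paper's (\ref{closeexcu}), and it is the real content of $(iii)$. It fails for general paths. Take $H=\phi_1\centerdot\phi_{-1}\centerdot\phi_{-1}\centerdot\phi_3$ and $b=1$. Then $\pcH(4)$, at level $0$ on the final rise, lies in $\bcE_1\cT_{\! H}$. Yet no $1$-erasure time lies in its subtree, since $N_1(H)=1$ and $s^1_1=2$.

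Your justification (``Step 1 rules that out, since $\rho_H$ lies between the end of the rise and the excursion'') is not an argument. Step 1 locates the erasure times, not an arbitrary $\sigma$, and $\rho_H$ is in fact coded by $0$ or by $\zeta$. The paper proves (\ref{closeexcu}) by a case analysis on whether $r$ lies in the rise, the excursion or the descent.

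Alternatively, use $H>0$ on $(0,\zeta_1+\zeta_2)$ and $m_H(t,\zeta)=H_\zeta$ for $t\ge\zeta_1$. These show that if $H$ fails to return to $H_r$ on one side of $\bar s$, the subtree above $\sigma$ contains $[\zeta_1,\zeta_1+\zeta_2]$. Hence $\sigma$ is an ancestor of the top. Since $H_r\le\overline H_\zeta-b$, it then lies on $\lgeo\rho_H,\pcH(\delta_b)\rgeo$, where $\delta_b$ is the $b$-erasure time that Remark \ref{sbnrems} $(b)$ provides inside $H^2$.

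\paragraph{Two smaller problems.} First, in Step 1 the bound $s^b_1\le s^*(H)$ is false. For $b=h$, Lemma \ref{uniqextract} gives $s^h_1=\zeta_1+\zeta_2>s^*(H)$. More generally, the time produced by Remark \ref{sbnrems} $(b)$ inside $H^2$ is $\delta_b\ge s^*(H)$. What holds, and what you need, is $s^b_1\le\delta_b\le\zeta_1+\zeta_2$; this still gives $g^b_0=0$ and $m_H(0,s^b_1)=0$.

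Second, in Step 3 the identity $N_{b+}=\tnn(\cE_bH)$ is exactly the bookkeeping you leave open, and the ``merging'' description does not supply it. The missing idea, which is the paper's, runs as follows.
\begin{itemize}
\item For $a<b$, Remark \ref{sbnrems} $(b)$ gives an injection $k\mapsto(\gamma^{b,a}_k,\delta^{b,a}_k)$ from the $b$-erasure pairs into the $a$-erasure pairs.
\item This injection, not the leaf count, is what makes $b\mapsto N_b$ nonincreasing, since $N_b\ge\tnn(\cE_bH)$ can be strict.
\item If $N_a=N_b$, the injection is a bijection. Then $s^b_k$ and $r^b_{k+1}$ lie in $[s^a_k,s^a_{k+1}]$, which forces $H_{s^a_k}\wedge H_{s^a_{k+1}}-m_H(s^a_k,s^a_{k+1})\ge b-a>0$.
\item Hence no cell of $\cE_aH$ is degenerate, and $\tnn(\cE_aH)=N_a$ away from the jumps of $N$.
\item Right-continuity of $b\mapsto\tnn(\cE_bH)$ then yields $N_{b+}=\tnn(\cE_bH)$ everywhere.
\end{itemize}
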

\noi
\textbf{Proof.} $(i)$ is a direct consequence of the fact that $H$ is $h$-unicellular and of the very definition of the times $\smash{ s^{_b}_{^1}}$ and $\smash{ s^{_b}_{^{N_b}}}$
We next prove $(ii)$. For all $\smash{ 1\leqo k\leqo N_b}$, for all $\smash{ h\geqo b\geko  a \geko 0}$ we set 
\begin{equation}
\label{oversbk}
\smash{ \overline{s}^{_b}_{^k} \eqo  \min \Big\{ t\ino [r^{_b}_{^k}, s^{_b}_{^k}] :  H_t \eqo \!\! \!\! \!\!  \max_{\quad s\in  [r^{_b}_{^k}, s^{_b}_{^k}]} \!\! \!\! \!\! \!  H_s \,  \Big\}, }
\end{equation}   
$$\smash{  \gamma^{_{b,a}}_{^k}\eqo \max \{ t\ino [r^{_b}_{^k} , \overline{s}^{_b}_{^k}] : H_t \eqo H_{\overline{s}^{_b}_{^k}} \! -\!   a \} \quad \textrm{and} \quad  \delta^{_{b,a}}_{^k}\eqo \max \{ t\ino [r^{_b}_{^k} , \overline{s}^{_b}_{^k}] : H_t \eqo H_{\overline{s}^{_b}_{^k}} \! -\! a \}.}$$
By Remark \ref{sbnrems} $(b)$, $\smash{ \{  (\gamma^{_{b,a}}_{^k}, \delta^{_{b,a}}_{^k}) ; 1\leqo k\leqo N_b \} \! \subset \! \{ (r^{_a}_{^l}, s^{_a}_{^l}); 1\leqo l\leqo N_a \}}$. Thus $\smash{ b\ino (0, h] \! \mapsto \! N_b}$ is nonincreasing. Moreover, since $H$ is continuous the function $\smash{ b\mapsto N_b}$ has to be left-continuous.  
To complete the proof of $(ii)$, we fix $0\leko a \leko b$ such that $\smash{ N_a\eqo N_b}$ (there is necessarily such a 
real number). Necessarily, we get $\smash{ (\gamma^{_{b,a}}_{^k}, \delta^{_{b,a}}_{^k}) \eqo (r^{_a}_{^k}, s^{_a}_{^k})}$ 
for all $\smash{ 1\leqo k\leqo N_b}$. Thus, $\smash{ H_{\! s^{_a}_{^k}} \! \wedge \!  H_{\! s^{_a}_{^{k+1}}} \!\!\!  -  
m_H  (s^{_a}_{^k}, s^{_a}_{^{k+1}}) }$ $\geqo$ $ b \! -\!  a$,  for all $\smash{ 1\leqo k \leko N_a}$. It implies that 
$\smash{ \cE_aH}$ has $\smash{ N_a}$ local maximal. Namely, $\smash{ \tnn (\cE_aH)\eqo N_a}$. Therefore, it shows that 
$\smash{ N_b\eqo \tnn(\cE_b H)}$, for all $b\ino (0, h] $ which are not jump-times of the function 
$\smash{ b'\mapsto N_{b'}}$. We now complete the proof of $(ii)$ by observing that 
$\smash{ b\mapsto \tnn(\cE_b H)}$ is necessarily right-continuous since the $\smash{ \cE_bH}$ 
are zigzag functions.  
 
We next prove $(iii)$. Remark \ref{subtreerem} $(c)$ asserts that $\mathtt{Tree} (\cE_b H) \equiv (T,d_H,\rho_H, <_H)$. So we only need to prove that $T\eqo \bcE_b \cT_{\! H}$. To this end we first check easily for all $1\leqo k\leqo N_b$ that 
$\pcH(s^{_b}_{^k}) \ino \lgeo \rho_H , \pcH (\overline{s}^{_b}_{^k})  \rgeo $ and that $d_H( s^{_b}_{^k},\overline{s}^{_b}_{^k})\eqo b$, which implies $T \! \subset \!  \bcE_b \cT_{\! H}$. 

  It remains to prove that $\smash{ \bcE_b \cT_{\! H}\! \subset \! T} $. Let us fix 
$\smash{ \sigma \ino \mathtt{Lf} (\bcE_b \cT_{\! H})}$. By definition, there is 
$\smash{ \overline{\sigma}\ino \cT_{\! H}}$ such that $\smash{ \sigma \ino \lgeo  \rho_H, \overline{\sigma} \rgeo }$ and 
$\smash{ d_H(\sigma,  \overline{\sigma} )\eqo b}$, and such that for all $\smash{ \gamma \ino \mathcal T_{\! H}}$, 
if $\smash{ \sigma \ino \lgeo  \rho_H, \gamma \rgeo }$, then $\smash{ d_H(\sigma, \gamma) \leqo b}$. 
Thus we can find $ \smash{ r , \overline{s} \ino [0, \zeta]}$ such that $\smash{ \pcH(r) \eqo \sigma}$, 
$\smash{ \pcH( \overline{s} )\eqo  \overline{\sigma} }$. Therefore $\smash{ H_r\eqo m_H(r,\overline{s})\eqo 
H_{\overline{s}}\! -\! b}$ and for all $\smash{ s\ino [0, \zeta]}$ such that $\smash{ H_r \eqo m_H(r,s)}$, we get 
$\smash{ H_s \leq H_r+b}$. 
In particular for all $\smash{ t\ino [r\wedge \overline{s}, r\vee \overline{s}]}$, we get $\smash{ H_r \leqo H_t \leqo 
H_r+ b\eqo H_{\overline{s}}}$. Without loss of generality, we can always assume that $\smash{ H_r} $ $\leko$ $\smash{  H_t 
} $ $\leqo$ $\smash{  H_r+ b} $ $\eqo$ $\smash{  H_{\overline{s}}}$ for all $\smash{ t\ino (r\wedge \overline{s}, r\vee \overline{s})}$. Then, 
we prove that 
\begin{equation}
\label{closeexcu}
\smash{ m_H(0, \overline{s})\vee m_H(\overline{s}, \zeta) \leqo H_r}
\end{equation}

Before proving (\ref{closeexcu}), let us first show that it implies $(iii)$. \emph{Indeed}, thanks to (\ref{closeexcu}), it 
makes sense to define $s\eqo \min \{ u\ino [\overline{s}, \zeta] : H_u\eqo H_r \}$ and $s'\eqo \max \{ u\ino [0,\overline{s}] : H_u \eqo H_r \}$. Then $r\ino \{ s,s'\}$ and by Remark \ref{sbnrems} $(b)$ there is $k\ino \{ 1, \ldots, N_b\}$ such that $r^{_b}_{^k}\eqo s' $ and $s^{_b}_{^k}\eqo s$. Consequently,  $\sigma \eqo \pcH(r)\eqo \pcH(r^{_b}_{^k})\eqo  \pcH(s^{_b}_{^k})  \ino T$. We thus have proved that $\mathtt{Lf} (\bcE_b \cT_{\! H}) \!\subset \! T$, which easily entails  that $\bcE_b \cT_{\! H}\! \subset \! T $. It completes the proof of $(iii)$.

\emph{It remains to prove (\ref{closeexcu})}. There are three cases to consider: either $\smash{ r\ino [0, r^{_h}_{^1} ]}$ or $\smash{ r\ino (r^{_h}_{^1} ,s^{_h}_{^1} )}$ or $\smash{ r\ino [s^{_h}_{^1} , \zeta ]}$. 
First we suppose that $\smash{ r\ino [0, r^{_h}_{^1} ]}$. Since $H$ on $ \smash{ [0, r^{_h}_{^1} ]}$ is a rise 
$\smash{ m_H(0, r)\eqo 0\leqo H_r }$
and $\smash{ m_H(r,  r^{_h}_{^1})\leqo H_r}$. Note that $\smash{ m_H (0, \overline{s} ) \leqo 0 \leqo H_r}$ and 
if $\smash{ \overline{s} \leko r}$, then 
$\smash{ m_H( \overline{s}, \zeta) \leqo m_h( r, r^{_h}_{^1}) \leqo H_r}$. If $\smash{ \overline{s} \geko r}$, then 
$\smash{ \overline{s}\leko  r^{_h}_{^1}}$ and $\smash{ m_H(r,  r^{_h}_{^1})\eqo m_H(\overline{s},  r^{_h}_{^1})}$ 
and thus $\smash{ m_H(\overline{s}, \zeta) \leqo 
m_H(\overline{s},  r^{_h}_{^1})\leqo H_r}$. Similarly, if $\smash{ r\ino [s^{_h}_{^1}, \zeta ]}$, since $H$ is a descent 
we can argue in the same way, but  backwards in time, to show that $\smash{H_r \geqo m_H(0, \overline{s})\vee m_H(\overline{s}, \zeta) 
}$. Let us suppose that $\smash{ r\ino (r^{_h}_{^1} ,s^{_h}_{^1} )}$. Necessarily 
$\smash{ \overline{s} \ino (r^{_h}_{^1} ,s^{_h}_{^1} )}$ too and consequently $\smash{ H_r \geqo 
H (r^{_h}_{^1})\eqo m_H(r,r^{_h}_{^1})\eqo m_H(r, s^{_h}_{^1})\eqo m_H(\overline{s}, r^{_h}_{^1})  \eqo 
m_H(\overline{s}, s^{_h}_{^1})}$, which implies that $\smash{ m_H(\overline{s}, 0)\vee m_H(\overline{s}, \zeta) 
\leqo H_r}$. This completes the proof of (\ref{closeexcu}) and thus of the lemma. \cqfd

\begin{lemma}
\label{semigrera} For all $(H, \zeta)\ino \bC_{\! f}$ and all $b, b' \ino \bbR_+$, $\cE_b (\cE_{b'} H)\eqo \cE_{b+b'} H$.  
\end{lemma}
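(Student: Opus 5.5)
We would prove Lemma \ref{semigrera} by reducing it, through the unicellular decomposition of Proposition \ref{buniceldec}, to the unicellular case, where it is essentially Remark \ref{erafunrem} $(d)$ combined with Lemma \ref{unicelprop}. The cases $b\eqo 0$ or $b'\eqo 0$ are trivial with the convention $\cE_0 H\eqo H$, so we assume $b,b'\geko 0$ and set $h\eqo b+b'$.

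\emph{Step 1: reformulating erasure.} Write $H\eqo H^0\centerdot H^1\centerdot\ldots\centerdot H^n\centerdot H^{n+1}$ as in Proposition \ref{buniceldec} at level $h$. By Definition \ref{erafundef}, $\cE_h H$ is obtained by the following replacements:
\begin{compactenum}
\item[$-$] the descent $H^0$ becomes the straight descent $\phi_{H_{g^h_0}}$;
\item[$-$] each $h$-unicellular part $H^k$, with cell $(x^+_k,x^-_k)$ (Remark \ref{sbnrems} $(a)$), becomes $\phi_{x^+_k}\centerdot\phi_{-x^-_k}$;
\item[$-$] the rise $H^{n+1}$ becomes $\phi_{H_\zeta-H_{g^h_n}}$.
\end{compactenum}
In short, $h$-erasure keeps exactly the data of the $h$-decomposition: the total drop of the descent, the cells, and the total rise.

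\emph{Step 2: compatibility of $\cE_{b'}$ with this concatenation.} We would show that
\[
\cE_{b'}H\eqo \cE_{b'}H^0\centerdot\cE_{b'}H^1\centerdot\ldots\centerdot\cE_{b'}H^{n+1}.
\]
The concatenation points $g^h_k$ are times at which $H$ attains its minimum over $[s^h_k,s^h_{k+1}]$, so no excursion of $H$ above a level straddles $g^h_k$. Consequently the sequences $(r^{b'}_l,s^{b'}_l)$ for $H$ are exactly the union of those of the pieces, shifted by the starting times of the pieces. This yields the identity, up to merging collinear adjacent segments, which does not change the zigzag function.

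\emph{Step 3: identifying the pieces.} We check that the erased pieces form the $b$-unicellular decomposition of $\cE_{b'}H$.
\begin{compactenum}
\item[$-$] For $1\leqo k\leqo n$, Remark \ref{erafunrem} $(d)$ gives $\cE_{b'}H^k\ino\bC_{x^+_k,x^-_k,b}$. It is therefore $b$-unicellular with the same cell, by Lemma \ref{unicelprop} $(iii)$: the height relation there shows that $b'$-erasure lowers the top by exactly $b'$.
\item[$-$] $\cE_{b'}H^0$ is a descent with the same final value and amplitude $<h-b'\eqo b$. Every excursion of $H^0$ above its running infimum has height $<h$, and $b'$-erasure reduces each such height by $b'$, or kills the excursion if its height is $<b'$.
\item[$-$] Symmetrically, $\cE_{b'}H^{n+1}$ is a rise with amplitude $<b$. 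This can be obtained by time reversal, using Remark \ref{elemfunrem} $(b)$.
\end{compactenum}
By the uniqueness in Proposition \ref{buniceldec}, these pieces are exactly the $b$-unicellular decomposition of $\cE_{b'}H$. Applying Step 1 at level $b$, $\cE_b(\cE_{b'}H)$ is the zigzag function with the same descent drop, the same cells $(x^\pm_k)$ and the same final rise. This is exactly $\cE_hH$.

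The main obstacle is Step 2 together with the amplitude claims in Step 3, namely that $b'$-erasure acts excursion by excursion and lowers every excursion height by exactly $b'$. We would first attempt to handle this via the tree picture (Lemma \ref{unicelprop} $(iii)$ and the tree semigroup property (\ref{semigrtreeera})) applied to each excursion above the running infimum. We would then reassemble the pieces using Remark \ref{sbnrems} $(b)$ to match the erasure times.
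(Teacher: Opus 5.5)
Your argument is correct, and it takes a genuinely different route from the paper's.

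\textbf{The paper's route.} It first settles the case of an $h$-unicellular $H$ with arbitrary $h\geq b+b'$, working entirely on trees. Lemma \ref{unicelprop} $(iii)$ is applied to $H$ and to $\cE_{b'}H$, which is $(h-b')$-unicellular by Remark \ref{erafunrem} $(d)$. Combined with the tree semigroup property (\ref{semigrtreeera}), this gives $\mathtt{Tree}(\cE_b\cE_{b'}H)\equiv\mathtt{Tree}(\cE_{b+b'}H)$. Lemma \ref{zigzagtree} $(ii)$, a rigidity statement for zigzag codings of ordered rooted trees, then upgrades this to equality of functions. A general $H$ is handled by padding it with linear pieces and bumps into one large unicellular $H'$, and reading $\cE_aH$ off from $\cE_aH'$.

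\textbf{Your route.} You cut $H$ at level exactly $b+b'$. This makes the unicellular pieces trivial: Remark \ref{erafunrem} $(d)$ alone shows that $b$-erasing $\cE_{b'}H^k$ returns $\phi_{x^+_k}\centerdot\phi_{-x^-_k}$. So you need neither the tree coding, nor Lemma \ref{zigzagtree}, nor the padding. The price is that two things must be proved directly on paths: the compatibility of $\cE_{b'}$ with the $h$-decomposition, and the amplitude bounds for the boundary pieces.

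The paper's padding step also relies, without proof, on a concatenation identity of exactly this kind. Moreover, as written, both of its bumps have height $h$, so the padded $H'$ has two complete excursions of height $h$ and is not literally $h$-unicellular; taking the second bump lower repairs this. Your decomposition makes the needed compatibility explicit.

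\textbf{Step 2 needs a restart argument.} Non-straddling alone is not quite enough; you also need that the greedy recursion defining the $s^{b'}_l$ restarts at each $g^h_k$. This holds for the following reasons.
\begin{itemize}
\item Remark \ref{sbnrems} $(b)$ places a $b'$-time inside $(r^h_k,s^h_k)$, so the last $b'$-time before $g^h_k$ is $\geq r^h_k$.
\item Hence $H_{g^h_k}$ is the minimum of $H$ since that time.
\item The corresponding reflected process therefore vanishes at $g^h_k$ with running maximum $<b'$, so earlier values play no further role and the recursion starts afresh.
\item Moreover $g^h_k$ is itself a $g^{b'}$-time.
\end{itemize}

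\textbf{Avoid time reversal for the final rise.} The paper never shows that $\cE_{b'}$ commutes with reversal, so check the rise bound directly, as you did for the descent. Write $G=H^{n+1}$. The $k$-th peak of $\cE_{b'}G$ equals $G_{\overline{s}^{b'}_k}-b'$. The minimum of $\cE_{b'}G$ after that peak is $m_G(s^{b'}_k,\zeta_{n+1})$, which is $\geq m_G(\overline{s}^{b'}_k,\zeta_{n+1})$. Hence the rise condition $G_t<m_G(t,\zeta_{n+1})+h$ yields the corresponding bound with $b=h-b'$.
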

\noi
\textbf{Proof.} We first suppose that $(H, \zeta)$ is $h$-unicellular with $h\geqo b+b'\geko 0$. 
We set $(Z, \zeta_Z)\eqo (\cE_{b'} H, M_{b'} (H))$, which is $(h\! -\! b')$-unicellular by Remark \ref{erafunrem} $(d)$. 
Since $h\! -\! b'\geko b$, Lemma \ref{unicelprop} $(iii)$ applies and $\mathtt{Tree} (\cE_b Z) \! \equiv \! 
\bcE_b (\mathtt{Tree} (Z))$. Since $h\geko b'$, 
Lemma \ref{unicelprop} $(iii)$ applies and $\mathtt{Tree} (Z)\eqo \bcE_{b'} (\mathtt{Tree} (H))$. Thus 
$\mathtt{Tree} (\cE_b Z) \! \equiv \! \bcE_b ( \bcE_{b'} (\mathtt{Tree} (H)))\eqo \bcE_{b+b'} (\mathtt{Tree} (H))$ 
by the semigroup property  (\ref{semigrtreeera}). Since $h\geko b+b'$, Lemma \ref{unicelprop} $(iii)$ again shows that 
$\bcE_{b+b'} (\mathtt{Tree} (H))) \! \equiv \!  \mathtt{Tree} (\cE_{b+b'} H)$. We have prove that 
$\mathtt{Tree} (\cE_b Z) \! \equiv \!  \mathtt{Tree} (\cE_{b+b'} H)$. Next, we observe that $(Z,\zeta_Z)$ 
satisfies (\ref{upanddown}) and that $\cE_{b+b'} H$ is a zigzag function whose terminal value is 
$H_\zeta\eqo Z (\zeta_Z)$. Then Lemma \ref{zigzagtree} $(ii)$ applies and $\cE_b Z\eqo \cE_{b+b'} H$, which is the desired result.  

We prove the general case $\smash{ (H, \zeta) \ino \bC_{\! f}}$ thanks to the unicellar case as follow. 
Let $h, x$ be such that $\smash{ x\geko h\geko b+b'+\, \overline{\! H}_\zeta \! -\!  \underline{H}_{\zeta}}$ and 
set 
$\smash{ H'\! :=\!  \phi_{x}\centerdot \phi_h \centerdot   \phi_{-h}  \centerdot  H   \centerdot \phi_h 
\centerdot   \phi_{-h}  \centerdot \phi_{-x}}$ whose lifetime is $\smash{ \zeta' \! :=\!  2x+4h+\zeta}$. 
Then $\smash{ (H',\zeta')}$ is $h$-unicellular and for all $\smash{ a\ino (0, h)}$ we get 
$\smash{ \cE_a H'\eqo \phi_{x}\centerdot } $ $\smash{ \phi_{h-a} \centerdot  }$ $\smash{  \phi_{-(h-a)}  \centerdot } $ $\smash{  \cE_a H   
\centerdot} $ $\smash{  \phi_{h-a} \centerdot } $ $\smash{   \phi_{-(h-a)}  \centerdot } $ $\smash{ \phi_{-x}}$. Since the semigroup property holds in 
$h$-the unicellular cases with $h\geko b+b'$, we get 
$ \smash{ \phi_{x}\centerdot \phi_{h-b-b'} \centerdot   \phi_{-(h-b-b')}  \centerdot  \cE_b (\cE_{b'} H)  
\centerdot \phi_{h-b-b'} \centerdot   \phi_{-(h-b-b')}  \centerdot \phi_{-x}} $ $ \eqo$ $\smash{  \cE_b (\cE_{b'} H')} $ $\eqo$ $\smash{ \cE_{b+b'}H'\eqo  \phi_{x}\centerdot \phi_{h-b-b'} \centerdot   \phi_{-(h-b-b')}  \centerdot \cE_{b+b'} H 
\centerdot \phi_{h-b-b'} \centerdot   \phi_{-(h-b-b')}  \centerdot \phi_{-x}} $. It entails 
$\smash{ \cE_b (\cE_{b'} H)\eqo \cE_{b+b'} H}$ and the proof is completed.   \cqfd

\subsection{Erasure and growth-division evolutions: deterministic results}
\label{detererasureprocsec}
We next discuss how erasure of real tree and growth of cell population are related. 
To this end we first we recall from Remarks \ref{remlifetime} $(a)$ the definition of the elementary function $\smash{ (\phi_x , |x|)\ino \bC_{\!f}}$ for all $\smash{ x\ino \bbR}$. For all $\smash{ b\ino \bbR_+}$ we denote by $\smash{ \psi_b}$ is the $\wedge$-shape zigzag function $\smash{  \psi_b(t) \eqo  \big(t \! -\! 2(t-b)_{+}\big)_+ }$, $\smash{ t\ino \bbR_+}$. Namely,   
\begin{equation}
\label{psibdef} 
\smash{ \psi_b\eqo \phi_b \centerdot \phi_{-b}, \quad \textrm{ with lifetime $2b$.}}
\end{equation} 
\begin{definition}(\emph{Growth of zigzag functions}) \label{growthzigzag}
Let $\smash{ (Z, \zeta_Z)\ino \bC_{\! f}}$ be a zigzag function. Let us denote by $\smash{((x^+_k, x^-_k))_{1\leq j\leq \tnn(Z)}\eqo \mathtt{C} (Z)}$ the minimal cell population of $\smash{(Z, \zeta_Z)}$ as in Definition \ref{cellfromzigzag}. 
For all $\smash{b\ino \bbR_+}$, we define the \emph{$b$-growth of $Z$} as the zigzag function 
$$\smash{\mathtt{G}\overline{\mathtt{ro}}_b (Z)= \phi_{x^+_1 } \! \centerdot \psi_b \centerdot \phi_{-x^-_1} \! \centerdot \ldots \centerdot \phi_{x^+_{\! \tnn(Z)} } \! \centerdot \psi_b  \centerdot \phi_{-x^-_{\! \tnn(Z)}} }$$
whose lifetime is $\smash{\zeta(\grob_b(Z) )\eqo \zeta_Z + 2b \tnn(Z)}$. See Figure \ref{reflected}.  \cq 
\end{definition}
\begin{remark}
\label{unicelerarems} $(a)$ We keep the previous notations and we first observe that 
$\smash{ \grob_b(Z)}$ starts with a rise and and ends with a descent. We next recall the function $\smash{ (a, \mathbf{c})\ino \bbR_+ \times \mathtt{Cell} \! \mapsto \! \mathtt{Gro}_a (\mathbf{c}) \ino \mathtt{Cell}}$ from Definition \ref{GroDivdef} $(a)$. 
Then note that $\smash{ \tnn(Z)\eqo \tnn (\grob_b(Z)) }$ and 
\begin{equation}
\label{grogrob}
\smash{  \mathtt{C} \big( \grob_b(Z) \big)\eqo \mathtt{Gro}_{2b} \big( \mathtt{C} (Z)\big)\eqo \big( (b+x^+_k, b+x^-_k)\big)_{1\leq k\leq \tnn(Z)}  \; , }
\end{equation}
where we recall from Definition \ref{cellfromzigzag} the notation $\smash{ \mathtt{C}(Z')}$ for the minimal cell associated with a zigzag function $\smash{ Z'}$. If $\smash{ (z_j)_{1\leq j\leq 2\mathtt{n} (Z)}}$ stands for the sign-changes sequence of $\smash{ (Z,\zeta_Z)}$, then we check that 
$\smash{ \mathscr M (\grob_b (Z))\eqo \big\{ z_{2k-1} \! +\!  (2k\! -\! 1) b \;  ; 1\leqo k\leqo \tnn(Z)\big\}}$. 

\smallskip

\noi
$(b)$ Let $\smash{ (Z, \zeta_Z)}$ and $\smash{ (Z',\zeta_{Z'}) \ino \bC_{\! f}}$ be zigzag functions that satisfy the following 
$\smash{ \tnn (Z')\eqo \tnn(Z)}$ and $\smash{ (\cE_b Z' , M_b (Z'))\eqo (Z, \zeta_Z)}$. If $\smash{ (Z, \zeta_Z)}$ furthermore satisfies (\ref{upanddown}), then we easily see that 
$\smash{ Z'\eqo \grob_b(Z)}$ and $\smash{ \zeta_{Z'}\eqo \zeta_Z + 2b\tnn (Z)}$. Moreover 
\begin{equation}
\label{grovseraexpli}
\smash{ \forall a\ino (0, b), \quad  \grob_{a} (Z) \eqo \cE_{b-a} \big( \grob_b (Z)) \; . }
\end{equation}
Let us first mention that if $Z$ is $h$-unicellular, then $\smash{\grob_a(Z)}$ is $(h\! +\!  a)$-unicellular.

\smallskip

\noi
$(c)$ Let $\smash{ (Z, \zeta_Z)\ino \bC_{\! f}}$ be $h$-unicellular and $a, b\ino (0, \infty)$ such that $b\geko a$. 
By the previous remark, $\smash{ \grob_{b} (Z)}$ is $(h\! +\! b)$-unicellular and $\smash{ \grob_{a} (Z)}$ 
is $(h\!  +\! a)$-unicellular. Then, by (\ref{grovseraexpli}) and Lemma \ref{unicelprop}, 
$\smash{ \mathtt{Tree} (\grob_{a} (Z)) \! \equiv \! \bcE_{b-a} (\mathtt{Tree} (\grob_b Z))}$. 
It yields an elementary subtree decomposition which allows to view $\smash{\grob_{a} (Z)}$ as resulting from 
$\smash{\grob_{b} (Z)}$ by a time-change which is defined as follows. 
For any compact $\smash{K \subset \bbR_+}$, we use the notation 
$\smash{\mathtt{dist} (y, K)\! :=\! \min \{ |y\! -\! x| \, ; x\ino K \}}$. Then for all $\smash{z\ino [0, \zeta_Z \! +\!  2b \tnn(Z)]}$ we set 
\begin{equation}
\label{timchaele}
\smash{I(a,z)= \int_0^z\!\! \un_{\! \big\{  \mathtt{dist} \big( y ; \mathscr M (\grob_b (Z))\big) > b-a  \big\}}\,  \mathrm d y  \quad  \textrm{and} \quad \psi_{b-a}^{\centerdot \, \tnn(Z)} \eqo \underbrace{\psi_{b-a} \centerdot \ldots \centerdot \psi_{b-a}}_{\textrm{$\tnn(Z)$ times }} }
\end{equation}
We easily see for all $\smash{z\ino [0, \zeta_{Z}\! + \! 2b\tnn (Z)]}$ that 
\begin{equation}
\label{subdecgrogro}
\smash{\grob_b (Z) (z) = \psi_{b-a}^{\centerdot \, \tnn(Z)}  \big( z\! -\! I(a,z) \big) + \grob_{a} (Z) \big( I(a,z)\big)\; .  }
\end{equation} 
Namely, $\smash{I (a, \cdot) }$ is the local time and $\smash{\psi_{b-a}^{\centerdot \, \tnn(Z)}}$ is the reflected process on the 
$\smash{(b\! -\! a)}$-erased subtree of the real tree coded by $\smash{\grob_b(Z)}$.  \cq 
\end{remark} 

Let $\smash{ (H,\zeta) \ino \bC_{\! f}}$ be $h$-unicellular. We study the \emph{erasure process} $\smash{ b\ino (0, h] \! \mapsto \big(\cE_b H, M_b(H) \big)\ino \bC_{\! f}}$. We assume that it is not trivial. Namely, we suppose that 
$\smash{ \lim_{b\downarrow 0} N_b(H)\eqo \infty}$. 
In particular we explain how it 
yields a linear growth-division evolution as introduced in Definition \ref{lingrodivevodef}. 
To this end, we first define the \emph{erasure sequence} $\smash{ (b_n, y_n)_{n\in \bbN^*}}$ of $\smash{ (H, \zeta)}$. 
Namely, $\smash{ \{ b_n\, ; n\ino \bbN^*\}}$ is the set of discontinuities of $\smash{ b\ino (0, h] \mapsto N_b(H)}$ (it is also the set of discontinuities of $\smash{ b\ino (0, h] \mapsto \tnn(\cE_bH)}$ by Lemma \ref{unicelprop} $(ii)$) and $\smash{ y_n}$ is the time at which the corresponding local maximum of the function $\smash{ z\! \mapsto (\cE_{b_n} H) (z)}$ has vanished. More precisely, we introduce the following. 
\begin{definition} (\emph{Erasure sequence})
\label{eraseqdef} Let $\smash{ h\ino (0, \infty)}$ and let $\smash{ (H, \zeta)\ino \bC_{\! f}}$ be a $h$-unicellular function such that $\smash{ \lim_{b\downarrow 0} N_b(H)\eqo \infty}$. Its \emph{erasure sequence} $\smash{ (b_n, y_n)\ino(0, h] \! \times \! \bbR_+}$, $\smash{ n\ino \bbN^*}$, is defined by the following conditions that hold for all $\smash{ n\ino \bbN^*}$. 
\begin{compactenum}

\smallskip

\item[$\mathbf{(a)}$] $\smash{ \inf_{b\in (0, b_n)} \tnn(\cE_b H) \geko \tnn (\cE_{b_n} H)} $ and $\smash{ b_{n+1} \leqo b_n}$.

\smallskip

\item[$\mathbf{(b)}$] For all $\smash{ m \ino \bbN^*}$ such that $\smash{ b_{n+m}\eqo b_n }$, if any, $\smash{ y_{n} \leqo y_{n+m}}$. 

\smallskip

\item[$\mathbf{(c)}$] $\smash{ \sum_{z\in \mathscr M(\cE_{b_n} H) } \delta_z + \sum_{m\in \bbN^*\! : \, b_{m}= b_n} \delta_{y_m} \eqo \lim_{\varepsilon \downarrow 0} \sum_{z\in \mathscr M(\cE_{b_n-\varepsilon} H) } \delta_z}$. \cq 
\end{compactenum}
\end{definition}
\noi
Erasure sequences are related to division sequences as follows. 
\begin{proposition}
\label{celevovsera} Let $\smash{ h \ino (0, \infty)}$, $\smash{ x^+\! , x^-\! \ino \bbR_+}$. Let $\smash{ (H, \zeta)\ino \bC_{\! f}}$ be a $h$-unicellular function with initial cell $\smash{ (x^+\! , x^-)}$. We assume that  $\smash{ \lim_{b\downarrow 0} N_b(H)\eqo \infty}$ and we denote by $\smash{ (b_n, y_n)_{n\in \bbN^*}}$ the erasure sequence of $\smash{ \big((\cE_b H, M_b(H)) \big)_{b\in (0, h]}}$ as in Definition \ref{eraseqdef}. We set 
\begin{equation}
\label{connecexpli}
\smash{ \forall t\ino [0, 2h), \quad \bcc(t) \eqo \mathtt{C} \big( \cE_{h-\frac{_1}{^2} t } H \big) \; .}
\end{equation}
Then $\smash{ (\bcc(t))_{t\in [0, 2h)}}$ is a linear growth-division evolution.as in Definition \ref{lingrodivevodef}, which is completely characterized by its initial cell $\smash{ \bcc(0)\eqo (x^+\! , x^-)}$ and its division sequence $\smash{ (t_n, y_n)_{n\in \bbN^*}}$ where we have set $\smash{ t_n \eqo 2(h \! -\! b_n) }$. In particular we get for all $\smash{ t\ino [0, 2h)}$ 
\begin{equation}
\label{sizemassconnec} 
\smash{ \tnn \big( \bcc(t) \big)  \eqo \tnn \big(\cE_{h-\frac{_1}{^2} t } H \big) \quad \textrm{and} \quad \tmm (\bcc(t)) \eqo M_{h-\frac{_1}{^2} t} (H) \; .}
\end{equation}   
\end{proposition}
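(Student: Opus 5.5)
We have to show that $t\mapsto \bcc(t)=\mathtt C(\cE_{h-t/2}H)$ is the linear growth-division evolution $\bcc'(\cdot)$ of Definition \ref{lingrodivevodef} with initial cell $(x^+,x^-)$ and division sequence $(t_n,y_n)$, $t_n=2(h-b_n)$. The plan is to compare the two evolutions on each interval $(t_n,t_{n+1}]$ between consecutive distinct division times. Inside such an interval we use growth; at the endpoints we use division. The main tool is the semigroup property (Lemma \ref{semigrera}), together with the growth identity (\ref{grovseraexpli}) of Remark \ref{unicelerarems}.

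\emph{Initial cell.} By Lemma \ref{unicelprop}$(iii)$ and the identification $b=h$, $\cE_hH$ is $0$-unicellular by Remark \ref{erafunrem}$(d)$. Hence it equals $\phi_{x^+}\centerdot\phi_{-x^-}$, because an excursion of height $0$ is $\partial$ and rises and descents with amplitude $<0^+$ are straight lines. So $\mathtt C(\cE_hH)=(x^+,x^-)$, at least when $x^+x^->0$; degenerate cases are handled by the convention on minimal cells.

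\emph{Growth between divisions.} Fix $b'<b$ in $[b_{n+1},b_n)$. Then $\tnn(\cE_aH)$ is constant for $a\in[b',b]$, by Lemma \ref{unicelprop}$(ii)$ and the definition of the $b_n$. Set $Z=\cE_bH$. By Lemma \ref{semigrera}, $Z=\cE_{b-b'}(\cE_{b'}H)$. Moreover $\cE_{b'}H$ is a zigzag function with the same number of strict local maxima as $Z$, and $Z$ satisfies (\ref{upanddown}) since it is unicellular with $h-b>0$. Remark \ref{unicelerarems}$(b)$ then gives $\cE_{b'}H=\grob_{b-b'}(Z)$. By (\ref{grogrob}) this yields $\mathtt C(\cE_{b'}H)=\mathtt{Gro}_{2(b-b')}(\mathtt C(Z))$. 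In the time variable $t=2(h-b)$ this is exactly $\bcc(t')=\mathtt{Gro}_{t'-t}(\bcc(t))$. A limiting argument handles the left endpoint $b=b_n$: we use right-continuity of $b\mapsto\tnn(\cE_bH)$ and continuity of $\cE_\cdot H$ in $b$, which follows from the semigroup/growth description. This gives left-continuity of $\bcc$ at $t_{n+1}$ and growth on $(t_n,t_{n+1}]$, starting from the right limit $\bcc(t_n+)=\lim_{\varepsilon\downarrow0}\mathtt C(\cE_{b_n-\varepsilon}H)$.

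\emph{Divisions.} The key step, and the main obstacle, is to show that $\bcc(t_n+)=\mathtt{Div}_{y_{n+m-1}}\circ\cdots\circ\mathtt{Div}_{y_n}(\bcc(t_n))$ when $b_n=\dots=b_{n+m-1}$. Here $y$ denotes the abscissa in the zigzag function, and we must check that $\mathtt{Div}_y$ in cumulated coordinates corresponds to inserting a new local maximum at $y$. As $\varepsilon\downarrow0$, the zigzag $\cE_{b_n-\varepsilon}H$ is $\grob_\varepsilon$ of a zigzag $Z_\varepsilon$ with $\tnn(\cE_{b_n}H)+m$ maxima, where some cells have masses tending to $0$. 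By Definition \ref{eraseqdef}$(c)$, the limit point measure of maxima equals the maxima of $\cE_{b_n}H$ plus $\delta_{y_{n}},\dots$. A vanishing cell located at $y$ means that the cumulated sequence of $\lim Z_\varepsilon$ is that of $\mathtt C(\cE_{b_n}H)$ with $y$ inserted twice. This is precisely Definition \ref{GroDivdef}$(b)$. The main care is needed with minimal-cell identifications: inserting the pair $(y,y)$ at an existing sign change must still give the correct population.

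\emph{Conclusion.} Finally we check compatibility (\ref{divseqexpli}). First, $y_n\le M_{b_n}(H)$ because $y_n$ lies in $[0,\zeta(\cE_{b_n}H)]$. Second, $\sup t_n=2h$ because $N_b\to\infty$ as $b\downarrow0$. Mass conservation (\ref{masssizerel}) and (\ref{sizemassconnec}) then follow from the definition of $\mathtt C$, since the size is the number of strict local maxima and the mass is the lifetime. By induction on $n$, $\bcc$ coincides with the evolution built in Definition \ref{lingrodivevodef}$(c)$.
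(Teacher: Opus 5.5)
Your proposal is correct and uses the same ingredients as the paper's proof. These are the explicit $b=h$ erasure $\cE_hH=\phi_{x^+}\centerdot\phi_{-x^-}$, which is better read off directly from Definition \ref{erafundef} using $N_h(H)=1$, $g^h_0=0$, $g^h_1=\zeta$ than derived from a degenerate ``$0$-unicellular'' class; the semigroup property together with Remark \ref{unicelerarems}$(b)$ and (\ref{grogrob}) for the growth phases; and Definition \ref{eraseqdef}$(c)$ to identify each jump as $\mathtt{Div}_{y_{n+m-1}}\circ\cdots\circ\mathtt{Div}_{y_n}$. Your ordering is in fact the cleaner one: you invoke Remark \ref{unicelerarems}$(b)$ only for two parameters inside a constancy interval of $b\mapsto\tnn(\cE_bH)$, where its hypothesis $\tnn(Z')=\tnn(Z)$ holds, whereas the paper applies it between $\cE_{b_{n'}}H$ and $\cE_{b_{n'}-a}H$, whose numbers of maxima differ, so the paper's identity $\cE_{b_{n'}-a}H=\grob_a(\cE_{b_{n'}}H)$ has to be read as growth of the non-minimal population $\bcc'$.
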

\noi
\textbf{Proof.} Since $\smash{ (H, \zeta)}$ is $h$-unicellular with initial cell $\smash{ (x^+\! , x^-)}$, 
we see that $\smash{ \cE_hH\eqo \phi_{x^+}\!  \centerdot \phi_{-x^-}}$. Thus, 
$\smash{ \bcc(0)\eqo \mathtt{C} (\phi_{x^+}\!  \centerdot \phi_{-x^-})\eqo (x^+\! , x^-)}$ and 
$\smash{ M_h(H)\eqo x^+ \! + x^-\eqo \tmm (\bcc(0))}$. 
Let $m\geqo 0$ and $n\geqo1$ be such that $\smash{ b_{n+m+1} \leko b_{n+m} \eqo \ldots \eqo b_n \leko b_{n-1}}$, with the convention $\smash{ b_0\eqo h}$. We recall from Definition \ref{GroDivdef} $(b)$ the function $\smash{ \mathtt{Div}_y\! : \! \mathtt{Cell} \! \to \! \mathtt{Cell}}$. 
Then by Definition \ref{eraseqdef} $(c)$, we get 
\begin{equation}
\label{divera}
\smash{  \lim_{\varepsilon \downarrow 0} \mathtt{C} \big( \cE_{b_n-\varepsilon} H\big) \eqo \big( \mathtt{Div}_{y_n} \circ \ldots \circ  \mathtt{Div}_{y_{n+m}}\big) \big( \mathtt{C} (\cE_{b_n} H )\big) }
 \end{equation}
To simplify let us denote by $\smash{ \bcc'}$ the limit in (\ref{divera}) and let us set $n'\eqo n+m$. As a consequence 
of Definitions \ref{GroDivdef} and \ref{concadef} we see that 
$\smash{ \mathtt{Z} (\mathtt{Div}_y (\bcc))\eqo \mathtt{Z}(\bcc)}$ for any $\smash{ \bcc\ino   \mathtt{Cell}}$ and 
any $\smash{ y\ino \bbR_+}$. Thus $\smash{ \mathtt{Z} (\bcc') \eqo \cE_{ b_{ n'}}H\eqo \cE_{b_n} H}$. 
Moreover, by definition of the sequence $\smash{ (b_p)_{p\in \bbN^*}}$, for all $\smash{ a\ino (0, b_{n'} \! -\! b_{n'+1} ]}$, 
$\smash{ \tnn (\cE_{b_{n'}-a} H)\eqo \tnn(\bcc')}$ and thus 
$\smash{ \tnn (\cE_{b_{n'+1}} H)\eqo \tnn(\bcc')}$
since the function 
$\smash{ b\! \mapsto\!  \tnn (\cE_b H)}$ is right-continuous by Lemma \ref{unicelprop} $(ii)$. 
For all $\smash{ a\ino (0,  b_{n'} \! -\! b_{n'+1}]}$ the semigroup property stated in Lemma \ref{semigrera} 
implies $\smash{ \cE_{b_{n'}} H\eqo \cE_a(\cE_{b_{n'}-a} H)}$. 
Then Remark \ref{unicelerarems} $(b)$ applies to 
$\smash{ (Z', \zeta_{Z'})\! :=\! (\cE_{b_{n'}-a}H, M_{b_{n'}-a} (H))}$ and $\smash{ (Z, \zeta_Z) \! :=\! (  \cE_{b_{n'}} H, M_{b_{n'}} (H))}$ and we get $\smash{ \cE_{b_{n'}-a} H\eqo \grob_{a} (\cE_{b_{n'}} H)}$ and 
$\smash{ M_{b_{n'}-a }(H) } $ $\eqo$ $\smash{  M_{b_{n'}-a} (H)}$ $ +$ $\smash{   2a\tnn(\bcc')}$. 
By (\ref{grogrob}) in Remark \ref{unicelerarems} $(a)$ it entails 
$$\smash{ \forall a\ino (0, b_{n'} \! -\! b_{n'+1}], \quad 
\mathtt{C} (\cE_{b_{n'}-a} H)\eqo \mathtt{C} \big( \grob_{a} (\cE_{b_{n'}} H ) \big) \eqo \mathtt{Gro}_{2a} \big( \mathtt{C} (\cE_{b_{n'}} H) \big) = \mathtt{Gro}_{2a} (\bcc') \; .}$$ 
This implies the desired result by Definition \ref{lingrodivevodef} of linear growth-division evolution.  
\cqfd

\smallskip

To prove Theorem \ref{recovercell} we shall use Lemma \ref{divseqidentif} and to this end we need to compute in a 
tractable way the law of $\smash{ (b_{\mathtt{n} (\cE_b \mathbf H) }, y_{\mathtt{n} (\cE_b \mathbf H)})}$ knowing $\smash{ (\cE_a \mathbf H )_{ a\in [b, h]}}$, where $\smash{ (\mathbf H, \bzeta)}$ is a $h$-unicellular Brownian path. 
More specifically, let $\smash{ h \ino (0, \infty)}$, $\smash{ x^+\! , x^-\ino \bbR_+}$ and 
$\smash{ (H, \zeta)\ino \bC_{\! f}}$, a $h$-unicellular function with initial cell $\smash{ (x^+\! , x^-)}$. 
We assume that $\smash{ \lim_{b\downarrow 0} N_b (H)\eqo \infty}$ we fix $b \ino (0, h]$. To simplify, 
we often skip $H$ in various notations below.  
We also recall from (\ref{1erbtps}) and (\ref{snbdef}) the definition of $\smash{ (s^{_b}_{^k})_{1\leq k\leq N_b}}$. 
By Lemma \ref{unicelprop} $(i)$ $\smash{ (H,\zeta, (s^{_b}_{^k})_{1\leq k\leq N_b)})}$ satisfies (\ref{Hascend}) in 
Definition \ref{margtreedef}.  
We then define a cell population $\smash{ \bcc:=((x^+_k, x^-_k))_{1\leq k\leq N_b}}$ as in (\ref{celpopmargtree}) 
in Definition \ref{margtreedef}. Namely, 
\begin{equation}
\label{celpopmargtreebis}
\smash{ \forall k\ino \{ 1, \ldots, N_b\}, \qquad x^+_k \eqo H_{s^b_k}\! -\! m_H(s^b_{k-1}, s^b_k) \quad \textrm{and} \quad x^-_k \eqo H_{s^b_k}\! -\! m_H(s^b_k,s^b_{k+1} )  .}
\end{equation}
with the conventions: $\smash{ s^{b}_{0}\eqo 0}$ and $\smash{  s^{b}_{{N_b+1}}\eqo \zeta}$. Then 
$$ \smash{ \cE_bH\eqo \mathtt{Z} (\bcc) \eqo \phi_{x^+_1} \! \centerdot  \phi_{-x^-_1}\!  \centerdot \ldots \centerdot  \phi_{x^+_{N_b}} \! \centerdot  \phi_{-x^-_{N_b}} \quad \textrm{and} \quad \tmm(\bcc)=M_b =\!\!\!  \sum_{^{1\leq k\leq N_b}} \!\! x^+_k \! + x^-_k \; . }$$
Note that $\bcc$ may not be the minimal cell of $\smash{ \cE_b H}$ (see Definition \ref{cellfromzigzag}). This situation indeed occurs when $\smash{ \tnn(\cE_bH) \leko N_b}$, i.e., when $b$ is a jumptime of $\smash{ N_b}$. 
We also recall the notation $\smash{ \mathtt{Tree} (H)} $ $\eqo$ $\smash{ (\cT_{\! H}, d_H, \rho_H, <_H)}$ which is the real tree coded by 
$\smash{ (H, \zeta)}$ and we denote by $\smash{ \mathtt{p}_H\! : \! [0, \zeta] \! \to \! \cT_{\! H}}$ 
the canonical projection. We next recall from (\ref{oversbk}) the definition of the times $\smash{ (\overline{s}^{_b}_{^k})_{1\leq k\leq N_b}}$ and we set 
\begin{equation}
\label{Tbancill}
\smash{  \mathscr{T}_b\! = \!\!\!\!\! \! \!\!\!\!  \bigcup_{\quad 1\leq k\leq N_b}   \!\!\!\!\! \!  
 \lgeo \rho_{H} , \mathtt{p}_{\! H} (s^{_b}_{^k}) \rgeo   \quad \textrm{and} \quad  T_b \! = \!\!\!\!\! \! \!\!\!\! \bigcup_{\quad 1\leq k\leq N_b}  \!\!\!\!\! \!  \lgeo \rho_{H} , \mathtt{p}_{\! H} (\overline{s}^{_b}_{^k}) \rgeo \; }
\end{equation}

By Lemma \ref{unicelprop} $(i)$, $\smash{ (H,\zeta, (\overline{s}^{_b}_{^k})_{1\leq k\leq N_b})}$ satisfies (\ref{Hascend}) in Definition \ref{margtreedef} and $\smash{ T_b}$ is the $\smash{ (\overline{s}^{_b}_{^k})_{1\leq k\leq N_b}}$-marginal subtree of 
$\smash{ \mathtt{Tree} (H)}$ as 
in Definition \ref{margtreedef} $(a)$. 
Since $H$ is $h$-unicellular, (\ref{eraera}) in Lemma \ref{unicelprop} $(iii)$ asserts that 
$\smash{  \mathtt{Tree} (\cE_b H)\! \equiv \!}$  $\smash{ ( \mathscr T_b, d_H,\rho_H, <_H)}$ which is 
$\smash{ (\bcE_b \mathcal{T}_{\! H}, d_H, \rho_H , <_H) }$.

We then set $\smash{ (Z^b\! , \zeta_b)\eqo \grob_b (\cE_b H)}$. 
Namely, $\smash{  Z^b\! =\phi_{x^+_1} \! \centerdot  \psi_b \centerdot \phi_{-x^-_1}\! 
\centerdot \ldots \centerdot  \phi_{x^+_{N_b}} \! \centerdot  \psi_b \centerdot \phi_{-x^-_{N_b}} }$ 
and $\smash{ \zeta_b \eqo M_b + 2b N_b}$. 
Since the geodesic intervals $\smash{ (\, \rgeo \pcH (s^{_b}_{^k}) ,\pcH (\overline{s}^{_b}_{^k})  \lgeo \,)_{1\leq k\leq N_b}}$ 
are pairwise disjoint in $\smash{ \cT_{\! H}}$, we easily see that 
$$\smash{  \mathtt{Tree} (Z^b) \equiv \big( T_b, d_H, \rho_H, <_H) \; .}$$
Although $\smash{ \cE_bH}$ may have less than $\smash{ N_b}$ strict local maxima, $\smash{ Z^b}$ has $\smash{ N_b}$ strict local maxima, 
$$ \smash{ \tnn (Z^b)=N_b \quad \textrm{and} \quad \mathscr M (Z^b)= \big\{ z_{2k-1}\! + (2k\! -\! 1) b \, ; \, 1\leqo k\leqo N_b \big\}   \; ,} $$
where $\smash{ \overline{\bcc}\eqo (z_j)_{1\leq j\leq 2N_b}}$ stands for the cumulated version of the cell population $\bcc$ given by (\ref{celpopmargtreebis}).

We next denote by $\smash{ \cH^b_\cdot (H)}$ and by $\smash{J^b_\cdot (H)}$ respectively the reflected process and the local time on the subtree $\smash{T_b}$, as defined in Lemma \ref{loctimlem}.  Namely for all $\smash{t\ino [0, \zeta]}$, 
\begin{equation}
\label{loctiremind}
\smash{ \cH^b_t(H)\eqo d_H \big( \pcH(t), \mathtt{proj}_{T_b} (\pcH(t))\big) \quad \textrm{and} \quad H_t \eqo \cH^b_t(H)+ Z^b_{\! J^b_t(H)}\; .}
\end{equation}
Here we recall for all $\smash{\sigma\ino \mathcal{T}_{\! H}}$ that 
$\smash{\mathtt{proj}_{T_b} (\sigma)}$ is the point in $\smash{T_b}$ which stays the closest to $\sigma$.  
We recall from Lemma \ref{loctimlem} $(i)$ 
that $\smash{\cH^b_0(H)\eqo  \cH^b_\zeta(H)\eqo 0}$ and we define the excursion intervals of 
$\smash{ \cH^b_\cdot (H)}$ above $0$ as the following connected components. 
$$\smash{ \bigcup_{^{j\in \cJ_b}} (\alpha_j, \beta_j) =  \big\{ t\ino [0, \zeta] :  \cH^b_t (H) \geko 0 \big\} \; .}$$
We recall from Remark \ref{connTT} that $\smash{\pcH( (\alpha_j, \beta_j))}$, $\smash{j\ino \cJ_b}$, are the connected components of $\smash{\cT_{\! H} \backslash T_b}$. See Figure \ref{reflected}.

\begin{figure}[htb]
\hspace{-0mm}
\centering
\begin{minipage}{0.9\textwidth}
\centering 
\includegraphics[width=1\textwidth, height=0.30\textheight]{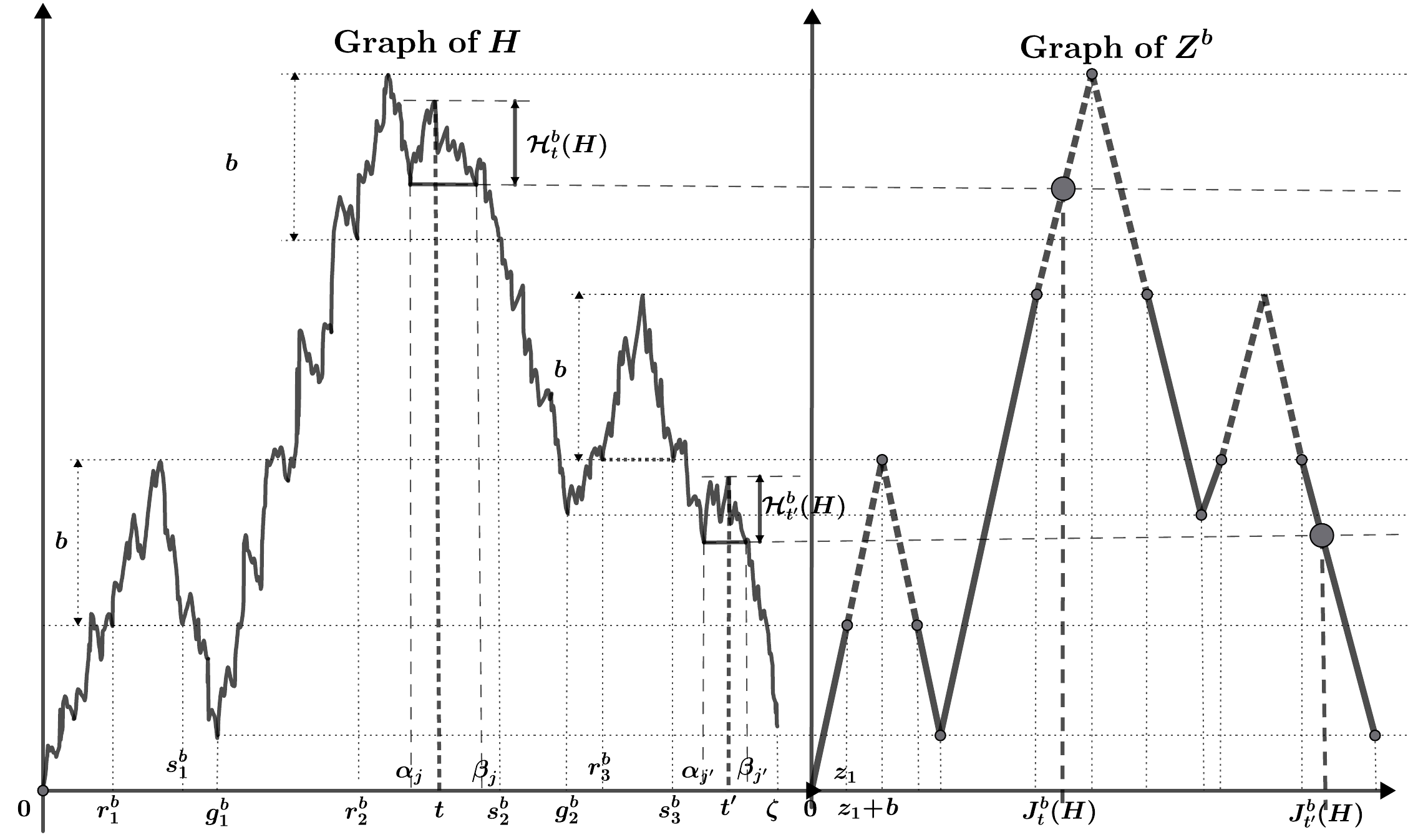}
\caption{{\footnotesize 
\emph{The function $\smash{ (H,\zeta)}$ on the left-hand side has three complete excursions of height $b$, i.e., $\smash{N_b(H) \eqo3}$. 
The process $\smash{\mathcal H^b_\cdot (H)}$ which is the reflected process on the subtree $\smash{T_b}$ as defined by (\ref{Tbancill}) we represent $\smash{\mathcal H^b_t (H)}$ and  $\smash{\mathcal H^b_{t'} (H)}$ for two times $\smash{t,t'\ino [0, \zeta]}$ such that $t\leko t'$. 
 The contour function which encodes $\smash{T_b}$ is $\smash{Z^b}$ whose graph appears on the right-hand side of the figure. The two corresponding local times $\smash{J^b_t(H)}$ and $\smash{J^b_{t'} (H)}$ are shown there. In the left-hand side of the figure, $\smash{(\alpha_j , \beta_j)}$ (resp.~$\smash{(\alpha_{j'}, \beta_{j'})}$) is the excursion interval of $\smash{\mathcal H^b_\cdot (H)}$ above $0$ which contains $t$ (resp.~$\smash{t'}$). 
}}
\label{reflected}}
\end{minipage}
\end{figure}  
\begin{proposition}
\label{nextjump} Let $\smash{h \ino (0, \infty)}$, $\smash{x^+\! , x^-\ino \bbR_+}$. Let $\smash{(H, \zeta)\ino \bC_{\! f}}$ 
be a $h$-unicellular function with initial cell $\smash{(x^+\! , x^-)}$ such that $\smash{\lim_{b\downarrow 0} N_b(H)
\eqo \infty}$. Its erasure sequence, as in Definition \ref{eraseqdef}, is denoted by 
$\smash{(b_n,y_n)_{n\in \bbN^*}}$ and it is convenient to set $\smash{b_0\eqo h}$. We fix $b\ino (0, h]$ and we 
keep the above notations for $\smash{Z^b_\cdot}  $, $\smash{T_b\, }$, $\smash{\cH^b_\cdot (H)}$, 
$\smash{J^b_\cdot (H)}$ and the 
$\smash{(\alpha_j, \beta_j)}$, $\smash{j\ino \cJ_b}$. Then there is a unique pair $\smash{(n,m)\ino (\bbN^*)^2}$ such that 
\begin{equation}
\label{bsitua}
\smash{b_{n+m} < b_{n+m-1}\! = \ldots = b_{n} < b \leq b_{n-1} \; }
\end{equation}
and such that the following holds true: $\smash{n\eqo  N_b(H)}$, $\smash{b_{n}\eqo  \max \big\{ \cH^{_b}_{^t} (H)\, ; \, t\ino [0, \zeta] \big\}}$ and there 
are distinct indices $\smash{j_1, \ldots, j_m \ino \cJ_b}$ such that $\smash{J^{b}_{{\alpha_{j_1}}} (H) \! 
\leq \ldots \leq \! J^{b}_{{\alpha_{j_m}}} (H)}$ and such that for all $\smash{l\ino \{ 1, \ldots, m\}}$ we get 
$$\smash{ \!\!\!\! \max_{^{\; \, t\in [\alpha_{j_l}, \beta_{j_l}]}}  \!\!\!\! \! \cH_{t}^b(H) = b_{n} \quad \textrm{and} \quad y_{n+l-1}\eqo  I\big( b_{n}\,  ,\,  J^{b}_{{\! \alpha_{j_l}}}\!  (H) \big)}$$
where for all $\smash{r\ino [0, b]}$ and all $\smash{z\ino [0,  M_b(H)+ 2bN_b(H)]}$ we have set 
$$ \smash{I(r,z)\eqo \int_0^z\!\! \un_{\! \big\{  \mathtt{dist} ( y \, ; \, \mathscr M (Z^b)) > b-r  \big\}}\,  \mathrm d y  .}$$
\end{proposition}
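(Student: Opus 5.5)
The plan is to read everything off the tree picture given by Lemma \ref{unicelprop} $(iii)$ and the subtree decomposition of Lemma \ref{loctimlem} applied to $T_b$. First, the index $n$. The map $a\mapsto N_a(H)$ is nonincreasing and left-continuous on $(0,h]$, and it tends to $\infty$ as $a\downarrow 0$ (Lemma \ref{unicelprop} $(ii)$). So its jumps accumulate only at $0$. By Definition \ref{eraseqdef} $(a)$, the $b_k$ list these jump points in nonincreasing order, and a jump point is repeated as many times as the size of the jump. Hence there is a unique $n$ with $b_n<b\le b_{n-1}$, and we let $m$ be the multiplicity of $b_n$. The number of jumps in $[b,h]$ is then $N_b-N_h$. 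Since $N_h=1$ for an $h$-unicellular function (Lemma \ref{uniqextract}), we get $n-1=N_b-1$, that is $n=N_b(H)$.

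Second, the claim $b_n=\max_t\cH^b_t(H)$. For $a<b$, Lemma \ref{unicelprop} $(iii)$ gives $\mathtt{Tree}(\cE_aH)\equiv\bcE_a\cT_{\! H}$. Since $\bcE_b\cT_{\! H}\subset T_b$, a new leaf of $\bcE_a\cT_{\! H}$ appears outside $T_b$ exactly when some component $\pcH((\alpha_j,\beta_j))$ of $\cT_{\! H}\backslash T_b$ (Remark \ref{connTT}) contains points at distance $\ge a$ from its root $\gamma_j$. That distance is $\max_{[\alpha_j,\beta_j]}\cH^b$. Inside $T_b$ no new leaf appears for $a\in(0,b)$, because each segment $\rgeo\pcH(s^b_k),\pcH(\overline s^b_k)\lgeo$ only lengthens the existing $k$-th branch. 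Therefore $N_a=N_b+\#\{j:\max_{[\alpha_j,\beta_j]}\cH^b\ge a\}$ for $a<b$. This count is finite by compactness, since only finitely many components have height above a given level. The largest $a<b$ at which $N_a$ jumps is then $\max_t\cH^b_t$, and the jump has size $m$ equal to the number of components reaching that maximum; call their indices $j_1,\dots,j_m$. One still has to check that $\max\cH^b<b$. This holds because any point at distance $\ge b$ above its projection on $T_b$ would lie in $\bcE_b\cT_{\! H}$, or would create an extra excursion of height $b$.

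Third, the locations $y_{n+l-1}$. By Lemma \ref{semigrera} and Remark \ref{unicelerarems} $(b)$, we have $\cE_{b_n}H=\grob_{b-b_n}(\cE_bH)$. By (\ref{grovseraexpli}) this equals $\cE_{b_n}$ applied to $Z^b=\grob_b(\cE_bH)$, and (\ref{subdecgrogro}) identifies $I(b_n,\cdot)$ as the time change that sends the zigzag $Z^b$ onto $\cE_{b_n}H$. A component hanging at $\gamma_j\in T_b$ is attached at the point of $T_b$ visited, in the contour $Z^b$, at local time $J^b_{\alpha_j}$ (Lemma \ref{loctimlem} $(ii)$). Consider the limit $\varepsilon\downarrow0$ in Definition \ref{eraseqdef} $(c)$. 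The new local maxima of $\cE_{b_n-\varepsilon}H$ converge to the positions where the contour of $\cE_{b_n}H$ passes $\gamma_{j_l}$, namely $I(b_n,J^b_{\alpha_{j_l}})$. Attachment points lying in the erased tips near $\mathscr M(Z^b)$ cannot occur, since those tips have been erased at level $b_n$. Ordering the indices by $J^b_{\alpha_{j}}$ matches the ordering rule of Definition \ref{eraseqdef} $(b)$. The indices are distinct because the components are distinct, although their $J$ values may coincide.

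The main obstacle I expect is this last identification. One must carefully compare the exploration order and the contour positions of $\cE_{b_n-\varepsilon}H$ with the $H$-time $\alpha_{j_l}$. When several components hang at the same point of $T_b$, the ordering of the cumulated version in $\mathtt{Div}$ must still be consistent. I would handle this by writing $\cE_{b_n-\varepsilon}H$ explicitly via the $b_n-\varepsilon$ decomposition of Proposition \ref{buniceldec} restricted to each $[\alpha_{j_l},\beta_{j_l}]$, and then passing to the limit.
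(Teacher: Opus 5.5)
Your argument is correct in substance. Parts 1 and 3 are essentially the paper's; Part 2 takes a different route.

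\textbf{Part 2 compared with the paper.} The paper argues with times. For $a\in(b_n,b]$, Remark~\ref{sbnrems}~$(b)$ gives $\overline s^{a}_k=\overline s^{b}_k$. So a component of $\{\cH^b>0\}$ of height $\ge a$ would contain some $\overline s^{a}_k$, where $\cH^b=0$; this yields $\max\cH^b\le b_n$. It then places the $m$ new times $\overline s^{b_n}_{k_l}$ in intervals $(\alpha_{j_l},\beta_{j_l})$ and proves, by contradiction via Lemma~\ref{loctimlem}~$(iii)$, that $(\alpha_{j_l},\beta_{j_l})=(r^{b_n}_{k_l},s^{b_n}_{k_l})$.

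Your leaf count of $\bcE_a\cT_{\! H}$, split between $T_b$ and the components of $\cT_{\! H}\backslash T_b$, is more conceptual and avoids that contradiction argument. The cost is that it does not give the identity $(\alpha_{j_l},\beta_{j_l})=(r^{b_n}_{k_l},s^{b_n}_{k_l})$. That identity is what makes the paper's last step short: the new $b_n$-excursion \emph{is} the excursion $j_l$, read at local time $J^b_{\alpha_{j_l}}$. Without it you need the $\varepsilon\downarrow0$ limit of ordered trees that you flag as the obstacle.

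\textbf{Details to tighten.}
\begin{itemize}
\item The formula $N_a=N_b+\#\{j:\max_{[\alpha_j,\beta_j]}\cH^b\ge a\}$ is false for small $a$. A component contributes as many leaves as its own $a$-erasure has, and leaf counting gives $\tnn(\cE_aH)=N_{a+}$, not $N_a$. It does hold on a left neighbourhood of $\max\cH^b$, which is all you use.
\item ``No new leaf appears in $T_b$'' and ``top components avoid the removed tips'' both rest on one fact. Since $H\le H(\overline s^b_k)$ on $[r^b_k,s^b_k]$, a component attached at distance $d$ below $\pcH(\overline s^b_k)$ has height at most $d$. State this instead of ``those tips have been erased''.
\item If $b$ is itself a jump time, then $\tnn(\cE_bH)=N_{b+}<N_b$, and Remark~\ref{unicelerarems}~$(b)$ does not give $\cE_{b_n}H=\grob_{b-b_n}(\cE_bH)$. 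Grow the $N_b$-cell population (\ref{celpopmargtreebis}) instead, as the paper implicitly does for $Z^b$.
\end{itemize}

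\textbf{A genuine correction: the time change in Part 3.} You correctly get $\cE_{b_n}H=\cE_{b_n}Z^b$, whose wedges have height $b-b_n$ against $b$ in $Z^b$. So (\ref{subdecgrogro}) must be applied with $a=b-b_n$. The map from the parametrization of $Z^b$ to that of $\cE_{b_n}H$ therefore deletes the $b_n$-neighbourhoods of $\mathscr M(Z^b)$: it is $z\mapsto\int_0^z\un_{\{\mathtt{dist}(y,\mathscr M(Z^b))>b_n\}}\mathrm dy$. It is not $I(b_n,\cdot)$ with the threshold $b-r=b-b_n$ written in the statement.

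With that literal threshold the claim fails when $b_n<b/2$. A top component may hang at distance $d\in[b_n,b-b_n)$ below a tip, inside the region that $I(b_n,\cdot)$ collapses.

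The slip is in the paper itself: its proof writes $\cE_{b_n}H=\cE_{b-b_n}Z^b$ ``with $a=b_n$''. Lemma~\ref{eraseqcomput} in fact uses threshold $r$, since there $\partial_yI(r,y)=\un_{\{r\le\mathtt{dist}(y,\mathscr M(Z^b))\}}$ and $I(r,\zeta_Z+2nb)=\zeta_Z+2n(b-r)$. So your sentence ``(\ref{subdecgrogro}) identifies $I(b_n,\cdot)$'' is right only if $I$ is defined with $\un_{\{\mathtt{dist}>r\}}$.
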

\noi
\textbf{Proof.} By definition of erasure sequences, 
(\ref{bsitua}) implies $\smash{ N_a\eqo N_b\eqo n}$, for all $\smash{ a\ino (b_{n}, b]}$ and 
Remark \ref{sbnrems} $(b)$ implies that 
$\smash{ r^{_{a}}_{^{k}}\eqo \max \{ t\ino [ r^{_{b}}_{^{k}} ,\overline{s}^{_{b}}_{^{k}} ]: H(t)
 \eqo H (\overline{s}^{_{b}}_{^{k}})\! -\! a \} }$ and $\smash{ s^{_{a}}_{^{k}}\eqo \min \{ t\ino 
 [\overline{s}^{_{b}}_{^{k}} , s^{_{b}}_{^{k}} ]}$ $\smash{ : H(t) \eqo H (\overline{s}^{_{b}}_{^{k}})\! -\! a\} }$. 
 Therefore, we get $\smash{  \overline{s}^{_{a}}_{^{k}}\eqo \overline{s}^{_{b}}_{^{k}}}$. 
 Let us suppose that there is $\smash{ j\ino \cJ_b}$ such that 
 $\smash{ \max \big\{ \cH^{_b}_{^t}\, ; \, t\ino [\alpha_j, \beta_j ] \big\} \geqo a}$ 
 (here to simplify we write $\smash{ \cH^{_b}_{^t}}$ instead of $\smash{ \cH^{_b}_{^t}(H)}$). 
 It would imply that $\smash{ \overline{s}^{_{a}}_{^{k}}\eqo \overline{s}^{_{b}}_{^{k}}\ino (\alpha_j, \beta_j)}$ 
 for some $\smash{ 1\leqo k \leqo N_b}$ which contradicts the very definition of $\smash{ \cH^{_b}_\cdot}$ which requires that 
 $\smash{  \cH^{b} (\overline{s}^{_{b}}_{^{k}} )\eqo 0}$.
Since $a$ can be chosen arbitrarily close to $b_{n}$, it proves 
\begin{equation}
\label{uppbndmaxcH}
\smash{ \max \big\{ \cH^{b}_{t} \, ; \, t\ino [0, \zeta] \big\}\leqo b_{n}\; .}
\end{equation}
By definition of erasure sequences, (\ref{bsitua}) implies $\smash{ N_{b_{n}}\eqo N_b+m}$. 
Argueing as previously entails 
$\smash{ \{\overline{s}^{_{b}}_{^{k}} \, ; \,  1\leqo k \leqo N_b \} \! \subset \! \{\overline{s}^{_{{b_{n}}}}_{^{k'}} \, ; \, 
1\leqo k' \leqo N_b +m\}} $. We then denote by 
$\smash{ k_1 \! \leq}$  $\smash{ \ldots \leq}$  $\smash{ \! k_m}$ the integers 
$\smash{ k'\ino \{ 1}$ $, \ldots$ $\smash{ , m+N_b\}}$ such that  
$\smash{ \overline{s}^{_{{b_{n}}}}_{^{k'}} \! \notin\! \{\overline{s}^{_{b}}_{^{k}} \, ; \,  1\leqo k \leqo N_b \} }$. We fix 
$\smash{ l\ino \{ 1, \ldots, m\}}$. 
Since $\smash{ \pcH (\overline{s}^{_{b_{n}}}_{^{k_l}}) \! \notin \! T_b}$, we see that 
$\smash{ \cH^{b} ({\overline{s}^{_{b_{n}}}_{^{k_l}}}) 
 \geko 0}$ and there is $\smash{ j_l\ino \cJ_b}$ such that $\smash{ \overline{s}^{_{b_{n}}}_{^{k_l}} \ino (\alpha_{j_l}, \beta_{j_l})}$.  We then prove that 
\begin{equation}
\label{alohrcoi}
\smash{ \alpha_{j_l}= r^{b_{n}}_{k_l} \quad \textrm{and} \quad \beta_{j_l}= s^{b_{n}}_{k_l} . }
\end{equation}
By definition, we get $\smash{ r^{_{b_{n}}}_{^{k_l}}\eqo \max \{ t\ino [ 0,  \overline{s}^{_{b_{n}}}_{^{k_l}} ]: 
H(t) \eqo H (\overline{s}^{_{b_{n}}}_{^{k_l}})\! -\! b_{n} \} }$ and $\smash{ s^{_{b_{n}}}_{^{k_l}}\eqo 
\min \{ t\ino [\overline{s}^{_{b_{n}}}_{^{k_l}} ,\zeta ]: H(t)} $  $\eqo$ $\smash{  H (\overline{s}^{_{b}}_{^{k_l}})\! -\! b_{n} \} }$. 
We first observe that (\ref{uppbndmaxcH}) entails $\smash{ b_{n} + H (\alpha_{j_l})  \geqo H (t) \geko H (\alpha_{j_l}) \eqo 
H (\beta_{j_l})}$ for all $\smash{ t\ino (\alpha_{j_l},\beta_{j_l})}$. We thus get 
$\smash{ r^{_{b_{n}}}_{^{k_l}} \leqo \alpha_{j_l}\leko \beta_{j_l} \leqo s^{_{b_{n}}}_{^{k_l}}}$. We 
next observe that if $\smash{ H(\alpha_{j_l})\eqo H(r^{_{b_{n}}}_{^{k_l}})}$, which is equivalent to 
$\smash{ H(\beta_{j_l})\eqo H(s^{_{b_{n}}}_{^{k_l}})}$, then (\ref{alohrcoi}) holds true. 

To complete the proof of (\ref{alohrcoi}), it therefore remains to prove that $\smash{ H(\alpha_{j_l})\eqo H(r^{_{b_{n}}}_{^{k_l}})}$. 
To this end we argue by contradiction by supposing that $\smash{ H(\alpha_{j_l}) \! \neq \! H(r^{_{b_{n}}}_{^{k_l}})}$. 
It implies $\smash{ H(r^{_{b_{n}}}_{^{k_l}}) } $ $\eqo$ $\smash{  H(s^{_{b_{n}}}_{^{k_l}})}$ 
$ \leko$ $\smash{  H(\alpha_{j_l}) } $ $\eqo$ $\smash{  H(\beta_{j_l})}$. Let $\smash{ k\ino \{ 0, \ldots, N_b\}}$ be such that 
$$\smash{  \overline{s}^b_k \leko s^b_k \leqo r^{{b_{n}}}_{{k_l}}  \leqo \alpha_{j_l}\leko 
\beta_{j_l} \leqo  s^{{b_{n}}}_{{k_l}}\leqo r^b_{k+1} \leqo \overline{s}^b_{k+1} ,} $$
with the conventions $\smash{ \overline{s}^{_b}_{^0}\eqo s^{_b}_{^0}\eqo 0}$ and 
$\smash{ \overline{s}^{_b}_{^{N_b+1}}\eqo s^{_b}_{^{N_b+1}}\eqo \zeta}$. 
Consequently, 
$\smash{ m_H(\overline{s}^b_k,  \alpha_{j_l}) \leqo H( r^{{b_{n}}}_{{k_l}}) } $ $\leko$ $\smash{ H(\alpha_{j_l})}$ and 
$\smash{ m_H( \beta_{j_l},  \overline{s}^b_{k+1})\leqo H( s^{{b_{n}}}_{{k_l}})} $  $ \leko$ $\smash{ H( \beta_{j_l})}$. 
We next observe that $\smash{ H( \alpha_{j_l})\eqo H(\beta_{j_l})}$ and similarly that $\smash{ m_H(\overline{s}^b_k,  \alpha_{j_l})} $ $\eqo$  
$\smash{m_H(\overline{s}^b_k,  \beta_{j_l})}$. Therefore,  
$$ \smash{ H (\beta_{j_l}) \geko m_H(\overline{s}^b_k,  \beta_{j_l} ) \vee 
m_H( \beta_{j_l},  \overline{s}^b_{k+1})=Z^b \big(J^b_{\alpha_{j_l}} (H)\big), }$$
by Lemma \ref{loctimlem} $(iii)$. By (\ref{loctiremind}) we get 
$\smash{ \cH^{b} (\beta_{j_l})   \geko 0}$, which contradicts the definition of $\smash{ \beta_{j_l}}$. 
Thus $\smash{ H(\alpha_{j_l})\eqo H(r^{_{b_{n}}}_{^{k_l}})}$ which entails (\ref{alohrcoi}) and also 
$\smash{ b_{n}\eqo \max \{ \cH^{_b}_{^t} (H)\, ; \,  t\ino [\alpha_{j_l}, \beta_{j_l} ] \}}$, $\smash{ 1\leqo l\leqo m}$. 

  Next note that while $\smash{ J^{b}_{{\alpha_{j_l}}} (H)}$ is the position of the $(n+l-1)$th erasure within the 
parametrization of $\smash{ Z^b\eqo\grob_b (\cE_b H)}$ which encodes $\smash{ T_b}$, 
$\smash{ y_{n+l}}$ is the position of the $(n+l-1)$th erasure within the 
parametrization of $\smash{ \cE_{b_{n }} H}$. 
Since $\smash{ \cE_{b_{n }} H\eqo \cE_{b-b_{n}} Z^b}$, by Remark \ref{unicelerarems} $(c)$ applied to $a\eqo b_n$, we get $ \smash{ y_{n+l-1}\eqo I ( b_{n}\,  ,\,  J^{b}_{{\alpha_{j_l}}}\!  (H) )}$, which completes the proof of the proposition.  \cqfd 

\section{DR branching processes driven by unicellular Brownian processes}
\label{randDRsec}
\subsection{Preliminary results}
\label{prelrandsubsec}
In this subsection, which contains no new result, we set some notations and 
we recall standard results on Brownian excursion and Bessel processes. 
Recall from the introduction that $\smash{ (B_t)_{t\in \bbR_+}}$ under $\bP$ stands for a real valued Brownian motion with initial value $\smash{ B_0\eqo 0}$. 
For all $\smash{ t\ino \bbR_+}$ we recall the notations
$\smash{ \underline{B}_t\eqo \min_{s\in [0, t]} B_s}$ and $\smash{ B^+_t \eqo B_t \! -\! \underline{B}_t}$. We also recall from (\ref{NItomeadefi}) the definition of Ito's excursion measure $\bN$ on $\bC$. 
We shall use the following lemma which computes explicitely the Laplace transform of the lifetime of a $h$-unicellular Brownian path. 
\begin{lemma}
\label{durationLapla} Let $\smash{ h\ino (0, \infty)}$ and $\smash{ x^+\! , \, x^-\ino \bbR_+}$. Let $\smash{ (\bH, \bzeta)}$ be a $\bC$-valued process whose law is equal to $\smash{ P_{\! x^+\! , \, x^- \! , \, h}}$, as in Definition \ref{unicelBrodef} $(c)$. Then, for all $\smash{ \lambda\ino \bbR_+}$
\begin{equation}
\label{explidura}
\smash{ \bE  \big[ \mathrm e^{-\lambda \bzeta} \big] = \bigg( \, \frac{h\sqrt{ 2\lambda\, }}{\, \sh (h\sqrt{ 2\lambda\, })} \bigg)^2  \exp \Big( \!\! -(x^+\! +x^-) \Big( \sqrt{2\lambda\, } \mathrm{coth} \big(  h\sqrt{2\lambda\, }\,  \big) \! -\! \tfrac{1}{h} \Big) \Big) .}
\end{equation}
\end{lemma}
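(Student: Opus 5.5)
By Definition \ref{unicelBrodef} $(c)$, $\bzeta=\bzeta_1+\bzeta_2+\bzeta_3$ with independent summands. The Laplace transform therefore factorises as
\[
\bE[\mathrm e^{-\lambda\bzeta}]=\bE[\mathrm e^{-\lambda\bzeta_1}]\,\bE[\mathrm e^{-\lambda\bzeta_2}]\,\bE[\mathrm e^{-\lambda\bzeta_3}].
\]
I would compute the three factors separately. By Definition \ref{unicelBrodef} $(a)$–$(b)$, the lifetimes under $P^{\uparrow}_{x^+,h}$ and under $P^{\downarrow}_{x^+,h}$ are both the law of $\bvaro_{-x^+}$ under $\bP(\,\cdot\,|\max_{[0,\bvaro_{-x^+}]}B^+<h)$; time reversal does not change the duration. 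So it suffices to find, for $x>0$, the quantity $\bE[\mathrm e^{-\lambda\bvaro_{-x}}\,|\,\max_{[0,\bvaro_{-x}]}B^+<h]$, together with the Laplace transform of the lifetime under $\bN(\,\cdot\,|\,\Gamma=h)$.

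For the descent factors I would use the Poisson decomposition (\ref{NItomeadefi}) of $B$ into excursions above its infimum. Up to $\bvaro_{-x}$, the local time $-\underline B$ runs over $[0,x]$, and $\bvaro_{-x}$ is the sum of the lifetimes of the excursions with local time in $[0,x]$; the Lebesgue time spent at the infimum is zero. Conditioning on the event that every such excursion has height $<h$ amounts to restricting the Poisson measure to $\{\Gamma<h\}$. The exponential formula then gives
\[
\bE[\mathrm e^{-\lambda\bvaro_{-x}};\max B^+<h]=\exp\bigl(-x\,\bN(1-\mathrm e^{-\lambda\zeta}\un_{\{\Gamma<h\}})\bigr)=\exp\bigl(-x\,\bN(1-\mathrm e^{-\lambda\zeta})-x\,\bN(\mathrm e^{-\lambda\zeta};\Gamma\ge h)\bigr).
\]
Dividing by $\mathrm e^{-x/h}$ from (\ref{ampliBh}), the conditional transform becomes $\exp(-x[\Phi_h(\lambda)-1/h])$, where $\Phi_h(\lambda):=\bN(1-\mathrm e^{-\lambda\zeta}\un_{\{\Gamma<h\}})$. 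The two descent factors together thus contribute $\exp(-(x^++x^-)(\Phi_h(\lambda)-1/h))$. It then remains to check that $\Phi_h(\lambda)=\sqrt{2\lambda}\coth(h\sqrt{2\lambda})$.

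The classical way to get this is through the Brownian exit identity. The probability that $B$ started at $0$ hits $-x$ before $h$ under the weight $\mathrm e^{-\lambda\tau}$ is $\sinh(h\sqrt{2\lambda})/\sinh((h+x)\sqrt{2\lambda})$. Conditioning on its first part, it also equals $\exp(-x\Phi_{\cdot})$ with a height cap that moves with the infimum. The cleaner route is therefore infinitesimal. Under $\bN$, the quantity $\bN(1-\mathrm e^{-\lambda\zeta}\un_{\{\Gamma<h\}})$ is the rate at which $\bE_\epsilon[\mathrm e^{-\lambda T_0};T_0<T_h]$ decays in $\epsilon$. That function is $\sinh((h-\epsilon)\sqrt{2\lambda})/\sinh(h\sqrt{2\lambda})$, and differentiating at $\epsilon=0$ gives $\sqrt{2\lambda}\coth(h\sqrt{2\lambda})$. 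The normalisation (\ref{normaIto}) fixes the constant, since $\lambda=0$ correctly gives $1/h$.

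For the excursion factor, Definition \ref{condheightIto} $(c)$ gives
\[
\bN(\mathrm e^{-\lambda\zeta};\Gamma<h)=\int_0^h\bN(\mathrm e^{-\lambda\zeta}\,|\,\Gamma=a)\,a^{-2}\,\mathrm da.
\]
This quantity also equals $\bN(1;\Gamma\ge\cdot)$-type terms, namely $\bN(\mathrm e^{-\lambda\zeta}\un_{\{\Gamma<h\}}-1)+\bN(1-\mathrm e^{-\lambda \zeta})$. Equivalently, $\partial_h\Phi_h(\lambda)=-h^{-2}\bN(\mathrm e^{-\lambda\zeta}|\Gamma=h)$, and the continuity in Definition \ref{condheightIto} $(a)$ justifies the differentiation. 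Computing,
\[
\partial_h\bigl(\sqrt{2\lambda}\coth(h\sqrt{2\lambda})\bigr)=-2\lambda/\sinh^2(h\sqrt{2\lambda}),
\]
so
\[
\bN(\mathrm e^{-\lambda\zeta}|\Gamma=h)=\bigl(h\sqrt{2\lambda}/\sinh(h\sqrt{2\lambda})\bigr)^2.
\]
Multiplying the three factors yields (\ref{explidura}).

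The main obstacle is making the identification of $\Phi_h$ rigorous: it has to be tied to the exit Laplace transform with the precise normalisation of $\bN$, rather than read off heuristically. I would do this by taking the excursion decomposition of $B$ started at $\epsilon$ and stopped on hitting $0$, and letting $\epsilon\downarrow0$.
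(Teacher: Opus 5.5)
Your proof is correct. It reaches the two special-function identities in the opposite order from the paper.

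\textbf{What is the same.} The rise/descent factor is handled as in the paper: Poisson excursion decomposition, the exponential formula, then division by $\mathrm e^{-x/h}$ from (\ref{ampliBh}).

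\textbf{What differs.} The paper takes the Williams/Bessel formula $\bN[\mathrm e^{-\lambda\zeta}\,|\,\Gamma=h]=(h\sqrt{2\lambda}/\sinh(h\sqrt{2\lambda}))^2$ as input, citing Borodin--Salminen. It then gets $\bN[(1-\mathrm e^{-\lambda\zeta})\un_{\{\Gamma\le h\}}]=\sqrt{2\lambda}\coth(h\sqrt{2\lambda})-1/h$ by integrating against $a^{-2}\,\mathrm da$. You instead get $\Phi_h$ from the Brownian two-sided exit transform. You then recover the height-conditioned transform by differentiating in $h$, using the disintegration and continuity in Definition \ref{condheightIto}.

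The paper's route is shorter, given Williams' decomposition, which it needs anyway for (\ref{Williams}) and Lemma \ref{Topmore}. Your route is self-contained from the classical exit formula and in effect reproves the Williams Laplace transform.

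\textbf{Two points of detail.}

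First, the ``moving cap'' identity you set aside already closes what you call the main obstacle. It states that $\exp(-\int_0^x\Phi_{h+y}(\lambda)\,\mathrm dy)=\sinh(h\sqrt{2\lambda})/\sinh((h+x)\sqrt{2\lambda})$ for all $x\ge0$. You may differentiate this in $x$ because $h\mapsto\Phi_h(\lambda)$ is continuous: $|\Phi_h(\lambda)-\Phi_{h'}(\lambda)|\le\bN(h\wedge h'\le\Gamma<h\vee h')=|1/h-1/h'|$. The same continuity is what makes your $\epsilon\downarrow0$ version rigorous.

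Second, the intermediate step for the excursion factor is wrong as written.
\begin{itemize}
\item $\bN(\mathrm e^{-\lambda\zeta};\Gamma<h)=+\infty$, since $\bN(\Gamma<h)=\infty$ while $\bN(1-\mathrm e^{-\lambda\zeta})<\infty$.
\item $\bN(\mathrm e^{-\lambda\zeta}\un_{\{\Gamma<h\}}-1)+\bN(1-\mathrm e^{-\lambda\zeta})$ equals $-\bN(\mathrm e^{-\lambda\zeta};\Gamma\ge h)$, not that quantity.
\end{itemize}
The finite identity you actually need is $\Phi_h(\lambda)=\bN(1-\mathrm e^{-\lambda\zeta})+\int_h^\infty\bN(\mathrm e^{-\lambda\zeta}\,|\,\Gamma=a)\,a^{-2}\,\mathrm da$. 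Since $\mathrm e^{-\lambda\zeta}$ is bounded and continuous on $\bC$, the integrand is continuous in $a$. This yields your formula $\partial_h\Phi_h(\lambda)=-h^{-2}\bN(\mathrm e^{-\lambda\zeta}\,|\,\Gamma=h)$ at every $h$, so your conclusion stands.
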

\noi
\textbf{Proof.} We recall from (\ref{NItomeadefi}) the notation $\smash{ (\alpha_j, \beta_j)}$ and $ \smash{ (H_j, \zeta_j)}$, $\smash{ j\ino \mathcal J}$, for the excursions of $\smash{ B^+}$ above $0$. We recall that $\bN$ is normalized as in (\ref{normaIto}). Let $x\ino (0, \infty)$. We then set 
$\smash{ V_{h,x}\eqo \# \{ j\ino \mathcal J: } $ $\smash{ -\underline{B}_{\alpha_j} \leqo x \; \mathrm{and} \; \Gamma (H_j) \geko h \}}$. We recall from (\ref{bvarodef}) that $\smash{ \bvaro_{-x}}$ stands for the $(-x)$-hitting time of $B$. We note that $\smash{ \bvaro_{-x}\eqo \sum_{j\in \mathcal J} \zeta_j \un_{[0, x] } (-\underline{B}_{\alpha_j})}$. By 
standard P.p.p.~computations, we get 
\begin{eqnarray*}
\smash{ \bE\Big[ \mathrm{e}^{-\lambda \bvaro_{-x} }\un_{\{\max_{t\in  [0, \bvaro_{-x} ]} \! B^+_t \leq h \}} \Big] }&\eqo &\smash{ \lim_{^{y\to \infty}} \bE\big[ \mathrm{e}^{-\lambda \bvaro_{-x} -yV_{h,x}} \big]\eqo  \lim_{^{y\to \infty}} \exp \big( \! -\! x \bN \big[1\! -\! \mathrm{e}^{-\lambda \zeta -y \un_{\{ \Gamma > h\} }} \big] \big)} \\
& \eqo & \smash{ \exp \big(\!  -\! x \bN \big[  \big( 1\! -\! \mathrm{e}^{-\lambda \zeta } \big) \un_{\{ \Gamma \leq h\}}\big] - \tfrac{x}{h} \, \big),}
\end{eqnarray*}
for all $\smash{ \lambda, x\ino \bbR_+}$. Therefore 
\begin{equation}
\label{explidescentdur}
\smash{\int_{\bC} \!\!\! P^{\downarrow}_{\! x, h} (\mathrm d H) e^{-\lambda \zeta_H}
  \eqo \bE\Big[ \mathrm{e}^{-\lambda \bvaro_{-x} } \Big| \max_{t\in  [0, \bvaro_{-x} ]} \! B^+_t \leq h \Big] 
= \exp \big(\!  -\! x \bN \big[  \big( 1\! -\! \mathrm{e}^{-\lambda \zeta } \big) \un_{\{ \Gamma \leq h\}}\big]  \big).}
\end{equation}
By time-reversal, we also get $\smash{\int_{\bC} \! P^{\uparrow}_{\! x, h} (\mathrm d H) e^{-\lambda \zeta_H}
\eqo \int_{\bC} \! P^{\downarrow}_{\! x, h} (\mathrm d H) e^{-\lambda \zeta_H}}$. 
Next recall, as an easy consequence of Williams decomposition (see e.g.~Borodin \& Salminen \cite{BorSal15}, IV.17 p.~61), we get 
\begin{equation}
\label{williamcsq}
\smash{\bN \Big[ \mathrm{e}^{-\lambda \zeta}  \Big| \, \Gamma \eqo h\Big] \eqo  \bigg(  \frac{h\sqrt{ 2\lambda\, }}{\, \sh (h\sqrt{ 2\lambda\, })} \bigg)^{\! 2} \; \textrm{and} \; \bN \Big[ \big(1 \! -\! \mathrm{e}^{-\lambda \zeta} \big) \un_{\{ \Gamma \leq h\}} \Big] \eqo  \sqrt{ 2\lambda\, } \mathrm{coth} \big( h\sqrt{ 2\lambda\, }\big) \! -\! \frac{1}{h} }
\end{equation}
(the second equality is derived from the first one thanks to elementary computation). The definition of $P_{\! x^+\! , \, x^- \! , \, h}$, combined with (\ref{explidescentdur}) and (\ref{williamcsq}) implies (\ref{explidura}). \cqfd

\smallskip

  We next recall several standard results on three dimensional Bessel processes. Let $\smash{(R_t)_{t\in \bbR_+}}$ be such a process starting at $\smash{R_0\eqo 0}$, which is an entrance boundary point. The first one is Pitman's theorem (see e.g.~Revuz and Yor \cite{RevYor04}, Theorem 3.5 p.253), which asserts the following.  
 \begin{equation}
\label{PitmanTh}
\smash{\big( B^+_t , -\underline{B}_t \big)_{\! t\in \bbR_+} \overset{\textrm{law}}{=} \big( R_t \! -\! \tJJ_t, \tJJ_t \big)_{\! t\in \bbR_+} \quad \textrm{where} \quad \tJJ_t= \min_{r\in [t, \infty)} R_t \; , \quad t\ino \bbR_+.}
\end{equation}  
 Here, $\smash{(\tJJ_t)_{t\in \bbR_+}}$ is the \emph{future infimum process}, which is a continuous nondecreasing process such that a.s.~$\smash{\tJJ_0\eqo 0}$, $\smash{\tJJ_t  \geko 0}$ for all $t\ino (0, \infty)$, and $\smash{\lim_{t\to \infty} \tJJ_t\eqo \infty}$. 

For all $h\ino (0, \infty)$, we next set $\smash{\lambda_h\eqo \max \{ t\ino \bbR_+: R_t\eqo h  \}}$ which is a.s.~finite and equal to $\smash{ \max \{ t\ino \bbR_+: \tJJ_t\eqo h  \}}$. Recall from (\ref{bvarodef}) that $\smash{\bvaro_{-h}}$ stands for $\smash{\varrho_{-h} (B)}$. 
Since Brownian motion has homogeneous and independent increments Pitman's theorem (\ref{PitmanTh}) entails 
\begin{equation}
\label{Pitconsq1}
\smash{\textrm{$R_{\, \cdot \wedge \lambda_h}$ and $R_{\lambda_h +\,  \cdot}$ are independent,}  \; R_{(\lambda_h - \, \cdot)_+}\! -\! h   \overset{\textrm{law}}{=} B_{\, \cdot \wedge \bvaro_{-h}} 
\;  \textrm{and} \; R_{\lambda_h + \, \cdot}\! -h  \overset{\textrm{law}}{=} R_{\, \cdot} }
\end{equation}
\begin{remark}
\label{decBes} Let $\smash{x, b\ino (0, \infty)}$. We recall from (\ref{hittim}) the notation $\smash{\varrho_{x+b} (R)}$. To simplify the notation, we 
set $\smash{\ell \eqo} $ $\smash{ \max \{ t\ino [0, \varrho_{x+b} (R)] : R_t\eqo x\}}$. For all $\smash{y\ino \bbR}$, we recall that use the notation $\smash{\bvaro_{y}}$ for $\smash{\varrho_{y} (B)}$. Then we easily deduce from (\ref{Pitconsq1}) the following. 
\begin{compactenum}

\smallskip

\item[$(i)$] $\smash{\big(R_{\,\cdot  \wedge \ell}, \ell)}$ and $\smash{(R_{(\ell +\, \cdot )\wedge \varrho_{x+b} (R)} , \varrho_{x+b} (R) \! -\! \ell \big)}$ are independent.

\smallskip

\item[$(ii)$] $\smash{\big( R_{\, \cdot \wedge \ell}, \ell \big)  \overset{\textrm{law}}{=} \big(R_{\, \cdot \wedge \lambda_x}, \lambda_x \big)}$ under $\smash{\bP \big( \cdot  \big| \max_{t\in [0, \lambda_x] } R_t \leko x+b \big)}$.

\smallskip

\item[$(iii)$] By (\ref{Pitconsq1}), $(ii)$ translates into $\smash{(R_{(\ell -\, \cdot\, )_+} \!\!\!  -\! x\, , \ell)   \overset{\textrm{law}}{=} (B_{\cdot \wedge \bvaro_{-x}} ,  \bvaro_{-x})}$ under $\smash{\bP (\, \cdot \,  | \, \bvaro_{-x}\! \leko \bvaro_{b} )}$.

\smallskip

\item[$(iv)$] $\smash{ \big( R_{(\ell +\, \cdot )\wedge \varrho_{x+b} (R)} , \varrho_{x+b} (R) \! -\! \ell \big) \overset{\textrm{law}}{=} \big( R_{\, \cdot \wedge \varrho_{b} (R)} , \varrho_{b} (R) \big)}$. \cq

\end{compactenum}
\end{remark}
We deduce from Remark \ref{decBes} the following decomposition of $\smash{R_{\, \cdot \wedge \varrho_h(R)}}$ into excursions above its future infimum. For all $\smash{t\ino [0, \varrho_h(R)]}$ we set 
$$\smash{ \tJJ_t^h\eqo m_R (t, \varrho_h(R)) \eqo \min  \! \big\{ \! R_s\,  ; \, s\ino [ t,\varrho_h(R)] \big\} 
 \quad \textrm{and} \quad  \cH_t= R_t - \tJJ_t^h .}$$
We define the excursion intervals $\smash{(\alpha_j, \beta_j)}$, $\smash{j\ino \cJ}$, of $\cH$ above $0$ as the connected components 
$$\smash{ \bigcup_{j\in \cJ} (\alpha_j, \beta_j) \eqo \big\{ t\ino [0, \varrho_h(R)]: \cH_t \geko 0 \big\} \; }$$ 
and the corresponding excursions by
$\smash{\big(\cH_j( \cdot), \zeta_j \big)\! :=\!  \big(R_{(\alpha_j + \, \cdot)\wedge \beta_j} \! -\tJJ_{\alpha_j}^h \, , \, \beta_j \! -\! \alpha_j\big)}$, $\smash{j\ino \cJ}$. Then 
\begin{equation}
\label{Bessdec1}
\smash{\textrm{$ \Big\{ \big( \tJJ_{\alpha_j}^h \, , (\cH_j, \zeta_j)\big)\,  ; \, j\ino \mathcal J \Big\}$ is a Poisson point process on $[0, h] \! \times \! \bC$} }
\end{equation}
with intensity $\smash{\un_{[0, h]} (y) \mathrm dy\,  \bN (\mathrm d H\, ; \Gamma \leko y)}$, 
where $\smash{\int_{\bC} F(H) \bN (dH\, ; \Gamma \leko y) }$ means $\smash{\bN \big[ F(\cdot ) \un_{\{ \Gamma < y\}} \big]}$ for all bounded and measurable $\smash{F \! :\! \bC \! \to \! \bbR}$.      

We next recall William's decomposition of $\bN$ as follows. To this end we first recall from Definition \ref{condheightIto} the conditional law $\smash{\bN (\, \cdot \, | \, \Gamma \eqo h)}$. 
Let $\smash{(R', \zeta')}$ and $\smash{(R'' \! , \zeta'')}$ be two $\smash{\bC}$-valued processes such that 
$\smash{(R', \zeta')}$ and $\smash{(R''_{(\zeta''- \, \cdot)_+}\!\!\!  -  R''_{\zeta''} \, ,\zeta'')}$ are two independent copies of ($\smash{R_{\, \cdot \wedge \varrho_h(R)}, \varrho_h(R))}$. Then Williams' decomposition asserts that 
\begin{equation}
\label{Williams}
\smash{\big( R' \! \centerdot R'' , \, \zeta' \! +\zeta''\big)  \overset{\textrm{law}}{=} \bN (\, \cdot \, | \, \Gamma \eqo h). }
\end{equation}
Namely, the concatenation of $\smash{R'}$ and $\smash{R''}$ has law $\smash{\bN (\, \cdot \, | \, \Gamma \eqo h)}$. 
We shall need to view $\smash{\bN (\, \cdot \, | \, \Gamma \eqo h)}$ as the law of a specific function of $H$ under $\smash{\bN (\, dH \,  | \, \Gamma \geqo h')}$ for any 
$\smash{h'\ino [h, \infty)}$. To this end we introduce the following functional on $\smash{\bC}$. 
\begin{definition}
(\emph{Topping of functions}) 
\label{toppdef} Let $\smash{(H, \zeta)\ino \bC}$ and $h\ino (0, \infty)$ satisfy the following 
\begin{equation}
\label{conditopp}
\smash{\zeta \leko \infty \quad \textrm{and} \quad \max_{t \in [0, \zeta]} H_t \geqo h+ (H_\zeta)_+ \; .}
\end{equation}
We set $\smash{\overline{s} (H)\eqo \min \{ t\ino [0, \zeta] : H_t\eqo \max_{s\in [0, \zeta]} H_s \}}$. Then (\ref{conditopp}) allows for  defining: 
$$\smash{ \gamma_h (H) \eqo \max \big\{ t \ino [0, \overline{s} (H)] : H_t \eqo H_{\overline{s} (H)} -h \big\} \; \,  \textrm{and}  \; \,  \delta_h (H) \eqo \min \big\{ t \ino [\overline{s} (H), \zeta] : H_t \eqo H_{\overline{s} (H)} -h \big\} .}$$
We then define the \emph{$h$-topping of $H$} by $\smash{ \mathtt{Top}_h(H)\! := \!  \big( H_{( \gamma_h (H) + \, \cdot )\wedge \delta_h (H)  } -H_{\gamma_h (H) }\,  , \delta_h (H) \! -\! \gamma_h (H) \big)}$. 
If $\smash{(H, \zeta)}$ does not satisfy (\ref{conditopp}) then we simply set $\smash{\mathtt{Top}_h(H, \zeta)\eqo  \partial}$. \cq
\end{definition}
\begin{remark}
\label{eravstop} For all $b\ino (0, h)$, we easily check that $\smash{\cE_b (\mathtt{Top}_h (H))\eqo \mathtt{Top}_{h-b} (\cE_bH)}$. \cq  
\end{remark}
\begin{lemma}
\label{measuTop} Let $h\ino (0, \infty)$. Then the following holds true. 
\begin{compactenum}

\smallskip

\item[$(i)$] $\smash{(H,\zeta) \ino \bC \! \mapsto \! \big( \gamma_h(H) , \overline{s} (H), \delta_h (H) \big) \ino \bbR_+^3}$ and $\smash{(H,\zeta) \ino \bC \! \mapsto \! \mathtt{Top}_h(H)\ino \bC}$ are measurable.

\smallskip

\item[$(ii)$] Let $\smash{(H,\zeta)\ino \bC}$ satisfy (\ref{conditopp}). We assume 
for all $\varepsilon \ino (0, \infty)$ that  
\begin{equation} 
\label{critcontitop}
H_{\gamma_h} \! \geko m_H \big( 
\gamma_h  \! - \varepsilon , \gamma_h \big), \; \, H_{\delta_h} \! \geko m_H \big( \delta_h , \delta_h + \varepsilon  \big)  \; \textrm{and} \; \forall t \ino [0, \zeta] \; \,  \big( H_t\eqo H_{\overline{s}} \big) \! \Longrightarrow \!  \big( t\eqo \overline{s}  \big).
\end{equation} 
Then $\smash{(h',(H', \zeta')) \! \mapsto \mathtt{Top}_{h'} (H') }$ is continuous at $\smash{(h, (H,\zeta))}$.  
\end{compactenum}
\end{lemma}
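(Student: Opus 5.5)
For $(i)$ I will write each quantity as a countable limit of measurable functionals. First, $\zeta$ is a coordinate of $\bC$, and the set where (\ref{conditopp}) holds is measurable, since $H\mapsto \max_{t\in[0,\zeta]}H_t=\overline{H}_\zeta$ and $H_\zeta$ are measurable (continuous on $\bC_{\! f}$ by (\ref{contimH})). On that set, $\overline{s}(H)=\inf\{t\ino \mathbb{Q}_+\cup\{\zeta\} : \overline{H}_{t\wedge\zeta}=\overline{H}_\zeta\}$, up to approximating first hitting times by rationals: for each $t$, $\{\overline{s}(H)\leqo t\}=\{\overline{H}_{t\wedge \zeta}=\overline{H}_\zeta\}$, which is measurable.

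Similarly, $\{\gamma_h(H)\geqo t\}=\{ t\leqo \overline{s}(H)\ \textrm{and}\ m_H(t,\overline{s}(H))\leqo H_{\overline{s}}-h\}$, and $\{\delta_h(H)\leqo t\}=\{t\geqo\overline{s}(H)\ \textrm{and}\ m_H(\overline{s}(H),t)\leqo H_{\overline{s}}-h\}$. Both sets are measurable because $(s,s',H)\mapsto m_H(s,s')$ is continuous by (\ref{contimH}) and $\overline{s}$ is measurable. Then $\mathtt{Top}_h$ is the composition of the measurable map $H\mapsto(\gamma_h,\delta_h,H)$ with the continuous maps of Section \ref{erafunsec} (shift $\theta_t$, stopping $H_{\cdot\wedge t}$, subtraction of a constant). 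It is defined as $\partial$ on the complementary measurable set, so it is measurable.

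For $(ii)$, take $(h_p,(H^p,\zeta_p))\to(h,(H,\zeta))$. I will first check that (\ref{conditopp}) holds for large $p$. Here $\zeta_p\to\zeta<\infty$, and $\overline{H}^p_{\zeta_p}\to\overline{H}_\zeta$, $H^p_{\zeta_p}\to H_\zeta$ by uniform convergence on compacts. The strict version of (\ref{conditopp}) is also needed: equality $\overline{H}_\zeta=h+(H_\zeta)_+$ would force $\delta_h=\zeta$ or $\gamma_h=0$, contradicting the strict local-min conditions in (\ref{critcontitop}), since $H$ is constant after $\zeta$ (resp. $H_0=0$). So I will use (\ref{critcontitop}) to get that inequality strict.

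The key step is $\overline{s}(H^p)\to\overline{s}(H)$. The maximum of $H$ on $[0,\zeta]$ is attained only at $\overline{s}$, so any limit point $s$ of $\overline{s}(H^p)$ satisfies $H_s=\overline{H}_\zeta$, hence $s=\overline{s}$, by compactness and uniform convergence.

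Next, $\gamma_{h_p}(H^p)\to\gamma_h(H)$. The level is $\ell_p=H^p_{\overline{s}_p}-h_p\to\ell=H_{\overline{s}}-h$. For $\varepsilon>0$, on $[\gamma_h+\varepsilon,\overline{s}]$ one has $H>\ell$: indeed $\gamma_h$ is the last time before $\overline{s}$ at level $\ell$, so there is a positive margin there, and uniform convergence (together with $\overline{s}_p\to\overline{s}$) gives $\gamma_{h_p}(H^p)\leqo\gamma_h+\varepsilon$ eventually. Conversely, by the first condition of (\ref{critcontitop}) there is $t\in(\gamma_h-\varepsilon,\gamma_h)$ with $H_t<\ell$, so eventually $H^p_t<\ell_p$, which forces $\gamma_{h_p}(H^p)\geqo t>\gamma_h-\varepsilon$. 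The argument for $\delta$ is symmetric, using the second condition. Continuity of $\mathtt{Top}$ then follows from the joint continuity of shift and stopping in $(t,H)$ recalled in Section \ref{erafunsec}. I expect the main obstacle to be the boundary cases, namely $\delta_h=\zeta$ or $\gamma_h=0$, where the strict local-minimum conditions must be reinterpreted with $H$ extended constantly; these are the cases that require the strictness argument above.
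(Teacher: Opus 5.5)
Your part $(i)$ is sound. In $(ii)$, your convergence arguments for $\overline{s}$, $\gamma$ and $\delta$, and the final appeal to continuity of shift and stopping, are also sound. The gap is the step where you deduce strictness of (\ref{conditopp}) from (\ref{critcontitop}).

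Your claim that $\overline{H}_\zeta=h+(H_\zeta)_+$ forces $\gamma_h=0$ or $\delta_h=\zeta$ is false for a general $(H,\zeta)\in\bC$. When $H_\zeta\geq 0$, the level $H_{\overline{s}}-h$ equals $H_\zeta$, and $\delta_h$ is only the \emph{first} return to that level after $\overline{s}$, which may come well before $\zeta$. When $H_\zeta<0$, the level is $0$, and $\gamma_h$ is the \emph{last} zero before $\overline{s}$, which may be positive.

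Concretely, take $H=\phi_1\centerdot\phi_h\centerdot\phi_{-h}\centerdot\phi_{-1}\centerdot\phi_1$, with lifetime $2h+3$. It has a unique maximum $1+h=h+(H_\zeta)_+$ at $\overline{s}=1+h$, with $\gamma_h=1$ and $\delta_h=1+2h<\zeta$. $H$ is strictly increasing just before $\gamma_h$ and strictly decreasing just after $\delta_h$, so all of (\ref{critcontitop}) holds. Yet for $h_p=h+1/p$ condition (\ref{conditopp}) fails, so $\mathtt{Top}_{h_p}(H)=\partial$, while $\mathtt{Top}_h(H)=\phi_h\centerdot\phi_{-h}$ has lifetime $2h$. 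So this step cannot be repaired: statement $(ii)$ itself fails at such points.

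The fix is to add the strict hypothesis $\overline{H}_\zeta>h+(H_\zeta)_+$ to $(ii)$. Since $\overline{H}^p_{\zeta_p}\to\overline{H}_\zeta$ and $H^p_{\zeta_p}\to H_\zeta$, condition (\ref{conditopp}) then holds for $(h_p,H^p)$ for all large $p$, and the rest of your argument goes through unchanged.

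Your strictness argument is in fact correct when $H$ is an excursion, i.e.\ $H_\zeta=0$ and $H>0$ on $(0,\zeta)$. Equality then means the level is $0$, which forces $\delta_h=\zeta$. The second condition of (\ref{critcontitop}) fails there because $H$ is constant after $\zeta$.

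The paper only invokes the lemma for excursions under $\bN(\,\cdot\,|\,\Gamma\geq h)$ and $Q^b(\,\cdot\,|\,\Gamma\geq a')$, where moreover $\Gamma$ exceeds the topping level a.s. So nothing downstream is affected, but you should state the restriction explicitly rather than claim that strictness follows from (\ref{critcontitop}).
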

\noi
\textbf{Proof.} We leave it to the reader. \cqfd 
\begin{lemma}
\label{Topmore} Let $\smash{x,b\ino (0, \infty)}$. Let $\smash{\gamma_b}$, $\smash{\overline{s}}$ and $\smash{\delta_b}$ be as in Definition \ref{toppdef}.  
Under $\smash{\bN (dH | \, \Gamma \eqo x+b)}$ we set 
$\smash{(H^\uparrow \! , \zeta^\uparrow) \! =\!  
(H_{(\gamma_b-\cdot)_+}\! \! -\! x, \gamma_b)}$ and $\smash{(H^\downarrow \! ,\zeta^\downarrow)\! =\!  \big(H_{(\delta_b + \cdot ) \wedge \zeta} \! -\! x, \zeta \! -\! \delta_b \big)}$. Then the following holds. 

\begin{compactenum}

\smallskip

\item[$(i)$] $\smash{(H^\uparrow, \zeta^\uparrow) }$, $\smash{\mathtt{Top}_b (H)}$ and $\smash{(H^\downarrow ,\zeta^\downarrow)}$
are independent. 

\smallskip

\item[$(ii)$] $\smash{(H^\uparrow, \zeta^\uparrow)} $ and  $\smash{(H^\downarrow ,\zeta^\downarrow)}$ are distributed as $\smash{(B_{\cdot \wedge \bvaro_{-x}}, \bvaro_{-x})}$ under $\smash{\bP ( \, \cdot  \, | \, \bvaro_{-x} \leko \bvaro_{b})}$.

\smallskip

\item[$(iii)$] The law of $\smash{\, \mathtt{Top}_b (H)}$ is 
$\smash{\bN (dH | \, \Gamma \eqo b)}$.

\smallskip

\end{compactenum}

\noi
In particular for all $h \ino [b, \infty)$ the law of $\smash{\mathtt{Top}_b (H)}$ under 
$\smash{\bN (dH | \, \Gamma \geqo h)}$ is $\smash{\bN (dH | \, \Gamma \eqo b)}$.
\end{lemma}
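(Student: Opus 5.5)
We derive the lemma from Williams' decomposition (\ref{Williams}) together with the Bessel path decomposition of Remark \ref{decBes}. Under $\bN(dH\,|\,\Gamma = x+b)$, (\ref{Williams}) lets us realize $H$ as $R'\centerdot R''$. Here $(R',\zeta')$ and the time-reversed $(R''_{(\zeta''-\cdot)_+}-R''_{\zeta''},\zeta'')$ are independent copies of $(R_{\cdot\wedge\varrho_{x+b}(R)},\varrho_{x+b}(R))$, with $R$ a three-dimensional Bessel process started at $0$. Almost surely the maximum of the excursion is attained only at $\overline{s}=\zeta'$. Hence
\[
\gamma_b(H)=\ell' := \max\{t\in[0,\varrho_{x+b}(R')]: R'_t=x\},
\]
and symmetrically $\zeta-\delta_b(H)$ equals the corresponding last passage time $\ell''$ at level $x$ of the reversed second copy. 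It follows that $H^\uparrow$ is the time-reversal at $\ell'$ of $R'_{\cdot\wedge\ell'}$, shifted by $-x$. Likewise $H^\downarrow$ is, up to the same shift, the time-reversal at $\ell''$ of the reversed copy before $\ell''$. The topped part $\mathtt{Top}_b(H)$ is the concatenation of $R'$ on $[\ell',\varrho_{x+b}(R')]$ with the reversed second copy's portion on its own $[\ell'',\varrho_{x+b}]$, both shifted by $-x$.

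The key steps, in order, are as follows. We first check that these identifications of $\gamma_b,\overline{s},\delta_b$ hold almost surely. This uses the uniqueness of the maximum time and the fact that $\ell'$ is the last visit to level $x$ before $R'$ hits $x+b$, which is exactly the definition of $\gamma_b$ as the last time before $\overline{s}$ at level $H_{\overline{s}}-b$. We then apply Remark \ref{decBes} $(i)$ to each copy: the pre-$\ell$ and post-$\ell$ pieces are independent. Combined with the independence of the two copies, this gives four mutually independent pieces, which yields $(i)$, since $\mathtt{Top}_b(H)$ is a measurable function of the two post-$\ell$ pieces only. Next, Remark \ref{decBes} $(iii)$ gives directly that $(H^\uparrow,\zeta^\uparrow)$ has the law of $(B_{\cdot\wedge\bvaro_{-x}},\bvaro_{-x})$ under $\bP(\,\cdot\,|\,\bvaro_{-x}<\bvaro_b)$. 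The same holds for $H^\downarrow$ after undoing the reversal of $R''$, and this is $(ii)$. Finally, Remark \ref{decBes} $(iv)$ shows that each post-$\ell$ piece minus $x$ is distributed as $(R_{\cdot\wedge\varrho_b(R)},\varrho_b(R))$. Their concatenation, with the second one reversed, is then by (\ref{Williams}) applied at height $b$ distributed as $\bN(\,\cdot\,|\,\Gamma=b)$, which gives $(iii)$.

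For the final assertion we disintegrate $\bN(\,\cdot\,|\,\Gamma\geq h)$ using Definition \ref{condheightIto} $(c)$ as a mixture $\int_h^\infty \bN(\,\cdot\,|\,\Gamma=y)\,h y^{-2}\,\mathrm dy$. For each $y\geq h\geq b$, part $(iii)$ with $x=y-b$ shows that the law of $\mathtt{Top}_b$ under the $y$-component is $\bN(\,\cdot\,|\,\Gamma=b)$, independently of $y$. The edge case $y=b$, where $x=0$, is either excluded or is trivially true since then $\mathtt{Top}_b(H)=H$. Measurability of $\mathtt{Top}_b$, from Lemma \ref{measuTop}, justifies the mixing.

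The main obstacle is the careful almost-sure identification of the topping times with the Bessel last-passage times, including the handling of time-reversal for the second copy. This requires that Brownian excursions almost surely satisfy nondegeneracy conditions, namely a unique maximum and no flat levels, so that $\max$ and $\min$ in Definition \ref{toppdef} coincide with the Bessel quantities. We would handle this with the standard facts that Bessel paths almost surely have no local extremum at a prescribed level and that the excursion maximum is almost surely unique.
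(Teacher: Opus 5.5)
Your proposal is correct and follows the paper's proof essentially step by step. You realize $H$ through Williams' decomposition (\ref{Williams}), so that $\gamma_b$ and $\zeta-\delta_b$ become the last passage times $\ell',\ell''$ at level $x$ of the two independent Bessel copies, and then obtain $(i)$ from Remark \ref{decBes}~$(i)$, $(ii)$ from Remark \ref{decBes}~$(iii)$, and $(iii)$ from Remark \ref{decBes}~$(iv)$ together with (\ref{Williams}) at height $b$; the nondegeneracy facts you flag as the main obstacle are actually not needed, since $\overline{s}=\zeta'$ holds by construction ($R'$ first reaches $x+b$ at $\zeta'$ and $H\leq x+b$ throughout), which makes $\gamma_b=\ell'$ and $\zeta-\delta_b=\ell''$ exact identities.
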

\noi
\textbf{Proof.} We work under $\smash{\bN (dH | \, \Gamma \eqo x+b)}$. 
To simplify the notation, we omit the lifetime in what follows. 
We set $\smash{\frak h^{ \uparrow}\!  \eqo H_{(\gamma_b + \cdot ) \wedge \overline{s}} \! -\! x}$ 
and $\smash{\frak h^{ \downarrow} \! \eqo H_{(\overline{s} + \cdot ) \wedge \delta_b} \! -\! x \! -\! b}$. 
Thus $\smash{\mathtt{Top}_b (H)\eqo \frak h^\uparrow \! \centerdot \frak h^\downarrow}$. For all 
$\smash{(H', \zeta')\ino \bC_{\! f}}$ we use the notation 
$\smash{\overleftarrow{H}'}$ for the time-reversed function $\smash{H' ((\zeta'\! - \cdot )_+) \! -\! H'(\zeta')}$. 
Williams' decomposition (\ref{Williams}) then asserts that 
$\smash{\overleftarrow{H}^\uparrow\!  \centerdot \frak h^\uparrow}$ and 
$\smash{\overleftarrow{H}^\downarrow  \! \centerdot \! \overleftarrow{\frak{h}}^{\! \downarrow} }$ 
are two independent copies of $\smash{R_{\cdot \wedge \varrho_{x+b} (H)}}$. Observe that 
$\smash{(H^\uparrow, \frak h^{ \uparrow})}$ (resp.~$\smash{(H^\downarrow,  \frak h^{ \downarrow})}$) 
is a measurable function of 
$\smash{\overleftarrow{H}^\uparrow\!  \centerdot \frak h^\uparrow}$ (resp.~of 
$\smash{\overleftarrow{H}^\downarrow  \! \centerdot \! \overleftarrow{\frak{h}}^{\! \downarrow} }$). 
Thus $\smash{(H^\uparrow, \frak h^{ \uparrow})}$ and $\smash{(H^\downarrow,  \frak h^{ \downarrow})}$ are independent.
By Remarks \ref{decBes} $(i)$ we easily see 
that $\smash{H^\uparrow}$ and $\smash{\frak h^\uparrow}$ are independent. 
Thus, on one hand that $\smash{H^\uparrow}$, $\smash{\frak h^{ \uparrow}}$, $\smash{H^\downarrow}$ and 
$\smash{\frak h^{ \downarrow}}$ are independent, which implies $(i)$. On the other hand 
$\smash{H^\uparrow}$ and $\smash{H^\downarrow}$ have the same law and  
$\smash{\frak h^{ \uparrow}}$ and $\smash{\overleftarrow{\frak{h}}^{\! \downarrow}}$ have the same law too. 
Then Remarks \ref{decBes} $(iii)$ shows that $\smash{H^\uparrow}$ 
(and thus $\smash{H^\downarrow}$ too) is distributed as $\smash{(B_{\cdot \wedge \bvaro_{-x}}, \bvaro_{-x})}$ 
under $\smash{\bP ( \, \cdot  \, | \, \bvaro_{-x} \leko \bvaro_{b})}$. This proves $(ii)$. Finally 
Remarks \ref{decBes} $(iv)$ shows that  $\smash{\frak h^\uparrow}$ (and thus 
$\smash{\overleftarrow{\frak{h}}^{\! \downarrow}} $ too) has the same law as 
$\smash{R_{\cdot \wedge \varrho_b(H)}}$. Since 
$\smash{\mathtt{Top}_b (H)\eqo \frak h^\uparrow \! \centerdot \frak h^\downarrow}$, 
Williams' decomposition (\ref{Williams}) entails $(iii)$. 
The last statement of the lemma is easily derived from $(i)$, $(ii)$ and $(iii)$. \cqfd

\smallskip

We complete this subsection by providing basic results 
on the two zigzag processes 
$\smash{(Y_{b,z})_{z\in \bbR_+}}$ and $\smash{(Z_{b,z})_{z\in \bbR_+}}$ which are respectively defined in (\ref{defYb}) and (\ref{defZb}) in the introduction.  
Recall that $\smash{Y_{b,\cdot}}$ has an infinite lifetime and that $\smash{Z_{b, \cdot}}$ has a finite lifetime denoted by $\smash{\zeta_b}$. 
The processs $\smash{Z_{b,\cdot }}$ is actually the first excursion of $\smash{Y_{b,\cdot}}$ above its infimum. 
More precisely, we 
recall that $\smash{\underline{Y\! }_{\, b,z}\eqo \min_{y\in [0, z]} Y_{b,y}}$, for all $\smash{z\ino \bbR_+}$ and we denote by 
$\smash{(\mathbf a_p, \mathbf b_p)}$, $\smash{p\ino \bbN^*}$, the excursion invervals of $\smash{Y_{b, \cdot}}$ above its infimum process $\smash{\underline{Y\! }_{\, b, \cdot}}$, which are the connected components 
$$\smash{ \bigcup_{^{p\in \bbN^*}} (\mathbf a_p, \mathbf b_p) \eqo \big\{ z\ino \bbR_+ : Y_{b, z} \geko  \underline{Y}_{b,z} \big\} .}$$
For each $\smash{p\ino \bbN^*}$, the corresponding excursion 
is $\smash{(\mathcal Z_p^b(\cdot ) , \zeta^b_p)\! :=\! \big( 
Y_{b, (\mathbf a_p + \, \cdot )\wedge \mathbf b_p}  \!\!- \! \underline{Y\! }_{\, b,\mathbf a_p} \, , \mathbf b_p \! -\! \mathbf a_p
\big)} $. 
It is convenient to set $\smash{\mathbf a_0\eqo \mathbf b_0\eqo 0}$. Then 
the renewal property of Poisson processes combined with elementary arguments entail the following. 
\begin{compactenum}

\smallskip

\item[$(a)$] The r.v.s $\smash{\underline{Y\! }_{\, b,\mathbf a_{p-1}} \!\! - \underline{Y\! }_{\, b,\mathbf a_p}}$ and $\smash{(\mathcal Z_p^b(\cdot ) , \zeta^b_p)}$, $\smash{p\ino \bbN^*}$, are independent.

\smallskip

\item[$(b)$] $\smash{\underline{Y\! }_{\, b,\mathbf a_{p-1}} \!\! - \underline{Y\! }_{\, b,\mathbf a_p}}$ is exponentially distributed with mean $b$.

\smallskip

\item[$(c)$]  $\smash{(\mathcal Z_p^b(\cdot ) , \zeta^b_p)}$ has the same law as $\smash{(Z_{b,\cdot}, \zeta_b)}$. 

\smallskip

\end{compactenum}
\begin{equation}
\label{Zigzagdec}
\smash{\textrm{In other words, $ \Big\{ \big(  \!  -\!  \underline{Y\! }_{\, b,\mathbf a_p} \, , (\mathcal Z_p^b , \zeta^b_p)\big)\,  ; \, p\ino \bbN^* \Big\}$ \emph{is a Poisson point process on} $\bbR_+ \! \times \! \bC$} }
\end{equation}
\emph{with intensity }$\smash{b\un_{\bbR_+} (y) \mathrm dy\,  Q^b (\mathrm d H)}$, \emph{where} $\smash{Q^b}$ \emph{stands for the law of} 
$\smash{(Z_{b,\cdot}, \zeta_b)}$. 

\subsection{Decomposition of Brownian motions into unicellular paths}
\label{brodecsubsec}
Let $\smash{ x, x^+\! ,\,  x^-  \ino \bbR_+}$ and let $h\ino (0, \infty)$. 
We recall from Definition \ref{unicelBrodef} that $\smash{ P^{_\downarrow}_{^{\! x,h}}}$
(resp.~$\smash{ P^{_\uparrow}_{^{\! x,h}}}$) is the law of the $x$-Brownian descent (resp.~rise) with amplitude $< \! h$. We also recall from the same definition that 
$\smash{ P_{x^+ \! ,\, x^-\! ,\, h}}$ stands for the law of the $h$-unicellular Brownian process with initial cell $\smash{ (x^+\! , x^-)}$. 
\begin{remark}
\label{contiunicelBrorem} Note that for a given $x\ino (0, \infty)$, the Brownian motion $\smash{ B_\cdot}$ satisfies the continuity criterion (\ref{hittcontcrit}) in Lemma \ref{regprophit} $(ii)$ for the hitting times $\smash{ \varrho_x(B)}$. This lemma and elementary arguments imply that $\smash{ x \ino \bbR_+ \! \mapsto \!  P^{_\downarrow}_{^{\! x,h}} \ino \cM_1 (\bC)}$ is continuous (here the space of Borel probability measures $\smash{ \cM_1 (\bC)}$ on the Polish space $\smash{ \bC}$ is equipped with the topology of weak convergence). Similarly, $\smash{ x\! \mapsto \! P^{_\uparrow}_{^{\! x,h}}}$ and $\smash{  (x^+, x^-) \! \mapsto \! P_{x^+ \! ,\, x^-\! ,\, h}}$ are continuous too (we recall Remark \ref{remconca} $(c)$ which asserts that concatenation is continuous). In particular 
we get $\smash{ \lim_{x\to 0} P^{_\downarrow}_{^{\! x,h}}\eqo \lim_{x\to 0}  P^{_\uparrow}_{^{\! x,h}}\eqo \delta_\partial}$ and 
$\smash{ \lim_{(x^+\! , \, x^-) \to (0,0)} P_{x^+ \! ,\, x^-\! ,\, h}\eqo \bN (\, \cdot \, | \, \Gamma \eqo h)}$. 
\cq 
\end{remark}

The goal of this section is to compute for any $b\ino (0, h]$ the joint law 
of $\smash{ (\cE_b\bH, \bH)}$ where $\smash{ (\bH, \zeta)}$ has law $\smash{ P_{x^+ \! ,\, x^-\! ,\, h}}$. Our first step consists in computing the joint law of $\smash{ (\cE_b B , B)}$. 
We recall from  (\ref{1erbtps}) the definition of $\smash{ s^{_b}_{^1}}$, from (\ref{r1bHdef}), (\ref{g0bdef}) and (\ref{snbdef}) the definitions of $\smash{ g^{_b}_{^k}}$, $\smash{ r^{_b}_{^k}}$ and $\smash{ s^{_b}_{^k}}$, $k\ino \bbN$, with the convention that 
$\smash{ r^{_b}_{^0}\eqo s^{_b}_{^0}\eqo 0}$. 
To simplify notations we introduce the following. 
\begin{notation}
\label{notaeraBM}
For all $\smash{ k\ino \bbN^*}$ we set the following. 
\begin{compactenum}

\smallskip

\item[$-$] $\smash{ \tgg^{_b}_{^{k-1}}\!  \eqo  g^{_b}_{^{k-1}} (B)}$,  
$\smash{ \trr^{_b}_{^{k}} \eqo  r^{_b}_{^{k}} (B)}$, and $\smash{ \tss^{_b}_{^{k}} \eqo  s^{_b}_{^{k}} (B)}$, which are a.s.~finite since a.s.~$\smash{ \limsup_{t\to \infty} B_t \eqo}$  $\smash{-\liminf_{t\to \infty} B_t\eqo \infty}$. Note that a.s.~$\smash{ 0\eqo \trr^{_b}_{^{0}}\eqo \tss^{_b}_{^{0}}\leko \tgg^{_b}_{^{0}}}$ and $\smash{ \tgg^{_b}_{^{k-1}} \leko \trr^{_b}_{^{k}}\leko \tss^{_b}_{^{k}}}$ for all $\smash{ k\ino \bbN^*}$.  

\smallskip

\item[$-$] $\smash{ \tHH^{_\downarrow}_{^{k-1}} (\cdot) \eqo B \big( (\tss^{_b}_{^{k-1}} + \, \cdot \, )\wedge \tgg^{_b}_{^{k-1}}  \big) \! -\! B(\tss^{_b}_{^{k-1}} )}$ and $\smash{ \txx^{_-}_{^{k-1}} (b)\eqo  B(\tss^{_b}_{^{k-1}} ) \! -\! B(\tgg^{_b}_{^{k-1}} )}$. 

\smallskip

\item[$-$] $\smash{  \tHH^{_\uparrow}_{^{k}} (\cdot) \eqo B \big( (\tgg^{_b}_{^{k-1}} + \, \cdot\, )\wedge \trr^{_b}_{^{k}}  \big) \! -\! B(\tgg^{_b}_{^{k-1}} )}$ and $\smash{ \txx^{_+}_{^{k}} (b)\eqo  B(\trr^{_b}_{^{k}} ) \! -\! B(\tgg^{_b}_{^{k-1}} )}$. 

\smallskip

\item[$-$] $\smash{ \tHH^{_{\mathtt{ex}}}_{^k} (\cdot)\eqo B \big( (\trr^{_b}_{^{k}} + \, \cdot\, )\wedge \tss^{_b}_{^{k}}  \big) \! -\! B(\trr^{_b}_{^{k}} )}$. 

\smallskip

\end{compactenum}
See Figure \ref{ErazBM}. \cq  
\end{notation}

\begin{figure}[htb]
\hspace{-0mm}
\centering
\begin{minipage}{0.9\textwidth}
\centering 
\includegraphics[width=1\textwidth, height=0.3\textheight]{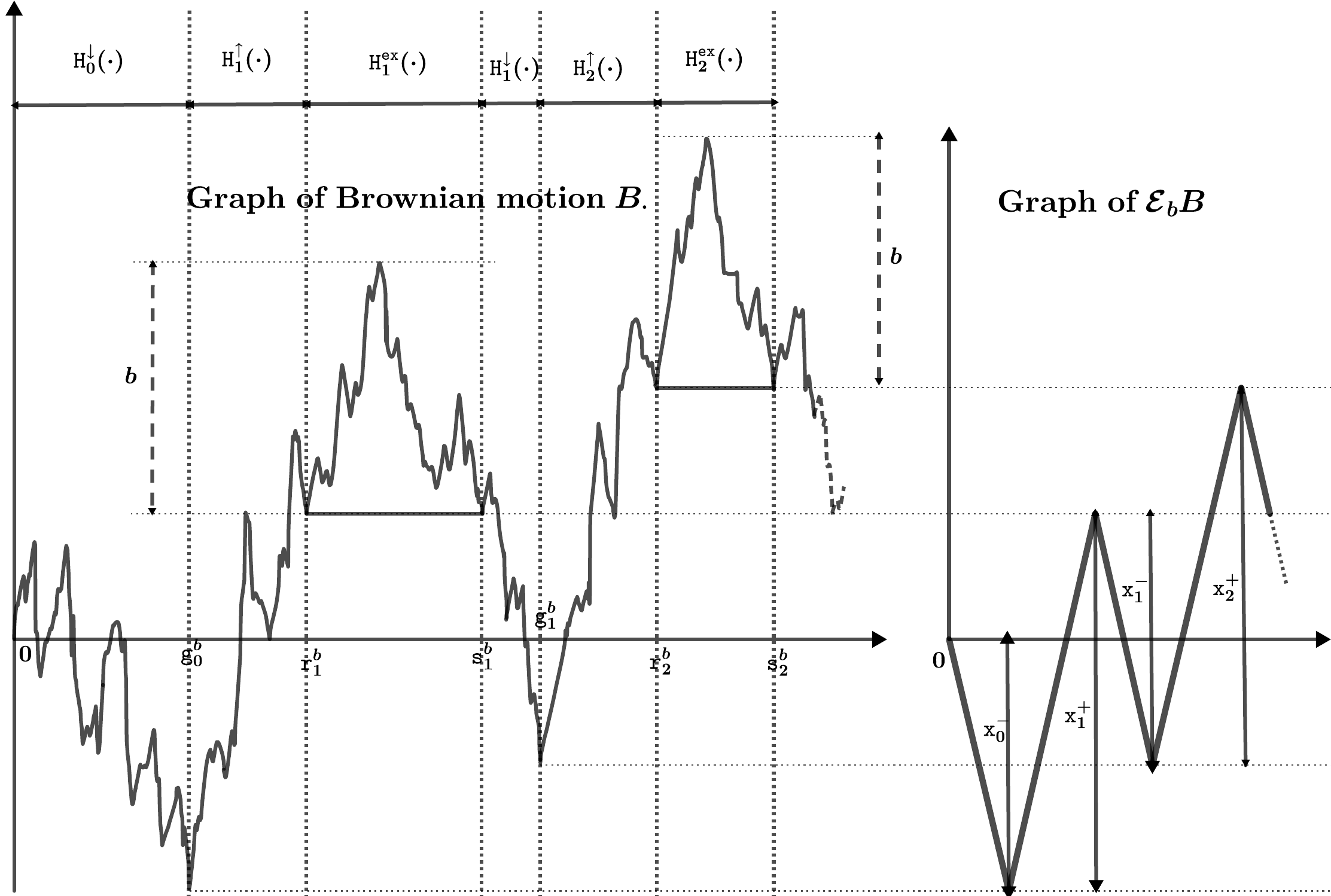}
\caption{{\footnotesize 
\emph{The left-hand side of the figure represents a linear Brownian motion with initial value $\smash{B_0\eqo 0}$ and its first two complete excursions of height $b$. The left-hand side represents the graph of the corresponding $b$-erased process 
$\smash{\mathcal E_b B}$. }\cq
}
\label{ErazBM}}
\end{minipage}
\end{figure}

\begin{theorem}
\label{Brodecunicelth} Let $b\ino (0, \infty)$. We keep the previous notations. Then the following holds true. 
\begin{compactenum}

\smallskip

\item[$(i)$] The r.v.s $\smash{\big( \tHH^{_\downarrow}_{^{k-1}}, \txx^{_-}_{^{k-1}} (b),  \tHH^{_\uparrow}_{^{k}}, \txx^{_+}_{^{k}} (b),  \tHH^{_{\mathtt{ex}}}_{^k}\big)}$, $\smash{k\ino \bbN^*}$, are i.i.d.

\smallskip

\item[$(ii)$] For $\smash{k\ino \bbN^*}$ fixed, $\smash{\tHH^{_{\mathtt{ex}}}_{^k}}$, 
$\smash{( \tHH^{_\downarrow}_{^{k-1}}, \txx^{_-}_{^{k-1}} (b))}$  and $\smash{(\tHH^{_\uparrow}_{^{k}}, \txx^{_+}_{^{k}} (b))}$ are independent. The process $\smash{\tHH^{_{\mathtt{ex}}}_{^k}}$ has law $\smash{\bN (\, \cdot \, | \, \Gamma \eqo b)}$ and for all bounded and measurable 
$\smash{F\! : \! \bC \ \! \times \! \bbR_+ \! \to \! \bbR}$,    
\end{compactenum}
\begin{eqnarray} \label{condilawBrodec0}
\bE \big[ F (\tHH^{_\downarrow}_{^{k-1}}, \txx^{_-}_{^{k-1}} (b))\big] \!\!\!\! &=&  \!\!\!\!\! \int_0^\infty \!\! \!\!  \tfrac{\mathrm d x}{b}\, \ee^{-\frac{x}{b}} \! \int_{\bC} \!\!  
P^{_\downarrow}_{^{\! x,b}} (\mathrm d H) \, F(H,x) \quad \textrm{and}  \\
 \bE \big[ F (\tHH^{_\uparrow}_{^{k}}, \txx^{_+}_{^{k}} (b))\big] \!\!\!\! &=&  \!\!\!\!\! \int_0^\infty \!\!\!\!  \tfrac{\mathrm d x}{b}\, \ee^{-\frac{x}{b}}\!  \int_{\bC} \!\!  
P^{_\uparrow}_{^{\! x,b}} (\mathrm d H) \, F(H,x) \; .
\label{condilawBrodec}
\end{eqnarray} 
\end{theorem}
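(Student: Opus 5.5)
We will prove Theorem \ref{Brodecunicelth} by combining the strong Markov property at the stopping times $\smash{\tss^{_b}_{^{k}}}$ with the excursion theory of $B$ above its infimum recalled in (\ref{NItomeadefi}). First, $\smash{\tss^{_b}_{^{1}}=s^b_1(B)}$ is a stopping time, and $\smash{\tss^{_b}_{^{k}}}$ is defined from $\smash{\theta_{\tss^{_b}_{^{k-1}}}B}$ exactly as $\smash{\tss^{_b}_{^{1}}}$ is defined from $B$. Since $\smash{\theta_{\tss^{_b}_{^{k-1}}}B}$ is a Brownian motion independent of $\smash{\mathscr F_{\tss^{_b}_{^{k-1}}}}$, and the $k$-th block $\smash{\big( \tHH^{_\downarrow}_{^{k-1}}, \txx^{_-}_{^{k-1}} (b),  \tHH^{_\uparrow}_{^{k}}, \txx^{_+}_{^{k}} (b),  \tHH^{_{\mathtt{ex}}}_{^k}\big)}$ is the same measurable functional of $\smash{\theta_{\tss^{_b}_{^{k-1}}}B_{\cdot\wedge (\tss^{_b}_{^{k}}-\tss^{_b}_{^{k-1}})}}$ as the first block is of $\smash{B_{\cdot \wedge \tss^{_b}_{^1}}}$, this gives $(i)$. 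It therefore suffices to treat $k\eqo 1$.

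For $k\eqo 1$ we use the Poisson point process $\smash{\{(-\underline{B}_{\alpha_j},(H_j,\zeta_j))\}}$ with intensity $\smash{\mathrm dy\,\bN(\mathrm dH)}$. The first complete excursion of height $b$ is not simply the first excursion above the infimum of height $\ge b$: $s^b_1$ is the first time $H^+$ drops by $b$ below its running maximum. We handle this in two stages.

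\emph{Stage one: the descent.} Let $j_*$ be the first excursion above the infimum (in local time order) with $\Gamma(H_{j_*})\geqo b$, and let $\smash{\txx^{_-}_{^0}(b)= -\underline B_{\alpha_{j_*}}}$. By the Poisson property, $\txx^{_-}_{^0}(b)$ is exponential with rate $\bN(\Gamma\geqo b)=1/b$. Given $\smash{\txx^{_-}_{^0}(b)=x}$, the excursions with level less than $x$ form a Poisson process restricted to $\{\Gamma<b\}$, and they are independent of $H_{j_*}$, which has law $\bN(\cdot\,|\,\Gamma\geqo b)$. Moreover $\smash{\tgg^{_b}_{^0}=\alpha_{j_*}}$, because $H^+$ stays below $b$ before $\alpha_{j_*}$ and the first $b$-drop from the running maximum must happen inside $H_{j_*}$. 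Hence $\smash{\tHH^{_\downarrow}_{^0}=B_{\cdot\wedge\alpha_{j_*}}}$ has law $\smash{P^{_\downarrow}_{^{x,b}}}$, using the same Poisson computation as in the proof of Lemma \ref{durationLapla}. This proves (\ref{condilawBrodec0}).

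\emph{Stage two: the rise and the excursion.} Inside $H_{j_*}$, the times $\trr^{_b}_{^1}$ and $\tss^{_b}_{^1}$ are $\alpha_{j_*}$ plus the times $\gamma_b$ and $\delta_b$ of $H_{j_*}$. The main obstacle is that these are defined by the first $b$-drop from the running maximum, not by the global maximum as in Definition \ref{toppdef}. We therefore decompose $H\sim\bN(\cdot\,|\,\Gamma\geqo b)$ at the first time $\tau$ at which $\overline H-H=b$. Using the excursions of $H$ below its running maximum, or equivalently the path decomposition of the Bessel-3 process $R$ at the last passage time $\ell$ in Remark \ref{decBes}, we aim to show the following three facts.

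1. $\overline H_\tau-b=:\txx^{_+}_{^1}(b)$ is exponential with mean $b$. The maxima at which a $b$-drop occurs arrive at rate $1/b$ in the running-maximum variable, with the same normalization as in (\ref{normaIto}).

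2. The pre-$\gamma_b$ path is a rise of amplitude less than $b$. After time reversal it is a descent, and it should have law $\smash{P^{_\uparrow}_{^{x,b}}}$ by Remark \ref{decBes} $(ii)$–$(iii)$ combined with Remark \ref{elemfunrem} $(b)$.

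3. The segment $[\gamma_b,\delta_b]$ is a topping with law $\bN(\cdot\,|\,\Gamma\eqo b)$, independent of the rest, as in Lemma \ref{Topmore} $(i)$ and $(iii)$.

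The cleanest route is probably to mix Lemma \ref{Topmore} over the height, with $\bN(\cdot\,|\,\Gamma\geqo b)=\int_b^\infty \tfrac{b}{h^2}\,\bN(\cdot\,|\,\Gamma\eqo h)\,\mathrm dh$. Because the first $b$-drop need not occur at the global maximum, however, we expect to have to redo the Williams-type argument with the running maximum, applying the strong Markov property at the hitting time of $\overline H_\tau$. This is the step where the main care is required.

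Finally, the independence of the three components in $(ii)$ follows from three facts:

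1. the Poisson independence between $H_{j_*}$ and the excursions before it;
2. the strong Markov property at $\tss^{_b}_{^1}$, which discards the remainder of $H_{j_*}$;
3. the independence established in Stage two.
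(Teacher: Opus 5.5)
\textbf{Where the gap is.} Your part $(i)$ and your Stage one match the paper: you use the strong Markov property at $\tss^b_{k-1}$, then the Poisson point process of excursions above the infimum. That process also makes the descent independent of the first excursion of height $\geq b$, whose law is $\bN(\cdot\,|\,\Gamma\geq b)$.

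The gap is Stage two, which is where the content of the theorem lies. Under $\bN(\cdot\,|\,\Gamma\geq b)$ you must prove three things:
\begin{itemize}
\item $H_{s^b_1}$ has density $\frac1b\mathrm{e}^{-x/b}$;
\item given $H_{s^b_1}=x$, the piece on $[0,r^b_1]$ has law $P^\uparrow_{x,b}$;
\item the piece on $[r^b_1,s^b_1]$ has law $\bN(\cdot\,|\,\Gamma=b)$ and is independent of both.
\end{itemize}
You identify the obstacle correctly, and your fact 1 is essentially right. Facts 2 and 3, however, are only announced, and the results you invoke do not deliver them.
\begin{itemize}
\item Lemma \ref{Topmore} describes the topping at the \emph{global} maximum. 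Under $\bN(\cdot\,|\,\Gamma=b+x)$ with $x>0$, the first $b$-drop occurs before that maximum with positive probability. So mixing over the height does not help by itself.
\item Remark \ref{decBes}$(iii)$ only produces the conditioning $\{\bvaro_{-x}<\bvaro_b\}$. It does not give the amplitude constraint that defines $P^\downarrow_{x,b}$.
\item The strong Markov property at the hitting time of $\overline H_\tau$ is not available. That time is not a stopping time, because whether the current maximum equals $\overline H_\tau$ depends on the excursion below the maximum that starts afterwards.
\end{itemize}

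\textbf{How to close it.} The paper restricts to $\{N_b=1\}$, where $r^b_1$ and $s^b_1$ coincide with the global-maximum times $\gamma_b,\delta_b$ of Definition \ref{toppdef}.
\begin{itemize}
\item The strong Markov property at the genuine stopping time $s^b_1$ shows two things on $\{\Gamma\geq b\}$. The event $\{N_b=1\}$ has conditional weight $\mathrm{e}^{-H(s^b_1)/b}$, and on it the post-$s^b_1$ piece has law $P^\downarrow_{H(s^b_1),b}$.
\item Reweighting by $\mathrm{e}^{H(s^b_1)/b}$ and using the time-reversal invariance of $\bN$ transfers this law to the reversed pre-$r^b_1$ piece. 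On $\{N_b=1\}$, reversal swaps the two pieces while fixing $H_{s^b_1}$.
\item The joint law of $H_{s^b_1}$ and the middle piece then follows by disintegrating in $\Gamma=b+x$ and applying Lemma \ref{Topmore}, using $\bP(B_{\cdot\wedge\bvaro_{-x}}\in\bC^\downarrow_b\,|\,\bvaro_{-x}<\bvaro_b)=\mathrm{e}^{-x/b}(b+x)/b$. The weights $(b+x)^{-2}\mathrm{e}^{x/b}\,\mathrm{e}^{-2x/b}(b+x)^2b^{-2}$ collapse to $b^{-2}\mathrm{e}^{-x/b}$.
\end{itemize}

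Your running-maximum route can also be completed, provided you condition on $\overline H_\tau=m$ correctly.
\begin{itemize}
\item Do the conditioning through the Poisson point process of excursions of $\overline H-H$ indexed by the level $\overline H$. This is a compensation formula in $m$ that only involves the genuine stopping times $T_m$.
\item The path up to $T_m$ is then a BES(3) run until $\varrho_m(R)$, conditioned on $\sup_{v<u}(H_v-H_u)<b$. It is independent of the piece on $[T_m,\tau]$, which is $m$ minus a BES(3) run until $\varrho_b(R)$.
\item Remark \ref{decBes} at the last passage at $m-b$ then applies, because this constraint involves only the pre-$\ell$ piece. Read backwards, the constraint is exactly $\max B^+<b$. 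This upgrades part $(iii)$ of that remark to $P^\downarrow_{m-b,b}$.
\item Williams' decomposition (\ref{Williams}) identifies the middle piece as $\bN(\cdot\,|\,\Gamma=b)$. You also need the fact that $b-R_{(\varrho_b(R)-\cdot)_+}$ is again a BES(3) run until $\varrho_b$.
\end{itemize}
The paper's reweighting on $\{N_b=1\}$ is shorter precisely because it never needs excursion theory below the running maximum.
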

\noi
\textbf{Proof.} We denote by $\smash{\mathscr F_{\! t}}$ the $\bP$-completed sigma field generated by 
$\smash{B_{\, \cdot \wedge t}}$. Observe that there is a measurable functional $\Phi$ such that 
$\smash{\big( \tHH^{_\downarrow}_{^{k-1}}, \txx^{_-}_{^{k-1}} (b),  \tHH^{_\uparrow}_{^{k}}, \txx^{_+}_{^{k}} (b),  \tHH^{_{\mathtt{ex}}}_{^k}\big)\eqo \Phi (\theta_{\! \tss^b_{k-1}} \!\!\! \!\! B)}$, where we recall from (\ref{shiftHdef}) the notation 
$\smash{\theta_t H}$. We then observe that $\smash{\tss^{_b}_{^{k-1}}}$ is a $\smash{(\mathscr F_{\! t})_{t\in \bbR_+}\!}$-stopping time. Thus $(i)$ is an immediate consequence of the strong Markov property for $\smash{B_\cdot}$ which asserts that $\smash{\theta_{\! \tss^b_{k-1}} \!\!\! \!\!B}$ is independent of $\smash{\mathscr F_{^{\!\! \tss^b_{k-1}}} \!\!  }$ and has the same law as $B$. 

Therefore, we only need to compute the law of $\smash{\big( \tHH^{_\downarrow}_{^{0}}, \txx^{_-}_{^{0}} (b),  \tHH^{_\uparrow}_{^{1}}, \txx^{_+}_{^{1}} (b),  \tHH^{_{\mathtt{ex}}}_{^1}\big)}$. To this end we denote by $\smash{(\alpha_j, \beta_j)}$, $\smash{j\ino \cJ}$, the excursion intervals of $B$ above its infimum process $\smash{\underline{B}}$. We denote by $\smash{(H_j(\cdot), \zeta_j) }$, $\smash{j\ino \cJ}$, the corresponding excursions. We recall from (\ref{NItomeadefi}) 
that $\smash{\{( -\! \underline{B}_{\alpha_j}} $ $\smash{, (H_j, \zeta_j))\, ;} $ $\smash{  j\ino \mathcal J \big\}}$ is a Poisson point process on $\smash{\bbR_+ \! \times \! \bC}$
whose intensity is equal to $\smash{\mathrm dy\,  \bN (\mathrm d H)}$. We then set $\smash{\tdd^{_b}_{^0}\eqo 
\inf \{ t \ino (\tgg^{_b}_{^0} , \infty) : B(t)\eqo B(\tgg^{_b}_{^0}) \}}$. Then 
$\smash{( \tgg^{_b}_{^0},\tdd^{_b}_{^0})}$ is the first excursion interval $\smash{(\alpha_j, \beta_j)}$ such that $\smash{\Gamma (H_j)\geqo b}$ (we recall that $\smash{\Gamma (H_j)}$ stands for the height of $\smash{H_j}$, i.e., its maximal value). We denote this excursion by 
$\smash{(\frak{h}, \frak{z})\eqo \big( B ( (\tgg^{_b}_{^0}+ \, \cdot \, ) \wedge \tdd^{_b}_{^0})) \! -\! B(\tgg^{_b}_{^0}) , \tdd^{_b}_{^0}\!-\! \tgg^{_b}_{^0} \big)}$. We recall from Definition \ref{elemfundef} $(a)$ the definition of the set 
$\smash{\bC^{_\downarrow}_{^{b}}}$ of the descents with an amplitude less than $b$. 
We recall from (\ref{bvarodef}) the definition of $\smash{\bvaro_{-x}\eqo \varrho_{-x} (B)}$.  
Elementary results on Poisson point processes imply the following for all bounded measurable functions $\smash{F,G\! : \! \bC \! \to \! \bbR}$ and $\smash{f\! : \! \bbR_+ \! \to \! \bbR}$: 
\begin{eqnarray*}
\smash[t]{ \bE \big[ F( \tHH^{_\downarrow}_{^{0}}) f(\txx^{_-}_{^{0}} (b) ) G(\frak{h}) \big]} \!\!\!\! &=&  \!\!\!\! \smash[t]{\bE \Big[ \sum_{j\in \cJ} \! F\big( B_{\, \cdot \wedge \alpha_j}\big)\un_{\{ B_{\, \cdot \wedge \alpha_j }\in\,  \bC^{_\downarrow}_{^{b}} \} } f \big(\! -\! \underline{B}_{\alpha_j} \big) G(H_j )\un_{\{ \Gamma (H_j) \geq b\}}\Big]} \\
 \!\!\!\! & =&  \!\!\!\!  \bN \big[  
 G (\cdot )\un_{\{ \Gamma \geq b \}} \big]  \int_0^\infty \!\!\!\!  \mathrm d x \, f(x) 
\,  \bE \big[ F(B_{\, \cdot \wedge \bvaro_{-x}} \big)\un_{\{ B_{\, \cdot \wedge \bvaro_{-x} }\in\,  \bC^{_\downarrow}_{^{b}} \} } \, \big]  \\
  \!\!\!\! & =&  \!\!\!\!\smash[b]{ \bN \big[ G \, \big| \, \Gamma \geqo b \big] \int_0^\infty \!\! \!\!  \tfrac{\mathrm d x}{b}\,  \ee^{-\frac{x}{b}} f(x) \! \int_{\bC} \!\!  
P^{_\downarrow}_{^{\! x,b}} (\mathrm d H) \, F(H).}
\end{eqnarray*}
Here, we use (\ref{ampliBh}) and to simplify we omit lifetimes. This proves (\ref{condilawBrodec0}) and since 
$\smash{( \tHH^{_\uparrow}_{^{1}}, \txx^{_+}_{^{1}} (b),  \tHH^{_{\mathtt{ex}}}_{^1})}$ is equal to $\smash{\big( \frak{h}( \cdot \wedge r^{_b}_{^1} (\frak{h})), \frak{h} (s^{_b}_{^1} (\frak{h})), 
\frak{h} \big(  ( r^{_b}_{^1} (\frak{h}) + \, \cdot \, )  \wedge s^{_b}_{^1} (\frak{h}) \big) \! -\! \frak{h}(r^{_b}_{^1} (\frak{h})) \big) }$, 
which is a measurable function of $\smash{\frak{h}}$, it also proves that 
$\smash{( \tHH^{_\downarrow}_{^{0}}, \txx^{_-}_{^{0}} (b))}$ and 
$\smash{(\tHH^{_\uparrow}_{^{1}}, \txx^{_+}_{^{1}} (b),  \tHH^{_{\mathtt{ex}}}_{^1})}$ 
are independent. 

  Since $\smash{(\frak h, \frak z)\overset{_{\textrm{law}}}{=}\bN (\, \cdot \,  | \,  \Gamma \geqo b)}$, it only remains to prove the following 
\begin{eqnarray}
\label{Bpluslaw}
\smash{\bN \big[ F( H_{(r^b_1 -\, \cdot )_+} ) f(H_{s^b_1})} \!\!\!\!\!\!\! & & \!\!\!\!\!\!\!\! \smash{G( H_{(r^b_1+ \cdot) \wedge s^b_1})  \, \big| \, \Gamma \geqo b \big] }
\\
\!\!\!\!\! & =&  \quad \; \smash{ \bN [ G (H) \, | \, \Gamma \eqo b ] \! \int_0^\infty \!\!\!\!  \tfrac{\mathrm d x}{b}\, \ee^{-\frac{x}{b}} f(x) \!  \int_{\bC} \!\!  \! 
P^{_\downarrow}_{^{\! x,b}} (\mathrm d H) \, F(H).} \nonumber 
\end{eqnarray}  
for all bounded and measurable $\smash{F,G\! :\! \bC \!  \to \! \bbR}$ and $\smash{f\! :\! \bbR \!  \to \! \bbR}$ (here we omit lifetime to simplify notations). 
\emph{Let us prove (\ref{Bpluslaw}).} We denote by $\smash{Q_x}$ the law of $\smash{(B_{\cdot \wedge \bvaro_{-x}} , \bvaro_{-x})}$.  
We observe that $\bN$-a.e.~$\smash{\un_{\{ N_b \geq 1\}}\eqo  \un_{ \{ \Gamma \geq b\}}\eqo \un_{\{s^b_1 <\infty \}}}$. 
Then, the strong Markov property at $\smash{s^{_b}_{^1}}$ under $\bN$ implies that 
\begin{equation}
\label{Markov}
\smash{\bN \big[  \un_{\{ \Gamma \geq b\} } F_1( H_{\! \, \cdot \wedge s^b_1} )F_2 (\theta_{\! s^b_1} H) \big]\eqo \bN \big[  \un_{\{ \Gamma \geq b\} } F_1( H_{\! \, \cdot \wedge s^b_1} ) Q_{H(s^b_1)} [F_2 ] \big]}
\end{equation}
for all bounded and measurable $\smash{F_1,F_2\! :\! \bC \!  \to \! \bbR}$. 
We then observe that $\bN$-a.e.~$\smash{N_b \eqo 1}$ is equivalent to $\smash{\Gamma \geqo b}$ and $\smash{\theta_{\! s^b_1} H \ino \bC^{_{\downarrow}}_{^b} }$. By (\ref{ampliBh}) and the definition of $\smash{P^{_{\downarrow}}_{^{x,b}}}$ we deduce from (\ref{Markov}) that 
\begin{equation}
\label{MarkovNb1}
\smash{\bN \big[  \un_{\{ N_b=1\} } F_1( H_{\! \, \cdot \wedge s^b_1} )F_2 (\theta_{\! s^b_1} H) \big]\eqo \bN \big[  \un_{\{ \Gamma \geq b\} } F_1( H_{\! \, \cdot \wedge s^b_1} ) \ee^{-H(s^b_1)/b} P^{_{\downarrow}}_{^{\! H(s^b_1),b}} [F_2 ] \big]}
\end{equation}
Next we  note that $\bN$-a.e.~on the event $\smash{\{ N_b \eqo 1\}}$, $\smash{(r^{_b}_{^1} (\overleftarrow{H}) ,  s^{_b}_{^1} (\overleftarrow{H}))\eqo (\zeta \! -\!  s^{_b}_{^1} (H) , \zeta \! -\!  r^{_b}_{^1} (H))}$. Since $\bN$ is invariant under the 
time reversion 
we get the following. 
\begin{eqnarray}
\label{revNb1}
\smash{\bN \big[ \un_{\{ N_b =1 \}} F( H_{(r^b_1 -\, \cdot )_+} ) f(H_{s^b_1})} \!\!  \!\!  \!\!  \!  & &  \!\!  \!\!  \!\!  \! \smash{G( H_{(r^b_1+ \cdot) \wedge s^b_1}) \big]} \\
 \!\!  \!\!  \!\!   &=&  \!\!  \!\!  \smash{ \bN \big[ \un_{\{ N_b =1 \}} F( \theta_{\! s^b_1} H  ) f(H_{s^b_1}) G( H_{(r^b_1+ \cdot) \wedge s^b_1}) \big]}.\nonumber 
\end{eqnarray}
This implies 
\begin{eqnarray}
\label{indeprise}
\smash{\mathrm{LHS}_{\eqref{Bpluslaw}} }&\smash{\overset{\textrm{by (\ref{MarkovNb1})}}{=}}&\smash{ \bN \big[ \un_{\{ N_b =1 \} } \, F( H_{(r^b_1 -\, \cdot )_+} ) \, f(H_{s^b_1}) \, G( H_{(r^b_1+ \cdot) \wedge s^b_1}) \, \ee^{H(s^b_1)/b}  \, \big| \, \Gamma \geqo b \big]  }
\nonumber  \\
& \smash{\overset{\textrm{by (\ref{revNb1})}}{=}}& \smash{\bN \big[  \un_{\{ N_b =1 \} }\,  F( \theta_{\! s^{b}_1} H ) \, f(H_{s^b_1}) \, G( H_{(r^b_1+ \cdot) \wedge s^b_1}) \ee^{H(s^b_1)/b}  \, \big| \, \Gamma \geqo b \big]} \nonumber \\
&\smash{ \overset{\textrm{by (\ref{MarkovNb1})}}{=}}& \smash{\bN \big[ f(H_{s^b_1}) \, G( H_{(r^b_1+ \cdot) \wedge s^b_1})\,  P^{_{\downarrow}}_{^{\! H(s^b_1),b}} [ F] \, \big| \, \Gamma \geqo b \big] }  \label{indeprise} 
\end{eqnarray}
Thus (\ref{Bpluslaw}) easily follows from (\ref{indeprise}) as soon as we have proved:
\begin{equation} 
\label{excuonly}
\smash{\bN \Big[ f(H_{s^b_1})\,  G( H_{(r^b_1+ \cdot) \wedge s^b_1})  \, \Big| \, \Gamma \geqo b \Big] \eqo \bN [ G (H) \, | \, \Gamma \eqo b ] \! \int_0^\infty \!\!\!\!  \tfrac{\mathrm d x}{b}\, \ee^{-\frac{x}{b}} f(x) .}
\end{equation}
\emph{Proof of (\ref{excuonly})}. We recall notations $\smash{\mathtt{Top}_b (\cdot )}$, $\smash{\gamma_b}$, $\smash{\overline{s}}$ and $\smash{\delta_b}$ from Definition \ref{toppdef} and notations $\smash{H^{\uparrow}}$, $\smash{H^{\downarrow}}$ from Lemma \ref{Topmore}. We observe that $\bN$-a.e.~$\smash{N_b \eqo 1}$ if and only if $\smash{H^{\uparrow}}$ and $\smash{H^{\downarrow}}$ belong to $\smash{\bC^{_\downarrow}_{^b}}$ and in this case 
we also get 
$\smash{ \gamma_b \eqo r^{_b}_{^1}}$, $\smash{ \delta_b \eqo s^{_b}_{^1}}$ and thus 
$ \smash{\mathtt{Top}_b (H)\eqo H ((r^{_b}_{^1}+ \cdot) \wedge s^{_b}_{^1})}$ and $\smash{H(s^{_b}_{^1})\eqo \Gamma (H)\! -\! b}$. Then we get the following. 
\begin{eqnarray*}
\smash[t]{\bN \Big[ f \big(H_{s^b_1} \big)\,  G\big(  H_{(r^b_1+ \cdot) \wedge s^b_1} \big) } \! \! \! \! \! \! \! \!  \! \! \! \!\! \! \! \! \!  & &  \! \! \! \!  \! \! \! \! \! \! \! \! \! \! \! \! \! \!  \smash[t]{\un_{\{  \Gamma \geq b\} } \Big] \; }
\smash[t]{\overset{\textrm{by (\ref{MarkovNb1})}}{=}} \; \smash[t]{ \bN \Big[ \un_{\{ N_b=1\}} \mathrm{e}^{H(s^b_1)/b} f\big( H_{\! s^b_1}\big) G \big( H_{(r^b_1 + \cdot)\wedge s^b_1}\big) \Big]} \\
\!\!\!\! \! \! \!  \! \! \! \!   &=& \! \! \! \!  \! \! \! \!  \!\!\!  \bN \Big[ \un_{\{ \textrm{$H^\uparrow$ and $H^\downarrow \in \bC^\downarrow_b$ } \}}  
\mathrm{e}^{\frac{\Gamma-b}{b}} f\big(\Gamma \! -\! b\big) G \big( \mathtt{Top_b(H)}\big)\un_{\{ \Gamma \geq b\}} \Big] \\ 
\!\!\!\! \! \! \!  \! \! \! \!   &=&\!\!\!\! \! \! \!  \! \! \! \!  \!  \!     \int_0^\infty \!\!\!\!  \frac{\mathrm dx}{(b+x)^2} \mathrm{e}^{\frac{x}{b}} f(x) \, \bN \Big[ \un_{\{ \textrm{$H^\uparrow$ and $H^\downarrow \in \bC^\downarrow_b$ } \} 
 G \big( \mathtt{Top_b(H)}\big) \Big| \, \Gamma \eqo b+x  \Big] }\\
\!\!\! \!\!   &\smash[b]{\overset{\textrm{by Lemma \ref{Topmore}}}{=}}& \!\! \!\!\! \smash[b]{\int_0^\infty \!\! \frac{\mathrm dx}{(b+x)^2} \mathrm{e}^{\frac{x}{b}} f(x) \, \bP \big( B_{\cdot \wedge \bvaro_{-x}} \! \ino \bC^{_\downarrow}_{^b} \big| \bvaro_{-x} \leko \bvaro_{b} \big)^2 \, \bN \big[ G(H) \, \big| \, \Gamma \eqo b \big] }, 
\end{eqnarray*}
which implies (\ref{excuonly}) since 
$ \smash{ \bP(  \bvaro_{-x} \leko \bvaro_{b} )\eqo \frac{b}{b+x}}$ and thus 
$\smash{ \bP \big( B_{\cdot \wedge \bvaro_{-x}} \!\!  \ino C^{_\downarrow}_{^b} \big| \bvaro_{-x} \! \leko \bvaro_{b} \big)\eqo \mathrm{e}^{-\frac{x}{b}}\! \cdot \! \frac{b+x}{b}}$ by (\ref{ampliBh}). This completes the proof of the theorem. \cqfd

\smallskip

We recall from (\ref{defYb}) and (\ref{defZb}) the definition of the zigzag functions $\smash{ (Y_{b,z})_{z\in \bbR_+}}$ and 
$\smash{ (Z_{b,z})_{z\in \bbR_+}}$. The process $\smash{ Z_{b, \cdot}}$ is distributed as the first excursion of $\smash{ Y_{b, \cdot}}$ above its infimum and it has 
a finite lifetime denoted by $\smash{ \zeta_b}$. The law of $\smash{ (Z_{b, \cdot}, \zeta_b)}$ is denoted by $\smash{ Q^b}$. 
We also recall from Lemma \ref{buniceldec} the definition of the $k$th $b$-unicellular part of a function which is denoted by 
$\smash{ \tU_{^k}^{_b}(\cdot)}$. We observe that $\smash{ \tU_{^k}^{_b}(B)\eqo \tHH^{_\uparrow}_{^{k}}  \centerdot \tHH^{_{\mathtt{ex}}}_{^k} \centerdot \tHH^{_\downarrow}_{^{k}}} $. 
\begin{proposition}
\label{BMzigzag}  Let $b\ino (0, \infty)$. We keep the previous notations. Then the following holds true. 
\begin{compactenum}

\smallskip

\item[$(i)$] $\smash{ \cE_b B}$ has the same law as $\smash{ Y_{b,\cdot}\, }$. 

\smallskip

\item[$(ii)$] Conditionally given $\smash{ \cE_b B}$, the processes $\smash{ \tHH^{_\downarrow}_{^{0}}} $ and $\smash{ \tU_{^k}^{_b}(B)}$, $\smash{ k\ino \bbN^*}$, are independent: the conditional law of $\smash{ \tHH^{_\downarrow}_{^{0}} }$ is $\smash{ P^{_\downarrow}_{^{\!\! \txx^{-}_{{0}} (b), b}}}$ and for all $\smash{ k\ino \bbN^*}$, the conditional law of 
$\smash{ \tU_{^k}^{_b}(B)}$ is $\smash{ P_{ \!\! \txx^{+}_{{k}} (b), \txx^{-}_{{k}} (b), b}}$.

\smallskip

\item[$(iii)$] The law of $\smash{ \cE_b H}$ under $\smash{ \bN (\, \cdot \, | \, \Gamma \geqo b)}$ is $\smash{ Q^b}$ and thus $\smash{ Q^b (\Gamma \ino \mathrm d a)\eqo b(a+b)^{-2} \mathrm da}$. 
\end{compactenum}
\end{proposition}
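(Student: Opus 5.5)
The plan is to read everything off Theorem \ref{Brodecunicelth} together with the $b$-unicellular decomposition of Proposition \ref{buniceldec} and Remark \ref{infinidec}. Almost surely $N_b(B)\eqo\infty$ and $\tss^{_b}_{^k}\to\infty$, so $B\eqo \tHH^{_\downarrow}_{^0}\centerdot \tU^{_b}_{^1}(B)\centerdot \tU^{_b}_{^2}(B)\centerdot\cdots$ with $\tU^{_b}_{^k}(B)\eqo \tHH^{_\uparrow}_{^k}\centerdot\tHH^{_{\mathtt{ex}}}_{^k}\centerdot\tHH^{_\downarrow}_{^k}$. By Definition \ref{erafundef} and Remark \ref{erafunrem} $(a)$, $\cE_b B$ is the zigzag function that falls by $\txx^{_-}_{^0}(b)$, then rises by $\txx^{_+}_{^1}(b)$, falls by $\txx^{_-}_{^1}(b)$, rises by $\txx^{_+}_{^2}(b)$, and so on. Indeed $B(\tgg^{_b}_{^0})\eqo -\txx^{_-}_{^0}(b)$, $B(\tss^{_b}_{^k})-B(\tgg^{_b}_{^{k-1}})\eqo \txx^{_+}_{^k}(b)$, and $B(\tss^{_b}_{^k})-B(\tgg^{_b}_{^k})\eqo \txx^{_-}_{^k}(b)$. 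So $\cE_b B$ is a measurable function of the sequence $(\txx^{_-}_{^0}(b),\txx^{_+}_{^1}(b),\txx^{_-}_{^1}(b),\ldots)$.

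\emph{Step $(i)$.} By Theorem \ref{Brodecunicelth} $(i)$ and $(ii)$, the variables $\txx^{_-}_{^{k-1}}(b),\txx^{_+}_{^k}(b)$, $k\ino\bbN^*$, are mutually independent, and by (\ref{condilawBrodec0}) and (\ref{condilawBrodec}) they are exponential with mean $b$. On the other hand $Y_{b,\cdot}$ starts with slope $(-1)^{1+S_{b,0}}\eqo -1$, since $S_{b,0}$ counts all $n\geqo 1$. It then alternates slopes at the jump times of the Poisson process $S_{b,\cdot}$, whose spacings are i.i.d.\ exponential with mean $b$. The two zigzag functions therefore have the same law.

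\emph{Step $(ii)$.} We need the joint law. The family consisting of the pairs $(\tHH^{_\downarrow}_{^{k-1}},\txx^{_-}_{^{k-1}}(b))$ and $(\tHH^{_\uparrow}_{^k},\txx^{_+}_{^k}(b))$ together with the $\tHH^{_{\mathtt{ex}}}_{^k}$ is mutually independent. The disintegrations (\ref{condilawBrodec0}) and (\ref{condilawBrodec}) say that, given $\txx^{_\mp}=x$, the path has law $P^{_\downarrow}_{^{x,b}}$, resp.\ $P^{_\uparrow}_{^{x,b}}$, and $\tHH^{_{\mathtt{ex}}}_{^k}$ has law $\bN(\cdot|\Gamma\eqo b)$. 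Conditionally on all the lengths, the paths are therefore independent with these laws. Concatenating, $\tU^{_b}_{^k}(B)$ has conditional law $P_{\txx^+_k(b),\txx^-_k(b),b}$ by Definition \ref{unicelBrodef} $(c)$. Since $\cE_bB$ and the sequence of lengths generate the same sigma-field (the lengths are the successive monotone run lengths of $\cE_bB$), $(ii)$ follows. Measurability of concatenation is supplied by Remark \ref{remconca} $(c)$.

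\emph{Step $(iii)$.} I use the excursion theory of $B$ above its infimum and transfer it through $\cE_b$. The excursion of $B$ straddling $\tgg^{_b}_{^0}$ is the first excursion with $\Gamma\geqo b$, so it has law $\bN(\cdot|\Gamma\geqo b)$; call it $\frak h$, as in the proof of Theorem \ref{Brodecunicelth}. The key claim is that the first excursion of $\cE_bB$ above its infimum is exactly $\cE_b\frak h$. Erasure acts locally on the unicellular pieces, and the pieces making up $\frak h$ are precisely those between $\tgg^{_b}_{^0}$ and the return time $\tdd^{_b}_{^0}$. Moreover, excursions of $B$ of height $<b$ are flattened inside descents, so they never create an excursion of $\cE_bB$. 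Checking this claim carefully via Remark \ref{sbnrems} $(c)$ (minima over intervals are attained at the $g^b_k$) is the main obstacle.

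Granting the claim, $(i)$ gives that the first excursion of $Y_{b,\cdot}$ above its infimum has law $Q^b$, so $\cE_b\frak h$ has law $Q^b$. Finally, by Lemma \ref{unicelprop} $(iii)$ (heights drop by exactly $b$), together with $\bN(\Gamma\ino \mathrm d h\,|\,\Gamma\geqo b)\eqo b h^{-2}\mathrm dh$ on $[b,\infty)$, we get $Q^b(\Gamma\ino\mathrm da)\eqo b(a+b)^{-2}\mathrm da$. Lemma \ref{unicelprop} $(iii)$ is stated for unicellular paths, so to apply it I would use the embedding trick from the proof of Lemma \ref{semigrera}.
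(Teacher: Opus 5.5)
Your proposal is correct and is essentially the paper's proof. Parts $(i)$ and $(ii)$ are read off from Theorem~\ref{Brodecunicelth}; you spell out the conditioning that the paper calls immediate. Part $(iii)$ rests on the same identification of the first excursion of $\cE_b B$ above its infimum with $\cE_b$ of the first excursion of $B$ of height $\geq b$, followed by the height shift $\Gamma(\cE_b H)=\Gamma(H)-b$.

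Two small remarks:
\begin{itemize}
\item No embedding trick is needed for that shift. An excursion of height $h\geq b$ already belongs to $\mathtt{Exc}(h)=\bC_{0,0,h}$ (Remark~\ref{elemfunrem}~$(a)$), so Lemma~\ref{unicelprop}~$(iii)$ applies to it directly.
\item $Y_{b,\cdot}$ starts falling because $S_{b,0}=0$ a.s., not because $S_{b,0}$ counts all $n$. The intended Poisson process counts the $n$ with $\tee_1(b)+\cdots+\tee_n(b)\leq z$.
\end{itemize}
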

\noi
\textbf{Proof.} Theorem \ref{Brodecunicelth} immediatly implies $(i)$, $(ii)$. Let us prove $(iii)$. Without loss of generality 
we take $\smash{Y_{b, \cdot}}$ equal to $\smash{\cE_b B}$ and $\smash{Z_{b, \cdot}}$ equal to the first excursion of $\smash{\cE_{b} B}$ above 
its infimum. 
We recall from the proof of Theorem \ref{Brodecunicelth} the time 
$\smash{\tdd^{_b}_{^0} \eqo \inf\{ t\ino [\tgg^{_b}_{^0}, \infty) : B^{_+}_{^t}\eqo 0 \}}$ and the process $\smash{(H', \zeta')\! :=\!
(B^{+} ((\tgg^{_b}_{^0} + \cdot)\wedge \tdd^{_b}_{^0} ) , \tdd^{_b}_{^0} -\tgg^{_b}_{^0} )}$, which is the first excursion of 
$B$ above its infimum whose height is $\geqo b$: its law
is $\smash{\bN (\, \cdot \, | \Gamma \geqo b)}$. 
Then we denote by $\smash{\mathbf k }$ the largest integer $\smash{k\ino \bbN^*}$ such that $\smash{s^{_b}_{^k} \leqo\tdd^{_b}_{^0}}$. 
Then $\smash{\cE_bH'}$ is the zigzag function taking successively the values $0$, $\smash{B^+(\trr^{_b}_{^1})}$, $\smash{B^+(\tgg^{_b}_{^1})}$, 
$\smash{\ldots, B^+(\trr^{_b}_{^{\mathbf k}})}$, $0$. Namely, $\smash{\cE_b H' (\cdot) \eqo Z_{b, \cdot\, }}$, which proves the first point of $(iii)$. 
We then oberve that under 
$\smash{\bN (\, \cdot \, | \Gamma \geqo b)}$, $\smash{\Gamma (\cE_b H)\eqo \Gamma (H) \! -\! b}$. The first point of $(iii)$ thus implies that 
$\smash{\Gamma }$ under $\smash{Q^b}$ has the same law as $\smash{\Gamma  \! -\! b}$ under $\smash{\bN (\, \cdot \, | \Gamma \geqo b)}$, which completes the proof of the proposition. \cqfd 

\smallskip

As $\bN$, the law $Q^b$ admits a regular version when conditioned by its height. 
\begin{lemma}
\label{regverQbcondheight}
For all $a,b\ino (0, \infty)$ there is $\smash{Q^b (\, \cdot \, | \, \Gamma\eqo a) \ino \mathcal M_1 (\bC)}$ satisfying the following. 
\begin{compactenum}

\smallskip

\item[$(i)$] $\smash{a\ino (0, \infty) \! \mapsto \! Q^b (\, \cdot \, | \, \Gamma\eqo a) \ino \mathcal M_1 (\bC) }$ is continuous.

\smallskip

\item[$(ii)$] For all $a\ino (0, \infty)$, $\smash{Q^b (\, \cdot \, | \, \Gamma\eqo a)}$-a.s.~$\smash{\Gamma\eqo a}$.

\smallskip

\item[$(iii)$] $\smash{Q^b\eqo \int_0^\infty\frac{b \, \mathrm da}{(a+b)^2} \, Q^b (\, \cdot \, | \, \Gamma\eqo a) }$. 

\smallskip

\end{compactenum}
Moreover, for all real numbers $\smash{a'\geqo a\geko 0}$, the law of $\smash{\mathtt{Top}_a (H)}$ under $\smash{Q^b (dH \, | \, \Gamma \geqo a')}$ is equal to $\smash{Q^b (\, \cdot \, | \, \Gamma\eqo a)}$ and the law of $\smash{\cE_bH}$ under $\smash{\bN (dH \, |\, \Gamma \eqo a+b)}$ is also equal to $\smash{Q^b (\, \cdot \, | \, \Gamma\eqo a)}$. 
\end{lemma}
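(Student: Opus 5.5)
The natural route is to transport the height-conditioning of $\bN$ through the map $H\mapsto\cE_b H$. We define $Q^b(\,\cdot\,|\,\Gamma\eqo a)$ directly as the law of $\cE_bH$ under $\bN(dH\,|\,\Gamma\eqo a+b)$; this makes the last assertion of the lemma a definition, and the other properties have to be checked for this choice.

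\emph{Step 1: (ii) and (iii).} Under $\bN(\cdot\,|\,\Gamma\eqo a+b)$ the path is an excursion of height $a+b\geqo b$, so $\Gamma(\cE_bH)\eqo\Gamma(H)-b\eqo a$. This gives $(ii)$, exactly as in the proof of Proposition \ref{BMzigzag} $(iii)$. For $(iii)$ we disintegrate $\bN(\cdot\,|\,\Gamma\geqo b)=b\int_b^\infty h^{-2}\bN(\cdot\,|\,\Gamma\eqo h)\,\mathrm dh$ using Definition \ref{condheightIto} $(c)$ and $\bN(\Gamma\geqo b)=1/b$. Pushing forward by $\cE_b$ and substituting $h\eqo a+b$, Proposition \ref{BMzigzag} $(iii)$ yields $Q^b=\int_0^\infty b(a+b)^{-2}Q^b(\cdot\,|\,\Gamma\eqo a)\,\mathrm da$.

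\emph{Step 2: continuity (i).} The map $a\mapsto\bN(\cdot\,|\,\Gamma\eqo a+b)$ is weakly continuous, but $\cE_b$ is not continuous on $\bC$, so the pushforward needs an argument. The cleanest route avoids continuity of $\cE_b$ by using Brownian scaling. Under $\bN(\cdot\,|\,\Gamma\eqo h)$, the rescaled path $\lambda H_{\cdot/\lambda^2}$ has law $\bN(\cdot\,|\,\Gamma\eqo\lambda h)$. Moreover $\cE_{\lambda b}(\lambda H_{\cdot/\lambda^2})=\lambda\,\cE_b(H)_{\cdot/\lambda}$, since erasure is parametrization-free (Remark \ref{erafunrem} $(b)$) and zigzags keep slopes $\pm1$ after scaling space and time by $\lambda$. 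Hence the law of $\cE_{b}$ under height $a+b$ is the image of the law of $\cE_{b'}$ under height $1$, with $b'=b/(a+b)$, by the continuous map $Z\mapsto(a+b)Z_{\cdot/(a+b)}$.

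It therefore suffices to show that $b'\mapsto\cE_{b'}H$ is a.s.\ continuous at each fixed $b'\in(0,1)$ under $\bN(\cdot\,|\,\Gamma\eqo1)$, and then apply dominated convergence. This holds because a.s.\ $b'$ is not a jump time of $N_{b'}$, since jump times are countable and their heights have diffuse laws. Between jumps, Remark \ref{unicelerarems} $(b)$ gives $\cE_{b'-a}H=\grob_a(\cE_{b'}H)$, which is continuous in $a$; the left side is handled similarly because $N$ is left-continuous. Alternatively, one could use the explicit description of $\cE_bH$ through Theorem \ref{Brodecunicelth}, with cells that are independent exponentials, but scaling is shorter.

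\emph{Step 3: topping.} For $a'\geqo a$ we combine Remark \ref{eravstop}, $\cE_b\circ\mathtt{Top}_{a+b}=\mathtt{Top}_a\circ\cE_b$, with the last statement of Lemma \ref{Topmore}. Under $\bN(\cdot\,|\,\Gamma\geqo a'+b)$, the path $\mathtt{Top}_{a+b}H$ has law $\bN(\cdot\,|\,\Gamma\eqo a+b)$. Since $\{\Gamma\geqo a'+b\}$ for $H$ is $\{\Gamma\geqo a'\}$ for $\cE_bH$, and $\cE_bH$ under $\bN(\cdot\,|\,\Gamma\geqo b)$ has law $Q^b$, the law of $\mathtt{Top}_a(Z)$ under $Q^b(dZ\,|\,\Gamma\geqo a')$ equals the law of $\cE_b(\mathtt{Top}_{a+b}H)$, which is $Q^b(\cdot\,|\,\Gamma\eqo a)$.

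The main obstacle is Step 2, namely the regularity of $\cE_b$. The argument hinges on $b'$ a.s.\ avoiding the countably many erasure jump levels.
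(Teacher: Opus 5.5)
Your proposal is correct, and Steps 1 and 3 are the paper's argument. As in the paper, $Q^b(\cdot\,|\,\Gamma=a)$ is defined as the law $\Lambda_{a,b}$ of $\cE_bH$ under $\bN(dH\,|\,\Gamma=a+b)$. Properties $(ii)$--$(iii)$ then follow from Proposition~\ref{BMzigzag}~$(iii)$ and the disintegration of $\bN(\cdot\,|\,\Gamma\geq b)$, and the topping claim is Remark~\ref{eravstop} combined with Lemma~\ref{Topmore}.

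Where you differ is continuity $(i)$. The paper reuses the topping identity. For $a\leq a'$, $\Lambda_{a,b}$ is the image of the \emph{fixed} law $Q^b(\cdot\,|\,\Gamma\geq a')$ under $Z\mapsto\mathtt{Top}_a(Z)$. The hypotheses \eqref{critcontitop} hold a.s.\ for this zigzag, so Lemma~\ref{measuTop}~$(ii)$ and dominated convergence give continuity on $(0,a']$. You instead freeze the height by Brownian scaling and reduce to continuity of the erasure flow $b'\mapsto\cE_{b'}H$. The scaling step needs $\bN(\cdot\,|\,\Gamma=h)\circ S_\lambda^{-1}=\bN(\cdot\,|\,\Gamma=\lambda h)$ with $S_\lambda H=\lambda H_{\cdot/\lambda^2}$. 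This is standard (it follows from $\bN\circ S_\lambda^{-1}=\lambda\bN$, Definition~\ref{condheightIto}~$(c)$ and continuity in $h$) but is not in the paper. The paper's route is shorter because the topping identity must be proved anyway; yours is more intrinsic, since it explains continuity in $a$ by continuity of erasure in the level.

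The one soft spot is your claim that a fixed $b'$ is a.s.\ not a jump level of $N_\cdot(H)$ under $\bN(\cdot\,|\,\Gamma=1)$. Countability of the jump levels only gives this for Lebesgue-a.e.\ $b'$. For a fixed $b'$ you need each jump level to have a diffuse law, which you assert without proof. You cannot borrow it from the paper either: its explicit laws of erasure levels (Lemma~\ref{eraseqcomput}, Theorem~\ref{mainth3}) go through Theorem~\ref{condi}, which rests on the present lemma via Lemma~\ref{beraazig}.

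You do not need the claim, because $b\mapsto\cE_bH$ is continuous at \emph{every} level. Jump levels are isolated in $(0,h]$ and $N$ is left-continuous, so $N$ is constant on $[\beta,\beta_0]$ for $\beta<\beta_0$ close to $\beta_0$, even when $\beta_0$ is a jump level. The peaks $\overline{s}^{\,\beta_0}_k$ and the branch minima are then the same at levels $\beta$ and $\beta_0$. Hence the cell populations $\mathbf c^{(\beta)}$ of \eqref{celpopmargtreebis} satisfy $\mathbf c^{(\beta)}=\mathtt{Gro}_{2(\beta_0-\beta)}(\mathbf c^{(\beta_0)})$, and so $\cE_\beta H=\mathtt{Z}(\mathtt{Gro}_{2(\beta_0-\beta)}(\mathbf c^{(\beta_0)}))\to\cE_{\beta_0}H$ as $\beta\uparrow\beta_0$. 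In words, a peak that disappears at $\beta_0$ just has prominence $\beta_0-\beta\to 0$; this is the fact $\mathtt{Z}\circ\mathtt{Div}_y=\mathtt{Z}$ used in the proof of Proposition~\ref{celevovsera}. From the right, your $\grob$ argument applies unchanged. With this replacement your Step~2 is complete.
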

\noi
\textbf{Proof.} For all $\smash{a'\geqo a\geko 0}$ and all bounded and measurable $\smash{F\! :\!  \bC \! \to \! \bbR}$, the following holds true. 
\begin{eqnarray}
\smash{Q^b \big[ F( \mathtt{Top}_a ) \, \big| \, \Gamma \geqo a'  \big]} & \smash{\overset{\textrm{by Prop.~\ref{BMzigzag} $(iii)$}}{=} }& \smash{  (a'+b) \bN \big[F (\mathtt{Top}_a\circ \cE_b )  \un_{\{\Gamma \geq a'+b \}} \big] }\nonumber \\ 
&\smash{ \overset{\textrm{by Remark \ref{eravstop}}}{=} }& \smash{ (a'+b) \bN \big[F (\cE_b \circ \mathtt{Top}_{a+b} )  \un_{\{\Gamma \geq a'+b \}} \big] } \nonumber \\
 &\smash{ \overset{\textrm{ by Lemma \ref{Topmore}}}{=} }&\smash{ \bN \big[ F(\cE_b ) \, | \, \Gamma \eqo a+b \big] . }\label{Topetc}
{}\end{eqnarray}
Let us denote by $\smash{\Lambda_{a,b}}$ the law of $\smash{\cE_b H}$ under $\smash{\bN (dH\, | \, \Gamma\eqo a+b)}$. 
Since (\ref{critcontitop}) $\smash{Q^b(\, \cdot \, | \, \Gamma\geqo a')}$-a.s.~holds, Lemma \ref{measuTop} applies and, combined with (\ref{Topetc}), proves that $\smash{a\ino (0, \infty) \! \mapsto  \! \Lambda_{a,b}\ino \mathcal M_1(\bC)}$ is continuous. Moreover by definition of $\smash{\Lambda_{a, b}}$, a.s.~$\smash{\Gamma\eqo a}$. We then get the following for all bounded and measurable $\smash{f\! : \! \bbR_+ \! \to \!   \bbR}$. 
\begin{eqnarray*}
\smash[t]{\int_0^{a'}\frac{b \, \mathrm da}{(a+b)^2} f(a) \Lambda_{a,b} [F ] }\!\!\!\!\! \!\! &=& \!\!\!\!\! \!\!\!\!\!\!\!\!\! \smash[t]{\int_b^{a'+b}\frac{b \, \mathrm d \alpha}{\alpha^2} f(\alpha \! -b) \bN \big[ F(\cE_b ) \, | \, \Gamma \eqo \alpha\big]} \\
\!\!\!\!\! \!\!  & =& \!\!\!\!\! \!\!  \smash{b \bN\big[ F(\cE_b ) f(\Gamma \! \circ \! \cE_b) \un_{\{b\leq \Gamma \leq a'+b\}} \big]} \\
\!\!\!\!\! \!\! & \smash{ \overset{\textrm{by Prop.~\ref{BMzigzag} $(iii)$}}{=} }& \!\!\!\! \!\! \smash[b]{\int_\bC\!\! Q^b (dH) \,  F(H) f(\Gamma (H)) \un_{\{\Gamma (H) \leq a'\}} ,}
\end{eqnarray*}
which easily entails that $\smash{Q^b\eqo \int_0^\infty\frac{b \, \mathrm da}{(a+b)^2}\Lambda_{a,b}}$. We then take 
$\smash{Q^b(\, \cdot \, | \, \Gamma\eqo a)}$ equal to $\smash{\Lambda_{a, b}}$: it therefore satisfies $(i)$, $(ii)$ and $(iii)$, and 
(\ref{Topetc}) completes the proof of the lemma. \cqfd

\smallskip

We recall the notation $\smash{ \bvaro_{-x} \eqo \varrho_{-x} (B)}$ and to simplify we also set $\smash{\bvaro_{-x}^b\eqo \varrho_{-x} (Y_{b, \cdot})}$. We also recall  the following notations: $\smash{\underline{Y}_{b, t}\eqo \min_{s\in [0, t]} Y_{b,s}}$ and $\smash{Y^{+}_{b, t}\eqo Y_{b, t}  -  \underline{Y}_{b, t}}$. We easily check that $\smash{\cE_b (B_{\cdot \wedge \bvaro_{-x} })}$ has the same law as $\smash{Y_{b\, , \, \cdot \, \wedge \bvaro_{-x}^b}}$. This implies that $\smash{\max_{t\in [0, \bvaro_{-x} ]} B^+_t -b}$ has the same law as $\smash{\max_{t\in [0, \bvaro^b_{-x} ]} Y^+_{b,t} }$. By (\ref{ampliBh}) we thus get for all $\smash{a\ino (0, \infty)}$ 
\begin{equation}
\label{ampliBhbera}
\smash{\bP \Big( \!\! \!\! \!\! \! \max_{\quad \;  t\in [0, \bvaro^b_{-x} ]} \!\! \!\!   Y^+_{b,t}  \leq a \Big) = \exp \Big( \!\!-\frac{x}{a+b} \Big).}
\end{equation}
As in Definition \ref{unicelBrodef}, we introduce the following. 
\begin{definition}
(\emph{Unicellular zigzag processes}) $\,$
\label{unicelZigdef} Let $\smash{x, x^+\! , \,  x^- \! ,\,  a , b\ino \bbR_+}$, with $b\geko 0$. 
\begin{compactenum}

\smallskip

\item[$(a)$] If $x \geko 0$ we denote by $\smash{Q^{_{b\downarrow}}_{^{\! x,a}}}$ the law of 
$\smash{( Y_{b\, , \, \cdot \, \wedge \bvaro^b_{-x}} , \bvaro^b_{-x})}$ under $\smash{\bP ( \, \cdot \, | \, \max_{ t\in [0, \bvaro^b_{-x} ]} \! Y^{_+}_{b, t } \leko  a\, )}$. If $x\eqo 0$, we set 
$\smash{Q^{_{b,\downarrow}}_{^{\! 0,a}}\eqo \delta_{\partial}}$, where $\partial$ stands for the null lifetime function. We call 
$\smash{Q^{_{b\downarrow}}_{^{\! x,a}}}$ the law of the \emph{$b$-zigzag descent of size $x$ with amplitude $<\! a$}.

\smallskip

\item[$(b)$] We denote by $\smash{Q^{_{b\uparrow}}_{^{\! x,a}}}$ the law of $\smash{\overleftarrow{H}}$ under $\smash{Q^{_{b\downarrow}}_{^{\! x,a}} (dH)}$ and we call it the law of the \emph{$b$-zigzag rise of size $x$ with amplitude $<\! a$}.

\smallskip

\item[$(c)$] The law $\smash{Q^b_{x^+ \! ,\, x^-\! ,\, a}}$ of the \emph{$a$-unicellular $b$-zigzag process with initial cell $\smash{(x^+\! , x^-)}$} 
is the law of $\smash{Z^1\!\!  \centerdot Z^2\!  \centerdot Z^3}$, where $\smash{Z^1}$, $\smash{Z^2}$ and $\smash{Z^3}$ are independent zigzag processes that are distributed as follows: $\smash{Z^1}$ has law $\smash{Q^{{b\uparrow}}_{{\! x^+,a}}}$, $\smash{Z^2}$ has law $\smash{Q^b (\, \cdot \, | \, \Gamma\eqo a)}$ and $\smash{Z^3}$ has law $\smash{Q^{{b\downarrow}}_{{\! x^-,a}}}$.\cq 
\end{compactenum}
\end{definition}

\begin{remark}
\label{contiQbx} $(a)$ For a given $x\ino (0, \infty)$, the zigzag process $\smash{Y_{b, \cdot}}$ satisfies the continuity criterion (\ref{hittcontcrit}) in Lemma \ref{regprophit} $(ii)$. This implies 
that $\smash{x \! \mapsto \! Q^{_{b\downarrow}}_{^{x,a}}}$ and also  
$\smash{x \! \mapsto \! Q^{_{b\uparrow}}_{^{x,a}}}$ are weakly continuous in $\smash{\mathcal M_1(\bC)}$, and therefore argueing as in Remark \ref{contiunicelBrorem}, we see that $\smash{(x^+\! , \,  x^-) \! \mapsto \! Q^{_b}_{^{x^+\! ,\, x^-\! ,\, a}}}$ is continuous too and we also get $\smash{Q^{_b}_{^{x^+\! ,\, x^-\! ,\, a}}\!\!\!  \longrightarrow \! Q^b (\, \cdot \, | \, \Gamma \eqo a)}$ as $\smash{(x^+\! ,\, x^-)}$ goes to $(0,0)$. 

\smallskip

\noi
$(b)$ As already mentioned, $\smash{\cE_b (B_{\cdot \wedge \bvaro_{-x} })}$ has the same law as $\smash{Y_{b\, , \, \cdot \, \wedge \bvaro_{-x}^b}}$. Thus, $\smash{\cE_b H}$ under $\smash{P^{_{\downarrow}}_{^{x, a+b}}(dH)}$ has law $\smash{Q^{_{b\downarrow}}_{^{x,a}}}$. By time reversal, the law of $\smash{\cE_b H}$ under $\smash{P^{_{\uparrow}}_{^{x, a+b}}(dH)}$ is equal to $\smash{Q^{_{b\uparrow}}_{^{x,a}}}$.\cq
\end{remark}
\begin{lemma}
\label{beraazig} For all $\smash{x^+\! ,\, x^-\ino \bbR_+}$, $a,b\ino (0, \infty)$, $\smash{\cE_bH}$ under $\smash{P_{x^+\! ,\, x^-\! ,\, a+b}(dH)}$ has law $\smash{Q^{_b}_{^{x^+\! ,\, x^-\! ,\, a}}}$. 
\end{lemma}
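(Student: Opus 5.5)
The plan is to reduce the statement to the three building blocks of Definition \ref{unicelBrodef} $(c)$, by showing that $b$-erasure commutes with the concatenation $\bH = \bH^{(1)}\centerdot \bH^{(2)}\centerdot \bH^{(3)}$. Concretely, let $(H^1,\zeta_1)\in \bC^{_\uparrow}_{^{x^+\!,a+b}}$, $(H^2,\zeta_2)\in \mathtt{Exc}(a+b)$ and $(H^3,\zeta_3)\in \bC^{_\downarrow}_{^{x^-\!,a+b}}$. We want to check deterministically that
$$\cE_b(H^1\centerdot H^2\centerdot H^3)=\cE_bH^1\centerdot \cE_bH^2\centerdot \cE_bH^3 .$$
The argument goes through the successive $b$-erasure times and the unicellular decomposition of Proposition \ref{buniceldec}.

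First, $H^2$ is an excursion of height $a+b\ge b$, so by Remark \ref{sbnrems} $(b)$ it contains at least one complete $b$-excursion. Its first $b$-erasure time starts after $\zeta_1$: the rise $H^1$ has amplitude $<a+b$, but its own $b$-unicellular parts sit inside it. So the decomposition of $H$ should split at $\zeta_1$ and $\zeta_1+\zeta_2$, up to the boundary pieces. Namely, the final rise of amplitude $<b$ in the decomposition of $H^1$ gets glued to the initial descent of $H^2$, and the final rise of $H^2$ gets glued to the initial descent of $H^3$. For $H^1$, its initial descent in Proposition \ref{buniceldec} is trivial since $H^1>0$ on $(0,\zeta_1]$.

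Since the erased function only records the values $H_{g^b_k}$ and $H_{s^b_k}$, I will verify that these values are the ones obtained from the separate erasures. The gluing points are handled by the fact that $H^2$ starts and ends at its minimum $0$ while $H^1$ ends at its terminal value. This identity is the step I expect to be the main obstacle. The delicate case is checking that no $b$-excursion straddles $\zeta_1$ or $\zeta_1+\zeta_2$, that is, that $s^b_k$ never crosses these times. A cleaner alternative is to use Lemma \ref{unicelprop} $(iii)$ together with Lemma \ref{zigzagtree} $(ii)$. In that route, $\bcE_b$ of $\mathtt{Tree}(H)$ is the union of the $b$-erased subtrees of the three pieces, grafted along the ancestral line of the top, and the zigzag function is determined by the ordered tree and its terminal value.

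Once the identity holds, independence of $\bH^{(1)},\bH^{(2)},\bH^{(3)}$ gives that $\cE_b\bH$ is the concatenation of three independent processes. By Remark \ref{contiQbx} $(b)$, $\cE_b\bH^{(1)}$ has law $Q^{_{b\uparrow}}_{^{x^+,a}}$ and $\cE_b\bH^{(3)}$ has law $Q^{_{b\downarrow}}_{^{x^-,a}}$. By the last assertion of Lemma \ref{regverQbcondheight}, $\cE_b\bH^{(2)}$ under $\bN(\cdot\,|\,\Gamma=a+b)$ has law $Q^b(\cdot\,|\,\Gamma=a)$. Comparing with Definition \ref{unicelZigdef} $(c)$ concludes.

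The degenerate cases $x^\pm=0$, where the piece is $\partial$, are trivial by Remark \ref{remconca} $(a)$.
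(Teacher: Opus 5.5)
Your proposal is correct and is essentially the paper's proof: the paper likewise writes the unicellular Brownian path as the concatenation of the three independent pieces, asserts (as ``easy to check'') that $\cE_b$ commutes with this concatenation, and concludes with Remark~\ref{contiQbx}~$(b)$ and the last assertion of Lemma~\ref{regverQbcondheight}. One small correction to your sketch of that identity: since $H^2$ starts and ends at its minimum, its own initial descent and final rise are trivial, so in the decomposition of $H$ the final rise of $H^1$ actually merges with the rising part of the first $b$-unicellular part of $H^2$, and the initial descent of $H^3$ merges with the falling part of the last one (and no complete $b$-excursion straddles $\zeta_1$ or $\zeta_1+\zeta_2$), which is exactly why adjacent monotone pieces of $\cE_bH^1\centerdot\cE_bH^2\centerdot\cE_bH^3$ coalesce into the correct cells.
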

\noi
\textbf{Proof.} Let $\smash{(\mathtt H^i\! , \zeta_i)}$, $i\ino \{ 1,2,3\}$ be three $\bC$-valued independent processes such that 
$\smash{(\mathtt H^1\! ,\zeta_1)}$ has law $\smash{P^{\uparrow}_{x^+\! ,\,  a+b}}$, $\smash{(\mathtt H^2 \! , \zeta_2)}$ has law $\smash{\bN (\, \cdot \, | \, \Gamma\eqo a+b)}$ and 
$\smash{(\mathtt H^3\! ,\zeta_3)}$ has law $\smash{P^{\downarrow}_{x^-\!  ,\,  a+b}}$. We denote by $\smash{(\mathtt H,\zeta)}$ their concatenation: $\smash{\mathtt H\eqo \mathtt H^1\! \centerdot \mathtt H^2\! \centerdot \mathtt H^3}$, which is distributed according to $\smash{P_{x^+\! ,\, x^-\! , \, a+b}}$ by definition. It is easy to check that $\smash{\cE_b \mathtt H\eqo \cE_b \mathtt H^1\! \centerdot \cE_b \mathtt H^2\! \centerdot \cE_b\mathtt H^3}$, which implies the desired result by Remark \ref{contiQbx} $(b)$ and by Lemma \ref{regverQbcondheight}.  
\cqfd 

\smallskip

The main goal of this section is to compute the joint law of a $(a+b)$-unicellular Brownian process and its $b$-erased process. To this end we need to introduce the following notation. 
\begin{definition}
\label{Pcellunicel} Let $b\ino (0, \infty)$ et $\smash{\mathbf c \eqo ((x^+_k,x^-_k))_{1\leq k\leq n} \ino \mathtt{Cell}}$. 
We denote by $\smash{P_{\! \mathbf c, b}}$ the law of the concatenated process $\smash{H^1\! \centerdot H^2 \! \centerdot \ldots \centerdot H^n}$, where the processes $\smash{(H^k \! , \zeta_k)_{1\leq k\leq n}}$ are independent and where 
$\smash{(H^k \! ,\zeta_k)}$ has law $\smash{P_{\! {x^+_k \! ,\,  x^-_k\! ,\,  b}}\, }$. \cq
\end{definition}
\begin{remark}
\label{contiencPcb} Since concatenation is continuous (see Remark \ref{remconca} $(c)$) and since $\smash{(x^+\! , \, x^-) \! \mapsto P_{x^+\! ,\,  x^-\! ,\,  b}}$ is continuous (see Remark \ref{contiunicelBrorem}), we get that $\smash{\mathbf c\! \mapsto P_{\! \mathbf c, b}}$ is continuous from 
$\smash{\mathtt{Cell}}$ to $\smash{\mathcal M_1(\bC)}$ equipped with the topology of weak convergence. 
\cq
\end{remark}
\begin{theorem}
\label{condi} For any zigzag function $Z$, we denote by $\mathtt C(Z)$  its associated minimal cell population as in Definition \ref{cellfromzigzag}.  Let $\smash{x^+\! ,\, x^-\ino \bbR_+}$ and let $a,b\ino (0, \infty)$. For all bounded and continuous functions $\smash{F,G\! : \! \bC \! \to \! \bbR}$, we get the following. 
\begin{equation}
\label{contierauni}
\smash{\int_{\bC} \!\! P_{\! x^+\! ,\, x^-\! ,\,  a+b} (\mathrm dH) \, F( \cE_b H) G(H) =\int_{\bC} \!\! Q^b_{\! x^+\! ,\, x^-\! ,\,  a} (\mathrm d Z) \, F(Z) 
\int_{\bC} \!\! P_{\! \mathtt C(Z), b} (\mathrm d H) \, G(H) \; .}
\end{equation}
\end{theorem}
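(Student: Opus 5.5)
The plan is to derive (\ref{contierauni}) from the conditional structure of a Brownian motion given its $b$-erasure, Proposition \ref{BMzigzag} $(ii)$. Each of the three independent pieces of $\smash{H\eqo \mathtt H^1\! \centerdot \mathtt H^2\! \centerdot \mathtt H^3}$ (as in the proof of Lemma \ref{beraazig}) is treated separately, and the results are glued using $\smash{\cE_b \mathtt H\eqo \cE_b \mathtt H^1\! \centerdot \cE_b \mathtt H^2\! \centerdot \cE_b\mathtt H^3}$. Two preliminary facts are needed.

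\emph{Minimality of the cells.} Almost surely the cells produced by the decomposition of Theorem \ref{Brodecunicelth} have $\smash{\txx^{\pm}_k(b)\geko 0}$, since these are exponential variables. Hence, by Lemma \ref{mincellzigzag} $(ii)$, the sequence of $b$-unicellular parts indexes exactly the minimal cell population $\mathtt C(\cdot)$ of the erased zigzag. This is what lets $P_{\mathtt C(Z),b}$ appear in the statement.

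\emph{Gluing of descents.} A descent of law $\smash{P^{_\downarrow}_{^{y,b}}}$ followed by an independent descent of law $\smash{P^{_\downarrow}_{^{y',b}}}$ has law $\smash{P^{_\downarrow}_{^{y+y',b}}}$. This follows from the Poisson description (\ref{NItomeadefi}) of excursions above the infimum, restricted to excursions of height $<b$, via the strong Markov property at $\smash{\bvaro_{-y}}$. By time reversal, the same holds for rises.

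\emph{Step 1: the descent piece.} Take $\smash{(B_{\cdot\wedge\bvaro_{-x}},\bvaro_{-x})}$ conditioned on $\smash{\max_{[0,\bvaro_{-x}]}B^+\leko a+b}$. This conditioning event coincides with $\smash{\{\max Y^+_{b,\cdot}\leko a\}}$ on the erased path (see the derivation of (\ref{ampliBhbera})), so it is measurable with respect to $\smash{\cE_b(B_{\cdot\wedge\bvaro_{-x}})}$. Conditioning on it therefore does not change the conditional law given the erasure.

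Next, apply Proposition \ref{BMzigzag} $(ii)$ at the stopping time $\smash{\bvaro_{-x}}$, which lies inside some descent $\smash{\tHH^{_\downarrow}_{^k}}$. Given the erasure, the unicellular parts before it are independent with laws $P_{\txx^+_l,\txx^-_l,b}$. The final truncated descent is a descent of amplitude $<b$ with the prescribed size. The initial descent $\smash{\tHH^{_\downarrow}_{^0}}$ has conditional law $\smash{P^{_\downarrow}_{^{\txx^-_0,b}}}$, and it will be absorbed into the last cell of the excursion piece via the gluing fact. The rise piece is handled by time reversal.

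\emph{Step 2: the excursion piece.} Under $\bN(\cdot\,|\,\Gamma\geqo b)$, realize the excursion as the first excursion of $B$ above its infimum with height $\geqo b$, as in the proof of Proposition \ref{BMzigzag} $(iii)$. Then, given $\cE_bH\eqo Z$ (which has law $Q^b$), the path is the concatenation of independent pieces: a descent absorbed into the first cell, unicellular parts $\smash{P_{\txx^+_k,\txx^-_k,b}}$, and a final descent. After gluing, this gives exactly $P_{\mathtt C(Z),b}$. This yields (\ref{contierauni}) with $\bN(\cdot\,|\,\Gamma\geqo b)$ on the left and $Q^b$ on the right.

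To pass to the conditioning $\Gamma\eqo a+b$, condition further on $\smash{\Gamma(\cE_bH)\ino[a,a+\varepsilon]}$, which equals $\Gamma(H)-b\ino[a,a+\varepsilon]$, and let $\varepsilon\to 0$. This uses the continuity of $a\mapsto Q^b(\cdot\,|\,\Gamma\eqo a)$ (Lemma \ref{regverQbcondheight}), the continuity of $\bN(\cdot\,|\,\Gamma\eqo h)$ in $h$, and the continuity of $\mathbf c\mapsto P_{\mathbf c,b}$ (Remark \ref{contiencPcb}), composed with the a.s.\ continuity of $\mathtt C$ at the limit zigzag, whose cells are positive. This limiting argument is why $F$ and $G$ are taken continuous.

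\emph{Step 3: assembly.} Combine the three pieces by independence. At the two junctions, glue the excursion's first and last descent fragments with the ends of the rise and descent pieces, using the gluing fact. The result is a concatenation of independent $P_{x^+_k,x^-_k,b}$ over the cells of $\mathtt C(\cE_bH)$, with the cell lengths adding correctly at each junction.

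The main obstacle I expect is Step 2's disintegration: justifying that conditioning on $\Gamma\eqo a+b$ commutes with taking the conditional law given the erasure. I would first try the $\varepsilon$-window approach described above. If that fails, I would fall back on the topping identity of Lemma \ref{regverQbcondheight}, that is, realize $\bN(\cdot\,|\,\Gamma\eqo a+b)$ as $\mathtt{Top}_{a+b}$ under $\bN(\cdot\,|\,\Gamma\geqo h')$, together with Remark \ref{eravstop}, to avoid the singular conditioning altogether.
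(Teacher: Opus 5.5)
Your proposal is correct in outline but takes a different route from the paper. The paper never splits $H$ into rise, excursion and descent.

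\textbf{The paper's route.} It realizes $P_{x^+,x^-,a+b}$ as the conditional law of the first $(a+b)$-unicellular part $\mathtt{Uni}^{a+b}_1(B)$ of a Brownian motion, given its initial cell $(\txx^+_1(a+b),\txx^-_1(a+b))$. That initial cell is a function of $\cE_bB$. By the nesting of erasures, $\tgg^{a+b}_0=\tgg^b_{\mathbf k}$ and $\tgg^{a+b}_1=\tgg^b_{\mathbf l}$, with $(\mathbf k,\mathbf l)$ also a function of $\cE_bB$. Hence $\mathtt{Uni}^{a+b}_1(B)=\mathtt{Uni}^b_{\mathbf k+1}(B)\centerdot\cdots\centerdot\mathtt{Uni}^b_{\mathbf l}(B)$ consists of complete $b$-unicellular parts. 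Proposition \ref{BMzigzag} $(ii)$ then gives the conditional law $P_{\mathtt C(\mathtt Z),b}$ at once, and Lemma \ref{beraazig} supplies the law of $\mathtt Z$. There are no boundary fragments, so no gluing lemma is needed. The price is that the initial cell is exponentially distributed. The identity therefore comes out for Lebesgue-a.e.\ $(x^+,x^-)$ and must be extended by continuity, which is why $F,G$ are taken continuous.

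\textbf{Your route.} Your piecewise argument keeps $(x^+,x^-)$ fixed; the amplitude conditioning is erasure-measurable, so that step is exact. You pay with several extra ingredients:
\begin{itemize}
\item truncated descents and rises at the ends of each piece;
\item a Poisson splitting/gluing lemma for these fragments;
\item compatibility of $\cE_b$ with time reversal;
\item a separate disintegration in the excursion height.
\end{itemize}
With your topping variant (Lemma \ref{Topmore} and Remark \ref{eravstop}), the last step also becomes exact. Under $\bN(\cdot\,|\,\Gamma\geq h')$, $\mathtt{Top}_{a+b}(H)$ starts at a last-passage point inside a rise and ends at a first-passage point inside a descent, both at erasure-measurable levels. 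This gives the identity for every $a$ and all bounded measurable $F,G$.

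\textbf{Corrections.} First, the excursion realized on $[\tgg^b_0,\tdd^b_0]$ starts exactly where $\tHH^{\uparrow}_1$ starts. So its first $b$-unicellular part is complete: there is no initial descent fragment, only the truncated descent at the end.

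Accordingly, at the first junction the rise piece ends with a rise fragment, namely the reversal of the initial descent $\tHH^{\downarrow}_0$ of the embedded descent. This fragment merges with $\tHH^{\uparrow}_1$ by the gluing of rises, not descents. Only the second junction glues descents.

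Second, in the $\varepsilon$-window version, weak continuity of $h\mapsto\bN(\cdot\,|\,\Gamma=h)$ does not by itself control $h\mapsto\bN[F(\cE_bH)G(H)\,|\,\Gamma=h]$, because $H\mapsto\cE_bH$ is not continuous. You need $\cE_b$ to be continuous at $\bN(\cdot\,|\,\Gamma=a+b)$-almost every path. The paper's final step tacitly needs the analogous continuity of its left-hand side in $(x^+,x^-)$.
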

\noi
\textbf{Proof.} Let $\smash{h\ino (0, \infty)}$, $\smash{k\ino \bbN^*}$ and $\smash{(H,\zeta)\ino \bC}$. 
We recall from (\ref{1erbtps}) the definition of $\smash{s^{h}_{1}(H)}$, from (\ref{r1bHdef}), (\ref{g0bdef}) and (\ref{snbdef}) the definitions of $\smash{g^{h}_{k}(H)}$, $\smash{r^{h}_{k}(H)}$ and $\smash{s^{h}_{k}(H)}$, $k\ino \bbN$, with the convention that 
$\smash{r^{h}_{0} (H)\eqo s^{h}_{0} (H)\eqo 0}$. 
We also recall from Lemma \ref{buniceldec} that the $k$-th $h$-unicellular part of $H$ is 
$\smash{H ((g^h_{k-1}(H)\,  + \, \cdot \, )\wedge g^h_k(H) ) \! -\! H (g^h_{k-1}(H))}$, which is denoted by 
$\smash{\mathtt{Uni}^{h}_{k}(H)}$. 
We then recall that $\smash{B_\cdot}$ under $\bP$ is a real valued Brownian motion with initial value $\smash{B_0\eqo 0}$. 
From Notation \ref{notaeraBM} we recall the following shorthands: 
$\smash{\tgg^{h}_{{k}} \! :=\!   g^{h}_{{k}} (B)}$,  
$\smash{\trr^{h}_{{k}}  \! :=\! r^{h}_{{k}} (B)}$, $\smash{\tss^{h}_{{k}}  \! :=\! s^{h}_{{k}} (B)}$, 
$\smash{\txx^{+}_{{k}} (h)  \! :=\!  B(\trr^{h}_{{k}} ) \! -\! B(\tgg^{h}_{{k-1}} )}$ and  $\smash{\txx^{-}_{{k}} (h) \! :=\! B(\tss^{h}_{{k}} ) \! -\! B(\tgg^{h}_{{k}} )}$. In particular $\smash{\mathtt{Uni}^{h}_{1} (B)\eqo B^+ ((\tgg^{{h}}_{0} + \cdot) \wedge \tgg^{{h}}_{1} )}$. 

We set $\smash{(\mathtt Z , \zeta_{\mathtt Z})\eqo \cE_b  (\mathtt{Uni}^{a+b}_{1} (B))}$. By Theorem \ref{Brodecunicelth}, conditionally given $\smash{(\txx^{_+}_{^{1}} (a+b) , \txx^{_-}_{^{1}} (a+b))}$, 
$\smash{\mathtt{Uni}^{a+b}_{1} (B)}$ has law $\smash{P_{\txx^{_+}_{^{1}} (a+b) , \txx^{_-}_{^{1}} (a+b), a+b}}$ and for all bounded and measurable $\smash{g\! : \! \bbR_+^2 \! \to \! \bbR}$, we get 
\begin{eqnarray}
\smash[t]{\bE \Big[ g\big(\txx^{_+}_{^{1}} (a+b) , \txx^{_-}_{^{1}} (a+b) \big) F(\mathtt Z) G\big( \mathtt{Uni}^{a+b}_{1} (B) \big)\Big] }&=& \nonumber  \\ 
\smash[b]{\int_{\bbR^2_+} \!\! \frac{\mathrm d x^+ \mathrm d x^-\! }{(a+b)^2}\, \mathrm{e}^{-\frac{x^+\! +\, x^-\! }{a+b}} g(x^+\! ,\, x^-)}\! \! \! \! \!\!\!\! & &\!\!\! \!  \! \! \! \! \smash[b]{ \int_{\bC} \!\! P_{\! x^+\! ,\, x^-\! ,\,  a+b} (\mathrm dH) \, F( \cE_b H) G(H).    }\label{law1step}
\end{eqnarray}
We next observe that $\smash{Z\eqo \mathtt{Uni}^a_1(\cE_b B) }$ and we also easily check that $\smash{(\txx^{_+}_{^{1}} (a+b) , \txx^{_-}_{^{1}} (a+b))}$ is a measurable function of $\smash{\cE_b B}$. Thus 
\begin{eqnarray}
\smash{\bE \big[ g\big(\txx^{_+}_{^{1}} (a+b) , \txx^{_-}_{^{1}} (a+b) \big) F(\mathtt Z) G\big( \mathtt{Uni}^{a+b}_{1} (B) \big) \, \big| \, \cE_b B\big]} &=& \nonumber  \\ 
\smash{g \big( \txx^{_+}_{^{1}} (a+b) , \txx^{_-}_{^{1}} (a+b) \big) F(\mathtt Z)   }
\! \! \! \! \!\!\! & &\!\!\! \!  \! \! \! \smash{ \bE \big[ G\big( \mathtt{Uni}^{a+b}_{1} (B) \big) \, \big| \, \cE_b B\big]}
 \label{law2step}
\end{eqnarray}
To compute the conditional expectation in the right hand-side of (\ref{law2step}), we first observe that there are two random integers $\mathbf k$ and $\mathbf l$ such that $\smash{\tgg^{_b}_{^{\mathbf k}} \eqo \tgg^{_{a+b}}_{^{0}}}$ and 
$\smash{\tgg^{_b}_{^{\mathbf l}} \eqo  \tgg^{_{a+b}}_{^{1}}}$. Consequently the cell population of $\smash{\mathtt Z}$ is $\smash{\mathtt C(\mathtt Z) \! :=\!  
\big( (\txx^{_+}_{^{\mathbf k + j}} (b) , \txx^{_-}_{^{\mathbf k + j}} (b)) \big)_{1\leq j\leq \mathbf l -\mathbf k}} $ and 
\begin{equation}
\label{concaUniuni}
\smash{\mathtt{Uni}^{a+b}_{1} (B)\eqo \mathtt{Uni}^{b}_{\mathbf k+1} (B)\! \centerdot  \mathtt{Uni}^{b}_{\mathbf k+2} (B) \! \centerdot  \ldots  \centerdot \mathtt{Uni}^{b}_{\mathbf l} (B) .}
\end{equation}
Next note that $(\mathbf k, \mathbf l)$ is a measurable function of $\cE_b B$.
By Proposition \ref{BMzigzag} $(ii)$, (\ref{concaUniuni}) implies that a regular version of the conditional law of $\mathtt{Uni}^{a+b}_{1} (B)$ given $\cE_b B$ is $P_{\mathtt C (\mathtt Z), b}$. Since Lemma \ref{beraazig} asserts that the conditional law of $\mathtt Z$ given 
$ (\txx^{_+}_{^{1}} (a+b) , \txx^{_-}_{^{1}} (a+b))$ has law $Q^{b}_{{\mathtt x^+_1(a+b) ,\mathtt x^-_1 (a+b) ,a}}$, the previous arguments combined with (\ref{law1step}) and (\ref{law2step}) imply 
\begin{eqnarray*}
\mathrm{LHS}_{\eqref{law1step}} \!\!\! &= &\!\!\!  \bE \Big[g\big(\txx^{_+}_{^{1}} (a+b) , \txx^{_-}_{^{1}} (a+b) \big) \! \int_{\bC} \!\!  Q^b_{\! \mathtt x^+_1(a+b)  ,\, \mathtt x^-_1 (a+b) ,\,  a} (\mathrm d Z) \, F(Z) 
\int_{\bC} \!\! P_{\! \mathtt C(Z), b} (\mathrm d H) \, G(H)   \Big] \\
\!\!\! &=& \!\!\!  \int_{\bbR^2_+} \!\! \frac{\mathrm d x^+ \mathrm d x^-\! }{(a+b)^2}\, \mathrm{e}^{-\frac{x^+\! +\, x^-\! }{a+b}} g(x^+\! ,\, x^-) \int_{\bC} \!\!  Q^b_{x^+\! , \, x^-\! , \, , a} (\mathrm d Z) \, F(Z) 
\int_{\bC} \!\! P_{\! \mathtt C(Z), b} (\mathrm d H) \, G(H)  
\end{eqnarray*}
which entails the desired result by continuity of $(x^+\! , \, x^-) \! \mapsto Q^b_{x^+\! , \, x^-\! , \, , a}$ and by the $Q^b_{x^+\! , \, x^-\! , \, , a}(\mathrm dZ)$ a.s.~continuity of 
$Z\mapsto P_{\mathtt{C} (Z), b}$ (see Remark \ref{contiencPcb}).  \cqfd

\subsection{Erasure processes of unicellular Brownian paths and DR branching processes}
\label{lawDRsubsec}

In this section we prove Theorem \ref{mainth3} which constitutes the main part of the proof of Theorems \ref{cvzigzag} and  \ref{recovercell}. We fix $\smash{ x^+\! , \, x^- \ino \bbR_+}$ and $h\ino (0, \infty)$ and we first study the erasure process $\smash{b\ino (0, h] \! \mapsto \! \cE_b H}$ when $H$ is random with law $\smash{P_{x^+\! , \, x^-\! ,\,  h}(\mathrm d H)}$, i.e., when it is distributed as a $h$-unicellular Brownian path with initial cell $\smash{(x^+\! , \, x^-)}$. 
More precisely, we fix $b\ino (0, h]$ and conditionally given $\smash{\cE_bH}$ we compute the next erasure time and location to come in the erasure process of $H$.  
To this end we need to describe the law of the excursion of the reflected process on the subtree spanned by the $b$-erasure times as explained in Section \ref{detererasureprocsec}.

Let $\smash{\mathbf c\eqo ((x^+_k\! , \, x^-_k))_{1\leq k\leq n} \ino \mathtt{Cell}}$, a cell population such that $n\geqo 1$ and 
$\smash{ x^+_k x^-_k\geko 0}$ for all $1\leqo k\leqo n$. We denote by $\smash{\overline{\mathbf c}\eqo (z_j)_{1\leq j\leq 2n}}$ its cumulated version (see Definition \ref{GroDivdef}) and by $\smash{Z\eqo \mathtt{Z} (\mathbf c)}$ its corresponding zigzag function (see Definition \ref{concadef}). Namely, 
$\smash{Z\eqo \phi_{x^+_1} \centerdot \phi_{-x^-_1} \centerdot \ldots \centerdot \phi_{x^+_n} \centerdot \phi_{-x^-_n}}$, where as in Remark \ref{remlifetime} for all $\smash{x\ino \bbR}$, $\smash{\phi_x(t) \eqo \varepsilon (t \wedge |x|)}$, $\smash{t\ino \bbR_+}$, $\varepsilon$ being the sign of $x$ (if $x\eqo 0$, then $(\phi_0, 0)\eqo \partial$). The lifetime of $Z$ is thus $\smash{\zeta_Z\! :=\! \mathtt m (\mathbf c)\eqo \sum_{1\leq k\leq n} (x^+_k \! + x^-_k)}$. 

We fix $\smash{ b\ino (0, h]}$ and we denote by $\smash{ (\mathtt H, \zeta)}$ a 
$\bC$-valued random process with law $\smash{ P_{\mathbf c, b}}$ 
(see Definition \ref{Pcellunicel}). Thus a.s.~$\smash{ \cE_b \mathtt H\eqo Z}$ and $\smash{ N_b (\mathtt H)\eqo n}$. For all $k\ino \{ 1, \ldots, n\}$, we also recall the  notation $\smash{ \overline{s}^{_b}_{^k} 
\eqo \min \{ t\ino [r^{_b}_{^k}, s^{_b}_{^k}]: \mathtt H_t\eqo 
\max_{s\in [r^{_b}_{^k}, s^{_b}_{^k}] }\mathtt H_s \}}$. We then set 
$$\smash{  \big(\mathcal T,d,\rho, < \big)\! :=\! \mathtt{Tree} (\mathtt H) \quad \textrm{and} \quad T_b \! :=\! \bigcup_{^{1\leq k\leq n}} \lgeo \rho, \mathtt{p}_{\mathtt H} (\overline{s}^{_b}_{^k} )\rgeo.}$$
Namely, $\smash{ T_b}$ is the $\smash{ (\overline{s}^{_b}_{^k})_{1\leq k\leq N_b}}$-marginal subtree of $\smash{ \mathcal T}$ as in Definition \ref{margtreedef}. 
We recall from (\ref{eraera}) Lemma \ref{unicelprop} that $\smash{ \mathtt{Tree} (Z) \equiv \bcE_b \mathcal T}$ and that 
$\smash{ \mathtt{Tree} (Z^b) \equiv T_b}$, where 
we recall from Definition \ref{growthzigzag} that $\smash{ Z^b\! :=\! \mathtt{G}\overline{\mathtt{ro}}_b(Z)}$, i.e., $\smash{ Z^b \eqo  \phi_{x^+_1 } \! \centerdot \psi_b \centerdot \phi_{-x^-_1} \! \centerdot \ldots \centerdot \phi_{x^+_{n} } \! \centerdot \psi_b  \centerdot \phi_{-x^-_{n}} }$, where $\smash{  \psi_b\eqo \phi_b \centerdot \phi_{-b}}$. The lifetime of $\smash{ Z^b}$ 
is $\smash{ \zeta_{Z^b}\! :=\! \zeta_Z+2nb}$. The sign-changes sequence of $\smash{ Z^b}$ is $\smash{ (z'_j)_{1\leq j \leq 2n}\! :=\! (z_j+jb)_{1\leq j\leq 2n}}$. We note that $\smash{ Z^b}$ has $\smash{ \mathtt n (Z^b)\eqo n}$ local maxima which are reached at the following set of times:  
\begin{equation}
\label{locmaxbisbis}
\smash{ \mathscr M (Z^b)\eqo \big\{ z'_{2k-1}\, ;\,  1\leqo k\leq n\big\} = \big\{ z_{2k-1}\! + (2k\! -\! 1) b\, ; \, 1\leq k\leqo n\big\}\; .}
\end{equation}
We then denote by $(\smash{ \mathcal H_t(\mathtt H))_{t\in [0, \zeta]})}$ and by $\smash{ (J_t(\mathtt H))_{t\in [0, \zeta]}}$ respectively the reflected process on $\smash{ T_b}$ and the local time at $\smash{ T_b}$, as defined in Lemma \ref{loctimlem}. Namely, a.s.~for all $t\ino [0, \zeta]$ 
\begin{equation}
\label{loctiremindbis}
\smash{  \cH_t(\mathtt H)\eqo d \big( \mathtt p_{\mathtt H} (t), \mathtt{proj}_{T_b} (\mathtt p_{\mathtt H} (t) ) \big) \quad \textrm{and} \quad  \mathtt H_t \eqo \cH_t(\mathtt H)+ Z^b_{\! J_t( \mathtt H)}.}
\end{equation}
We also recall from Lemma \ref{loctimlem} $(i)$ 
that $\smash{ \cH_0(\mathtt H)\eqo  \cH_\zeta(\mathtt H)\eqo 0}$ and we define the excursion intervals of 
$ \smash{ \cH_\cdot (\mathtt H)}$ above $0$ as the intervals $\smash{ (\alpha_j, \beta_j)}$, $\smash{ j\ino \mathcal J}$, which are the connected components of the open subset $\smash{ \{ t\ino (0, \zeta): \mathcal H_t (\mathtt H) \geko 0\}}$. We recall from Remark \ref{connTT} that $\smash{  \mathtt p_{\mathtt H} ( (\alpha_j, \beta_j))}$, $\smash{ j\ino \cJ_b}$, are the connected components of $\smash{ \cT \backslash T_b}$.  
We then define the corresponding excursions $\smash{ ((\mathscr  H_j (\cdot), \zeta^j))_{j\in \mathcal J}}$ as $\smash{ \zeta^j \eqo \beta_j \! -\! \alpha_j}$ and 
$\smash{ \mathscr H_j (t) \eqo \cH_{ (\alpha_j +t) \wedge \beta_j}(\mathtt H)}$, $\smash{ t \ino \bbR_+}$. In the next lemma we compute the law of the following random point process on $\smash{ [0, \zeta_{Z^b}] \! \times \! \bC}$: 
\begin{equation}
\label{needPiH}
\smash{  \Pi (\mathtt H) \eqo \big\{  \big( J_{\! \alpha_j} (\mathtt H) , (\mathscr  H_j (\cdot), \zeta^j)\big) \, ; \, j\ino \mathcal J\big\}. }
 \end{equation}
Recall for all $\smash{ y \ino (0, \infty)}$ that $\smash{ \bN (\mathrm d H\, ; \, \Gamma \leko y)}$ is Ito's measure restricted to the event $\smash{ \{ \Gamma \leko y\}}$. 
\begin{lemma}
\label{PittHcomput} We keep the same assumptions and the same notations as above. Then $\smash{ \Pi (\mathtt H) }$ is a Poisson point process on $\smash{ [0, \zeta_{Z^b}] \! \times \! \bC}$ with intensity 
\begin{equation}
\label{bmudef}
\smash{ \bmu (\mathrm d y,  \mathrm d H) = \un_{[0, \zeta_Z+ 2nb]} (y) \, \mathrm d y \, \bN \big( \mathrm d H \, ;\, \Gamma \leko 
b\wedge  \mathrm{dist} (y, \mathscr M (Z^b) \big) . }
\end{equation}
\end{lemma}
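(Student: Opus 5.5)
The plan is to reduce to the case of a single cell, $n\eqo 1$, and then glue the cells together. Under $\smash{P_{\mathbf c, b}}$ the process $\mathtt H$ is the concatenation $\smash{H^1\centerdot \ldots \centerdot H^n}$ of independent $b$-unicellular Brownian paths with laws $\smash{P_{x^+_k, x^-_k, b}}$. The subtree $T_b$, the reflected process $\cH(\mathtt H)$ and the local time $J(\mathtt H)$ split accordingly. By Lemma \ref{loctimlem} $(iii)$, on each block $\smash{[g^b_{k-1}, g^b_k]}$ the local time $J$ runs exactly through the interval $\smash{[z'_{2k-2}, z'_{2k}]}$ of $\smash{Z^b}$ (with $z'_0\eqo 0$). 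Moreover, $\cH$ restricted to that block is the reflected process of $H^k$ on its own branch $\smash{\lgeo \rho, \mathtt p(\overline s^b_k)\rgeo}$, because both endpoints of the block lie on $T_b$ (each is a $g^b$ time at a running minimum). Since the $H^k$ are independent and a superposition of independent Poisson point processes on disjoint $y$-intervals is again Poisson, it suffices to prove the claim for one cell, with $\smash{\mathscr M(Z^b)}$ reduced to the single point $\smash{x^++b}$.

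For one cell, write $\smash{H\eqo H^1\centerdot H^2\centerdot H^3}$ with $\smash{H^1\sim P^{\uparrow}_{x^+,b}}$, $\smash{H^2\sim \bN(\cdot\,|\,\Gamma\eqo b)}$ and $\smash{H^3\sim P^{\downarrow}_{x^-,b}}$; these are independent, so I treat the three pieces separately.

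\emph{The descent $H^3$.} Here $T_b\cap$(this part) is the segment along which $H$ goes down from $x^+$ to $x^+-x^-$, and $\cH$ is $B^+$ (Brownian motion minus its running infimum) run until $\bvaro_{-x^-}$. By (\ref{NItomeadefi}), the excursions of $B$ above its infimum, indexed by local time $-\underline B$, form a Poisson point process with intensity $\mathrm dy\,\bN$. Conditioning on the event $\smash{\{\max B^+\leko b\}}$, which has the product form $\{$no atom in $[0,x^-]\times\{\Gamma\geqo b\}\}$, simply restricts the intensity to $\smash{\mathrm dy\,\bN(\cdot\,;\Gamma\leko b)}$ on $y\in[0,x^-]$. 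The local time $J$ is then $z'_1+b+(-\underline B)$, i.e.\ the position $y\in[x^++2b, \zeta_{Z^b}]$, and there $\smash{\mathrm{dist}(y,\mathscr M)\geqo b}$, so $b\wedge \mathrm{dist}\eqo b$ as required.

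\emph{The rise $H^1$.} This follows by time reversal, since $\smash{P^{\uparrow}_{x,b}}$ is the reversal of $\smash{P^{\downarrow}_{x,b}}$. Reversing time maps excursions above the future infimum to time-reversed excursions above the past infimum, and $\bN$ is invariant under reversal. This gives intensity $\smash{\mathrm dy\,\bN(\cdot;\Gamma\leko b)}$ on $[0,x^+]$, where again the distance to $x^++b$ is at least $b$.

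\emph{The excursion $H^2$.} This is the piece I expect to be the main obstacle. Under $\bN(\cdot\,|\,\Gamma\eqo b)$ the branch of $T_b$ inside it is the segment from height $0$ to the maximum $b$, and $J$ parametrizes the $\psi_b$ part, i.e.\ $y\in[x^+,x^++2b]$. I would use Williams' decomposition (\ref{Williams}): $H^2$ is the concatenation of $\smash{R_{\cdot\wedge\varrho_b(R)}}$ with an independent reversed copy. The first half, decomposed into excursions above its future infimum $\smash{\tJJ^b}$, gives by (\ref{Bessdec1}) a Poisson point process with intensity $\smash{\un_{[0,b]}(u)\,\mathrm du\,\bN(\cdot\,;\Gamma\leko b-u)}$, where $u$ is the height on the branch. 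Here $\cH\eqo R-\tJJ^b$ is exactly the reflected process on the branch, and $J\eqo x^++u$. Since $b-u\eqo \mathrm{dist}(J,x^++b)\eqo b\wedge\mathrm{dist}$, this matches (\ref{bmudef}). The second half is handled by reversal: excursions of the reversed Bessel piece correspond to excursions with $J\eqo x^++2b-u$, again with cap $b-u$, and $\bN$ is reversal invariant.

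The delicate points are three. First, one must check that $\cH$ defined via $\mathtt{proj}_{T_b}$ coincides with $R-\tJJ^b$ (respectively $B^+$) on each piece; I would do this via the formula $\smash{H'_t\eqo\max_k m_H(t,s_k)}$ from the proof of Lemma \ref{loctimlem}. Second, one must identify $J$ with the height/local-time coordinate on each piece, which follows from (\ref{decompexpli}). Third, the junction points (excursions straddling $\zeta_1$, $\zeta_1+\zeta_2$, or the $g^b_k$) do not occur, since $\cH$ vanishes there. Finally, I would assemble the independent Poisson pieces into a single process with intensity $\bmu$.
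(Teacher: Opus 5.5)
Your proposal is correct and is essentially the paper's own argument. Like the paper, you split $\mathtt H$ into its independent $b$-unicellular blocks and glue the resulting Poisson point processes by deterministic shifts (the paper's ``concatenation of regular P.p.p.''). For a single cell you treat the rise and the descent through the excursions of $B$ above its infimum, with time reversal for the rise, and the middle excursion through Williams' decomposition (\ref{Williams}) together with (\ref{Bessdec1}). One remark: the intensity $\un_{[0,b]}(u)\,\mathrm du\,\bN(\cdot\,;\Gamma<b-u)$ you use for the Bessel piece is the right one, whereas (\ref{Bessdec1}) as printed reads $\Gamma<y$; the paper's conclusion $\bN(\cdot\,;\Gamma<|b-y|)$ for the middle piece agrees with yours.
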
 
\noi
\textbf{Proof.} We first explain how to concatenate independent Poisson point processes (P.p.p.~for short) 
on $\smash{ \bbR_+ \! \times \! \bC}$. 
More precisely, we say that a P.p.p.~$\smash{ \Pi}$ on $\smash{ \bbR_+ \! \times \! \bC}$ is \emph{regular} if there is $x\ino (0, \infty)$ 
such that a.s.~$\smash{ \# \Pi \cap ((x, \infty) \! \times \! \bC)\eqo 0}$ but $\smash{ \# \Pi \cap ((x\! -\! \varepsilon, \infty) \! \times \! \bC)
\eqo \infty}$ for all $\varepsilon \ino (0, x)$.Therefore $x$ is measurable with respect to $\smash{ \Pi}$: we call it the 
\emph{horizon} of the P.p.p.~$\smash{ \Pi}$. For all $z\ino (0, \infty)$, we first define $\smash{ \mathtt{Shift}_z \!: \! (y,(H,\zeta))\ino 
 \bbR_+\! \times \! \bC \! \mapsto \! (z+y, (H, \zeta)) 
 \ino  \bbR_+\! \times \! \bC }$. Let $\smash{ \Pi}$ and $\smash{ \Pi'}$ be two independent 
 regular P.p.p.~on $\smash{ \bbR_+ \! \times \! \bC}$ with respective horizons $x$ and $\smash{ x'}$ and with respective intensity 
 $\mu$ and $\smash{ \mu'}$. The concatenation 
 of $\smash{ \Pi}$ and $\smash{ \Pi'}$ is then given by $\smash{ \Pi\centerdot \Pi'\! =\! \Pi \cup \mathtt{Shift}_x (\Pi')}$, which is a regular
P.p.p.~whose horizon is $\smash{ x+x'}$ and whose intensity is $\smash{ \mu (\mathrm dy , \mathrm dH) +\mu' (x+\mathrm dy , \mathrm dH)}$.   

We first prove the lemma when $n\eqo 1$. Namely the law of $\smash{ \mathtt H}$ is $\smash{ P_{\! {x^+_1\! , \, x^-_1\! , \, b}}}$. By 
Definition (\ref{unicelBrodef}) there are three independent $\smash{ \bC}$-valued processes $\smash{ (H^{[i]}, \zeta^{[i]})}$, $\smash{ i\ino \{ 1,2,3\}}$ such that $\smash{ \mathtt H \eqo H^{[1]} \!  \centerdot   H^{[2]}  \!  \centerdot   H^{[3]}}$: the law of 
$\smash{ (H^{[1]}, \zeta^{[1]})}$ is $\smash{ P^{\uparrow}_{^{\! x^+_1\! , \, b}}}$, the law of $\smash{ (H^{[2]}, \zeta^{[2]})}$ is 
$\smash{ \bN ( \, \cdot \, |\,  \Gamma \eqo b)}$ and the law of $\smash{ (H^{[3]}, \zeta^{[3]})}$ is $\smash{ P^{_\downarrow}_{^{\! {x^-_1\! , \, b}}}}$.
For all $\smash{ i\ino \{ 1,2,3\}} $ and for all $\smash{ t\ino [0, \zeta^{[i]}]}$ we introduce the following notations. 
\begin{compactenum}
\item[$-$] We first set 
$\smash{ J_t (H^{[1]}) \eqo m_{H^{[1]} } (t, \zeta^{[1]})}$ and $\smash{ \mathcal H_t (H^{[1]}) \eqo H^{[1]}_t \! -\! J_t (H^{[1]}) }$. 
\item[$-$] We next set $\smash{ \overline{s} \eqo \min \big\{  s \ino [0, \zeta^{[2]}] \, :\, H^{[2]}_s \eqo 
\max_{r\in [0, \zeta^{[2]} ] } H^{[2]}_r \big\}}$. Then we define $\smash{ J_\cdot (H^{[2]})}$ as follows: 
if $\smash{ t\ino [0, \overline{s}]}$, we set $\smash{ J_t (H^{[2]}) \eqo m_{H^{[2]} } (t, \overline{s})}$ and 
if $\smash{ t\ino [\overline{s}, \zeta^{[2]}]}$, we set $\smash{ J_t (H^{[2]}) \eqo }$ $\smash{ 2b }$ $\! -\!$ $\smash{  m_{H^{[2]} } (t, \overline{s})}$. 
Then we set $\smash{ \mathcal H_t (H^{[2]})\eqo H^{[2]}_t\! -\! \psi_b \big(  J_t (H^{[2]})\big)}$, where we recall $\smash{ \psi_b}$ from (\ref{psibdef}). 
\item[$-$] Finally we set $\smash{ J_t (H^{[3]}) \eqo -m_{H^{[3]} } (0,t)}$ and $\smash{ \mathcal H_t (H^{[3]}) \eqo H^{[3]}_t \! -\! J_t (H^{[3]}) }$. 
\end{compactenum}  
We note that $\smash{ H^{[1]}_t\!  \eqo \mathcal H_t (H^{[1]})+ \phi_{x^+_1} \big(J_t (H^{[1]}))}$, $\smash{ H^{[2]}_t\! \eqo \mathcal H_t (H^{[2]})+ \psi_b \big(J_t (H^{[2]}))}$ and $\smash{ H^{[3]}_t\! \eqo \mathcal H_t (H^{[3]})} $ $+$ $\smash{  \phi_{-x^-_1} \big(J_t (H^{[3]}))}$. Then, we easily check  that 
$$\smash{ J  (\mathtt{H})\eqo  J(H^{[1]}) \! \centerdot J(H^{[2]})  \! \centerdot  J(H^{[3]})  \quad \textrm{and} \quad \mathcal H (\mathtt{H})\eqo \mathcal H (H^{[1]}) \! \centerdot  \mathcal H (H^{[2]})  \! \centerdot  \mathcal H (H^{[3]}) \; .}$$
For all $i\ino \{ 1,2,3\}$ we next denote by $\smash{ (\alpha^{[i]}_j, \beta^{[i]}_j)}$, $\smash{ j\ino \cJ^{[i]}}$, the connected components of the open subset $\smash{ \big\{ t\ino [0, \zeta^{[i]}]: \mathcal H_t(H^{[i]}) \geko 0 \big\}}$. For all $\smash{ j\ino \cJ^{[i]}}$ we denote the corresponding excursion by $\smash{ \mathscr H_j^{[i]} (\cdot )\eqo  \mathcal H_{(\alpha^{[i]}_j + \cdot)\wedge \beta_j^{[i]}}(H^{[i]})}$, whose lifetime is $\smash{ \zeta^{[i]}_j\eqo \beta_j^{[i]} \! -\! \alpha_j^{[i]}}$. We then set 
$$\smash{  \Pi (H^{[i]}) \eqo \Big\{ \big( J_{\!^{\alpha_j^{[i]}} } (H^{[i]}), (\mathscr H_{^j}^{_{[i]}} (\cdot ) , \zeta_{^j}^{_{[i]}})\big)\, ; \, j\ino \mathcal J^{[i]} \Big\}\; .}$$
Since $\smash{ \mathcal H_\cdot  (H^{[1]})}$ is distributed as $\smash{ B^+ (\, \cdot \! \wedge \! \bvaro_{-x^+_1})}$ under 
$\smash{ \bP ( \, \cdot \, | \,  \max \{ B^+_t  ; \, t\ino [0, \bvaro_{-x^+_1} ] \} \leko b)}$, we easily see that $\smash{ \Pi (H^{[1]})}$ is a P.p.p.~on $\smash{ \bbR_+ \! \times \! \bC}$ 
with intensity $\smash{ \mu_1 (\mathrm dy  ,  \mathrm dH)  \eqo}$  $\smash{ \un_{[0, x^+_1]} (y)}$  $\smash{ \mathrm d y}$  
$\smash{ \bN (\mathrm dH ;  \Gamma \leko b)}$. It is therefore regular with horizon $\smash{ x^+_1}$. 
Similarly by time-reversal $\smash{ \Pi (H^{[3]})}$ is a regular P.p.p.~on $\smash{ \bbR_+ \! \times \! \bC}$ 
with intensity $\smash{ \mu_3 (\mathrm dy , \mathrm dH)  \eqo}$  $\smash{ \un_{[0, x^-_1]} (y)}$  $\smash{ \mathrm d y}$  
$\smash{ \bN (\mathrm dH;  \Gamma \leko b)}$. Its horizon is $\smash{ x^-_1}$. By (\ref{Bessdec1}) and by Williams' Decomposition (\ref{Williams}), we see that $\smash{ \Pi (H^{[2]})}$ is a regular P.p.p.~on $\smash{ \bbR_+ \! \times \! \bC}$ 
with intensity $\smash{ \mu_2 (\mathrm dy , \mathrm dH)  \eqo}$  $\smash{ \un_{[0, 2b]} (y)}$  $\smash{ \mathrm d y}$  
$\smash{ \bN ( \mathrm dH ;  \Gamma \leko |b\! -\! y| )}$. Its horizon is $2b$. Then we check that 
$\smash{ \Pi (\mathtt H)}$ as defined in (\ref{needPiH}) is equal to $\smash{ \Pi (H^{[1]}) \centerdot  \Pi (H^{[2]})  \centerdot  \Pi (H^{[3]})}$. It is therefore a P.p.p.~on $\smash{ \bbR_+ \! \times \! \bC}$ 
with intensity   $\smash{ \un_{[0\, , \, x^+_1 \! + x^-_1+ 2b]} (y)}$  $\smash{ \mathrm d y}$  
$\smash{ \bN (\mathrm dH ;  \Gamma \leko b \wedge |b+ x^+_1\! -\! y| )}$, which is (\ref{bmudef}) when $n\eqo 1$. This proves the lemma in this case. 
 
The general case proceeds simply from the $n\eqo 1$ case. Indeed, by definition $\smash{ \mathtt H\eqo H^1\! \centerdot \ldots \centerdot H^n}$, where the processes $\smash{ (H^k\! , \zeta^k)_{1\leq k \leq n}}$ are independent and $\smash{ (H^k\! ,\zeta^k)}$ ihas law $\smash{ P_{^{\!\! x^+_k\! , \, x^-_k\! , \, b}}}$. We then see that a.s.~$\smash{ J(\mathtt H) \eqo J(H^1) \centerdot \ldots \centerdot J(H^n)}$, 
$\smash{ \mathcal H (\mathtt H)\eqo \mathcal H  (H^1) \centerdot \ldots \centerdot \mathcal H (H^n)}$ and $\smash{ \Pi(\mathtt H) \eqo \Pi (H^1) \centerdot \ldots \centerdot \Pi(H^n)}$. The $\smash{ n\eqo 1}$ case applies to each $\smash{ \Pi (H^k)}$ and a direct computation shows that $\smash{ \Pi(\mathtt H)}$ is a P.p.p.~with intensity $\smash{ \bmu}$ as defined in (\ref{bmudef}). This completes the proof of the lemma. \cqfd 
  
\smallskip

We next compute the law of the erasure sequence of $\smash{ (\cE_b \mathtt H)_{b\in (0, h] }}$. To this end, for all $x, b\ino (0, \infty)$ and for all $\smash{ n \ino \bbN^*}$ we introduce the following measure on $\smash{ [0, b] \! \times \! \bbR_+}$: 
\begin{equation}
\label{condlawexpliRev}  
\smash{ \, \overline{\! M}_{x,n,b} (\mathrm d r,  \mathrm dy) = \un_{[0, b]} (r) \, \un_{[0\, , \, x+ 2n(b-r)]} (y) \frac{b^{2n}}{r^{2n+2}} \, \exp \! \Big(\!\!  -\! (x+2nb) \big(\tfrac{1}{r}\! -\! \tfrac{1}{b} \big) \Big) \mathrm d r\,  \mathrm dy\, , }
 \end{equation}
which is a probability measure, as a consequence of the following lemma. 
\begin{lemma}
\label{eraseqcomput} Let $\smash{ h,b,x^+\!, \, x^-\!  \, \ino \bbR}$. We assume that $b\ino (0,h]$. Let 
$\smash{ \mathbf c\eqo ((x^+_k\! , \, x^-_k))_{1\leq k\leq n} \ino \mathtt{Cell}}$, a cell population such that $n\geqo 1$ and 
$\smash{ x^+_k x^-_k\geko 0}$ for all $\smash{ k\ino \{ 1, \ldots, n\}}$. We set $\smash{ (Z, \zeta_Z) \eqo (\mathtt{Z} (\mathbf c), \mathtt{m} (\mathbf{c}))}$, the zigzag function associated with $\smash{ \mathbf{c}}$ (see Definition \ref{concadef}). Let 
$\smash{ (\mathtt H, \zeta)}$ be a $\bC$-valued random process with law $\smash{ P_{\mathbf c, b}}$ 
(see Definition \ref{Pcellunicel}). We assume that $\smash{ Z}$ is $\smash{ (h\! -\! b)}$-unicellular with inital cell $\smash{ (x^+\!, \, x^-)}$, i.e., $\smash{ (Z, \zeta_Z) \ino \bC_{x^+\!, \, x^-\!, \, h-b}}$. Then the following holds true. 
\begin{compactenum}

\smallskip

\item[$(i)$] A.s.~$\smash{ \lim_{b'\downarrow 0}N_{b'} (\mathtt H) \eqo \infty}$, $\smash{ N_b(\mathtt H)\eqo n}$ and $\smash{ \mathtt H}$ is $h$-unicellular with inital cell $\smash{ (x^+\!, \, x^-)}$.

\smallskip

\item[$(ii)$] Let $\smash{ (\mathbf b_{n'}, \mathbf y_{n'})_{n'\in \bbN^*}}$ be the erasure sequence of $\smash{ (\cE_{b'} \mathtt H)_{b'\in (0, h] }}$ as in Definition \ref{eraseqdef}. By convenience we set $\smash{ \mathbf b_0\eqo 0}$. Then a.s.~$\smash{ \mathbf b_{n+1} \leko \mathbf b_{n} \leko b\leqo \mathbf b_{n-1}}$ and the law of $\smash{ (\mathbf b_{n}, \mathbf y_{n})}$ is equal to $\smash{ \, \overline{\! M}_{\zeta_Z, n,b} (\mathrm dr, \mathrm dy)}$, which is defined in (\ref{condlawexpliRev}).
\end{compactenum}
\end{lemma}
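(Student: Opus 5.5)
The plan is to use Lemma \ref{PittHcomput} together with the deterministic Proposition \ref{nextjump}, so that the erasure sequence can be read off the Poisson point process $\Pi(\mathtt H)$ of excursions of the reflected process on $T_b$.

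For $(i)$, write $\mathtt H=H^1\centerdot\ldots\centerdot H^n$ with independent $H^k$ of law $P_{x^+_k,x^-_k,b}$. Each $H^k$ is $b$-unicellular, so Proposition \ref{buniceldec} gives $N_b(\mathtt H)=n$ and $\cE_b\mathtt H=Z$. To get $h$-unicellularity with initial cell $(x^+,x^-)$, I would use the structure of $Z\in\bC_{x^+,x^-,h-b}$ and the fact that erasure commutes with the concatenation structure: the $b$-topping of each $H^k$ reinserts exactly height $b$ above the corresponding peak of $Z$. Hence the rise/excursion/descent decomposition of $Z$ at amplitude $h-b$ lifts to a decomposition of $\mathtt H$ at amplitude $h$; this is essentially the converse direction of Remark \ref{erafunrem}~$(d)$. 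Finally, $\lim_{b'\downarrow0}N_{b'}(\mathtt H)=\infty$ holds a.s. because Brownian excursions under $\bN(\cdot\,|\,\Gamma=b)$ have infinitely many small sub-excursions.

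For $(ii)$, apply Proposition \ref{nextjump} to $\mathtt H$ at level $b$. It gives $\mathbf b_n=\max_t\cH_t(\mathtt H)$, which is the largest height $\Gamma$ among the atoms of $\Pi(\mathtt H)$. Since the intensity in (\ref{bmudef}) restricts heights strictly below $b\wedge\mathrm{dist}$, we get $\mathbf b_n<b$ a.s. The maximal height is a.s. attained by a unique atom, so $m=1$ and $\mathbf b_{n+1}<\mathbf b_n$. The value $\mathbf b_{n-1}\ge b$ follows from (\ref{bsitua}). Moreover $\mathbf y_n=I(\mathbf b_n,J_{\alpha_{j_1}})$, where $J_{\alpha_{j_1}}$ is the location coordinate $y$ of that maximal atom.

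It remains to compute the law. By Lemma \ref{PittHcomput}, using $\bN(\Gamma\ge r)=1/r$, the joint density of (location $y$, height $r$) of the highest atom is
\[
\un_{\{r<b\wedge \mathrm{dist}(y,\mathscr M(Z^b))\}}\,\frac{1}{r^2}\exp\Big(-\int_0^{\zeta_Z+2nb}\Big(\frac1r-\frac1{b\wedge \mathrm{dist}(u,\mathscr M(Z^b))}\Big)_+\mathrm du\Big).
\]
The set $\{u:\mathrm{dist}(u,\mathscr M(Z^b))>r\}$ has Lebesgue measure $\zeta_Z+2n(b-r)$, which equals $I(r,\zeta_{Z^b})$. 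Near each of the $n$ maxima, $\mathrm{dist}$ is tent-shaped of slope one, so the integral equals
\[
(\zeta_Z+2nb)\Big(\frac1r-\frac1b\Big)-2n\log\frac{b}{r}.
\]
This produces the factor $b^{2n}r^{-2n}\exp(-(\zeta_Z+2nb)(1/r-1/b))$. Substituting $y'=I(r,y)$, which is a bijection from $\{y:\mathrm{dist}(y,\mathscr M(Z^b))>r\}$ onto $[0,\zeta_Z+2n(b-r)]$ with unit Jacobian, gives exactly $\overline M_{\zeta_Z,n,b}$.

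The main obstacle is the exponent computation near the tent peaks and checking that $\mathrm{dist}$ is taken relative to $\mathscr M(Z^b)$ with the cap at $b$. The hypothesis $x^\pm_k>0$ is needed here so that these tents do not overlap.
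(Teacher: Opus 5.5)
Your proposal is correct and follows the paper's proof. Part $(ii)$ uses the same ingredients in the same order: Proposition \ref{nextjump}, the Poisson description of Lemma \ref{PittHcomput}, Mecke's formula for the highest atom, the tent computation of $\bmu(A(r))$, and the change of variables $y'=I(r,y)$. For $(i)$ the paper makes your lifting heuristic precise through the semigroup property $\cE_h\mathtt H=\cE_{h-b}(\cE_b\mathtt H)=\cE_{h-b}Z=\phi_{x^+}\centerdot\phi_{-x^-}$ of Lemma \ref{semigrera}.

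One minor correction: the tents around $\mathscr M(Z^b)$ can never overlap, because consecutive maxima of $\grob_b(Z)$ are at distance at least $2b$ by construction. So the hypothesis $x^+_kx^-_k>0$ is not needed for the exponent computation; in the paper it serves instead to identify $N_b(\mathtt H)=n$ with the number $\mathtt n(Z)$ of local maxima of $Z$.
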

\noi
\textbf{Proof.} Elementary arguments on Brownian motion imply that 
a.s.~$\smash{ \lim_{b'\downarrow 0}N_{b'} (\mathtt H) \eqo \infty}$.  
We next observe that a.s.~$\smash{ \cE_b \mathtt H\eqo Z}$. Since $\smash{  x^+_k x^-_k\geko 0}$ 
for all $k\ino \{ 1, \ldots, n\}$, we see that the number $\smash{ \mathtt n (Z)\eqo n}$ of local maxima of 
$\smash{ Z}$ is here equal to $\smash{ N_b(\mathtt H)}$. By the semigroup property of erasure (see 
Lemma \ref{semigrera}), $\smash{ \cE_{h}\mathtt H\eqo \cE_{h-b} (\cE_b \mathtt H)\eqo  
\cE_{h-b}Z\eqo \phi_{x^+} \centerdot \phi_{-x^-}}$ since $\smash{ (Z, \zeta_Z) \ino \bC_{x^+\!, \, x^-\!, \, h-b}}$. 
Thus, a.s.~$\smash{ (\mathtt H, \zeta) \ino \bC_{x^+\!, \, x^-\!, \, h}}$, which completes the proof of $(i)$.

  Let us prove $(ii)$. We first note that the erasure sequence $\smash{ (\mathbf b_{n'}, \mathbf y_{n'})_{n'\in \bbN^*}}$ of 
$\smash{ (\cE_{b'} \mathtt H)_{b'\in (0, h] }}$ is well-defined by $(i)$. We keep the notations $\smash{ Z^b}$, 
$\smash{ \mathcal H_\cdot (\mathtt H)}$, $\smash{ J_\cdot (\mathtt H)}$ which satisfy (\ref{loctiremindbis}) and we recall from (\ref{needPiH}) the definition of $\smash{ \Pi (\mathtt H)}$. 
By Proposition \ref{nextjump}, 
$\smash{ \mathbf b_{n} \eqo \max_{t\in [0, \zeta]} \mathcal H_t(\mathtt H) \eqo 
\max_{j\in \mathcal J} \Gamma \big( \mathscr H_j \big)}$ 
(for all $\smash{ (H, \zeta)\ino \bC_{\!  f}}$ we recall that $\smash{ \Gamma (H)\eqo \max_{t\in \bbR_+} \! H_t }$). By Lemma \ref{PittHcomput}, and since the 'law' of $\smash{ \Gamma}$ under $\smash{ \bN}$ is diffuse, 
there exists a unique index $\smash{ \mathbf j_1 \ino \mathcal J}$ 
such that $\smash{ \mathbf b_{n} \eqo \Gamma \big( \mathscr H_{\mathbf j_1} \big)}$. The same argument entails that 
a.s.~$\smash{ \mathbf b_{n+1} \leko \mathbf b_{n} \leko b\leqo \mathbf b_{n-1}}$. We next recall the notations 
$\smash{ Z^b}$ and $\smash{ \mathscr M (Z^b)}$ from (\ref{locmaxbisbis}). For all $\smash{ z, r\ino \bbR_+}$ we set 
$\smash{ I(r,z) \eqo \int_0^z \un_{\{ \mathrm{dist} (y, \mathscr M (Z^b)) >b-r \}} \, \mathrm d y}$. 
By Proposition \ref{nextjump} again, $\smash{ \mathbf y_{n} \eqo I (\mathbf b_{n}, J_{\alpha_{\mathbf j_1}}(\mathtt H) )}$. 
For all $\smash{ r\ino (0, b]}$ we set 
$$\smash{ A(r)\eqo \big\{ (y, (H, \zeta)) \ino [0, \zeta_Z +2nb] \! \times \! \bC_f: \Gamma (H) \geko r \big\} }$$
and for all $\smash{ j\ino \mathcal J }$, we set $\smash{ \Pi_j \eqo \Pi (\mathtt H) \backslash  \big\{  \big( J_{\! \alpha_j} (\mathtt H) , (\mathscr  H_j (\cdot), \zeta^j)\big) \big\}}$. 
Then for all bounded and measurable functions $\smash{ f, g \! : \! \bbR_+\! \to \! \bbR}$, 
Mecke's formula for P.p.p.~and elementary computations imply the following. 
\begin{eqnarray*}
\smash[t]{ \bE \big[ f(\mathbf b_{n}) }\! \! \! \! \! \! \! & & \! \! \! \! \! \!  \! \smash[t]{ g(\mathbf y_{n})\big] = \bE \Big[\sum_{^{j\in \mathcal J}}  
f \big( \Gamma (\mathscr H_j) \big) g\big( I \big( \Gamma (\mathscr H_j), J_{\! \alpha_j}(\mathtt H) \big) \big) \un_{\{\# (\Pi_j \cap A(\Gamma (\mathscr H_j)))= 0 \}} \Big] }\\
\! \! \!  &=&\! \!  \! \smash[t]{  \int \!\! \int_{\bbR_+ \times \bC} \!\!\!\! \!\!\!\! \!\!\!\! \bmu (\mathrm d y , \mathrm d H )\,  f \big( \Gamma (H)\big) \, g\big( I(\Gamma (H), y )\big)
\mathrm{e}^{- \bmu (A(\Gamma (H)) ) }} \\
& =&  \int_0^{\zeta_Z +2nb} \!\!\!\! \! \! \!  \! \! \! \! \!  \mathrm d y \; \int_{\bC} \bN (\mathrm d H) \,  f \big( \Gamma (H)\big) \, g\big( I(\Gamma (H), y )\big) 
\mathrm{e}^{- \bmu (A(\Gamma (H)) ) } \un_{\{ \Gamma (H) \leq b\wedge \mathrm{dist} (y, \mathscr M(Z^b)) \}} 
 \end{eqnarray*}  
$$ \smash{\!\!\!\!\!\!\! \!\!\!\!\!\!\! \!\!\!\!\!\!\! \!\!\!\!\!\!\! = \int_0^b \frac{\mathrm d r}{r^2}\,  f(r) \, \mathrm{e}^{- \bmu (A(r) ) } \! \int_0^{\zeta_Z +2nb} \!\!\!\! \! \! \!  \! \! \! \! \!  
\mathrm d y \,  g( I(r, y )) \un_{\{ r\leq   \mathrm{dist} (y, \mathscr M(Z^b)) \}} }.$$
Since $ \smash{\partial_y I(r,y)\eqo \un_{\{ r\leq   \mathrm{dist} (y, \mathscr M(Z^b)) \} }}$ and since $ \smash{I(r, \zeta_Z+2nb)\eqo 
\zeta_Z+2n(b\! -\! r)}$, a simple change of variable entails the following. 
\begin{equation}
\label{Meckeconseq}
 \smash{ \bE \big[ f(\mathbf b_{n}) \, g(\mathbf y_{n})\big]=  \int_0^b \frac{\mathrm d r}{r^2}\,  f(r) \, \mathrm{e}^{- \bmu (A(r) ) } \int_0^{\zeta_Z +2n(b-r)} \!\!\!\! \! \! \!  \! \! \! \! \!  \! \! \! \!  \! \! \! \!  \! \! \! \!  
\mathrm d y \,  g( y).}
\end{equation}

Next, we need to compute $ \smash{\bmu (A(r))}$. To this end we recall that $ \smash{\overline{\mathbf{c}}\eqo (z_j)_{1\leq j\leq 2n}}$ stands for the cumulated version of $ \smash{\mathbf c}$ and we also recall $ \smash{\mathscr M(Z^b)}$ from (\ref{locmaxbisbis}). Then the following holds true. 
\begin{eqnarray*}
\smash{\!\!\!\!\!\!\!\! \bmu (A(r))} \!\!\! &=&\!\!\! \!\!\!\! \smash{\int_0^{ \zeta_Z +2nb} \!\!\!\! \! \! \!  \! \! \! \! \!   \! \! \!   
\mathrm d y \,  \bN\big(r\leko \Gamma \leko b\! \wedge \! \mathrm{dist} (y, \mathscr M(Z^b)) \big) }\\
\!\!\! &=& \!\! \! \smash[b]{ \bN (r\leko \Gamma \leko b )\int_0^{\zeta_Z +2nb} \!\!\!\! \! \! \!  \! \! \! \! \!   \! \! \!   \mathrm d y \,  
\un_{ \{  b < \mathrm{dist} (y, \mathscr M(Z^b)) \}} }\\
\!\!\! & & \!\! \!  \qquad \quad + \sum_{^{1\leq k\leq n}} 
   \int_{z_{2k-1}' \! -b}^{z_{2k-1}'\!  -r} \!\!\!\! \! \! \!  \! \! \! \! \!   \! \! \!  \mathrm d y \,
\bN\big(r\leko \Gamma \leko z_{2k-1}' \! -\! y \big) +
\int_{z_{2k-1}' +r}^{z_{2k-1}' +b} \!\!\!\! \! \! \!  \! \! \! \! \!   \! \! \!  \mathrm d y \,
\bN\big(r\leko \Gamma \leko y  \! -\! z_{2k-1}'  \big) \\
\!\!\! &=& \!\! \! \smash{ \; \, \bN (r\leko \Gamma \leko b )\, \zeta_Z \; + \; 2n \! \int_r^b\! \mathrm d z \, \bN (r\leko \Gamma \leko z )} \\
\!\!\! &=& \!\! \! \smash[b]{ \big( \tfrac{1}{r} \! -\! \tfrac{1}{b} \big) \, \zeta_Z+ 2n  \int_r^b\! \mathrm d z \, \big( \tfrac{1}{r} \! -\! \tfrac{1}{z} \big) =  (\zeta_Z+ 2nb)\big( \tfrac{1}{r} \! -\! \tfrac{1}{b} \big) + 2n \log \tfrac{r}{b},} 
\end{eqnarray*}  
which entails $(ii)$ by (\ref{Meckeconseq}).  \cqfd 

\smallskip

We now state and prove the main result of this article which implies a large part of Theorems \ref{cvzigzag} and  \ref{recovercell}. Recall that $\mathtt C(Z)$ is the minimal cell population associated with a zigzag function $Z$. 
\begin{theorem}
\label{mainth3} Let $ \smash{ x^+\! , \, x^-\ino \bbR_+}$ and $h\ino (0, \infty)$. Let $ \smash{ (\bH, \bzeta)}$ be 
distributed according to $ \smash{ P_{^{\! x^+\! , \, x^-\! , \, h}}}$, i.e., as a Brownian $h$-unicellular process with initial 
cell $ \smash{ \bcc(0)\eqo (x^+\! ,\, x^-)}$. For all $ \smash{ t\ino [0, 2h)}$
we set $ \smash{ \mathbf c (t) \eqo \mathtt C (\cE_{\! \frac{_1}{^2} (2h-t)} \bH)}$. Then, 
$ \smash{ (\mathbf c(t))_{t\in [0, 2h)}}$ is a D-R branching process with initial cell 
$ \smash{ \bcc(0)\eqo (x^+\! , x^-)}$ and time-horizon $ \smash{ 2h}$. In particular, a.s.~for all 
$ \smash{ t\ino [0, 2h)}$, $ \smash{ \mathtt n(\mathbf{c}(t))}$ is the number of local maxima of 
$ \smash{ \cE_{\! \frac{_1}{^2} (2h-t)} \bH}$ and $ \smash{ \mathtt m (\mathbf c(t))\eqo M_{\frac{_1}{^2} (2h-t)} (\bH)}$, 
which is the lifetime of 
$ \smash{ \cE_{\! \frac{_1}{^2} (2h-t)} \bH}$.  
\end{theorem}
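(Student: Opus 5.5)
The plan is to identify the division sequence of $(\mathbf c(t))_{t\in[0,2h)}$ with that of the D-R branching process via Lemma \ref{divseqidentif}.

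\textbf{Step 1: reduce to a growth-division evolution.} By Lemma \ref{eraseqcomput} $(i)$, applied with $b=h$, $\mathbf c=(x^+,x^-)$ and $Z=\phi_{x^+}\centerdot\phi_{-x^-}$, a.s.\ $\lim_{b\downarrow0}N_b(\bH)=\infty$. (When $x^+x^-=0$ we use the continuity in Remark \ref{contiunicelBrorem}, or argue directly under $P_{x^+,x^-,h}$.) Proposition \ref{celevovsera} then shows that $\mathbf c(t)=\mathtt C(\cE_{h-t/2}\bH)$ is a.s.\ the linear growth-division evolution with initial cell $(x^+,x^-)$ and division sequence $(\mathbf t_n,\mathbf y_n)=(2(h-\mathbf b_n),\mathbf y_n)$, where $(\mathbf b_n,\mathbf y_n)$ is the erasure sequence. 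It also yields the size and mass identities (\ref{sizemassconnec}), which give the last sentence of the theorem. Lemma \ref{eraseqcomput} $(ii)$ gives strict inequalities $\mathbf b_{n+1}<\mathbf b_n$ a.s., so the sequence is nonmultiple. Compatibility in (\ref{divseqexpli}) holds because $\mathbf y_n\le M_{\mathbf b_n}(\bH)=\mathtt m(\mathbf c(\mathbf t_n))$. It therefore remains to check hypotheses $(b)$ and $(c)$ of Lemma \ref{divseqidentif}, comparing with the D-R division sequence of Definition \ref{DRnuclei}, whose conditional laws are given by Lemma \ref{condlawDRcomput}.

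\textbf{Step 2: conditional law of the next division.} Fix $t\in[0,2h)$ and set $b=h-t/2$. Apply Theorem \ref{condi} with $a=h-b$: conditionally on $\cE_b\bH=Z$, $\bH$ has law $P_{\mathtt C(Z),b}$. Since $Q^b$ a.s.\ all cells of $\mathtt C(Z)$ have $x^+_kx^-_k>0$ (which I will check), Lemma \ref{eraseqcomput} $(ii)$ applies with $n=N_b(\bH)=\mathbf n_t$. It shows that, given $\cE_b\bH$, the pair $(\mathbf b_{n},\mathbf y_{n})$ has law $\overline M_{M_b,n,b}$. Under the map $r\mapsto s=2(h-r)$ we have $b-r=(s-t)/2$, $1/r-1/b=2(1/(2h-s)-1/(2h-t))$ and $x+2nb=x+n(2h-t)$. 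Consequently $\overline M_{\mathbf m_t,\mathbf n_t,b}$ is pushed forward exactly to $M_{\mathbf m_t,\mathbf n_t,t}$ from (\ref{condlawexpli}), including the Jacobian and the factor $(2h-t)^{2n}/(2h-s)^{2n+2}$.

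\textbf{Step 3: the main obstacle, the conditioning filtration.} Lemma \ref{divseqidentif} $(c)$ conditions on $(\mathbf t_k,\mathbf y_k)_{k\le n}$ together with the event $\{\mathbf n_t=n+1\}$, whereas Step 2 conditions on $\cE_b\bH$. I will use the fact that the whole past $(\cE_{b'}\bH)_{b'\in[b,h]}$ is a function of $\cE_b\bH$, by the semigroup property Lemma \ref{semigrera}. Hence $(\mathbf t_k,\mathbf y_k)_{k\le\mathbf n_t-1}$ and $\mathbf m_t$ are $\sigma(\cE_b\bH)$-measurable. Conversely, on $\{\mathbf n_t=n+1\}$ the quantity $\mathbf m_t$ is a deterministic function of $t$ and these past divisions, by (\ref{masssizerel}). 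Taking conditional expectations on $\{\mathbf n_t=n+1\}$ then gives a kernel $G$ that depends only on $(t,n,F)$ and is common to both processes, since Lemma \ref{condlawDRcomput} gives the same formula with $\mathscr F_t$. Hypothesis $(b)$ is the special case $t=0$, $n=1$. Lemma \ref{divseqidentif} then shows that the division sequences agree in law. Since both evolutions are deterministic functions of their initial cell and division sequence, $(\mathbf c(t))$ is a D-R branching process.
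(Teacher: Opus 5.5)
Your proposal is essentially the paper's proof: Proposition~\ref{celevovsera} makes $t\mapsto\mathtt C(\cE_{h-t/2}\bH)$ a growth-division evolution with division sequence $(2(h-\mathbf b_n),\mathbf y_n)_{n\geq 1}$, and Lemma~\ref{eraseqcomput} at $b=h$ gives the first division. Theorem~\ref{condi} with Lemma~\ref{eraseqcomput} and the change of variables $s=2(h-r)$ then gives the conditional law $M_{\mathbf m_t,\mathbf n_t,t}$ of the next division, and Lemmas~\ref{divseqidentif} and~\ref{condlawDRcomput} conclude.

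Your Step~3 usefully makes explicit, via Lemma~\ref{semigrera}, that $\mathscr F_t=\sigma(\cE_{h-t/2}\bH)$, which the paper uses tacitly. However, passing from this $\mathscr F_t$-conditional law to hypothesis $(c)$ of Lemma~\ref{divseqidentif} is not a plain tower-property step, because $\{\mathbf n_t=n+1\}$ is not $\sigma((\mathbf t_k,\mathbf y_k)_{k\leq n})$-measurable; the paper skips this step in the same way. One must also identify $\bP(\mathbf t_{n+1}>t\,|\,(\mathbf t_k,\mathbf y_k)_{k\leq n})$, which can be done using that $M_{y,n,t}$ conditioned on $\{s>t'\}$ equals $M_{y+n(t'-t),n,t'}$.
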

\noi
\textbf{Proof.} To simplify notation, for all $ \smash{ t\ino [0, 2h)}$, we set $ \smash{ \mathtt n_t \! :=\! \mathtt n(\mathbf c(t))}$ and 
$ \smash{ \mathtt m_t \! :=\! \mathtt m(\mathbf c(t))}$. 
We easily check that a.s.~$ \smash{ \lim_{b\downarrow 0} N_b (\bH)\eqo \infty}$. The erasure sequence of 
$ \smash{ (\cE_b \bH)_{b\in (0, h]}}$ is thus well-defined and we denote it by $ \smash{ (\mathbf b_{n}, \mathbf y_{n})_{n\in \bbN^*}}$ 
(see Definition \ref{eraseqdef}). 
By Proposition \ref{celevovsera}, almost surely $ \smash{ (\mathbf c(t))_{t\in [0, 2h)}}$ is a growth-division evolution 
which characterized by its initial cell $ \smash{ \bcc(0)\eqo (x^+\! , x^-)}$ and its division sequence is equal to 
$ \smash{ (\mathbf t_n, \mathbf y_n)_{n\in \bbN^*}}$ where $ \smash{ \mathbf t_n \eqo 2(h \! -\! \mathbf b_n) }$. 
Note that (\ref{sizemassconnec}) in Proposition \ref{celevovsera} entails the last statement of the theorem. 
To complete the proof, we show that $ \smash{ (\mathbf t_n, \mathbf y_n)_{n\in \bbN^*}}$ is distributed 
as the division sequence of a D-R branching process and to this end, we use Lemmas \ref{divseqidentif} 
and \ref{condlawDRcomput}: 
namely, we first have to prove that $ \smash{ M_{x^+ + x^-\! ,\,  1, 0}}$ is the law of $ \smash{ (\mathbf t_1, \mathbf y_1)}$, 
where $M_{y,n,t}$ is defined in (\ref{condlawexpli}), and next to prove that $ \smash{ M_{\mathtt m_t  , \mathtt n_t,t }}$ 
is the conditional law of 
$ \smash{ (\mathbf t_{\mathtt n_t }, \mathbf y_{\mathtt n_t })}$ given $ \smash{ \mathscr F_{\! t}}$, which is the sigma field generated by $ \smash{ (\mathbf c(s))_{s\in [0, t]}}$ or equivalently as the sigma field generated by 
$ \smash{ (\cE_{\! \frac{_1}{^2} (2h-s)} \bH )_{s\in [0, t]}}$. 

Since $ \smash{ (\bH, \bzeta)}$ has law $ \smash{ P_{\! x^+\! , \, x^-\! , \, h}}$, we can apply Lemma \ref{eraseqcomput} with $ \smash{ b\eqo h}$, $ \smash{ Z\eqo \phi_{x^+}\centerdot \phi_{-x^-} \eqo \cE_h \bH}$, $ \smash{ \zeta_Z\eqo \mathtt m_0\eqo x^+\! + x^-}$ and $ \smash{ n \eqo 1}$. It shows that 
$ \smash{ (\mathbf b_1, \mathbf y_1)}$ has law $ \smash{ \, \overline{\! M}_{ x^+ + x^-\! ,\,  1, h} }$ as defined in (\ref{condlawexpliRev}). A simple change of variable then entails that $ \smash{ (\mathbf t_1,  \mathbf y_1)\eqo (2(h \! -\! \mathbf b_1),  \mathbf y_1)}$ has law 
$ \smash{ M_{x^++ x^-\! ,\,  1, 0}}\, $. 

  We now compute the conditonal law of $ \smash{ (\mathbf t_{\mathtt n_t }, \mathbf y_{\mathtt n_t })}$ given $ \smash{ \mathscr F_{\! t}}$. 
By Theorem \ref{condi}, a regular version of the conditional law of $ \smash{ (\bH, \bzeta)}$ given $ \smash{ \mathscr F_{\! t}}$ is $ \smash{ P_{\! \mathbf c (t), \frac{_1}{^2}(2h-t)}}$. We then apply Lemma \ref{eraseqcomput} to $ \smash{ b\eqo  \frac{_1}{^2}(2h-t)}$, $ \smash{ (Z, \zeta_Z)\eqo ( \cE_{\! \frac{_1}{^2} (2h-t)} \bH, \mathtt m_t)}$: thus $ \smash{ \mathbf b_{\mathtt n_t} \leko b\eqo \frac{_1}{^2} (2h-t) \leqo  b_{\mathtt n_t-1}}$ and the conditional law of 
$ \smash{ (\mathbf b_{\mathtt n_t }, \mathbf y_{\mathtt n_t })}$ given $ \smash{ \mathscr F_{\! t}}$ is thus $ \smash{ \, \overline{\! M}_{\mathtt m_t, \mathtt n_t, \frac{_1}{^2} (2h-t)}}$. A simple change of variable then entails that $ \smash{ (\mathbf t_{\mathtt n_t},  \mathbf y_{\mathtt n_t})\eqo (2(h \! -\! \mathbf b_{\mathtt n_t}),  \mathbf y_{\mathtt n_t})}$ 
has law $ \smash{ M_{\mathtt m_t, \mathtt n_t, t}}$. By Lemma \ref{divseqidentif} and Lemma \ref{condlawDRcomput}, 
$ \smash{ (\mathbf t_n, \mathbf y_n)_{n\in \bbN^*}}$ is distributed as the division sequence of a D-R branching process with time horizon $2h$ and initial cell 
$ \smash{ \bcc(0)\eqo (x^+\! ,\, x^-)}$, which completes the proof of the theorem. \cqfd 

\subsection{Proofs of Theorems \ref{cvzigzag} and  \ref{recovercell}}
\label{LLNDRsubsec}
Let $\smash{ x^+\! , \, x^-\ino \bbR_+}$ and let $h\ino (0, \infty)$. Let $\smash{(\bH , \bzeta)}$ be distributed according to $\smash{P_{\! x^+\! , \, x^- \! , \, h}}$. 
By Theorem \ref{mainth3}, $\smash{t\ino [0, 2h) \! \mapsto \! \mathbf c (t) \eqo \mathtt C (\cE_{\frac{1}{2} (2h -t)} \bH)}$ is a D-R branching process with initial cell $\smash{(x^+\! , \, x^-)}$ (and time-horizon $2h$). 
To prove Theorems \ref{cvzigzag} and \ref{recovercell}, we thus only need to prove that 
\begin{equation}
\label{cvproba1}
\smash{ \Big(  \big( \cE_{b} \bH \big( \tfrac{t}{b}) \big)_{t\in \bbR_+} \, , \,  bM_b (\bH)  \Big) \xrightarrow[b\downarrow 0]{\; }
\big( \bH, \bzeta \big) \quad \textrm{in $\bP$-probability on $\bC$}}
\end{equation}
We first claim that (\ref{cvproba1}) follows from the similar convergence for 
the Brownian motion $B$:  
\begin{equation}
\label{cvproba2}
\smash{ \big( \cE_{b} B \big( \tfrac{t}{b}) \big)_{\! t\in \bbR_+}  \xrightarrow[b\downarrow 0]{\; } ( B_t )_{t\in \bbR_+} \quad \textrm{in $\bP$-probability on $(\bC^0 (\bbR_+, \bbR), \delta)$.}}
\end{equation}
\noi
\emph{Proof of (\ref{cvproba2}) $\! \Rightarrow \! $ (\ref{cvproba1})}. 
To simplify we denote by $\smash{B_{b} (\cdot)}$ the 
process in the left hand side of (\ref{cvproba2}) and we recall the notations 
$\smash{B^+_b (t) \eqo B_b(t)\! -\! \min_{s\in [0 ,t]} B_b(s)}$ and $\smash{B^+ (t) \eqo B(t)\! -\! \min_{s\in [0 ,t]} B(s)}$. Let $\smash{x\ino (0, \infty)}$. Recall from (\ref{hittim}) the notation $\smash{\varrho_{-x} (H)}$, for all $\smash{H \ino \bC^0 (\bbR_+, \bbR)}$ and that 
$\smash{\bvaro_{-x}\eqo \varrho_{-x} (B)}$. 
Since $\smash{B_\cdot}$ a.s.~satisfies (\ref{hittcontcrit}), Lemma \ref{regprophit} $(ii)$ applies and by (\ref{cvproba2}), we get 
\begin{equation}
\label{cvprobhitting}
\smash{\lim_{b\downarrow 0}\big( B_{b} (\cdot \wedge \varrho_{-x} (B_b)),  \varrho_{-x} (B_b) \big)\eqo (B_{\cdot \wedge \bvaro_{-x} }, \bvaro_{-x}) \; \textrm{ in $\bP$-probability on $\bC$.}}
\end{equation} 
Since the law of $\smash{\max \{ B^+_t ;\,  t\ino [0, \bvaro_{-x}]\}}$ is diffuse, a similar convergence holds in probability with respect to $\smash{\bP (\, \cdot \, | \, \max \{ B^+_t ;\,  t\ino [0, \bvaro_{-x}]\}  \leko h)}$-probability. By definition of $\smash{P^{\downarrow}_{\! x,h}}$ and $\smash{P^{\uparrow}_{\! x,h}}$ and by continuity of time-reversal, it entails: 
\begin{equation}
\label{risedesproba}
\smash{ \Big(  \big( \cE_{b} H \big( \tfrac{t}{b}) \big)_{t\in \bbR_+} \!  ,   bM_b (H)  \Big) \underset{b\to 0^+}{  -\!\!\! -\!\!\!  \longrightarrow} \big( H, \zeta \big) \; \textrm{in $P^{\downarrow}_{\! x,h} (\mathrm dH)$- and $P^{\uparrow}_{\! x,h} (\mathrm dH)$-probability on $\bC$.}}
\end{equation}

We next want to derive from (\ref{cvproba2}) a similar result firstunder $\smash{ \bN (\mathrm dH \, | \, \Gamma \geqo h)}$ and next under $\smash{ \bN (\mathrm dH \, | \, \Gamma \eqo h)}$. 
For all $b\ino (0, h)$, we set $\smash{ \mathtt t (b)\eqo \varrho_{h-b} (B^+_b)}$ and we also set $\smash{ \mathtt t(0)\eqo \bvaro_h}$. We easily see that a.s.~$-B$ satisfies the continuity assumption (\ref{hittcontcrit}): Lemma \ref{regprophit} $(ii)$ implies that a.s.~$\smash{ \lim_{b\downarrow  0} \mathtt t (b)\eqo \mathtt t(0)}$.    
We next introduce the following times: 
$\smash{ \mathtt g(b)\eqo \max \{ t\ino [0, \mathtt t(b)] : B^+_b(t) \eqo 0\}}$ and $\smash{ \mathtt d(b)\eqo \min \{ t\ino [\mathtt t(b), \infty) : B^+_b(t) \eqo 0\}}$, with the convention that $\smash{ B_0 (\cdot) \eqo B}$. We 
also introduce the event $\smash{ A\! :=\! \{\mathtt t (b)\! \to \!  \mathtt t(0) \, ; \,  \mathtt g(0) \leko \mathtt t(0) \leko \mathtt d(0) \, ; \,  \forall \varepsilon \ino (0,1),\,  \underline{B}_{\mathtt d(0)+ \varepsilon } \leko   \underline{B}_{\mathtt t(0)} \leko  \underline{B}_{\mathtt g(0) -\varepsilon} \}}$. Observe that $\bP (A)\eqo 1$. On $A$ we first prove that 
$\smash{ \lim_{b\downarrow 0} (\mathtt g(b), \mathtt d(b))\eqo (\mathtt g(0), \mathtt d(0))}$. 

\emph{Indeed}, 
the $\delta$-continuity of $\smash{ H\mapsto \underline{H}}$ and (\ref{cvproba2}) imply for all $\varepsilon\ino (0, 1)$ 
that there is $\smash{ b_\varepsilon\ino (0, \infty)}$ such that for all $\smash{ b\ino (0, b_\varepsilon)}$, 
$\smash{ \underline{B}_b (\mathtt d(0)+ \varepsilon) \leko   \underline{B}_b (\mathtt t(b))  
\leko  \underline{B}_b (\mathtt g(0) \! -\! \varepsilon)}$. It entails $\smash{ \mathtt g(0)
\! -\! \varepsilon \leko \mathtt g(b)} $ $ \leko$ $ \smash{ \mathtt d(b)} $ $ \leko$ $\smash{  \mathtt d (0) } $ $+$ $\smash{  \varepsilon}$. Let 
$\smash{ \varepsilon_0 \ino (0, 1)}$ be such that $\smash{ \mathtt g(0) + \varepsilon_0 \leko \mathtt t(0) \leko \mathtt d(0)
\! -\! \varepsilon_0}$. We argue on the event $A$ and we fix $\smash{ \varepsilon \ino (0, \varepsilon_0)}$. 
Then $\smash{ \min \{ B^+_t \, ; \, t\ino [\mathtt g(0) + \varepsilon , \mathtt d(0) \! -\! \varepsilon] \}\geko 0}$. 
By the $\delta$-continuity of $\smash{ H\! \mapsto \! H^+}$ and (\ref{cvproba2}), 
there is $\smash{ b'_\varepsilon\ino (0, \infty)}$ such that 
for all $\smash{ b\ino (0, b_\varepsilon)}$, $\smash{ \mathtt t(b) \ino  [\mathtt g(0) + \varepsilon , \mathtt d(0) \! -\! \varepsilon] }$ 
and $\smash{ \min \{ B^+_b(t) \, ; \, t\ino [\mathtt g(0) + \varepsilon , \mathtt d(0) \! -\! \varepsilon] \}\geko 0}$, 
which implies $\smash{ \mathtt g(b) \leko \mathtt g(0) + \varepsilon}$ and $\smash{ \mathtt d(0) \! -\! \varepsilon \leko \mathtt d(b)}$. 
This proves that a.s.~$\smash{ \lim_{b\downarrow 0} (\mathtt g(b), \mathtt d(b))\eqo (\mathtt g(0), \mathtt d(0))}$. 

  To simplify the notation we set $\smash{ \, \mathtt H (\cdot)  \eqo B^+ \big( \mathtt g(0)+ \cdot )\wedge \mathtt d(0) \big)}$: 
$\smash{ \mathtt H (\cdot)}$ is the first excursion of $\smash{ B^+}$ above $0$ whose height is $\smash{ \geqo h}$, its law is equal to $\smash{ \bN ( \, \cdot \, | \, \Gamma \geqo h)}$ and  
$\smash{ \cE_b \mathtt H (\cdot/b)\eqo  B^+_b (\mathtt g(b)+ \cdot )\wedge \mathtt d(b) )}$. Thus (\ref{cvproba2}) 
and the previous arguments imply that 
\begin{equation}
\label{cvprobexcugeqoh}
\smash{ \lim_{b\to 0}\big( \cE_b \mathtt H (\cdot/b), \mathtt g(b), \mathtt d(b)\big)\eqo \big(  \mathtt H (\cdot), \mathtt g(0), \mathtt d(0)\big) \; \, \textrm{in $\bP$-probability on $\bC \! \times\!  \bbR_+^2$.}}
\end{equation}
For all $b\ino [0, h)$, we define 
$\smash{ \overline{s}(b)\eqo \min \big\{ t\ino [\mathtt g(b), \mathtt d(b)]\! : \! B^+_b(t)\eqo \max \{ B^+_b(s)  ;  t\ino [\mathtt g(b) , \mathtt d(b)] \} \big\}}$, $\smash{ \gamma (b) \eqo }$ $\smash{\max \big\{ t\ino [\mathtt g(b), \overline{s}(b)]: B^+_b(t)\eqo B^+_b(\overline{s}(b))\! -\! h+b \}}$ and $\smash{ \delta (b)\eqo \min \big\{ t\ino [ \overline{s}(b),\mathtt d(b)]: B^+_b(t)} $ 
$\eqo$  $\smash{B^+_b(\overline{s}(b))\! -\! h+b \}}$ (with the conventions that $B_0 (\cdot)\! :=\!  B_\cdot$ and $B^+_0 (\cdot) \! :=\!  B^+_\cdot$). By Definition 
of topping we obtain $\smash{ \mathtt{Top}_{h-b} (\cE_b \mathtt H ) (\cdot/b)\eqo B_b ((\gamma (b) + \cdot )\wedge \delta (b))\! -\! B_b (\gamma (b))}$. By Remark \ref{eravstop}, we get $\smash{ \mathtt{Top}_{h-b} (\cE_b \mathtt H )\eqo \cE_b (\mathtt{Top}_{h} (\mathtt H))}$. 
Now we easily see 
that $\smash{ \mathtt H(\cdot)}$ a.s.~satisfies the continuity assumption (\ref{critcontitop}). Then Lemma \ref{measuTop} $(ii)$ 
applies and combined with (\ref{cvprobexcugeqoh}) it shows that 
\begin{equation}
\label{cvprobexcueqoh}
\smash{ \lim_{b\downarrow 0}\big(  \cE_b (\mathtt{Top}_{h} (\mathtt H))(\cdot/b), \gamma (b),\delta(b)\big)\eqo \big( \mathtt{Top}_{h} (\mathtt H), \gamma (0), \delta(0)\big) \; \, \textrm{in $\bP$-probability on $\bC\! \times\!  \bbR_+^2$.}}
\end{equation}
By Lemma \ref{Topmore} $(iii)$, the law of $\smash{ \mathtt{Top}_{h} (\mathtt H)}$ is equal to $\smash{ \bN ( \, \cdot \, | \, \Gamma \eqo h)}$. Therefore, we have proved that 
\begin{equation}
\label{condiexcproba}
\smash{  \Big(  \big( \cE_{b} H \big( \tfrac{t}{b}) \big)_{t\in \bbR_+} \!  ,   bM_b (H)  \Big) \underset{b\to 0^+}{  -\!\!\! -\!\!\!  \longrightarrow} \big( H, \zeta \big) \; \, \textrm{in $\bN (\mathrm dH \, | \, \Gamma\eqo h)$-probability on $\bC$.}}
\end{equation}
By definition, $(\bH, \bzeta)$ is the concatenation of three independent $\bC_{\! f}$-valued processes whose respective laws are $\smash{ P^{_\uparrow}_{\!\! ^{^{x^+\! , \, h}}}}$, $\smash{ \bN (\, \cdot \, | \, \Gamma\eqo h)}$ and $\smash{ P^{_\downarrow}_{\! \! ^{^{x^-\! , \, h}}}}$. Since concatenation is continuous on $\bC_{\! f}$, (\ref{risedesproba}) and (\ref{condiexcproba}) prove that (\ref{cvproba2}) implies (\ref{cvproba1}). \cq
  
\smallskip

\noi  
\emph{Proof of (\ref{cvproba2}).} We recall $\smash{ \mathtt x^+_k(b)}$ and $\smash{ \mathtt x^-_k(b)}$ from Notation \ref{notaeraBM}. 
Then the $b$-erased Brownian motion $B$ is the zigzag function given by 
$\smash{ \cE_b B (\cdot)  \eqo \phi_{^{-\mathtt x^{-}_{0} (b)}} \centerdot (\phi_{^{\mathtt x^{+}_{1} (b)}}\centerdot \phi_{^{-\mathtt x^-_1 (b)}}) \centerdot \ldots \centerdot (\phi_{^{\mathtt x^+_k (b)}}\centerdot } $ $\smash{ \phi_{{-\mathtt x^-_k (b)}}) \centerdot \ldots \, }$ which makes sense even if its lifetime is infinite. We recall that the $\smash{ \mathtt x^{_\pm}_{^k}(b)}$ are i.i.d.~$\smash{ \mathtt{expo}(\frac{_1}{^b})}$ r.v.s. Thus, $\smash{ \cE_b B (\cdot)}$ has the same law as $\smash{ Y_{b, \cdot}}$ (see Theorem \ref{Brodecunicelth} and Proposition \ref{BMzigzag}). Its starts with a descent and its sign-changes sequence 
$\smash{ (\mathtt{z}_j)_{j\in \bbN}}$ is given by 
$\smash{ \mathtt{z}_0\eqo \mathtt{x}^-_0 (b)}$, $\smash{ \mathtt{z}_{2k+1}\eqo  \mathtt{z}_{2k}+ \mathtt x^+_{k+1} (b)}$ 
and $\smash{ \mathtt{z}_{2k+2}\eqo  \mathtt{z}_{2k+1}+ \mathtt x^-_{k+1} (b)}$ for all $\smash{ k\in \bbN}$. The sequence of positive times at which $\smash{ \cE_b B_\cdot}$ reaches a local maximum is $\smash{ (\mathtt z_{2k+1})_{k\in \bbN}}$. The construction given in  (\ref{loctiremind}) extends to $\smash{ B_\cdot}$: there exist a reflected process $\smash{ (\mathcal H_t )_{t\in \bbR_+}}$ and a local time $\smash{ (\mathtt J_t)_{t\in \bbR_+}}$ at the real tree spanned by the times $\smash{ \overline{s}^{_b}_{^k} \eqo \min \big\{ t\ino [\mathtt r^{_b}_{^k} ,\mathtt s^{_b}_{^k}]
: B_t\eqo \max \{ B_s; s\ino  [\mathtt r^{_b}_{^k} ,\mathtt s^{_b}_{^k}]\} \big\}}$, $\smash{ k\ino \bbN^*}$, which satisfy 
$\smash{ B_t \eqo \mathcal H_t + Z^b \big( \mathtt J_t\big)}$, for all $\smash{ t\ino \bbR_+}$. 
Here, $Z^b$ is derived from $\smash{ \cE_bB_\cdot}$ by $b$-growth, i.e., at each positive maximum time 
$\smash{ \mathtt z_{2k+1}}$ we inserts a $\wedge$-shaped zigzag function $\smash{ \psi_b}$ whose lifetimes is equal to $2b$. Namely, 
$\smash{ Z^b \eqo \phi_{-\mathtt x^-_0 (b)} \centerdot } $ $\smash{ (\phi_{\mathtt x^+_1 (b)}\centerdot \psi_b \centerdot  \phi_{-\mathtt x^-_1 (b)} )\centerdot \ldots \centerdot (\phi_{\mathtt x^+_k (b)}\centerdot \psi_b \centerdot  \phi_{-\mathtt x^-_k (b)}) \centerdot \ldots \, }$.  

We next introduce the right continuous inverse $\smash{ (\mathtt J^{-1}_y)_{y\in \bbR_+}}$ of $\smash{ \mathtt J_\cdot }$, which is defined for all $\smash{ y\ino \bbR_+}$ by $\smash{ \mathtt J^{-1}_y\eqo \inf \{ t\ino \bbR_+: \mathtt J_t \geko y\}}$. We see that $\smash{ \mathcal H (\mathtt J^{-1}_y)\eqo 0}$. Thus, $\smash{ B_{\mathtt J^{-1}_y} \eqo Z^b ( y)}$, for 
all $\smash{y\ino \bbR_+}$.

We next specify $y$ in order to get $\cE_b B$ instead of $Z^b$ in the previous equality. 
To this end, we fix $z\ino \bbR_+$ and we denote by $\bnu(b,z)$ the number of positive times $\leqo z$ at which $\cE_b B$ reaches a local maximum: namely $\bnu(b,z)\eqo \min \{ k\ino \bbN : \mathtt{z}_{2k+1} \geko z\}$. Thus, $z$ in the parametrization of $\cE_b B_\cdot$ corresponds to $y\eqo z+2b \bnu(b, z)$ in the parametrization of $Z^b$, i.e., $Z^b \big( z+2b\bnu (b,z)\big)\eqo \cE_bB(z)$ for all $z\ino \bbR_+$. We set $K(b,z) \eqo \mathtt J^{-1}_{z+2b\bnu(b,z)}$. Since $\smash{ B_{\mathtt J^{-1}_y} \eqo Z^b ( y)}$, for 
all $\smash{y\ino \bbR_+}$, we get  
\begin{equation}
\label{BMtrifft2}
\smash{ \forall z\ino \bbR_+, \qquad B_{K(b,z)} \eqo \cE_b B ( z) \, .}
\end{equation}
We then prove for all $\smash{ t\ino \bbR_+}$ that
\begin{equation}
\label{BMcvprob1}
\smash{ \lim_{b\downarrow 0} K(b, t/b) = t \; \textrm{in $\bP$-probability.}}
\end{equation}
\emph{Indeed}, since $\smash{ \{ \mathtt z_j; j\ino \bbN\}}$ is a P.p.p.~on $\smash{ \bbR_+}$ whose intesity is $\smash{ b^{-1} \mathrm dt}$, standard arguments show that for all $\smash{ t\ino \bbR_+}$
\begin{equation}
\label{BMapproxnu}
\smash{ \lim_{b\to 0^+} 2b^2 \bnu(b, t/b) = t \; \textrm{in $\bP$-probability.}}
\end{equation}
We next recall $\smash{ \mathtt g^{_b}_{^k}}$, $\smash{ \mathtt r^{_b}_{^k}}$ and $\smash{ \mathtt s^{_b}_{^k}}$ from Notation \ref{notaeraBM}, with the convention that $\smash{ \mathtt s^{_b}_{^0}\eqo 0}$. For all $\smash{ k\ino \bbN^*}$, we set 
$\smash{ \zeta^{-}_{k-1}\! :=\!  \mathtt g^{_b}_{^{k-1}} \! -\! \mathtt s^{_b}_{^{k-1}}}$,  
$\smash{ \zeta^{+}_{k}\! :=\!  \mathtt r^{_b}_{^{k}} \! -\!  \mathtt g^{_b}_{^{k-1}} }$ and 
$\smash{ \zeta^{\mathrm{exc}}_k\! :=\!    \mathtt s^{_b}_{^{k}} \! -\!   \mathtt r^{_b}_{^{k}}} $. 
We also set $\smash{ V(b,0)\! :=\! 0}$ and $\smash{ V(b,k) \! :=\!  }$ $\smash{ V(b,k\! -\! 1)+ \zeta^{-}_{k-1}+ \zeta^{+}_{k}+ \zeta^{\mathrm{exc}}_k}$. We observe that 
\begin{equation}
\label{BMapproxiK}
\smash{ \forall z\ino \bbR_+, \quad V\big(b,\bnu(b,z)\big) \leq K(b,z)  < V\big(b,\bnu(b,z)+1\big) \; .}
\end{equation}
Then Theorem \ref{Brodecunicelth} and Lemma \ref{durationLapla} imply the following for all $\smash{ k\ino \bbN^*}$ and all $\smash{ \lambda \ino (0, \infty)}$, 
\begin{equation}
\label{BMapproxiKK}
\smash{ \bE \Big[ \mathtt{e}^{-\lambda V(b,k) } \Big] \eqo \bE \Big[ \mathtt{e}^{-\lambda V(b,1)} \Big]^k\eqo \Big( \mathrm{cosh} \big(b\sqrt{2\lambda \, }\big) \Big)^{-2k}= \Big(1+\lambda b^2 + \mathcal O_\lambda (b^4) \Big)^{-2k}. }
\end{equation}
Since $\smash{k\mapsto V(b,k)}$ increases, (\ref{BMapproxnu}) and (\ref{BMapproxiKK}) imply that 
$\smash{\lim_{b\downarrow 0} \bE [ \mathrm e^{-\lambda V(b,\bnu(b, t/b))} \big] \eqo \mathrm e^{-\lambda t}}$, which entails (\ref{BMcvprob1}) by (\ref{BMapproxiK}). \cq 

\smallskip

To  complete the proof of (\ref{cvproba2}) we use the following result. 
\begin{lemma}
\label{Dinicvproba} Let $\smash{((R^n_t)_{t\in \bbR_+})_{n\in \bbN}}$ be a sequence of $\smash{\bbR_+}$-valued nondecreasing processes. For all $t\ino \bbR_+$ we assume that $\smash{\lim_{n\to \infty} R^n_t\eqo t}$ in probability. Then, 
$\smash{\lim_{n\to \infty} \max_{t\in [0, t_0]} |R^n_t \! -\! t|\eqo 0 }$ in probability for all $\smash{t_0\ino\bbR_+}$.
\end{lemma}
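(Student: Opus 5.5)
This is the probabilistic version of Dini's theorem. The plan is to reduce uniform convergence on $[0,t_0]$ to convergence at finitely many grid points, using monotonicity of each $R^n$ together with continuity and monotonicity of the limit $t\mapsto t$.

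Fix $t_0\in\bbR_+$ and $\varepsilon\in(0,\infty)$. Choose an integer $p\geq 1$ with $t_0/p<\varepsilon$, and set $t_i=i t_0/p$ for $0\leq i\leq p$. Let $t\in[t_{i-1},t_i]$ for some $1\leq i\leq p$. Since $R^n$ is nondecreasing, $R^n_{t_{i-1}}\leq R^n_t\leq R^n_{t_i}$, and therefore
\[
R^n_t-t\leq R^n_{t_i}-t_{i-1}\leq |R^n_{t_i}-t_i|+\tfrac{t_0}{p}
\quad\textrm{and}\quad
t-R^n_t\leq t_i-R^n_{t_{i-1}}\leq |R^n_{t_{i-1}}-t_{i-1}|+\tfrac{t_0}{p}.
\]
Taking the maximum over $t\in[0,t_0]$ gives the deterministic bound
\[
\max_{t\in[0,t_0]}|R^n_t-t|\leq \max_{0\leq i\leq p}|R^n_{t_i}-t_i|+\varepsilon .
\]

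It follows that
\[
\bP\Big(\max_{t\in[0,t_0]}|R^n_t-t|>2\varepsilon\Big)\leq \sum_{0\leq i\leq p}\bP\big(|R^n_{t_i}-t_i|>\varepsilon\big).
\]
This is a finite sum with $p+1$ terms, and each term tends to $0$ as $n\to\infty$ by the pointwise convergence in probability assumed in the statement. Since $\varepsilon$ is arbitrary, the maximum converges to $0$ in probability.

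There is no real obstacle here. The one point to get right is the sandwich bound, which uses monotonicity of $R^n$ and of the limit in both directions; after that, the only probabilistic input is a union bound over finitely many grid times.
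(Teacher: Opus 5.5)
Your proof is correct and is essentially the paper's argument: the paper only says the lemma is a straightforward consequence of Dini's theorem, and your grid sandwich is exactly the proof of the monotone (P\'olya-type) Dini theorem. The one small difference is that you work directly in probability, via a union bound over finitely many grid times, instead of first extracting almost surely convergent subsequences and then applying the deterministic theorem.
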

\noi
\textbf{Proof.} It is a straightforward consequence of Dini's theorem. \cqfd 

\smallskip
 
By Lemma \ref{Dinicvproba} and (\ref{BMcvprob1}) for all $t_0\ino\bbR_+$ we thus get 
$\smash{\lim_{b\downarrow 0} \max_{t\in [0, t_0]}  |K(b,t/b) \! -\! t| \eqo 0 }$ in $\smash{\bP}$-probability which implies (\ref{cvproba2}) by  (\ref{BMtrifft2}) and easy arguments (see e.g.~Theorem 3.1 Whitt \cite{Wh80}). This completes the proof of (\ref{cvproba2}) and and also the proofs of Theorems \ref{cvzigzag} and  \ref{recovercell}.\cqfd 

\subsection{Proof of Proposition \ref{LLNnuc}}
\label{ProofLLNnucsec}
Let $\smash{x^+\! , \, x^-\ino \bbR_+}$, let $h\ino (0, \infty)$ and let $\smash{(\bH , \bzeta)}$ be distributed according to $\smash{P_{\! x^+\! , \, x^- \! , \, h}}$. 
For all $b\ino (0, h]$, we denote by $\smash{\mathtt C(\cE_b \bH)\eqo \big( (x^+_k(b, \bH), (x^+_k(b, \bH))\, ;\,  1\leqo k\leqo N_b(\bH)\big)}$, the cell population associated with the zigzag function $\smash{\cE_b\bH}$. 
For all continuous and bounded $\smash{F\! : \! \bbR_+^2 \! \to \! \bbR}$ we then set 
$\smash{\mathbf S_F(b)\eqo \sum_{1\leq k\leq N_b(\bH)} F \big( \frac{1}{b}x^+_k (b,\bH),\frac{1}{b}x^-_k (b,\bH) \big)} $. Thanks to Theorem \ref{mainth3} we see that Proposition \ref{LLNnuc} is a consequence from the following limit 
\begin{equation}
\label{LLNcvproba1}
\smash{2b^2\mathbf S_{\! F} (b)  \xrightarrow[b\downarrow 0]{\; } \langle F\rangle \bzeta  \quad \textrm{in $\bP$-probability,}}
\end{equation}
where $\smash{\langle F\rangle }$ stands for $\smash{\int_0^\infty \! \int_0^\infty \! F(y^+\! ,\, y^-) \mathrm{e}^{-(y^+\! + y^-)}\mathrm dy^+  \mathrm dy^- }$. 

To prove (\ref{LLNcvproba1}) we proceed as in the proofs of Theorems \ref{cvzigzag} and  \ref{recovercell} in the previous section and we first prove a similar result for the Brownian motion $\smash{B_\cdot}$. We recall $\smash{\mathtt x^+_k(b)}$ and $\smash{\mathtt x^-_k(b)}$ 
from Notation \ref{notaeraBM} and we recall that $\smash{(\mathtt{z}_j)_{j\in \bbN}}$ is defined as follows: 
$\smash{\mathtt{z}_0\eqo \mathtt{x}^-_0 (b)}$, $\smash{\mathtt{z}_{2k+1}\eqo  \mathtt{z}_{2k}+ \mathtt x^+_{k+1} (b)}$ and 
$\smash{\mathtt{z}_{2k+2}\eqo  \mathtt{z}_{2k+1}+ \mathtt x^-_{k+1} (b)}$ for all $\smash{k\in \bbN}$; 
$\smash{(\mathtt z_{2k+1})_{k\in \bbN}}$ is the sequence of positive times at which $\smash{\cE_b B_\cdot}$ 
reaches a local maximum. 
For all $\smash{z\ino \bbR_+}$, we also recall that $\smash{\bnu(b,z) \eqo \min \{ k\ino \bbN : \mathtt{z}_{2k+1} \geko z\}}$  is the number of positive times $\smash{\leqo z}$ at which $\smash{\cE_b B}$ 
reaches a local maximum. For all $\smash{z\ino \bbR_+}$, we set 
$$\smash{ S_{\! F} (b,z) \eqo \!\! \!\! \!\! \!\!  \sum_{^{\quad 1\leq k\leq  \bnu(b,z)}} \!\! \!\! \!\! \!\!  F \big( \tfrac{1}{b}\mathtt x^+_k (b),\tfrac{1}{b} \mathtt x^-_k (b) \big) .}$$
Then the law of large numbers combined with (\ref{BMapproxnu}) and Lemma \ref{Dinicvproba} imply for all $\smash{t_0\ino \bbR_+}$ that 
\begin{equation}
\label{LLNBrown}
\smash{\lim_{b\downarrow 0} \max_{t\in [0,t_0]} \big| 2b^2S_{\! F} (b,t/b) \! -\!  \langle F\rangle t \big|=0 \; \textrm{in $\bP$-probability.}}
\end{equation}
We then recall that $\smash{B_{b} (\cdot)}$ stands for the 
process in the left hand side of (\ref{cvproba2}). We recall from (\ref{hittim}) the notation $\smash{\varrho_{-x} (B_b)}$, which is equal to $\smash{b\varrho_{-x} (\cE_b B)}$, and also that $\smash{\bvaro_{-x}\eqo \varrho_{-x} (B)}$. Since the law of 
$\smash{\max \{ B^+_t ;\,  t\ino [0, \bvaro_{-x}]\}}$ is diffuse, (\ref{LLNBrown}) combined with (\ref{cvprobhitting}) imply for all $\smash{x, h \ino (0, \infty)}$ 
\begin{equation}
\label{LLNBrownhitting}
\smash{\lim_{b\downarrow 0} 2b^2S_{\! F} \big( b,\varrho_{-x} (\cE_b B) \big) = \langle F\rangle \bvaro_{-x}\;\,  \textrm{in $\bP \Big(\, \cdot \, \Big| \!\!\!\!\! \max_{\quad t\in [0, \bvaro_{-x}]}  \!\!\!\!\! B^+_t   \leko h \Big)$-probability.}}
\end{equation}

  From the proof in the previous section, we next recall the definition of the times $\gamma (b)$ and $\delta (b)$ for all $b\ino [0, h)$ and we set $\smash{S^{\mathrm{exc}}_{\! F} \big( b) \eqo S_{\! F} \big( b,\frac{1}{b} \delta (b) \big) \! -\! S_{\! F} \big( b,\frac{1}{b} \gamma (b) \big)}$. By (\ref{LLNBrown}) and (\ref{cvprobexcueqoh}), we get  
\begin{equation}
\label{LLNBrownexcu}
\smash{\lim_{b\downarrow 0^+} 2b^2S^{\mathrm{exc}}_{\! F} \big( b)  = \langle F\rangle \big(\delta (0)\! -\! \gamma (0) \big)\;\,  \textrm{in $\bP$-probability.}}
\end{equation}

Let $\smash{(H^i\! , \zeta_i)_{i\in \{ 1,2,3\}}}$ be independent $\smash{\bC_{\! f}}$-valued random paths such that 
$\smash{\bH\eqo H^1\! \centerdot H^2\! \centerdot H^3}$ and whose respective laws are given as follows: $\smash{(H^1\! , \zeta_1)}$ has law $\smash{P^{\uparrow}_{{\! x^+\! , \, h}}}$, $\smash{(H^2\! , \zeta_2)}$ has law $\smash{\bN (\, \cdot \, | \, \Gamma \eqo h)}$ and 
$\smash{(H^3\! , \zeta_3)}$ has law $\smash{P^{\downarrow}_{{\! x^-\! , \, h}}}$. It is easy to see that $\smash{\cE_b \bH\eqo \cE_bH^1\! \centerdot \cE_b H^2\! \centerdot  \cE_b H^3}$ and up to boundary errors $\smash{\mathbf S_{\! F} (b)}$ also splits into three corresponding terms 
$\smash{S_i(b)}$, $i\ino \{ 1,2,3\}$ which satisfy the following. 
\begin{compactenum}

\smallskip

\item[$(i)$] $\smash{\big|\mathbf S_{\! F} (b) -\! (S_1(b)+S_2(b)+S_3(b))  \big| \leq 4 \sup_{y\in \bbR_+} |F(y) |}$.

\smallskip

\item[$(ii)$] $\smash{\big(\overleftarrow{H}^1\! , \zeta_1, (S_1(b))_{b\in (0,h)} \big)\!  \overset{_{\textrm{(law)}}}{=}\!
 \big( B(\cdot \wedge \bvaro_{-x^+}), \bvaro_{-x^+} , \big( S_{\! F} ( b,\varrho_{-x^+} (\cE_b B) ) \big)_{b\in (0,h)} \big)}$ under the conditional probability $\smash{\bP \big(\, \cdot \, \big| \max \{ B^+_t  ;  t\in [0, \bvaro_{-x^+}]\}   \leko h \big)}$. 
 
\smallskip

\item[$(iii)$]  $\smash{\! \! \big(H^2\! , \zeta_2, (S_2(b))_{b\in (0,h)} \big)\!  \overset{_{\textrm{(law)}}}{=}\!\big( B \big( (\gamma (0) + \cdot) \! \wedge \! \delta(0) \big) \! -\! B(\gamma(0)),  \delta(0) \! -\! \gamma(0) , (S^{\mathrm{exc}}_{\! F} \big( b) )_{b\in (0,h)}\big) }$. 

\smallskip

\item[$(iv)$] $\smash{\big(H^3\! , \zeta_3, (S_3(b))_{b\in (0,h)} \big) \!  \overset{_{\textrm{(law)}}}{=}\!
 \big( B(\cdot \wedge \bvaro_{-x^-}), \bvaro_{-x^-} , \big( S_{\! F} ( b,\varrho_{-x^-} (\cE_b B) ) \big)_{b\in (0,h)} \big)}$ under the conditional probability $\smash{\bP \big(\, \cdot \, \big| \max \{ B^+_t  ;  t\in [0, \bvaro_{-x^-}]\}   \leko h \big)}$. 
 
\smallskip

\end{compactenum}

Then $(ii)$, $(iv)$ and (\ref{LLNBrownhitting}) imply that $\smash{\lim_{b\downarrow 0} 2b^2S_i(b) \eqo  \langle F\rangle\zeta_i}$ in $\bP$-probability for 
$i\ino \{ 1,3\}$. Similarly, $(iii)$ and (\ref{LLNBrownexcu}) imply that $\smash{\lim_{b\downarrow  0} 2b^2S_2(b) \eqo  \langle F\rangle\zeta_2}$ in $\bP$-probability.
By $(i)$ we therefore get 
$\smash{\lim_{b\downarrow 0} 2b^2\mathbf S_{\! F} (b) \eqo  \langle F\rangle (\zeta_1+ \zeta_2+  \zeta_3)\eqo  \langle F\rangle\bzeta }$ in $\bP$-probability. This proves (\ref{LLNcvproba1}) and the proof of Proposition \ref{LLNnuc} follows. \cqfd

\end{document}